%% file: main.tex
\documentclass[12pt]{article}
\usepackage[utf8]{inputenc}

\usepackage{amsmath}
\usepackage{amsthm}

\usepackage{algorithm}
\usepackage{algorithmic}
\usepackage{amssymb}
\usepackage{braket}
\usepackage{float}
\usepackage{gensymb}
\usepackage{tikz}
\usepackage{tocloft}
\usepackage{booktabs}
\usepackage{tabularx,makecell,multirow}
\usepackage{graphics,pstricks}

\usepackage{pifont}
\usepackage{float}
\usepackage{subfig}
\usepackage{tikz}
\usetikzlibrary{decorations.pathmorphing,knots}
\usepackage[matrix,arrow]{xy}
\tikzstyle WL=[line width=2pt,opacity=1.0]
\tikzstyle WLL=[line width=5pt,opacity=1.0]

\usepackage[paperheight=11in,
             paperwidth=8.5in,
             outer=1in,
             inner=1in,
             bottom=1in,
             top=1in]{geometry}

\usetikzlibrary{shapes}
\newtheorem{defi}{Definition}[section]
\newtheorem{rmk}[defi]{Remark}
\newtheorem{prop}[defi]{Proposition}
\newtheorem{thm}[defi]{Theorem}
\newtheorem{eg}[defi]{Example}
\newtheorem{coro}[defi]{Corollary}
\newtheorem{lem}[defi]{Lemma}

\usepackage{tikz}
\usetikzlibrary{calc}
\usepackage{tikz}
\usetikzlibrary{calc} 

\usetikzlibrary{calc}

\usepackage{ifthen}
\usepackage{ulem}

\usepackage[colorlinks,linkcolor=blue]{hyperref}
\usepackage{bbm}

\newtheorem*{maintheorem}{Main Theorem}
\newtheorem*{comparison}{Comparison Theorem}

\usepackage{appendix}

\usepackage[nottoc]{tocbibind}
\usepackage{float}

\begin{document}

\title{G-equivariant Alterfold TQFT in any dimension}
\author{%
  Zixuan Feng\textsuperscript{1}\qquad Zhengwei Liu\textsuperscript{1,2}\\[0.6em]
  {\small\textsuperscript{1}Tsinghua University, Beijing, China}\\
  {\small\textsuperscript{2}Beijing Institute of Mathematical Sciences and Applications, Beijing, China}\\[0.4em]
  {\small Email (Zixuan Feng): \texttt{fzx22@mails.tsinghua.edu.cn}}\\[0.4em]
  {\small Email (Zhengwei Liu): \texttt{zhengweiliu@mail.tsinghua.edu.cn}}
}

\date{}

\maketitle
\begin{abstract}
For a discrete group $G$, we construct an $(n+1)$-dimensional $G$-equivariant Alterfold topological quantum field theory from a $G$-equivariant $S^n$-algebra satisfying strong semisimplicity, complete finiteness, and Morita invariance. 
We use state sum on triangulation to define a partition function and prove that it is invariant under local surgery relations and is invariant under gauge transformations relative to the boundary. 
If the input is given a reflection-positive structure, the theory is unitary. 
For $n=2$ and input reconstructed from a strict faithful spherical $G$-fusion category, the restriction to ordinary $G$-cobordisms is naturally isomorphic to the Turaev--Virelizier state-sum HQFT \cite{turaev20123}.
A Tambara-Yamagami example recovers the Quon language \cite{liu2017quon} as a $\mathbb{Z}_2$-equivariant Alterfold TQFT.
\end{abstract}
\tableofcontents

\section{Introduction}

A topological quantum field theory is a symmetric monoidal functor from a cobordism category to a suitable category of vector spaces \cite{atiyah1988topological}. 
State sums construct such theories from local weights on triangulations, as in the work of Turaev and Viro \cite{turaev1992state} and Barrett and Westbury \cite{barrett1996invariants}. 
Equivariant theories incorporate a background principal $G$-bundle into the cobordism category.

Group extensions appear in several related forms. For a fusion category $\mathcal C$, a $G$-extension is a faithfully $G$-graded fusion category with neutral component equivalent to $\mathcal C$. For finite $G$, Etingof, Nikshych, and Ostrik describe these extensions through the Brauer--Picard $3$-groupoid and obtain obstruction and classification results \cite{etingof2005fusion,etingof2010fusion}. In this formulation, the graded components are invertible bimodule categories, and constructing an extension requires coherent multiplication and associativity data.

In the study of topological phases, symmetry-protected topological (SPT) phases and symmetry-enriched topological (SET) phases bring complementary aspects of this structure into view. Chen, Gu, Liu, and Wen construct bosonic SPT models from group cohomology \cite{chen2013symmetry}. For $2+1$-dimensional topological phases, Barkeshli, Bonderson, Cheng, and Wang develop the theory of symmetry fractionalization, symmetry defects, and gauging using $G$-crossed braided categories \cite{barkeshli2019symmetry}. The grading records defect sectors, while the crossed structure encodes their interaction with the symmetry.

The gauge-theoretic and orbifold viewpoints also distinguish a specified background from a sum over backgrounds. Dijkgraaf and Witten construct topological gauge theories from group cohomology \cite{dijkgraaf1990topological}; for finite gauge groups, Freed and Quinn give a rigorous quantization \cite{freed1993chern}. Twisted sectors are central to the classical orbifold constructions of Dijkgraaf, Vafa, Verlinde, and Verlinde \cite{dijkgraaf1989operator}. In the TQFT setting, Schweigert and Woike construct pushforwards along homomorphisms of finite groups, with orbifoldization given by pushforward to the trivial group \cite{schweigert2019orbifold}. Gauging and orbifoldization combine sectors; a theory with fixed background retains them separately.

For TQFTs, the fixed-background formulation is provided by Turaev's homotopy quantum field theory (HQFT), whose manifolds carry maps to a target space \cite{turaev1999homotopy}. For a discrete group, the relevant target is $K(G,1)$. Crossed group-categories supply three-dimensional examples \cite{turaev2000homotopy}, and Turaev and Virelizier construct state-sum HQFTs from spherical $G$-fusion categories \cite{turaev20123}. These developments motivate the present problem: to construct equivariant state sums in arbitrary dimension directly from local algebraic data. 

The Alterfold approach began with the construction by Liu, Ming, Wang, and Wu of quantum invariants of $3$-manifolds equipped with decorated separating surfaces \cite{liu20233}.
They subsequently constructed the associated $(2+1)$-dimensional TQFT, placing the Turaev--Viro theory of a spherical fusion category and the Reshetikhin--Turaev theory of its Drinfeld center in a common framework \cite{liu2026alterfold}.
Their further work applies alterfold methods to modular invariance and $\alpha$-induction in Morita contexts \cite{liu2026alterfold4}.
Liu developed a hypersphere-functional construction of Alterfold TQFTs in arbitrary dimension \cite{liu2024functional}.
We develop a $G$-equivariant version of this construction with both space and time boundaries.

Fix a discrete group $G$ and an integer $n\ge 1$. 
Our algebraic input is a $G$-equivariant $S^n$-algebra $\mathcal V=(V,Z)$. 
Its disk operations describe the insertion of labeled regions, and its sphere functional $Z$ evaluates closed labeled $n$-spheres. 
Idempotent completion emerge a spherical $n$-category, organizes the local labels into morphisms and supplies the traces and global dimensions used in the state sum.

We impose strong semisimplicity (SS), complete finiteness (CF), and Morita invariance (MI). The first two is used in the construction of \cite{liu2024functional} which provide semisimple local algebras, nonzero global dimensions, and finite summations in each prescribed sector. 
We demand an extra condition \textup{(MI)}, which supplies the normalization needed to compare the group sectors in the lowest adjacent relation.
The role of \textup{(MI)} is specific to the group background. 
In a Move~$1$, the bulk determines the transport along the core interval, and hence the sector over which one may sum. For an irreducible $0$-morphism $\alpha$, the required identity is
\[
\sum_{\substack{\beta\text{ an irreducible }\alpha\text{--}\alpha\text{ bimodule}\\
                  \operatorname{gr}(\beta)=g}}
\frac{\operatorname{Tr}(\beta)^2}{\mu(\beta)}=
\sum_{\substack{\beta\text{ an irreducible }\alpha\text{--}\alpha\text{ bimodule}\\
                  \operatorname{gr}(\beta)=h}}
\frac{\operatorname{Tr}(\beta)^2}{\mu(\beta)}
,
\qquad \forall g,h\in G,
\]
where the sum is over the chosen representatives. This is the \textup{(MI)} condition. It provides the normalization in the $0$--$1$ adjacent identity. For the higher adjacent identities, the fixed link boundary already specifies the admissible bimodule summation. In particular, none of these identities is obtained by summing over all group backgrounds.

The geometric input is a stratified PL manifold whose cooriented codimension-one strata carry elements of $G$, with a flatness condition at codimension two. The wall labels record parallel transport. Relative isotopies, refinements with identity labels on new walls, and changes of chamber trivialization describe gauge equivalence while preserving the boundary data.

Before stating the construction, we describe its source category. The objects of the \textbf{$G$-equivariant spacetime cobordism category} are compact oriented $G$-labeled PL $n$-manifolds $\mathcal F=(F,P_F)$, possibly with boundary. A morphism from $\mathcal F_-$ to $\mathcal F_+$ is represented by a compact oriented $G$-labeled PL $(n+1)$-manifold $B$ with a boundary decomposition
\[
\partial B=M\cup\iota_-(\overline{F_-})\cup\iota_+(F_+).
\]
Here $M$ is the space boundary and carries a $\mathcal V$-labeling. The incoming and outgoing time boundaries are identified with $\overline{F_-}$ and $F_+$ by the specified embeddings $\iota_-$ and $\iota_+$; they carry the prescribed $G$-labeling but no $\mathcal V$-labels. The time boundaries meet $M$ along their boundaries and admit $G$-preserving stratified product collars. Morphisms are taken modulo PL isomorphisms preserving these identifications and decorations, together with gauge transformations relative to the entire boundary. Composition is gluing along the identified time boundaries, and the monoidal product is disjoint union. Thus time boundaries may themselves have boundary, with the space boundary completing the boundary of the cobordism. Section~5 gives the full definitions.

Our main construction can be stated as follows.

\begin{maintheorem}
Let $\mathcal V$ be a $G$-equivariant $S^n$-algebra satisfying \textup{(SS)}, \textup{(CF)}, and \textup{(MI)}. 
There is a well defined partition function on bulk vectors which is independent of the chosen full transverse triangulation, invariant under gauge transformations relative to the boundary, and satisfies the global $S^{k-1}$-relations. 
Using the universal reconstruction procedure this partition function determines a symmetric monoidal functor from the associated $G$-equivariant spacetime cobordism category to finite-dimensional complex vector spaces. 
If $\mathcal V$ is equipped with a compatible reflection-positive structure, the functor is unitary.
\end{maintheorem}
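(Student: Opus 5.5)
The plan follows the strategy of \cite{liu2024functional}, with the group background carried along at every step. First, for a $G$-labeled PL $(n+1)$-manifold $B$ with $\mathcal V$-labeled space boundary $M$, choose a full transverse triangulation: every simplex is transverse to the $G$-walls, and the walls are subcomplexes of a dual cell decomposition. Then define the state sum. Each $k$-simplex receives an irreducible $(n-k)$-morphism of the idempotent completion, lying in the graded sector determined by the wall labels crossed. Each top cell is weighted by the sphere functional $Z$ evaluated on the labeled $n$-sphere of its link. Each interior $k$-simplex is weighted by the global-dimension and trace normalizations. (SS) guarantees nonzero global dimensions, and (CF) guarantees that the summation in each prescribed sector is finite, so the expression is well defined on bulk vectors.

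Second, prove independence of the triangulation. By Pachner's theorem, together with its relative version for triangulations fixed on the boundary and transverse to the strata, any two full transverse triangulations are related by bistellar moves and transverse shellings. A $k$--$(k+1)$ move replaces a ball $D^{k}\times D^{n+1-k}$. I would reduce each move to an adjacent identity: summing over the labels of the interior simplex reproduces, via the $S^{k-1}$-relations of $\mathcal V$ (semisimple decomposition of identities on spheres), the labeled link on the other side.

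For $k\ge 2$ the fixed link boundary already specifies the admissible bimodule sector, so the ungraded argument of \cite{liu2024functional} applies sectorwise. The case $k=1$, Move~$1$, is where I expect the main obstacle. There the transport along the core interval is a group element $g$, which can change when a wall is slid across. The required normalization is precisely
\[
\sum_{\operatorname{gr}(\beta)=g}\operatorname{Tr}(\beta)^2/\mu(\beta)
=\sum_{\operatorname{gr}(\beta)=h}\operatorname{Tr}(\beta)^2/\mu(\beta),
\]
which is (MI). I would verify carefully that this constant equals the ungraded global dimension divided by the number of sectors, so that the factors match.

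Third, prove gauge invariance relative to the boundary. The generators are relative isotopies of walls, refinements by identity-labeled walls, and chamber retrivializations $P\mapsto hPh^{-1}$. Isotopies are absorbed by the moves above, and identity walls contribute trivially. Retrivialization acts on labels by the $G$-action of the equivariant $S^n$-algebra, and $G$-invariance of $Z$ leaves each summand unchanged. The global $S^{k-1}$-relations follow by applying the local relations inside a triangulated neighborhood.

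Finally, apply the universal reconstruction procedure. For each time boundary $\mathcal F$, take the space spanned by bulks bounding $\mathcal F$, modulo the kernel of the pairing given by gluing and evaluating. Finite dimensionality follows from (CF), since triangulating a collar expresses any vector through finitely many boundary labelings. Functoriality, gluing, and symmetric monoidality follow from locality of the state sum. For unitarity, the reflection-positive structure makes the pairing $\langle B, B'\rangle=Z(\overline{B}\cup B')$ positive definite on the quotient.
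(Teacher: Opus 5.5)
Your overall architecture matches the paper's: cutting along a full transverse triangulation, adjacent identities with (MI) entering at the $(0,1)$ step, and the universal construction. However, the triangulation-independence step has a genuine gap.

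\textbf{Triangulation independence.} You invoke a Pachner theorem ``relative to the boundary and transverse to the strata''. That is not an available result, and you give no argument for it. The intermediate triangulations of a Pachner sequence, like a common subdivision of two transverse triangulations, are in general not transverse to the $G$-stratification, so the state sum is not even defined on them. Shellings also change the polyhedron and its $\mathcal V$-labelled boundary.

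The missing idea is to move the bulk instead of the triangulation. The paper connects a full triangulation to any subdivision of it by $p$-subdivisions and their inverses through full triangulations. It then isotopes the bulk so that a common refinement, and hence every intermediate triangulation, is transverse. This requires isotopy invariance for a \emph{fixed} triangulation as an independent result. The paper proves it by:
\begin{itemize}
\item factoring the isotopy into steps supported in open vertex stars, each with transverse endpoints;
\item subdividing so that vertex stars are shellable (Adiprasito--Benedetti);
\item using the shellable-tail lemma: on a shellable ball the algorithm equals $Z$ of the boundary diagram. This is shown by ordering the interior faces as consecutive free-facet pairs of a reversed collapse and applying $F^k_\pm$ pair by pair in a backward induction.
\end{itemize}
Your ``isotopies are absorbed by the moves'' presupposes the missing transverse Pachner statement. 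Likewise, ``summing over the interior labels reproduces the other side'' is exactly what this shelling mechanism achieves, together with factoring a stellar move into stellar moves with shellable links.

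\textbf{Gauge invariance} also fails as written. A $G$-$S^n$-algebra carries no $G$-action on the spaces $V_{\mathcal S}$, and $Z$ has no $G$-invariance axiom. Moreover, gauging a chamber that contains a vertex shifts the Move-$1$ sectors of the edges leaving that chamber. So summands do not correspond termwise, and invariance holds only after summation.

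The paper uses the fact that gauges are trivial near $\partial B$:
\begin{itemize}
\item Refinement by $1$-labelled walls leaves all transports unchanged, and hence the sum for a common triangulation.
\item After such a refinement, each small piece $D$ of the gauged chamber has $\overline D$ in the interior of a collared ball $U_D$.
\item Exposing $\partial U_D$ factors the sum through diagrams on $\partial U_D$. The inside contribution equals $Z$ of that diagram, which the gauge does not touch.
\end{itemize}

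\textbf{Smaller points.}
\begin{itemize}
\item The global $\mathcal S^{k-1}$-relations are not algorithm steps. For $k\ge1$ the core has its whole boundary on $\partial B$, so in a full triangulation it is never a simplex. The paper proves them by downward induction on $k$: it reduces to $\mathcal D^k_1(\beta)\times S^{n-k}$ via the basis proposition and vanishing on null vectors, uses the second-cut lemma, and uses (MI) again at $k=0$.
\item $\mu(\alpha_0)$ is by definition the trivial-sector sum. ``Ungraded global dimension over the number of sectors'' is a tautology for finite $G$ and undefined for infinite discrete $G$, where (CF) bounds each sector only.
\item Positivity for unitarity is not automatic. The paper obtains it from $\iota_+\circ\iota_-=\mathrm{id}$ on the identity cylinder with $\iota_-^\dagger=\iota_+$, which gives an isometric embedding into finitely many copies of $\mathbb C$.
\item A $k$-simplex carries a $k$-morphism, not an $(n-k)$-morphism.
\item Fullness means that a simplex with all vertices on $\partial B$ lies in $\partial B$. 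This is what lets the face-first cutting continue.
\end{itemize}
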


This combines Theorems~6.23, 6.26, 6.29, and 6.38. The state space is obtained by taking the span of compatible cobordisms and quotienting by the radical of the gluing pairing, following the universal-construction principle of Blanchet, Habegger, Masbaum, and Vogel \cite{blanchet1995topological}. Our surgery relations give finite-dimensional state spaces, and collar representatives establish compatibility with the monoidal product. In particular, applying the relations to an identity cylinder and splitting each relation into two halves yields an embedding into a finite direct sum of copies of $\mathbb C$. With reflection-positive data, the two halves are adjoints and the embedding is isometric.

The partition function is computed by applying surgery relations along a full transverse triangulation, processing faces before the simplices which contain them. Fullness means that a simplex whose vertices all lie on the boundary is itself contained in the boundary. Together with the product collars, this condition allows the surgeries to continue in the prescribed order. We prove subdivision invariance from the adjacent identities, evaluate the remaining sum in a shellable ball, and then establish isotopy invariance. Common subdivisions give triangulation independence. Gauge invariance is proved separately: after a relative refinement, chamber changes can be localized in collared balls and compared without altering the exterior. The shellable refinements use the derived-subdivision results of Adiprasito and Benedetti \cite{adiprasito2017subdivisions}. The relative PL results needed for these arguments are stated in Section~2 and proved in the appendix.

In dimension three, the construction has a direct comparison with the categorical state sum. A strict faithful spherical $G$-fusion category $\mathcal C=\bigoplus_{g\in G}\mathcal C_g$ defines an input $\mathcal V_{\mathcal C}$ by replacing a wall of degree $g$ with the direct sum of the simple objects in $\mathcal C_g$ and using the graphical calculus of $\mathcal C$ for the local operations.

\begin{comparison}
Let $\mathcal C$ be a strict faithful spherical $G$-fusion category over $\mathbb C$ with nonzero neutral-component dimension. For the same $G$-background and boundary identifications, the alterfold TQFT associated with $\mathcal V_{\mathcal C}$, restricted to ordinary $G$-cobordisms without space boundary, is symmetrically monoidally naturally isomorphic to the Turaev--Virelizier state-sum HQFT associated with $\mathcal C$.
\end{comparison}

The comparison identifies the universal state spaces with the images of the Turaev--Virelizier cylinder projectors \cite{turaev20123}. Its normalization uses $\dim(\mathcal C_e)$, the dimension of the neutral component. The factors attached to boundary colors account for the difference between the two presentations of the cobordism maps. This gives an identification of the functors, including their gluing maps. The comparison concerns the target $K(G,1)$; the crossed-module construction of S\"ozer and Virelizier \cite{sozer2023monoidal} also permits homotopy $2$-types and has a different range of backgrounds.

A further family of inputs is provided by Tambara--Yamagami categories \cite{tambara1998tensor,etingof2010fusion} for a group $A$. 
The cyclic case $A=\mathbb Z_d$ gives the categorical setting for comparison with the original Quon language of Liu, Wozniakowski, and Jaffe \cite{liu2017quon}. 
That construction uses $\mathbb Z_d$-charged strings and loop normalization $\sqrt d$. 
Here the charge group $A$ and the background grading group $G=\mathbb Z_2$ have different roles. 
The case $d=2$ gives the Ising fusion rules.
This article identifies Quon language as a $(2+1)$-dimensional $\mathbb{Z}_2$-equivariant Alterfold TQFT.
Recent applications of Quon diagrams to quantum circuits and classical simulation include \cite{kang20252d,feng2025quon}.

Section~2 develops the PL conventions and relative transversality results. Sections~3 and~4 introduce the local algebras, their morphisms, and the three algebraic conditions. Section~5 defines the spacetime cobordism category and relative gauge equivalence. Section~6 constructs the partition function and the TQFT. Section~7 discusses the HQFT comparison and examples related to Quon language. The appendix contains the geometric proofs deferred from the main text.

\include{Manifold}

\include{G-Sn-algebra}

\include{G-category}

\include{G-Alterfold_cobordism}

\include{G-equivariant_TQFT}

\include{Examples}

\section*{Acknowledgement}
\addcontentsline{toc}{section}{Acknowledgements}
We thank Shuang Ming, Yilong Wang and Jinsong Wu for helpful discussions.
The main ideas and constructions of this work were developed by the authors. We used AI as auxiliary tools to help clarify and check certain technical details in PL topology and to assist with manuscript editing. The authors take full responsibility for all mathematical statements, proofs, and conclusions. 
Both authors acknowledge support from the Beijing Natural Science Foundation Key Program (GrantZ220002) and the Beijing Natural Science Foundation (GrantZ221100002722017).

\bibliographystyle{alpha} 
\bibliography{ref}        

\include{app12}

\end{document}

%% file: Manifold.tex
\section{Stratified PL Manifolds}

\subsection{PL manifolds}

In this article, all topological manifolds are assumed to be oriented. 
A zero dimensional topological manifold is a set of points labeled with signs $\pm$ by convention.

\begin{defi}
    A Piecewise Linear (PL) manifold is a topological manifold $M$ equipped with a set of charts $(U_i,\phi_i)_i$ in which $(U_i)_i$ is an open covering of $M$ and $\phi_i:U_i\to \mathbb{H}^n:=\mathbb{R}^{n-1}\times[0,\infty)$ are oriented homeomorphisms onto relative open images such that $\phi_j\phi_i^{-1}:\phi_i(U_i\cap U_j)\to \phi_j(U_i\cap U_j)$ are oriented PL homeomorphisms.
    The set of charts is assumed to be maximal, namely it includes all the charts compatible in the above sense. 
\end{defi}

\begin{defi}
    A continuous map $f:M\to N$ between two PL manifolds $M,N$ is called a PL map if for any chart $(U_i,\phi_i)$ of $M$ and $(V_j,\psi_j)$ of $N$, $\psi_j f\phi_i^{-1}$ on $\phi_i(U_i\cap f^{-1}(V_j))$ is a PL map. 
    It is called a PL homeomorphism if it is bijective and its inverse is a PL map. 
    We write $M\sim_{PL}N$ if there is a PL homeomorphism in between.
\end{defi}

All PL homeomorphisms are assumed to be oriented in the rest of the article, i.e. all $\psi_j f\phi_i^{-1}$ are oriented.

\begin{defi}
    A triangulation of a topological manifold $M$ is a homeomorphism $|K|\to M$ in which $K$ is a (locally finite) simplicial complex. 
\end{defi}

Recall that the convex sum is defined as
\begin{equation}
    A\star B:=\{\lambda a + (1-\lambda)b \mid a\in A,b\in B,0\le \lambda\le 1 \}
\end{equation}

\begin{defi}
    In a simplicial complex $K$, the link $lk_K(\sigma)$ of a $k$-simplex $\sigma\in K$ is 
    \begin{equation}
        lk_K(\sigma)=\{\tau\in K| \sigma\star \tau\in K,\sigma\cap\tau=\emptyset \}
    \end{equation}
    and the star and the open star is respectively
    \begin{equation}
        st_K(\sigma):=\{\mu\in K| \exists \tau\in K,\sigma\subset\tau,\mu\subset\tau\},\quad
        ost_K(\sigma)=\bigcup_{\sigma\subset \tau\in K}Int(\tau)
    \end{equation}
    A $n$-dimensional simplicial complex is called a $n$-dimensional combinatorial manifold, if the link of any $k$-simplex is either $|lk_K(\sigma)|\sim_{PL}S^{n-k-1}$ or $|lk_K(\sigma)|\sim_{PL}D^{n-k-1}$. 
\end{defi}

We take the convention that $S^{-1}=D^{-1}=\emptyset$.

The followings are fundamental results of PL manifolds.

\begin{thm}
    The support $|K|$ of a combinatorial manifold admits a natural structure of a not necessarily orientable PL manifold. 
\end{thm}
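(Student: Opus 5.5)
The plan is to build PL charts on $|K|$ directly from the local combinatorial structure of $K$, namely the open stars of vertices, and then check that the transition maps between these charts are PL. Fix a vertex $v\in K$. Every simplex of $st_K(v)$ containing $v$ has the form $v\star\tau$ with $\tau\in lk_K(v)$, so
\[
|st_K(v)| = v\star |lk_K(v)|
\]
is the cone on the link. By hypothesis $|lk_K(v)|$ is PL homeomorphic to $S^{n-1}$ or $D^{n-1}$. Coning such a homeomorphism radially from $v$ to the origin gives a PL homeomorphism
\[
|st_K(v)| \;\to\; 0\star S^{n-1}\cong D^n \quad\text{or}\quad |st_K(v)|\;\to\; 0\star D^{n-1}\cong D^{n-1}\times[0,1].
\]
Here $S^{n-1}$ and $D^{n-1}$ are taken to be the boundaries of standard convex polytopes, so the cone is simplexwise linear after a subdivision. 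Restricting to the open star $ost_K(v)$ gives a homeomorphism $\phi_v$ onto a relatively open subset of $\mathbb R^n$, respectively of $\mathbb H^n$. The open stars of all vertices cover $|K|$, and local finiteness ensures each chart involves finitely many simplices.

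Next comes the compatibility of charts. For vertices $v,w$, the overlap $ost(v)\cap ost(w)$ is a union of open simplices of $K$. On each closed simplex of $K$ meeting the overlap, both $\phi_v$ and $\phi_w$ are PL, being simplexwise linear with respect to a subdivision. Hence $\phi_w\phi_v^{-1}$ is PL on each piece. A map that is PL on each closed cell of a locally finite cell decomposition is PL, so the transition map is PL. We then take the maximal atlas generated by these charts. This also shows the structure is natural: it depends only on $K$ up to subdivision, because a subdivision $K'$ yields charts that are PL with respect to the old ones by the same argument.

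The main obstacle is making the cone extension rigorous and uniform. The given PL homeomorphism $|lk(v)|\cong S^{n-1}$ is PL with respect to some subdivision of $lk(v)$, not necessarily simplicial. I would first choose subdivisions in which it is simplicial, using the standard fact that a PL map becomes simplicial after subdividing source and target. Then I would cone this simplicial isomorphism, and the cone of a simplicial isomorphism is a simplicial isomorphism. A second, smaller point is the boundary case: one must check that the interior points of the cone on $D^{n-1}$ correspond to a relatively open subset of $\mathbb H^n$, with $lk$-boundary points going to $\partial\mathbb H^n$. This follows from the standard identification $0\star D^{n-1}\cong D^{n-1}\times[0,1]$ with $v$ mapped to a boundary point.

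Finally, orientation is not claimed, since the statement says "not necessarily orientable". So the charts are only required to be PL homeomorphisms, and the orientation convention of the paper is imposed afterwards whenever $|K|$ is orientable.
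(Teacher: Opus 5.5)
The paper gives no proof of this statement: it lists it among the ``fundamental results of PL manifolds'' and defers to the standard PL literature. Your argument is the standard textbook proof, and it is correct. You put charts on the open vertex stars $ost_K(v)$ by coning a link homeomorphism that has been made simplicial after subdivision. The transition maps are PL because on each of the finitely many closed simplices containing the edge $vw$ (the only simplices meeting $ost_K(v)\cap ost_K(w)$) they are composites of simplexwise-linear maps. As your argument shows, only vertex links are needed.

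Two details should be tightened.

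\emph{The boundary model.} $D^{n-1}$ is not the boundary of a convex polytope, so the model must be placed so that the open cone is literally open in a half-space. For example, take the link model to be $\partial[-1,1]^n\cap\{x_n\le 0\}$. Its cone from the origin is $[-1,1]^{n-1}\times[-1,0]$, and $ost_K(v)$ then goes onto $(-1,1)^{n-1}\times(-1,0]$. What lands in the boundary hyperplane is the open cone on $\partial|lk_K(v)|$, namely the image of $(-1,1)^{n-1}\times\{0\}$. It is not the points of $\partial|lk_K(v)|$ themselves, which do not lie in $ost_K(v)$ at all.

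\emph{The meaning of $\sim_{PL}$.} You read it in the polyhedral sense, as simplicial after subdivision. Under the paper's chart-based definition this requires two further steps. First, apply the theorem inductively to $lk_K(v)$, which is itself an $(n-1)$-dimensional combinatorial manifold. Second, use the standard fact that chart-wise PL maps between compact polyhedra are PL in the polyhedral sense.
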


In this article we only consider combinatorial manifold for which the base manifold is orientable and is given an orientation.

\begin{defi}
    A triangulation $h:|K|\to M$ of a PL manifold $M$ is called a combinatorial triangulation if $K$ is a combinatorial manifold and $h$ is a PL homeomorphism w.r.t the natural PL structure on $|K|$.
\end{defi}

\begin{thm}\label{common subdivision}
    A compact PL $n$-manifold admits a combinatorial triangulation. 
    Any two combinatorial triangulations admits a common subdivision, which is also combinatorial. 
\end{thm}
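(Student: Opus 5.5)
The plan is to reduce both assertions to three classical facts of PL topology (Rourke--Sanderson, Zeeman, Hudson): compact polyhedra are triangulable; a PL map between compact polyhedra becomes simplicial after subdividing source and target; and being a combinatorial manifold is invariant under subdivision. Throughout, ``combinatorial triangulation'' is taken with respect to the natural PL structure of the preceding theorem.

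\textbf{Existence.} By compactness, cover $M$ by finitely many charts $(U_i,\phi_i)$, $i=1,\dots,r$. Shrink them to compact sets $C_i\subset U_i$ with $\phi_i(C_i)$ a compact polyhedron in $\mathbb{H}^n$ and $\bigcup_i C_i=M$.
\begin{itemize}
\item Argue by induction on $i$ that $C_1\cup\dots\cup C_i$ carries a triangulation compatible with all the charts. The transition maps $\phi_j\phi_i^{-1}$ are PL, so the intersections $C_i\cap C_j$ correspond to subpolyhedra in both charts.
\item At step $i$, triangulate $\phi_i(C_i)$ so that the polyhedron $\phi_i\bigl(C_i\cap(C_1\cup\dots\cup C_{i-1})\bigr)$ is a subcomplex. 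Then use the simplicial-approximation/subdivision lemma to make the two triangulations agree on the overlap after subdivision, and glue.
\item The result is a triangulation $h:|K|\to M$ that is PL in every chart.
\end{itemize}

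\textbf{$K$ is combinatorial.} Let $\sigma\in K$ and let $x$ be an interior point of $\sigma$. The link of $x$ in $K$ is $\partial\sigma\star lk_K(\sigma)$. In a chart, $x$ has a neighbourhood PL homeomorphic to a neighbourhood of a point of $\mathbb{R}^n$ or of $\partial\mathbb{H}^n$. With the standard triangulation, the link there is a PL $S^{n-1}$ or $D^{n-1}$. By uniqueness of PL links of points (links of the same point in two triangulations of the same polyhedron are PL homeomorphic), $\partial\sigma\star lk_K(\sigma)$ is a PL sphere or ball. Since $\partial\sigma\cong S^{k-1}$, the join-cancellation lemma then gives that $lk_K(\sigma)$ is a PL $S^{n-k-1}$ or $D^{n-k-1}$.

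\textbf{Common subdivision.} Given combinatorial triangulations $h_1:|K_1|\to M$ and $h_2:|K_2|\to M$, the composite $f=h_2^{-1}h_1:|K_1|\to|K_2|$ is a PL homeomorphism of compact polyhedra. By Zeeman's theorem there are subdivisions $K_1'\lhd K_1$ and $K_2'\lhd K_2$ such that $f:K_1'\to K_2'$ is simplicial. Being a homeomorphism, $f$ is then a simplicial isomorphism. Hence $h_1(K_1')=h_2(K_2')$ is a common subdivision. It is combinatorial by the subdivision invariance of combinatorial manifolds, which is the same link argument as above: the PL type of a point link does not change under subdivision.

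\textbf{Main obstacle.} The step I expect to be hardest is the uniqueness of point links up to PL homeomorphism, together with the join-cancellation lemma ($\partial\sigma\star L$ a PL sphere or ball implies $L$ is one). Both rest on the Alexander--Newman theory of stellar moves. I would cite these rather than reprove them: pseudo-radial projection shows that any two point links are PL homeomorphic, and Newman's theorem handles the cancellation. Everything else is bookkeeping with finitely many charts.
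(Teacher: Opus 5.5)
Your proposal is correct and is the standard Rourke--Sanderson/Zeeman argument. The paper gives no proof of its own and simply lists this statement among the fundamental facts of PL topology, so your reduction to triangulability of compact PL manifolds, Zeeman's subdivision theorem for PL maps, and PL invariance of point links is exactly what it implicitly invokes. One small simplification: the join cancellation needs no separate theorem (Newman's or otherwise), because for $k\ge 1$ and a facet $\tau$ of $\partial\sigma$ one has $lk(\tau,\partial\sigma\star lk_K(\sigma))=lk_K(\sigma)$, so it follows by induction on dimension from the very statement you are proving, namely that simplex links in any PL triangulation of a PL $(n-1)$-sphere or ball are spheres or balls.
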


\begin{defi}
    For a PL manifold $M$, a PL submanifold is a pair $(N,f)$ where $N$ is a PL manifold and $f:N\to M$ is a PL map and a topological homeomorphism onto the image. 
\end{defi}

\begin{defi}
    Two PL homeomorphisms $\phi_i:M\to M',i=0,1$ are called isotopic if there is a PL map $\Phi:M\times[0,1]\to M'$ such that $\Phi_{M\times\{i\}}=\phi_i,i=0,1$ and $\Phi_t:=\Phi|_{M\times\{t\}}:M\to M'$ are all PL homeomorphisms. 
    Two homeomorphisms onto images $\phi_i:N\to M,i=0,1$ are called ambient isotopic if there is a PL homeomorphism $F:M\to M$ isotopic to the identity $id_M$ such that $F\circ \phi_0=\phi_1$.
\end{defi}

\subsection{Stratification}

\begin{defi}
    A pre stratified PL manifold $\mathcal{M}$ is
    \begin{itemize}
        \item a PL $n$ manifold $M$, $B=\partial M$ being its boundary
        \item a sequence of subsets $\emptyset = M^{-1}\subset M^0\subset M^1\subset \dots \subset M^n=M$
    \end{itemize}
    such that 
    \begin{itemize}
        \item $M^i\subset M$ are closed subsets
        \item define $M_i:=M^i\setminus M^{i-1}$. We demand that $M_i\cap \mathring{M}$ is a PL $i$-dimensional submanifold without boundary
        \item any point $p$ in $M$ has an open neighborhood whose intersection with $M_i$ has finite connected components
        \item the boundary $B=\partial M$ admits a stratified PL homeomorphism $c:B\times[0,1)\to c(B\times[0,1))\subset M$ whose image is open in $M$ and $c(b,0)=b,\forall b\in B$. (Stratified means that $c^{-1}(M^i)=(B\cap M^i)\times[0,1)$.)
    \end{itemize}
\end{defi}

\begin{defi}
    A PL map $f:M\to N$ between two pre stratified PL manifolds $M,N$ is called a stratified PL map if $f(M^i)\subset N^i$ and $f(\partial M)\subset \partial N$.
    It is called a stratified PL homeomorphism if it is bijective and $f$ and $f^{-1}$ are all stratified PL maps.
\end{defi}

\begin{defi}
    When $\mathcal{A}$ is a pre stratified PL manifold, we denote by $c\mathcal{A}$ its open cone.
    It has support $cA=A\times[0,1)/A\times\{0\}=A\times(0,1)\cup \{v\}$, and stratification $(c\mathcal{A})^{k+1}:=c(\mathcal{A}^k),0\le k\le n$ and $(c\mathcal{A})^0:=\{v\}$.
\end{defi}

By convention $c\emptyset=\{v\}$.

\begin{defi}
    When $\mathcal{A}$ is a pre stratified PL manifold, we denote by $\Lambda\mathcal{A}$ its closed cone.
    It has support $\Lambda A=A\times[0,1]/A\times\{0\}=A\times(0,1]\cup \{v\}$, and stratification $(\Lambda\mathcal{A})^{k+1}:=\Lambda(\mathcal{A}^k),0\le k\le n$ and $(\Lambda\mathcal{A})^0:=\{v\}$.
\end{defi}

\begin{defi}\label{stratified PL manifold}(Stratified PL manifold)

    Suppose $\mathcal{M}$ is a pre stratified PL manifold with boundary $B=\partial M$.
    It is called a stratified PL manifold if for any point $p$ in $\mathcal{M}$, suppose that $p\in M_k$, set 
    \begin{equation}
        \mathbb{P}_k:=\begin{cases}
            \mathbb{R}^k,&p\notin B\\
            H^k=\mathbb{R}^{k-1}\times[0,\infty),&p\in B
        \end{cases}
    \end{equation}
    (we demand that $k\ge 1$ if $p\in B$) there is an open neighborhood $U\ni p$, a pre stratified PL manifold $|\mathcal{S}|\sim_{PL} S^{n-k-1}$ and a PL homeomorphism $\phi:\mathcal{M}|_U\to \mathbb{P}_k\times c\mathcal{S}$, such that
    \begin{itemize}
        \item $\phi(p)=(O_{k},v)$
        \item it is a stratified PL homeomorphism, i.e. 
        \begin{equation}
            \phi(M^i\cap U)=
            \begin{cases}
                \emptyset & i<k
                \\
                \mathbb{P}_k\times\{v\} & i=k
                \\
                \mathbb{P}_k\times c(S^{i-k-1})& i>k
            \end{cases}
            \quad\quad
            \phi(B\cap U)=\partial \mathbb{P}_k\times cS
        \end{equation}
        \item $\mathcal{S}$ is recursively a stratified PL manifold
    \end{itemize}
\end{defi}

This definition of stratified PL manifolds is equivalent to that of \cite{friedman2020singular}, when all the base restricted to PL manifolds.

\begin{defi}
    For later use, we denote by $\mathcal{M}^{(k)}:=\pi_0(M^{k}\setminus M^{k-1})$ the connected components of $M^{k}\setminus M^{k-1}$.
    We remember their orientations as PL manifolds and coorientations inside $M$ inherited from the orientation of $M$.
    Its elements are called $k$-dimensional defects. 
    Codimension $0$ defects are called chambers. 
    Codimension $1$ defects are called walls. 
\end{defi}

\begin{defi}
    For a wall $e\in \mathcal{M}^{(n-1)}$, we define its adjacent two chambers $c_-(e),c_+(e)$ as follows.
    By definition for any point $x\in e\cap Int M$, we have a chart $h:\mathcal{M}|_U\to \mathbb{R}^{n-1}\times (-1,1)$.
    Suppose that $h|_{e\cap U}$ is also oriented.
    The stratification of $(-1,1)$ has two chambers $(-1,0),(0,1)$ and a wall $\{0\}$.
    Their pre-images are contained in two chambers of $\mathcal{M}$, denoted by $c_-(e),c_+(e)$. 
    It is well defined, i.e. depends not on the point $x\in e$.
\end{defi}

\begin{defi}
    For a stratified PL manifold $\mathcal{M}=(M;M^i)$, a stratified submanifold is a subset $W\subset M$ such that $(W,\iota:W\hookrightarrow M)$ is a top-dimensional PL submanifold and $W^i:=W\cap M^i$ makes $W$ a stratified PL manifold.
\end{defi}

\subsection{Transversal intersection}

In this subsection we define the notion for a triangulation to be transversal to a stratification. 
Only transversal triangulations are useful in the definition of state sum in TQFT.
We then state several results, for which the detailed proofs are postponed to the appendix.

\begin{defi}
    Suppose that $\mathcal{F}=(F;F^i)$ is a $m$-dimensional stratified PL manifold, $N\subset F$ is a $j$-dimensional PL submanifold. 
    $N$ is said to be transversal to $\mathcal{F}$ if for any $k$ and any $x\in N\cap (F^k\setminus F^{k-1})$, there is an open neighborhood $V\subset \mathbb{P}_k$ of $O_k$, and a $(j+k-m)$-dimensional PL submanifold $Q$ of $V$ containing $O_k$, such that:
    \begin{itemize}
        \item $j+k-m\ge 0$
        \item there is a stratified PL homeomorphism $h: U \to V\times cS$, where $U$ is an open neighborhood of $x$ in $F$ and $\mathcal{S}$ is a stratified PL manifold supported on $S^{m-k-1}$, satisfying
        \begin{equation}
            h(\partial F\cap U) = (V\cap \partial\mathbb{P}_k) \times cS, \quad h(x) = (O_k,v)
        \end{equation}
        and
        \begin{equation}
            h(N\cap U) = Q\times cS, \quad h(\partial N\cap U) = \partial Q \times cS
        \end{equation}
    \end{itemize}
\end{defi}

\begin{defi}
    Suppose that $\mathcal{F}$ is a stratified manifold and $\Delta$ is a combinatorial triangulation.
    $\Delta$ is said to be transversal to $\mathcal{F}$ if every (closed) $j$ simplex is transversal to $\mathcal{F}$ as a $j$-dimensional PL submanifold. 
\end{defi}

\begin{defi}
    A triangulation of $B$ is called full, if for any simplex $\sigma$ all of whose vertices lie in the boundary $\partial B$ is contained in the boundary.
\end{defi}

\begin{thm}\label{transversal triangulation}\label{App: transversal triangulation}
    Suppose that $\mathcal{F}$ is a compact stratified PL manifold, and $\Delta$ is a combinatorial triangulation on $\mathcal{F}$.
    Then there is an ambient isotopy $F_t$ on $F$ such that $F_{1*}(\Delta)$ is transversal to $\mathcal{F}$. 
    Further, if $\partial F$ is a subcomplex of $\Delta$ and $\Delta|_{\partial F}$ is transversal to the stratification restricted on $\partial F$, then the isotopy can be chosen so as to fix the boundary $\partial F$.
\end{thm}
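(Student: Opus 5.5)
The proof will be an induction over the strata, from the lowest-dimensional defects upward. In each step I make the simplices of $\Delta$ transversal to one more stratum by a small ambient isotopy supported near that stratum, and I arrange that it does not destroy the transversality already achieved at lower strata.

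The local model for the key step is this. Near a point $x\in M_k$, Definition~\ref{stratified PL manifold} gives a chart $\phi:U\to\mathbb P_k\times c\mathcal S$. Transversality of a $j$-simplex $\sigma$ at $x$ means that, after a stratified homeomorphism, $\sigma\cap U$ is a product $Q\times cS$. So $\sigma$ must meet the stratum $M_k$ in a $(j+k-n)$-dimensional piece, and near that piece it must be "conical along the normal link."

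The steps, in order:

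\begin{enumerate}
\item \textbf{Lowest stratum.} Start with $M^0$, a discrete set of points. By a small isotopy, move $\Delta$ so that each point of $M_0$ lies in the interior of a top simplex, away from all lower simplices (general position). This satisfies the condition $j+k-n\ge 0$ for $k=0$.
\item \textbf{Link transversality.} Replace the triangulation near each such point by its star in a subdivision. Use the cone structure $c\mathcal S$ to straighten the star radially, so that every simplex through the point becomes a cone over its intersection with the link sphere.
\item \textbf{Recursion on the link.} This reduces the problem to making the induced cell structure on the link $S^{n-1}$ transversal to $\mathcal S$. That is the same statement one dimension lower, applied to the compact stratified manifold $\mathcal S$, so it holds by induction on $n$.
\item \textbf{Higher strata.} For $M_k$ with $k>0$, work in a regular neighbourhood $N(M_k)\cong M_k\times c\mathcal S$ away from the lower strata, which are already handled. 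First put $\Delta$ in general position with respect to $M_k$; PL general position moves vertices slightly, and this extends to an ambient isotopy. Then each simplex meets $M_k$ in a PL submanifold of the expected dimension.
\item \textbf{Straightening the normal direction.} Straighten $\Delta$ fibrewise in the normal cone direction by a radial isotopy of the form $(q,tz)\mapsto(q,\rho(t)z)$. Cut off near lower strata using the conical structure already obtained there, so that the new isotopy is compatible with what was done before.
\item \textbf{Relative version.} When $\partial F$ is a subcomplex and $\Delta|_{\partial F}$ is already transversal, perform all moves in the interior. Use the stratified collar $c:B\times[0,1)\to M$ to extend the boundary's product structure inward. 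Perturb only vertices not on $\partial F$, and let the isotopy taper to the identity on $B\times\{0\}$. The collar supplies the product chart with $\mathbb P_k=H^k$ needed near the boundary.
\end{enumerate}

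The main obstacle is step 5: making the simplices genuinely product-like, $Q\times cS$, rather than merely meeting strata in the right dimension. Radial straightening in the normal cones of different strata must agree on their overlaps. My first attempt would be to pass to a derived subdivision adapted to a triangulation of the stratification, for which the strata are subcomplexes. There the cone structures are simplicial, and transversality can be read off from joins: link of a simplex $=$ join of links. A global PL isotopy would then carry the given $\Delta$ to this adapted position, using uniqueness of regular neighbourhoods.

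A subtlety here is that subdivision changes $\Delta$, whereas the statement asks for an ambient isotopy of $\Delta$ itself. I would handle this by applying the regular-neighbourhood uniqueness argument to the original simplices rather than to a subdivision, leaving the detailed verification to the appendix.
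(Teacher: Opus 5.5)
There is a genuine gap, and it is the one you flag yourself: nothing in Steps 4--5 produces the local form $Q\times cS$.

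\textbf{Steps 4--5.} General position of $\Delta$ with respect to $M_k$, done stratum by stratum inside a neighbourhood of $M_k$, controls how $\sigma$ meets $M_k$. It does not control how $\sigma$ meets the higher strata near $M_k$. The definition needs exactly the latter: along $Q$ the simplex must contain the whole normal cone. For example, for a $1$-stratum, a codimension-one face through a crossing point $y$ must locally be the normal slice $\{y\}\times cS$.

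The radial isotopy $(q,tz)\mapsto(q,\rho(t)z)$ maps every cone ray to itself. So it cannot change the angular directions in which a simplex leaves $M_k$, and cannot turn a non-product simplex into a product. The ambient model $N(M_k)\cong M_k\times c\mathcal S$ is also not available. A stratum is noncompact and accumulates on lower strata, and its neighbourhood is at best a cone bundle, possibly with monodromy, not a product. Hence ``cutting off near lower strata'' is precisely the compatibility problem left unresolved.

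\textbf{Steps 2--3.} These are not well posed. Once a point of $M_0$ lies in an open top simplex, $\Delta$ induces no triangulation of its link to which the theorem could be applied by induction. Passing to a star in a subdivision changes $\Delta$.

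\textbf{Your fallback.} A triangulation in which the filtration terms are subcomplexes is never transverse once there is a stratum of positive codimension: its vertices in $M_k$, $k<m$, violate $j+k-m\ge0$. Moreover, uniqueness of regular neighbourhoods gives no way to move an unrelated $\Delta$ into a prescribed position.

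\textbf{Step 6.} Moving only interior vertices, tapered to the identity at $\partial F$, does not make the simplices meeting $\partial F$ in a face compatible with the stratified collar near $\partial F$.

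\textbf{The missing idea: how the paper creates and transfers transversality.}
\begin{itemize}
\item It proves a collar criterion: $P$ is transverse to $K$ iff, for each positive-dimensional $\sigma$, $\partial\sigma$ has a collar in $\sigma$ preserving all $M^i\cap\sigma$ simultaneously.
\item The complex that is automatically transverse is the \emph{dual} cell complex $J$ of a triangulation adapted to every filtration term. Each dual cell $b\alpha*\dot D(\alpha)$ meets each term in a cone from $b\alpha$. On $\partial F$ the duals are taken inside the simplices of $\Delta|_{\partial F}$, so that $J|_{\partial F}$ refines it.
\item Transversality is transferred to a common subdivision $L$ of $J$ and $\Delta$ by the Buoncristiano--Rourke--Sanderson subdivision construction. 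This reverses cylindrical collapses cell by cell, with one product for all marks. It yields a homeomorphism $h$ preserving every $J$-cell with $h_*P$ transverse to $L$, hence to $\Delta$ by amalgamating collars.
\item Cell-preserving homeomorphisms are isotopic to the identity through such maps.
\item For the relative statement, the boundary motion is cancelled by extending $(h_t|_{\partial F})^{-1}$ over the simplices of $\Delta$ by ball isotopy extension. This correction preserves each simplex of $\Delta$, hence preserves transversality.
\item For the absolute statement, the closed manifold $\partial F$ is treated first and that motion is pushed inward along a collar.
\end{itemize}

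\textbf{If you want to keep a general-position route.} The usable form is affine general position of the simplices of a Brouwer subdivision of $\Delta$ against all simplices of a triangulation adapted to every filtration term, in common local linear charts. There the splitting $I\oplus X\oplus Y$ of the ambient space gives $Q\times cS$ directly, and Step 5 becomes unnecessary. However, building such charts and realizing the perturbation by an ambient isotopy, relative to $\partial F$, is substantial work that your proposal does not supply.
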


\begin{thm}\label{transversal triangulation extend to bulk}\label{App: transversal triangulation extend to bulk}
    Suppose that $\mathcal{B}=(B;P_B)$ is a $m$-dimensional stratified PL manifold, $F\subset \partial B$ is a $(m-1)$-dimensional PL submanifold such that:
    \begin{itemize}
        \item $B$ is compact and $F$ is compact;
        \item set $M=\overline{\partial B\setminus F}$, then $F\cap M=\partial F = \partial M$;
        \item $F$ admits a stratified collar in $B$, that is there is a stratified PL homeomorphism $c:F\times(-1,0]\to U$ onto an open neighborhood of $F$ in $B$ identifying $F\times\{0\}$ with $F$ and $c(\partial F\times(-1,0])\subset \partial B$.
        We denoted by $P_B|_F$ the stratification $F^a=F\cap B^{a+1}$ and the same for $P_B|_M$.
    \end{itemize}
    Then for any transversal triangulation of $(F,P_B|_F)$, it can be extended to a transversal triangulation of $(B,P_B)$. 
    Further if it is full, then the extension can be full.
\end{thm}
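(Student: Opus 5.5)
The plan is to build the extension in two stages: first on a collar of $F$, where the product structure makes transversality automatic, and then on the complement of the collar, using Theorem~\ref{transversal triangulation} relative to the boundary.

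\textbf{Collar stage.} Let $\Delta_F$ be the given transversal triangulation of $(F,P_B|_F)$. Using the stratified collar $c:F\times(-1,0]\to U$, triangulate $F\times[-\tfrac12,0]$ as a product: take the standard prism subdivision of $\Delta_F\times[-\tfrac12,0]$, ordering the vertices of $\Delta_F$ so that the staircase triangulation of each $\sigma\times I$ is well defined. The stratification on the collar is $P_B|_F\times(-1,0]$. Each prism simplex $\tau\subset\sigma\times I$ is either contained in a slice $\sigma\times\{t\}$, or it is a ``slanted'' simplex that projects onto a face of $\sigma$.

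Transversality of $\tau$ should follow from transversality of the faces of $\sigma$ in $F$. Near a point $x$ of a stratum $F_k\times I$, the local chart of $\sigma$ is $Q\times cS$, and $\tau$ is the image of an affine map over $Q\times I$. I would straighten the slanted simplices by a small PL isotopy of the collar that preserves the product stratification, so that locally $\tau$ has the form $Q'\times cS$ with $Q'\subset \mathbb P_{k}\times I$ of the correct dimension.

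Fullness in this stage needs care. A prism simplex with all vertices in $\partial B$ has its vertices in $F\times\{0\}\cup\partial F\times I$. If $\Delta_F$ is full in $F$ (relative to $\partial F$), then such a simplex lies in $\partial F\times I$ or in $F\times\{0\}$. The one exception is a simplex with vertices in both $F\times\{0\}$ and $\partial F\times\{-\tfrac12\}$; such a simplex would lie in $\partial F\times I\subset\partial B$ only if its $F\times\{0\}$ vertices are in $\partial F$. One barycentric subdivision of the collar, with no new vertices placed on $F\times\{0\}$, removes this problem.

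\textbf{Complement stage.} Let $B'=B\setminus c(F\times(-\tfrac12,0])$. This is a compact stratified PL manifold, and $\partial B'$ contains the copy $F\times\{-\tfrac12\}$, which is already triangulated transversally. Choose any combinatorial triangulation of $B'$ that restricts to the prism triangulation on $F\times\{-\tfrac12\}$; such a triangulation exists because the triangulated collar is a subcomplex of a common subdivision (Theorem~\ref{common subdivision}) after a relative subdivision. Also triangulate the remaining part $M'$ of $\partial B'$, and make it transversal to $P_B|_{M'}$ by Theorem~\ref{transversal triangulation} applied to $M'$, fixing $\partial M'$. Here $\partial M'$ is triangulated from $\partial F\times\{-\tfrac12\}$, which is transversal because $\Delta_F|_{\partial F}$ is.

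Next, apply Theorem~\ref{transversal triangulation} to $B'$ relative to its boundary. This is allowed because $\partial B'$ is now a transversal subcomplex, and the result is an isotopy fixing $\partial B'$ after which the triangulation is transversal. Gluing this triangulation to the collar triangulation along $F\times\{-\tfrac12\}$ gives a triangulation of $B$ that restricts to $\Delta_F$ on $F$. It is transversal, since transversality is local and every simplex lies in one of the two pieces.

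\textbf{Final fullness.} To obtain fullness, pass to the derived subdivision relative to $\partial B$: subdivide the interior simplices only, starring at their barycenters. A standard argument shows that the derived subdivision is full with respect to a subcomplex. The new interior vertices are generic points, and starring at a point in the interior of a transversal simplex preserves transversality, possibly after a small stratified isotopy fixing the boundary.

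\textbf{Main obstacle.} I expect the hardest step to be checking transversality of the slanted prism simplices. This is a genuinely local PL statement: the affine structure of $\sigma\times I$ must be compatible with the cone charts $Q\times cS$ of $\sigma$. The first thing I would try is to rechoose the charts $h$ of $F$ to be simplexwise linear on $\sigma$. Then the prism simplices are linear in $h\times\mathrm{id}_I$, and transversality reduces to a dimension count in $\mathbb P_k\times I$.
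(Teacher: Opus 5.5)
As written there is a gap, and it sits exactly where you placed your ``main obstacle'': the collar stage.

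Write $T_i=\sum_{j\ge i}\lambda_j$ in barycentric coordinates of $\sigma$. The staircase simplices are then $\tau_i=\{(x,t):T_{i+1}(x)\le t\le T_i(x)\}$. Over $\mathrm{int}\,\sigma$, the shear $(x,t)\mapsto(x,t-T_i(x))$ preserves $P_F\times I$ and straightens these simplices. Over the face $\{\lambda_i=0\}$, however, $\tau_i$ is the wedge $\{0\le t-T_{i+1}(x)\le\lambda_i(x)\}$. To write it as $Q'\times cS$ you must straighten the graph of $\lambda_i$ by a stratum-preserving PL map. But $\lambda_i$, read in the chart $Q\times cS$ of $\sigma$, depends on the cone coordinate.

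Your proposed remedy does not settle this. The strata inside $\sigma$ are only PL, so a chart can at best be simplexwise linear on a subdivision of $\sigma$. The images of the $\tau_i$ are then still bounded by PL graphs that depend on the $cS$-coordinate, so nothing reduces to a dimension count in $\mathbb P_k\times I$.

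The collar stage is in fact unnecessary. Your complement stage never uses that $F\times\{-\tfrac12\}$ is a pushed-in copy of $F$, so you can run it on $B$ itself, with $\partial B=F\cup M$:
\begin{itemize}
\item triangulate $M$ extending $\Delta_F|_{\partial F}$, and position it relative to $\partial M$;
\item extend $\Delta_F\cup\Delta_M$ over $B$ without subdividing $\partial B$;
\item apply the boundary-fixed clause of Theorem~\ref{transversal triangulation}.
\end{itemize}
That is the paper's proof. A prism cylinder does appear there, but only in the exact-extension step: a collar $\partial B\times I$ carries a common subdivision at its inner end and the prescribed triangulation at its outer end, and each prism is coned to an interior point. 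It is used only combinatorially, since transversality is obtained afterwards by positioning. Your appeal to a common subdivision does not by itself give an extension restricting \emph{exactly} to the prescribed boundary triangulation; this cylinder-and-cone construction is what does.

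The second missing piece is the seam, which you assert with ``$\partial B'$ is now a transversal subcomplex''. The two parts of the boundary triangulation are transverse separately, each with half-space charts along their common boundary. The relative positioning theorem, however, needs transversality on the closed manifold $\partial B$, that is, full-space charts across $E=\partial F=\partial M$.

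The paper handles this in four steps:
\begin{itemize}
\item It checks that $F$, $M$, $E$, with filtrations $F\cap B^{a+1}$, $M\cap B^{a+1}$, $E\cap B^{a+2}$, are stratified manifolds with boundary; the side $c(E\times(-1,0])$ of the given collar is a filtered collar of $E$ in $M$.
\item It joins the filtered collars of $E$ on the two sides into a two-sided collar.
\item It matches this with the collar coming from the chart of an incident simplex, by simultaneous collar uniqueness.
\item It extends that chart across $E$: the stratum half-space becomes a full space while $Q$ stays on its original side, which the definition of transversality permits.
\end{itemize}
Your final gluing along the interior hypersurface $F\times\{-\tfrac12\}$ (``transversality is local'') has the same issue. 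It would need the same chart extension, or a simplex-intrinsic reformulation such as the marked-collar criterion of Lemma~\ref{app:criterion}. Dropping the collar stage removes this issue altogether.

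Your fullness step is fine. It is cleaner, as in the paper, to take the derived subdivision relative to $\partial B$ \emph{before} the final boundary-fixed positioning, because a subdivision of a transverse triangulation need not be transverse.
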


\begin{thm}\label{transversal isotopy step by step}\label{App: transversal isotopy step by step}
    Suppose that $\mathcal{B}=(B;P)$ is a compact stratified PL manifold, 
    $\Delta$ is a transversal triangulation, 
    $F_t$ is an ambient isotopy such that $\Delta$ is also transversal to $F_{1*}(\mathcal{B})$.
    Then there is a sequence of ambient isotopies $g_{j,t}, j=1,...,N$ and vertices $v_j\in \Delta^{(0)}$ such that
    \begin{itemize}
        \item $g_{j,0}=id_B$ and $g_{j,t}$ is the identity restricted on $B\setminus ost_\Delta(v_j)$
        \item set $G_0=id_B$ and $G_j=g_{j,1}G_{j-1}$, then $\Delta$ is transversal to $G_{j*}\mathcal{B}$ for any $j=1,...,N$
        \item $G_N=F_1$
    \end{itemize}
\end{thm}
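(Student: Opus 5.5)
We prove the statement by decomposing the isotopy $F_t$ into many short pieces and then localizing each piece to the open star of a single vertex. We work throughout with the fixed triangulation $\Delta$ and the moving stratification $F_{t*}(\mathcal{B})$, $t\in[0,1]$.

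\textbf{Step 1: openness of transversality.} Transversality of the compact simplices of $\Delta$ to a stratification is an open condition under small PL perturbations of the stratification that are controlled near each stratum. We establish this by the local cone charts of Definition~\ref{stratified PL manifold}. Near a point of $\sigma\cap F^k\setminus F^{k-1}$, a chart of the form $V\times cS$ in which $\sigma$ appears as $Q\times cS$ persists under a nearby isotopy after composing with the isotopy itself.

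The difficulty is that transversality need not hold at intermediate times. We do \emph{not} assume $\Delta$ is transversal to $F_{t*}\mathcal{B}$ for $0<t<1$, only at $t=0$ and $t=1$. The first thing to try is to modify the isotopy. Since $\Delta$ is transversal to both $\mathcal{B}$ and $F_{1*}\mathcal{B}$, we apply the relative version of Theorem~\ref{transversal triangulation} to the stratified manifold $B\times[0,1]$ with stratification $\{(F_t(x),t)\}$. The triangulation is $\Delta\times[0,1]$, suitably subdivided, with the boundary already transversal at both ends. Pushing this triangulation off by an isotopy fixing $B\times\{0,1\}$ gives, after reinterpretation, an isotopy $F'_t$ with $F'_1=F_1$ that is transversal to $\Delta$ at every time, up to finitely many "generic" moments. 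We expect this step to be the main obstacle: making the family transversal at all times, or at least away from finitely many moments that can be crossed by local moves, requires a parametrized transversality argument in the appendix style. I would first try to get transversality at all times by a further perturbation, since the strata are PL and the conditions are of finite type.

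\textbf{Step 2: subdivision in time.} Assuming a path $F'_t$ along which $\Delta$ is always transversal, compactness of $[0,1]$ and the openness from Step~1 give a partition $0=t_0<\dots<t_m=1$. Each increment $H_i=F'_{t_{i+1}}(F'_{t_i})^{-1}$ is then so small that every intermediate composition remains transversal; concretely, each $H_i$ moves points less than a Lebesgue number of the cover by open vertex stars $\{ost_\Delta(v)\}$.

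\textbf{Step 3: fragmentation.} For each small $H_i$, isotopic to the identity through the small isotopy $s\mapsto F'_{t_i+s(t_{i+1}-t_i)}(F'_{t_i})^{-1}$, we apply the PL fragmentation lemma, decomposing it via a partition of unity subordinate to the open stars, constructed PL on a subdivision. This writes $H_i$ as a composition $g_{1,1}\cdots g_{r,1}$ with each $g_{j,t}$ supported in some $ost_\Delta(v_j)$ and $g_{j,0}=id_B$. Ordering the vertices and moving only one vertex-star region at a time, each partial composition differs from a point on the transversal path by a further small stratified perturbation. Step~1 then makes each partial composite $G_j$ transversal to $\Delta$.

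\textbf{Step 4: assembly.} Concatenating over all $i$ gives the sequence $g_{j,t}$ with $G_N=F'_1=F_1$. This yields all three conditions of the statement: each $g_{j,0}=id_B$ with $g_{j,t}$ the identity outside $ost_\Delta(v_j)$, each $G_j$ transversal to $\Delta$, and $G_N=F_1$.
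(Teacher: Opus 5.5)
\textbf{There is a genuine gap.} Your Steps 1--3 rest on two claims that fail in the PL setting.

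\emph{Transversality is not open.} Transversality to $\Delta$ is not preserved by small PL perturbations. If a wall crosses an edge of $\Delta$ at one point, a $C^0$-small PL isotopy can bend the wall so that it contains a short subsegment of that edge, and that is not of the form $Q\times cS$. Your remark that the chart ``persists after composing with the isotopy'' transports the chart of the stratification, but not the fixed simplex $\sigma$.

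\emph{A path transversal at all times need not exist.} So no further perturbation can supply the hypothesis of Step 2. Take $B=S^2$ with one wall $C$, and a vertex $v$ lying in one complementary disc $A_0$ of $C$ but in $F_1(A_1)$. Every isotopy ending at $F_1$ must pass the wall through $v$ at some time. Transversality forbids a vertex on a wall, since $0+1-2<0$. Your fallback, finitely many bad moments ``crossed by local moves'', is exactly the content of the theorem and is not supplied.

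\emph{The positioning on $B\times[0,1]$ does not provide such a path.} The positioning isotopy is not level-preserving, so its result is not the graph of an isotopy of $B$. Transversality to the simplices of a subdivided prism says nothing about the slices $\sigma\times\{t\}$. You also lose the endpoint homeomorphism, whereas the statement requires $G_N=F_1$ exactly, not merely $G_{N*}\mathcal{B}=F_{1*}\mathcal{B}$.

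\emph{Step 3 would fail even with a transversal path.} The partial composites of a PL fragmentation (for instance Hudson's graph maps $x\mapsto H_{\phi(x)}(x)$) mix different times in different regions. They are not near any single point of the path in a sense that preserves transversality.

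\textbf{The missing idea,} which is how the paper proceeds, is to give up transversality along the path altogether. First fragment $F_1=g_r\cdots g_1$ into isotopies supported in small balls $B_j$ inside open vertex stars. No control is imposed on the partial composites $P_j=g_j\cdots g_1P_0$. Then choose one small composite $q=q_s\cdots q_1$ of Armstrong's special shifts, relative to a fine Brouwer subdivision $L$ of $\Delta$. These shifts are star-supported and have small Lipschitz constant, and $q$ puts all finitely many $P_j$ simultaneously into product position, hence transverse. It also provably preserves families already in that position, namely the endpoints after a cell-preserving preparation. Now use the chain
\[ P_0\to qP_0\to qP_1\to\cdots\to qP_r\to P_r . \]
Its first and last parts insert and withdraw the individual shifts, and its middle steps are $qg_{j,t}q^{-1}$, supported in $q(B_j)$. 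This chain has transverse intermediate endpoints and exact total endpoint $q^{-1}(qF_1q^{-1})q=F_1$.

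You also need a separate treatment of the boundary, which your proposal does not mention. The paper first realizes $F_t|_{\partial B}$ by star-supported steps. It then uses a homotopy of boundary isotopy paths to reduce the remaining motion to an isotopy fixing $\partial B$, to which the relative version of the chain argument applies.
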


Next we discuss the concept of refinement of stratifications, which will be used to formulate the gauge relations. 

\begin{defi}\label{refinement}\label{App: refinement}
    Suppose $B$ is a PL manifold and $P=(B;B^i)$ is a stratified PL manifold with support $B$. 
    A refinement $P'\succcurlyeq P$ of stratification $P$ is a new stratified PL manifold structure $P'=(B;B'^i)$ on $B$ such that:
    \begin{itemize}
        \item for any $i$ and any new defect $S'\in B'^{(i)}:=\pi_0(B'^i\setminus B'^{i-1})$, there is an old defect $S\in B^{(j)}:=\pi_0(B^j\setminus B^{j-1})$ such that $S'\subset S$
        \item there is an open neighborhood $C$ of $\partial B$ in $B$ such that $C\cap B'^i=C\cap B^i,\forall i$
    \end{itemize}
\end{defi}

\begin{prop}\label{transversal of refinement}\label{App: transversal of refinement}
    Suppose $P'\succcurlyeq P$ is a refinement of stratification on a PL manifold $B$.
    Then if a triangulation $\Delta$ is transversal to $P'$, it is transversal to $P$.
\end{prop}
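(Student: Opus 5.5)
The plan is to verify transversality to $P$ one point and one closed simplex at a time. We reduce to a local statement in charts and then use that each stratum of $P'$ sits inside a stratum of $P$ of at least the same dimension.

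Fix a closed $j$-simplex $\sigma$ of $\Delta$ and a point $x\in\sigma$. Let $x$ lie in the $P$-stratum of dimension $k$ and in the $P'$-stratum of dimension $k'$, so $x\in B^k\setminus B^{k-1}$ and $x\in B'^{k'}\setminus B'^{k'-1}$. The first condition in Definition~\ref{refinement} gives $S'\subset S$ for the relevant defects. Since both are PL submanifolds, $k'\le k$. Transversality to $P'$ provides a chart $h':U\to V'\times cS'$ with $V'\subset\mathbb{P}_{k'}$ and $h'(\sigma\cap U)=Q'\times cS'$, where $\dim Q'=j+k'-m$. Hence $j+k-m\ge j+k'-m\ge 0$, which gives the dimension condition.

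The main step is to produce a $P$-chart around $x$ in which $\sigma$ is a product. The strategy is as follows.

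\begin{enumerate}
\item Inside the $P'$-chart, $cS'$ carries the cone stratification of $\mathcal S'$. The strata of $P$ through the cone point are unions of $P'$-strata.
\item I would try to show that, after shrinking, $h'$ can be arranged so that $V'\times cS'$ decomposes as $V'\times \mathbb{R}^{k-k'}\times cS$. Here $\mathbb{R}^{k-k'}\times\{v\}$ is a neighborhood of the cone point in the old $k$-stratum $S$, and $S$ is the $P$-link.
\item The key point is that, near $x$, the old stratification is locally homogeneous along $S$: $P$ itself has a product chart $\mathbb{P}_k\times cS$ at $x$.
\item The refinement $P'$ restricted to this chart refines a product stratification. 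On the other hand, $\sigma$ is a product $Q'\times cS'$ only for the $P'$-chart.
\end{enumerate}

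The obstacle is to compare the two charts, which is a PL uniqueness-of-local-cone statement. I would argue directly with the simplex. The simplex $\sigma$ is a PL submanifold that is transversal to every $P'$-stratum near $x$, and the old strata are unions of new strata. Then $\sigma\cap S$ is a union of the manifolds $\sigma\cap S''$, over new strata $S''\subset S$, and these have the expected dimensions. One then checks that $\sigma\cap S$ is a PL submanifold $Q$ of dimension $j+k-m$ near $x$, and that its normal structure is the cone on $\sigma\cap\mathrm{lk}(S)$.

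Concretely, I would take $\Delta$-adapted charts. Pass to a small derived neighborhood of $x$: the star of $x$ in a derived subdivision of $\Delta$ that is also adapted to $P$, using Theorem~\ref{common subdivision}. In such a neighborhood everything becomes a cone on the link of $x$. The product decomposition for $P$ then follows by induction on $m$, applied to the links, which are lower-dimensional stratified spheres. There $\sigma\cap\mathrm{lk}(x)$ is transversal to the refined link stratification, so by induction it is transversal to the coarse one.

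The boundary requirements are handled as follows. The conditions $h(\partial F\cap U)=(V\cap\partial\mathbb{P}_k)\times cS$ and $h(\partial\sigma\cap U)=\partial Q\times cS$ are automatic near $\partial B$. This is because the second condition in Definition~\ref{refinement} says $P'=P$ on a neighborhood $C$ of $\partial B$, so there the $P'$-chart is already a $P$-chart. Away from $\partial B$, the same cone-and-induct argument also tracks $\partial\sigma$, since $\partial\sigma$ is a union of faces, each of which is transversal by hypothesis. The base case $m=0$ (or $k=m$) is trivial, since then no strata are merged. I expect the inductive link comparison to be the hardest part.
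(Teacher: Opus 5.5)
Your dimension count is correct. Your reduction at points of the neighborhood $C$ of $\partial B$ is also correct, and simpler than the paper's boundary argument: there $P'=P$, so a shrunk $P'$-chart is already a $P$-chart. The main step, however, is not proved. Your step 2 names the right target: re-coordinatize the fine cone $c\mathcal{S}'$ as $\mathbb{R}^{k-k'}\times c\mathcal{S}$ for the coarse filtration. But you leave it as something you ``would try to show'' and switch to an induction over links of $x$ in a derived subdivision, and that induction does not close.

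First, to know that $\sigma\cap\mathrm{lk}(x)$ is transverse to the link stratification induced by $P'$, you must identify the triangulation star, as a filtered pair, with the cone of the $P'$-chart. That is the chart comparison you wanted to avoid. More seriously, the inductive hypothesis only gives transversality at each point of the link. Transversality at the cone point $x$ requires one chart $\mathbb{P}_k\times c\mathcal{S}$ with $\sigma\cap U=Q\times c\mathcal{S}$, i.e.\ a global join decomposition $(\mathrm{lk}(x),\sigma\cap\mathrm{lk}(x))\cong(S^{k-1}*\mathcal{S},\,Q_1*\mathcal{S})$. Pointwise transversality in the link does not produce this. Likewise, showing that $\sigma\cap S$ is a submanifold of dimension $j+k-m$ gives no control of the normal structure the definition requires. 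A minor point: $k=m$ is not a case where ``no strata are merged'', since new walls may lie inside an old chamber. That case is trivial only because $c\emptyset$ is a point.

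The missing idea is that $\sigma$ already contains the entire normal cone factor of the fine chart, so neither induction nor an analysis of $\sigma\cap S$ is needed. Work inside the fine chart $h':U\to V'\times c\mathcal{S}'$ with $V'$ connected. Each coarse filtration term is a union of global fine strata. Each local fine piece $V'\times(\text{component of a stratum of }c\mathcal{S}')$ is connected, so it lies in a single fine stratum. Hence the coarse filtration in this same chart is a product $V'\times Y^i$ with $Y^i\subset c\mathcal{S}'$.

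Now cancel the Euclidean factor. $P$ has some chart $\mathbb{R}^k\times c\mathcal{S}$ at $x$, and the fibre $V'\times\{v\}$ lies in the coarse $k$-stratum. The product-slice lemma (Lemma~\ref{app:slice}) then gives $(c\mathcal{S}';Y^i)\cong\mathbb{R}^{k-k'}\times c\mathcal{S}$ near the cone point. That lemma is proved by a local affine-translation argument, not by a uniqueness-of-cones theorem. Substituting into $h'(\sigma\cap U)=Q'\times c\mathcal{S}'$ gives the coarse transverse chart $\sigma\cap U\cong(Q'\times\mathbb{R}^{k-k'})\times c\mathcal{S}$. Here $Q=Q'\times\mathbb{R}^{k-k'}$ has dimension $j+k-m$ and $\partial Q=\partial Q'\times\mathbb{R}^{k-k'}$. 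This is the paper's proof, and the product observation together with the Euclidean-factor cancellation is the content your proposal lacks.
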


\begin{thm}\label{transversal subdivision preparation}\label{App: transversal subdivision preparation}
    Suppose that $\mathcal{F}$ is a stratified PL manifold, $\Delta$ is a transversal triangulation, and $\Gamma$ is a subdivision of $\Delta$ which is combinatorial (but may not be transversal).
    Then there is an isotopy $h_t$ such that $h_{1*}(\mathcal{F})$ is transversal to $\Gamma$ (and therefore also to $\Delta$) and $h_t(\sigma)=\sigma,\forall \sigma\in \Delta$.
\end{thm}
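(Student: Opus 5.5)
We must prove Theorem \ref{transversal subdivision preparation}: given $\Delta$ transversal to $\mathcal F$ and a combinatorial subdivision $\Gamma$ of $\Delta$, find an isotopy $h_t$ with $h_t(\sigma)=\sigma$ for every $\sigma\in\Delta$ such that $\Gamma$ is transversal to $h_{1*}(\mathcal F)$. Note that moving the stratification by $h_1$ is the same as moving the triangulation by $h_1^{-1}$. So it suffices to find an ambient isotopy $g_t$ that preserves every simplex of $\Delta$ setwise and makes $g_{1*}(\Gamma)$ transversal to $\mathcal F$; then $h_t=g_t^{-1}$ works. Transversality to $\Delta$ is then automatic, since $h_1$ preserves $\Delta$.

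The plan is to induct on the skeleta of $\Delta$ and work one simplex at a time, using the relative form of Theorem \ref{transversal triangulation}. Fix a simplex $\sigma\in\Delta$ of dimension $j$. Because $\sigma$ is transversal to $\mathcal F$, the induced stratification $\mathcal F|_\sigma$, with strata $\sigma\cap F^k$ of dimension $j+k-m$, is a stratified PL manifold on $\sigma$. Its boundary $\partial\sigma$ is a union of lower faces, and each of them is already handled by the inductive hypothesis. Moreover $\Gamma|_\sigma$ is a combinatorial triangulation of $\sigma$ with $\partial\sigma$ as a subcomplex.

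Assume $\Gamma|_{\partial\sigma}$ has already been made transversal to $\mathcal F|_{\partial\sigma}$. The second clause of Theorem \ref{transversal triangulation} then gives an isotopy $g^\sigma_t$ of $\sigma$, fixing $\partial\sigma$, that makes $\Gamma|_\sigma$ transversal to $\mathcal F|_\sigma$.

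Next we extend $g^\sigma_t$ to an ambient isotopy of $F$ that preserves all simplices of $\Delta$. The local chart supplied by transversality of $\Delta$ near $\sigma$ has the form $Q\times cS$. For each top simplex $\tau\supset\sigma$, the star of $\sigma$ is the join $\sigma\star lk(\sigma)$. We extend $g^\sigma_t$ radially (conewise) along the join coordinate, damping it to the identity at the link. This preserves every simplex containing $\sigma$ and is the identity outside $ost_\Delta(\sigma)$. Processing all $j$-simplices simultaneously is possible because their open stars meet only in stars of lower faces, where the isotopies are already the identity.

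The main obstacle is that the conewise extension must keep the lower-dimensional simplices $\rho$ of $\Gamma$ transversal. This applies to those $\rho$ that lie in higher simplices $\tau\supset\sigma$ but are not contained in $\sigma$, and it applies in the \emph{ambient} sense, not merely within $\sigma$. My approach is to choose the extension so that, near $\sigma$, it is a product with respect to the chart $Q\times cS$ and respects the join structure. Transversality of $\rho$ inside $\sigma$ then upgrades to transversality in $F$, because the normal data in the chart is the cone $cS$ and is carried along unchanged.

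What remains is to handle the simplices of $\Gamma$ whose interiors lie in the interior of top simplices of $\Delta$. This is the top-dimensional step of the same induction: $\sigma=\tau$ is $m$-dimensional and there is nothing to extend. If the product structure cannot be arranged directly, I would first pass to a derived subdivision near $\sigma$, where joins become PL-standard. The relative transversality results of the appendix justify this step.
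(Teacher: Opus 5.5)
Your overall strategy is the paper's (Lemma~\ref{app:preparation}): induct over the skeleta of $\Delta$, position $\Gamma|_\sigma$ against $\mathcal F|_\sigma$ inside each simplex relative to $\partial\sigma$, and extend over the cofaces by a motion preserving every simplex of $\Delta$. The gap is the step you call the main obstacle.

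\textbf{The main obstacle is misdirected and not resolved.} First, simplices of $\Gamma$ whose carrier is a coface $\tau\supsetneq\sigma$ never need to be kept transversal, because they are repositioned anyway when $\tau$ is treated. Second, the real issue is passing from transversality of $\Gamma|_\sigma$ to $\mathcal F|_\sigma$ to ambient transversality of $\Gamma$. You settle it only by asking the extension to be ``a product with respect to $Q\times cS$'' while respecting the join structure. This is never constructed, and it cannot be meant literally. An isotopy of the form $\phi_t\times\mathrm{id}_{cS}$ in a stratified chart preserves the strata and so does no repositioning, whereas $g^\sigma_t$ must move $\mathcal F|_\sigma$ inside $\sigma$. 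The derived-subdivision fallback does not supply this either.

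\textbf{The missing ingredient is that transversality of a triangulation is a cellwise condition.} By Lemma~\ref{app:criterion}, $\Gamma\pitchfork\mathcal F$ if and only if, for every positive-dimensional simplex $\beta$ of $\Gamma$, $\partial\beta$ has a collar in $\beta$ preserving all $F^i\cap\beta$ simultaneously. This gives the argument directly:
\begin{itemize}
\item After the relative step in $\sigma$, every $\Gamma$-simplex in $\sigma$ has such a collar.
\item All later steps fix $\sigma$ pointwise, so these collars persist.
\item Once the top simplices are done, the criterion yields $h_{1*}\mathcal F\pitchfork\Gamma$.
\item The same criterion, applied on the sphere $\partial\sigma$ to the faces already treated, gives the hypothesis $\Gamma|_{\partial\sigma}\pitchfork\mathcal F|_{\partial\sigma}$ needed for the relative step.
\end{itemize}
No ambient control at intermediate stages, and no product structure of the extension, is needed.

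\textbf{The extension itself needs repair.} On $\tau=\sigma\star\tau'$, the join map $(1-s)x+sy\mapsto(1-s)g^\sigma_t(x)+sy$ preserves every face. It is already the identity on $\partial\sigma\star\tau'$, so no damping is needed. Damping via a reparametrization such as $t(1-s)$ would in fact make even the individual maps non-PL. However, as a function of $(p,t)$ the join map contains the bilinear term $(1-s)t$, so it is not a PL isotopy in the paper's sense. The paper instead extends over the cofaces of $\sigma$ one at a time, in increasing dimension, using the already-defined motion on $\partial\tau$ and the ball isotopy-extension theorem (Rourke--Sanderson, Prop.~3.22(ii)). Alternatively, you can keep only the PL endpoint $g^\sigma_1\star\mathrm{id}$ and obtain a cell-preserving PL isotopy from Rourke--Sanderson, Cor.~3.23.

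\textbf{Simultaneous processing is mis-justified.} Open stars of two distinct $j$-simplices meet in the interiors of their common cofaces, where both extensions are nontrivial, so your stated reason is wrong. Composing the extensions one $j$-simplex at a time does work, since each one fixes every other $j$-simplex pointwise.
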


%% file: G-Sn-algebra.tex
\section{G-equivariant hyper disk algebras and hyper sphere algebras}

Throughout the article, $G$ denotes a discrete group. 

\subsection{G-equivariant-$D^n$-algebra}

Firstly we give the definition of G-equivariant $D^n$ algebra.

\begin{defi}
    A G-equivariant empty disk is a stratified PL manifold $\mathcal{S}$ with support $S^{n-1}$, with a labeling $L_{\mathcal{S}}$ when $n\ge 2$ such that 
    \begin{itemize}
        \item each $codim=1$-defect $e\in \mathcal{S}^{(n-2)}$ is labeled by an element $g_e \in G$;
        \item the flatness condition holds at each $codim=2$-defect $f\in \mathcal{S}^{(n-3)}$: 
        \begin{equation}
            g_{e_1}^{\epsilon_1}\dots g_{e_n}^{\epsilon_n}=1
        \end{equation}
        where $e_1\dots e_n$ are the $codim=1$-defect adjacent to $f$ in a clockwise direction, with $\epsilon_i=\pm$ denotes whether the coorientation of $e_i$ is toward the clockwise direction;
        \item upon performing one-point suspension, $\Lambda \mathcal{S}$ extends $L_{\mathcal{S}}$ such that the flatness condition holds at each $codim=2$-defect $f\in (\Lambda\mathcal{S})^{n-2}$;
        \item above defined are the cases when $n\ge 2$. 
        When $n=1$, there is a label $g\in G$ for the whole $S^0$, called the transport.
    \end{itemize}
\end{defi}

\begin{rmk}
    When $n=2$, G-equivariant empty disk has seemingly no flatness constraints on $S^1$, but one flatness constraint on the central point of $S^1$. 
    As a result it still demand that the product of G-labelings on the boundary equals 1. 
    When $n\ge 3$, there is no further flatness constraints in the interior of $\Lambda\mathcal{S}$, because $(c\mathcal{S})^{n-2} = c(\mathcal{S}^{n-3})$ if $n-2\ge 1$, and the flatness conditions inherit from the boundary.
\end{rmk}

\begin{defi}
    We define $T^n(m)$ as the topological space obtained by remove the interior of $m$ disjoint little $D^n$'s inside a big $D^n$. 
    It is well defined topologically.
    Define an $n$-tangle as a stratified PL manifold $\mathcal{T}$ with support $|\mathcal{T}|\sim_{PL} T^n(m)$, such that $T^0\cap \mathring{\mathcal{T}}=\emptyset$.
\end{defi}

\begin{rmk}
    We demand that there is no bare stratification points in the interior of the tangle. 
    This is crucial. 
    Otherwise later in Example~\ref{G-graded associative algebra} we must have a system of natural maps $A_g\to A_{gh}$
    by adding a label-$h$ stratification point inside $D^1_g\setminus \epsilon D^1_g$, which is not presented in G-graded associative algebras. 
\end{rmk}

\begin{defi}
    A G-equivariant $n$-tangle is an $n$-tangle $\mathcal{T}$ together with a labeling $L_{\mathcal{T}}$ such that 
    \begin{itemize}
        \item 
        each $codim=1$-defect $e\in \mathcal{T}^{(n-1)}$ is labeled by an element $g_e \in G$
        \item 
        when $n\ge 2$, the flatness condition holds for each $codim=2$-defect $f\in \mathcal{T}^{(n-2)}$: 
        \begin{equation}
            g_{e_1}^{\epsilon_1}\dots g_{e_n}^{\epsilon_n}=1
        \end{equation}
        where $e_1\dots e_n$ are the $codim=1$-defect adjacent to $f$ in a clockwise direction, with $\epsilon_i=\pm$ denotes whether the coorientation of $e_i$ is toward the clockwise direction.
    \end{itemize}
\end{defi}

\begin{defi}(G-$D^n$-algebra)

    The operad of G-equivariant $D^n$-algebra has objects isotopy equivalence sets of G-equivariant empty disks, in which we fix a representative for each equivalence relation, and operations isotopy equivalence sets of G-equivariant $n$-tangles with boundary identifications, that is,
    \begin{itemize}
        \item 
        a G-equivariant $n$-tangle $\mathcal{T}$ together with stratified homeomorphism $\iota: \mathcal{S}_{out} \bigsqcup_j \mathcal{S}_{in,j} \to\partial \mathcal{T}$ is regarded as an operation from $\otimes_j [\mathcal{S}_{in,j}]$ to $[\mathcal{S}_{out}]$;
        \item 
        the equivalence relation is defined as $(\mathcal{T},\iota: \mathcal{S}_{out} \bigsqcup_j \mathcal{S}_{in,j} \to\partial \mathcal{T})\sim (f(\mathcal{T}),f|_{\partial T}\circ\iota: \mathcal{S}_{out} \bigsqcup_j \mathcal{S}_{in,j} \to f(\partial \mathcal{T}))$
    \end{itemize}
    such that the stratification and the G-labeling of the tangle coincide on the boundary with G-labelings of its input empty disks:
    \begin{itemize}
        \item 
        when $n\ge 2$ we demand that
        \begin{equation}
            L_{\mathcal{T}}|_{\mathcal{S}_i}=L_{\mathcal{S}_i}
        \end{equation}
        or more explicitly:
        \begin{equation}
            g^{(\mathcal{T})}_e = g^{(\mathcal{S}_i)}_{\partial e};
        \end{equation}
        \item 
        when $n=1$ we demand that the transports satisfy
        \begin{equation}
            g^{(\mathcal{S}_{out})} = \prod_j g^{(\mathcal{S}_{in,j})}.
        \end{equation}
    \end{itemize}
    
    And a G-equivariant $D^n$-algebra, or simply a G-$D^n$-algebra, is a representation in $Vect_{\mathbb{C}}$ of this operad.
\end{defi}

Note that these data represent a trivial G-bundle and its chosen trivialization inside each chamber.

\begin{eg}(G-graded associative algebra)\label{G-graded associative algebra}

    In the case $n=1$, a G-equivariant $D^1$-algebra is the same as a G-graded associative algebra $A=\oplus_g A_g$. 
\end{eg}

\begin{rmk}
    The specialness of the dimension $0$ comes from the following fact. 
    When $k\ge 1$, $k$-morphisms come from idempotents of an algebra, which can be summed to identity.
    So one has a resolution of the identity $I=\sum X_i$, which is the bare dimension-$k$ stratification. 
    But when $k=0$, $0$-morphisms do not fit into an invariant summation. 
    So bare dimension-$0$ stratification does not naturally exist.
    This is further elucidated in the reconstruction of G-graded fusion categories in the next section.
\end{rmk}

\subsection{Labeled manifolds}

Before defining the G-$S^n$-algebra, one need to define labeled manifolds to define a hyper sphere functional.

Let $V$ be a G-equivariant $D^n$-algebra. 

In this subsection, we define the $V$-labeled manifold.

A G-labeled manifold is a presentation of a manifold with a G-structure, i.e. a manifold together with a flat G-bundle over itself. 

\begin{defi}
    A G-labeled manifold is 
    \begin{itemize}
        \item 
        a stratified PL manifold $\mathcal{M}$;
        \item 
        an element $g_e\in G$ labeled at each $codim=1$ defect $e\in M^{(\dim M-1)}$;
        \item 
        such that one has the flatness condition
        \begin{equation}
            g_{e_1}^{\epsilon_1}\dots g_{e_n}^{\epsilon_n}=1
        \end{equation}
        at each $codim=2$ defect $f\in M^{(\dim M-2)}$, where $e_1\dots e_n$ are the $codim=1$ defects adjacent to $f$ in a clockwise order, and $\epsilon_i=\pm$ depends on the orientation of $e_i$.
    \end{itemize}
\end{defi}

Note that by flatness condition, the spherical boundary for any point in a G-labeled stratified manifold becomes a G-equivariant empty disk, i.e. an object in the operad of G-equivariant $D^n$-algebra. 

$V$-labeled manifold should have a G-labeled stratification and a labeling of a vector in $V$ at a finite number of positions.

\begin{defi}
    A $V$-labeled manifold $(M,P_M,\{v_p\}_p)$ is the following data:
    \begin{itemize}
        \item A G-labeled $n$ manifold $(M, P_M)$;
        \item A chosen vector $v_p\in V_{\mathcal{S}_p}$, where $\mathcal{S}_p$ is the spherical boundary of $p$, for each $p\in M^0$.
    \end{itemize}
\end{defi}

\begin{rmk}
    We demand that any 0-dimensional stratification point is a position point, i.e. there are no bare stratification points. 
    This is consistent with the definition of the G-$D^n$ operad.
\end{rmk}

A V-labeled manifold $\mathcal{M}$ with support $|\mathcal{M}|\sim_{PL} D^n$ can be viewed as a vector in $V_{\partial\mathcal{M},-}$, in the natural way of the operations in the $D^n$ operad.
In this way one can view a local disk region inside a V-labeled manifold as a single label. 

\begin{defi}(Null vectors)

    Suppose $\mathcal{M}=\sum_i a_i\mathcal{M}_i$ is a formal linear combination of V-labeled manifolds with same support $|\mathcal{M}_i|\sim_{PL}M$. 
    Suppose that there exists a local n disk region $D^n\sim_{PL}U\subset M$ with $\mathcal{M}_i$ and $\partial U$ intersects transversely and $\mathcal{M}_i|_{M\setminus U}=\mathcal{M}_j|_{M\setminus U}$.
    It is called a null vector if $\sum_i a_i \mathcal{M}_i|_{U}\in V_{\mathcal{M}|_{\partial U},-}$ is equal to $0$.
\end{defi}

\begin{defi}(Condensation space)

    For a PL $n$ manifold $M$ with a G-labeled $(n-1)$-manifold $\mathcal{S}$ supported on $\partial M$, write 
    \begin{equation}
        \widetilde{M}_{\mathcal{S}}(V) := Free_\mathbb{C}\{ V\text{-labeled manifold $\mathcal{M}$ with support $M$ and boundary $\mathcal{S}$} \} / \mathbb{C}\{ \text{null vectors} \}
    \end{equation}
    When $\partial M=\emptyset$, write $\widetilde{M}(V)=\widetilde{M}_{\emptyset}(V)$ for short.
\end{defi}

The non-degeneracy condition (Eq.~\ref{non-degenracy}) guaranties that $\widetilde{D^{n}}_{\mathcal{S}}(V)=V_{\mathcal{S},-}$ for any $|\mathcal{S}|\sim S^{n-1}$.

\begin{defi}(Notation/Terminology)\label{Notation: labeled at}

    Suppose $x\in V_{\mathcal{S},-}$ with $|\mathcal{S}|\sim_{PL}S^{n-1}$, and $\mathcal{M}$ is a G-labeled $n$-manifold with a local disk $\mathcal{D}^n\overset{f}{\cong} \mathcal{U}\subset \mathcal{M}$ with $\partial \mathcal{D}^n=\mathcal{S}$.
    We define $\mathcal{M}$ with $x$ labeled at $\mathcal{U}$ to be the V-labeled $n$-manifold obtained by $\mathcal{M}$ changing $\mathcal{U}$ into $\Lambda(\partial(\mathcal{U}))$ and add a position point the center of it and its label $x$.
\end{defi}

\begin{rmk}
    Using this ``labeled at" concept, one may rephrase the definition of  condensation space in the way that it is mod out by the equivalence relation that for the same situation as above,
    \begin{equation}
    \sum_i a_i \mathcal{M}_i\sim
    \text{``$\mathcal{M}$ with $\sum_i a_i \mathcal{M}_i|_U\in V_{\mathcal{M}|_{\partial U},-}$ labeled at $U$"}
    \end{equation}
\end{rmk}

\subsection{G-equivariant $S^n$-algebra}

Now we can give the definition of G-equivariant $S^n$-algebras. 
That is, a G-equivariant $D^n$-algebra together with a hyper sphere functional.

\begin{defi}(Notation)\label{Notation: VS+-}

    For a G-$D^n$-algebra $V$, denote by $V_{\mathcal{S},-}:=V(\mathcal{S})$ and $V_{\mathcal{S},+}:=V(\mathcal{S}^*)$ the vector space of the G-equivariant empty disk $\mathcal{S}$, where $\mathcal{S}^*$ stands for the orientation reversal of $\mathcal{S}$.
\end{defi}

\begin{defi}
    Define $\widehat{S}^n(V)\subset \widetilde{S}^n(V)$ to contain those which can be extended into a G-labeled manifold with support $D^{n+1}$.
\end{defi}

For $n\ge 2$, $\widehat{S}^n(V)= \widetilde{S}^n(V)$.
For $n=1$, $\widehat{S}^1(V)$ contains those with trivial holonomy.

\begin{defi}(Notation)\label{Notation: cup_S}

    Suppose that $|\mathcal{S}|\sim_{PL}S^{n-1}$ is a G-equivariant empty disk and $(a,b)\in V_{\mathcal{S},-}\times V_{\mathcal{S},+}$.
    Define $a\cup_{\mathcal{S}}b\in \widetilde{S}^n(V)$ as the V-labeled manifold with support $S^n$ obtained by gluing the boundaries of two G-equivariant empty disks in the canonical way, where $\widetilde{S}^n(V)$ is the condensation space defined before.
    It is obvious that $a\cup_{\mathcal{S}}b\in \widehat{S}^n(V)$.
\end{defi}

\begin{defi}(G-$S^n$-algebra)

    A G-equivariant $S^n$-algebra, or simply a G-$S^n$-algebra, is
    \begin{itemize}
        \item 
        a G-equivariant $D^n$-algebra $V$;
        \item 
        a linear functional 
        \begin{equation}
            Z: \widehat{S}^n(V)\to \mathbb{C};
        \end{equation}
        \item 
        $Z$ induces non-degenerate bilinear form for any $|\mathcal{S}|\sim S^{n-1}$:
        \begin{equation}\label{non-degenracy}
        \begin{aligned}
            &V_{\mathcal{S},-}\times V_{\mathcal{S},+}\to\mathbb{C}
            \\
            &a,b\mapsto \langle a,b\rangle:=Z(a\cup_{\mathcal{S}}b).
        \end{aligned}
        \end{equation}
    \end{itemize}
\end{defi}

\begin{eg}

    In the case $n=1$, a G-equivariant $S^1$-algebra is the same as a G-graded associative algebra $A=\oplus_g A_g$ with a trace $A_0\to \mathbb{C}$, satisfying non-degeneracy properties.
    That is, it satisfies trace property:
    \begin{equation}
        Tr(xy)=Tr(yx), \quad\forall x\in A_g, y\in A_{g^{-1}}
    \end{equation}
    and non-degeneracy property:
    \begin{equation}
        A_g\times A_{g^{-1}}\to \mathbb{C},\quad (x,y)\mapsto Tr(xy)
    \end{equation}
    is a non-degenerate bilinear form. 
\end{eg}

\begin{eg}(G-graded biangular algebra)

    G-equivariant $S^1$-algebra cover the biangular G-algebras in \cite{turaev1999homotopy}.
    Choose 
    \begin{equation}
        Z(l)=\operatorname{Tr}(\mu_l|_{L_1}:L_1\to L_1)
    \end{equation}
    This choice of $Tr$ makes sure of the trace property.
    Condition (ii) says that $\langle a,b\rangle: L_g\times L_{g^{-1}}\to \mathbb{C}$ is nondegenerate.
    Condition (i), the most mysterious axiom, implies that this satisfies (MI).
\end{eg}

\subsection{Reflection positivity (RP)}

\begin{defi}
    A reflection on a G-$S^n$-algebra $\mathcal{V}=(V,Z)$ is a set of order-two conjugate-linear maps $\theta_\mathcal{S}: V_{\mathcal{S},\pm}\to V_{\mathcal{S},\mp}$ for all G-equivariant empty disks $|\mathcal{S}|=S^{n-1}$, such that they are compatible with tangle operations
    \begin{equation}
        \theta\mathcal{T}(\otimes v_i)
        =\overline{\mathcal{T}}(\otimes \theta(v_i))
    \end{equation}
    for all tangles $\mathcal{T}$, where $\overline{\mathcal{T}}$ denotes its orientation reversal.
\end{defi}

\begin{defi}(RP)

    A G-$S^n$-algebra $\mathcal{V}=(V,Z)$ is called reflection positive (RP) with respect to a reflection $(\theta_{\mathcal{S},\pm}:V_{\mathcal{S},\pm}\to V_{\mathcal{S},\mp})_{\mathcal{S}}$ if
    \begin{itemize}
        \item 
        It is Hermitian in the sense that
        \begin{equation}
            Z(v_-\cup_{\mathcal{S}} v_+)^* = Z(\theta(v_+)\cup \theta(v_-))
            , \quad\forall |\mathcal{S}|=S^{n-1},\forall v_\pm\in V_{\mathcal{S},\pm}
        \end{equation}
        \item
        and is positive in the sense that
        \begin{equation}
        Z(v\cup_{\mathcal{S}} \theta(v)) \ge 0, \quad\forall |\mathcal{S}|=S^{n-1},\forall v\in V_{\mathcal{S},\pm}
    \end{equation}
    \end{itemize}
    where we have used notation Def.~\ref{Notation: cup_S}.
\end{defi}

\begin{rmk}
    Later on we will abuse the language and say that an $S^n$-algebra $\mathcal{V}=(V,Z)$ is reflection positive and make the reflection structure $(\theta_{\mathcal{S},\pm}:V_{\mathcal{S},\pm}\to V_{\mathcal{S},\mp})_\mathcal{S}$ implicit.
\end{rmk}

%% file: G-category.tex
\section{Emergent G-graded spherical n-category}

We study the higher representation theory of the G-$S^n$-algebra and construct an $n$-category. 
We consider such an $n$-category as a G-graded spherical $n$-category.
When $\mathcal{V}$ is (RP), then it is considered as a G-graded unitary $n$-category.

The basic procedure is parallel to \cite{liu2024functional}, replacing ordinary $S^n$-algebra by its G-version, and a difference in manner that we start from a (non-degenerate) G-$S^n$-algebra instead of a functional.

\subsection{Idempotent completion}

Let $\mathcal{V}=(V,Z)$ be a G-$S^n$-algebra. 

For a G-labeled manifold $\mathcal{D}^{n-1}$ with support $D^{n-1}$, we consider the vector space
\begin{equation}
    V_{\mathcal{S},-}, \quad \mathcal{S}:=\partial(\mathcal{D}^{n-1}\times D^1)
\end{equation}
This vector space forms an algebra in the following sense.

\begin{defi}
    For two vectors $x,y\in V_{\mathcal{S},-}$, define the multiplication $xy\in V_{\mathcal{S},-}$ as follows.
    We label $x$ at $\mathcal{D}^{n-1}\times[-1,0]$ and $y$ at $\mathcal{D}^{n-1}\times[0,1]$ after rescaling their coordinates in the obvious way, where we have used terminology in Def.~\ref{Notation: labeled at}.
    The result is defined as their multiplication $xy\in V_{\mathcal{S},-}$.
\end{defi}

\begin{defi}
    For a vector $x\in V_{\mathcal{S},-}$, define its trace as
    \begin{equation}
        Tr(x):=Z(x\cup_{\mathcal{S}} (\mathcal{D}^{n-1}\times D^1))
    \end{equation}
    where we have used the notation in Def.~\ref{Notation: cup_S}.
\end{defi}

\begin{prop}
    For any G-labeled manifold $|\mathcal{D}^{n-1}|=D^{n-1}$, the above multiplication and trace are well defined on $V_{\mathcal{S},-}$. 
    In this way $V_{\mathcal{S},-}$ forms an associative algebra with trace, denoted by $A(\mathcal{D}^{n-1}\times D^1)$.
\end{prop}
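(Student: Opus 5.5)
We check three things in turn: that the product is well defined, that it is associative, and that the trace is well defined; cyclicity of the trace is the main point needing care. Everything reduces to the operad structure of the G-$D^n$-algebra $V$: each construction is realized by a specific G-equivariant $n$-tangle, so the only issue is whether the boundary data match and whether the resulting tangle is determined up to isotopy.

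\textbf{Multiplication.} Labeling $x$ at $\mathcal{D}^{n-1}\times[-1,0]$ and $y$ at $\mathcal{D}^{n-1}\times[0,1]$ is the action of a two-input tangle $\mathcal{T}_{\mathrm{mult}}$. Its support is $\mathcal{D}^{n-1}\times D^1$ with two little disks removed, and its stratification is the product stratification of $\mathcal{D}^{n-1}$ times the interval.
\begin{itemize}
\item This stratification has no $0$-dimensional strata in the interior, since every stratum of $\mathcal{D}^{n-1}\times D^1$ has dimension at least $1$. So $\mathcal{T}_{\mathrm{mult}}$ is a legitimate $n$-tangle.
\item The G-labels on walls are pulled back from $\mathcal{D}^{n-1}$. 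Flatness at codimension-two defects therefore follows from flatness in $\mathcal{D}^{n-1}$, together with the condition at its cone point that is built into the definition of an empty disk.
\item The rescaling identifications of the two inner boundaries and the outer boundary with $\mathcal{S}$ are stratified PL homeomorphisms preserving labels, so the boundary compatibility condition of the operad holds.
\item Any two choices of rescaling are isotopic rel the product structure. Hence the operation is independent of choices and defines a linear map $V_{\mathcal{S},-}\otimes V_{\mathcal{S},-}\to V_{\mathcal{S},-}$.
\end{itemize}

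\textbf{Associativity.} Both $(xy)z$ and $x(yz)$ are operad compositions of $\mathcal{T}_{\mathrm{mult}}$ with itself. Each composite is the tangle obtained by labeling $x,y,z$ at three consecutive slabs $\mathcal{D}^{n-1}\times I_j$. The two slab decompositions differ by a PL isotopy of the interval factor, crossed with the identity on $\mathcal{D}^{n-1}$, and this isotopy preserves the stratification and the boundary identifications. Operad associativity and isotopy invariance then give $(xy)z=x(yz)$.

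The unit is the bare product $\mathcal{D}^{n-1}\times D^1$, viewed as a vector in $V_{\mathcal{S},-}$ through the nullary tangle. It is a unit by the same isotopy argument, since absorbing a bare slab is isotopic to the identity tangle.

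\textbf{Trace, well-definedness.} Gluing $x$ along $\mathcal{S}$ to the bare cylinder gives an element of $\widetilde{S}^n(V)$. It lies in $\widehat{S}^n(V)$ because any $a\cup_{\mathcal{S}}b$ does, so $Z$ can be applied. Linearity of the trace follows from linearity of $\cup_{\mathcal{S}}$ in the condensation space.

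\textbf{Trace, cyclicity.} This is the step I expect to be the main obstacle: showing $Tr(xy)=Tr(yx)$, which is what makes $A(\mathcal{D}^{n-1}\times D^1)$ an algebra with trace. The plan is to realize both sides as the same labeled sphere.
\begin{itemize}
\item Picture $x\cup_{\mathcal{S}}(\mathcal{D}^{n-1}\times D^1)$ as $S^n$ containing a solid cylinder carrying $x$. Its complement is again a cylinder $\mathcal{D}^{n-1}\times D^1$, but with the orientation of the interval factor reversed relative to the first.
\item For $xy$, the sphere contains two adjacent slabs labeled $x$ and $y$. One then constructs a PL isotopy of $S^n$ that slides the $y$-slab around through the complementary cylinder until it sits on the other side of $x$. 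This is a rotation in the interval direction, performed in the suspension coordinate.
\item The isotopy must preserve the stratification and G-labels. This holds because both cylinders carry the product stratification from the same $\mathcal{D}^{n-1}$, so the stratification of $S^n$ is the join-type stratification $\mathcal{D}^{n-1}\times S^1$ near the core, capped off.
\end{itemize}

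The delicate point is that $Z$ must be invariant under such isotopies of labeled spheres. I would argue this is automatic, because $\widetilde{S}^n(V)$ is defined from labeled manifolds up to the operad's isotopy relation. Concretely, cut the sphere along a sphere $\mathcal{S}'$ transverse to the stratification, evaluate each half in the $D^n$-algebra, and check that the two cuttings give equal pairings $\langle\cdot,\cdot\rangle$. That equality holds because the two halves are related by an isotopy inside each disk.

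For $n=1$ this argument reproduces the trace property $Tr(xy)=Tr(yx)$ with degrees $g$ and $g^{-1}$, which serves as a consistency check.
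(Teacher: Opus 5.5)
Your proposal is correct and takes the same route as the paper, whose proof says only that isotopies are modded out in the G-$D^n$ operad (giving associativity) and that $Tr(xy)=Tr(yx)$ likewise follows from isotopy invariance. Your explicit slide of the $y$-slab along the circle factor of $S^n=\partial(\mathcal{D}^{n-1}\times D^2)$ fills in that one-line appeal: you extend it by coning over $\partial\mathcal{D}^{n-1}\times D^2$ and take it supported in a ball avoiding the $x$-label, so the two labeled spheres differ by a null vector in $\widetilde{S}^n(V)$.
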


\begin{proof}
    Isotopy equivalences are mod out in the definition of the G-$D^n$-operad.
    So, the multiplication is associative.
    $Tr(xy)=Tr(yx)$ is also caused by isotopy invariance.
\end{proof}

\begin{prop}
    For $2\le k\le n$ and any G-labeled manifold $|\mathcal{D}^{n-k}|=D^{n-k}$, consider $V_{\mathcal{S},-}$, where
    \begin{equation}
        \mathcal{S}:=\partial(\mathcal{D}^{n-k}\times D^k)
    \end{equation}
    We consider $\mathcal{D}^{n-k}\times D^{k-1}=:\mathcal{D}^{n-1}$, then by the above $V_{\mathcal{S},-}$ forms a commutative associative algebra with trace, denoted by $A(\mathcal{D}^{n-k}\times D^k)$.
\end{prop}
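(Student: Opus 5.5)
We deduce associativity and the trace from the previous proposition, and commutativity from an Eckmann--Hilton argument in the extra coordinate directions that appear when $k\ge 2$.

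\textbf{Algebra with trace.} Write $\mathcal{D}^{n-1}:=\mathcal{D}^{n-k}\times D^{k-1}$. Then $\mathcal{D}^{n-k}\times D^k=\mathcal{D}^{n-1}\times D^1$, so
\[
\mathcal{S}=\partial(\mathcal{D}^{n-k}\times D^k)=\partial(\mathcal{D}^{n-1}\times D^1).
\]
The previous proposition therefore already makes $V_{\mathcal{S},-}$ an associative algebra with trace, with multiplication by stacking in the last coordinate $t_k$ of $D^k$. For this product to be a $G$-labeled operation, $\mathcal{D}^{n-1}$ must be a $G$-labeled manifold on $D^{n-1}$. Here it carries the product stratification: the strata of $\mathcal{D}^{n-k}$ times $D^{k-1}$, with the labels of $\mathcal{D}^{n-k}$ pulled back. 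Flatness at codimension-two defects follows from flatness in $\mathcal{D}^{n-k}$. Hence the multiplication tangle (two small boxes inside a big box, all walls of the form $(\text{wall of }\mathcal{D}^{n-k})\times(\text{interval})$) is a $G$-equivariant $n$-tangle with no bare interior $0$-strata. It remains to prove commutativity.

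\textbf{Commutativity.} Since $k\ge2$, there is a second coordinate $t_{k-1}$ in the $D^k$ factor. Stacking in $t_{k-1}$ defines a second product $x\cdot' y$. Every wall of the stratification has the form $(\text{wall of }\mathcal{D}^{n-k})\times D^k$, and every $G$-label depends only on the $\mathcal{D}^{n-k}$ coordinate. So the stratifications and labels are invariant under any PL isotopy supported in the $D^k$ factor, for instance rotating $D^k$ by $90^\circ$ in the $(t_{k-1},t_k)$-plane and moving the small boxes around. Because the operad is defined modulo such isotopies, three facts follow.
\begin{enumerate}
\item The tangle placing $x$ and $y$ side by side in $t_k$ is isotopic, relative to the outer boundary, to the tangle placing them side by side in $t_{k-1}$. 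Hence $xy=x\cdot' y$.
\item Sliding the two boxes around each other in the $(t_{k-1},t_k)$-plane, a half-turn, exchanges them. Hence $xy=yx$. This is the usual $E_2$ argument: the configuration space of two boxes in $D^k$ is connected when $k\ge 2$.
\item Both products have the same unit, the bare cylinder $\mathcal{D}^{n-k}\times D^k$, so the interchange law also holds. This gives the Eckmann--Hilton argument independently of the sliding in (2).
\end{enumerate}
The trace is the same trace as in the previous proposition, so it needs no further check. It is independent of which coordinate we call the stacking direction, again by isotopy invariance.

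\textbf{Main obstacle.} The delicate point is that the isotopies must be stratified and preserve the $G$-labels and coorientations, with no new $0$-dimensional strata appearing. I would handle this by building every isotopy as $\mathrm{id}_{D^{n-k}}\times\phi_s$, where $\phi_s$ is an isotopy of $D^k$ preserving the orientation (rotations are orientation-preserving). An isotopy of this form preserves every stratum of the product stratification and fixes all labels, so it is a valid equivalence in the $G$-$D^n$ operad. When $n-k=0$, $\mathcal{D}^{0}$ is a point and the argument reduces to the classical statement that the disk operad gives commutative algebras for $k\ge 2$.
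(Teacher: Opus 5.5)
Your proof is correct and follows the paper's route. Associativity and the trace are inherited from the previous proposition through $\mathcal{D}^{n-k}\times D^k=\mathcal{D}^{n-1}\times D^1$, and commutativity comes from exchanging the two small disks in the $D^k$ factor by an isotopy of the form $\mathrm{id}\times\phi_s$; this is the paper's one-line argument, with the boxes translated rather than rotated so the inner boundary identifications are unchanged, and your Eckmann--Hilton and label-preservation remarks only fill in details the paper leaves implicit.
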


\begin{proof}
    The commutativity for $k\ge 2$ follows from switching two smaller $k$-disks in $D^k$ by isotopy.
\end{proof}

Having these associative algebras, let's define the $n$-category emerged from idempotent completion. 

\begin{defi}
    For $0\le k\le n-1$, and a G-labeled manifold $|\mathcal{D}^k|=D^k$, we call a minimal idempotent $\alpha$ of $A(\mathcal{D}^k\times D^{n-k})$ an irreducible $k$-morphism of type $tp(\alpha)=\mathcal{D}^k$ and link boundary $lb(\alpha)=\partial\mathcal{D}^k$.
    A $k$-morphism is a formal direct sum of irreducible $k$-morphisms. 
\end{defi}

\begin{defi}
    For an irreducible $k$-morphism $\alpha$ with $tp(\alpha)=\mathcal{D}^k$, we define its quantum dimension as $Tr(\alpha)$, where the trace is the trace in the algebra $A(\mathcal{D}^k\times D^{n-k})$.
\end{defi}


\begin{defi}
    If $0\le k\le n-2$, suppose $\alpha_\pm$ are irreducible $k$-morphisms of type $\mathcal{D}_\pm$ respectively, and they have the same boundary $\mathcal{S}$. 
    Suppose $|\mathcal{D}|\sim D^{k+1}$ with boundary $(\mathcal{D}_-\times\{-1\}\cup \mathcal{D}_+\times\{+1\}\cup \mathcal{S}\times D^1)$.
    Suppose $\beta$ is an idempotent of $A(\mathcal{D}\times D^{n-k-1})$.
    $\beta$ is said to be an $\alpha_--\alpha_+$ bimodule, if $P(\alpha_-)\beta Q(\alpha_+)=\beta$, where $P(\alpha_-),Q(\alpha_+)\in A(\mathcal{D}\times D^{n-k-1})$ are $\mathcal{D}\times D^{n-k-1}$ labeled by $\alpha_-$ at $\mathcal{D}_-\times[-1,-1+\epsilon]\times D^{n-k-1}$ and $\alpha_+$ at $\mathcal{D}_+\times[1-\epsilon,1]\times D^{n-k-1}$ respectively.

    If $k=n-1$, we adjust the above definition by requiring $\beta$ to be an element of $V_{\partial\mathcal{D},-}$ and the multiplication $P(\alpha_-)\beta Q(\alpha_+)$ is regard as natural actions of $P(\alpha_-)\in V_{\mathcal{D}_-\cup_\mathcal{S} \mathcal{D}_-,-}$ and $Q(\alpha_+)\in V_{\mathcal{D}_+\cup_\mathcal{S} \mathcal{D}_+,-}$ on $V_{\partial\mathcal{D},-}$ by concatenating from upper and lower boundary.
\end{defi}

\begin{defi}
    Suppose $\alpha_0,\alpha_1,\alpha_2$ are irreducible $k$-morphisms and $\beta_0$ is an $\alpha_0-\alpha_1$ bimodule and $\beta_1$ is an $\alpha_1-\alpha_2$ bimodule. 
    Bimodule fusion $\beta_0\otimes_{\alpha_1}\beta_1$ is defined as their concatenating in the $(k+1)$-th coordinate.
    The resulting $\beta_0\otimes_{\alpha_1}\beta_1$ is an $\alpha_0-\alpha_2$ bimodule.
\end{defi}

\subsection{Annular equivalence, (SS) and (CF)}

Now we define the notion of annular equivalence. 
This plays the role of Morita equivalence.

Suppose $|\mathcal{S}|=S^{k-1}$, $|\mathcal{D}_i|=D^k$ and $\partial(\mathcal{D}_i)=\mathcal{S}$ for $i=0,1$. 
We consider $V_{\partial(\mathcal{D}_i\times D^{n-k})}$.
Abbreviate by
\begin{equation}
    V(\mathcal{D}_1,\mathcal{D}_0)=\widetilde{D^k\times(D^{n-k}\setminus\frac{1}{2}D^{n-k})}
    _{
    \mathcal{S}\times (D^{n-k}\setminus\frac{1}{2}D^{n-k}) \cup
    \mathcal{D}_0\times S^{n-k-1}
    \cup \mathcal{D}_1\times \frac{1}{2}S^{n-k-1}
    }
    (V)
\end{equation}

\begin{defi}
    Define an action
    \begin{equation}
        V(\mathcal{D}_1,\mathcal{D}_0)\times V_{\partial(\mathcal{D}_1\times D^{n-k})}
        \to
        V_{\partial(\mathcal{D}_0\times D^{n-k})}
    \end{equation}
    For a $T\in V(\mathcal{D}_1,\mathcal{D}_0)$ and $v\in V_{\partial(\mathcal{D}_1\times D^{n-k})}$, we label $v$ at $\mathcal{D}_1\times \frac{1}{2}D^{n-k}$ and join it inside $T$.
    The result is defined as $Tx\in V_{\partial(\mathcal{D}_0\times D^{n-k})}$.
\end{defi}

\begin{defi}
    For irreducible $k$-morphisms $\alpha_0,\alpha_1$ of type $tp(\alpha_i)=\mathcal{D}_i$ and common link boundary $\mathcal{S}^{k-1}$, we call them annular equivalent, if there are $T_1\in V(\mathcal{D}_1,\mathcal{D}_0)$ and $T_2\in V(\mathcal{D}_0,\mathcal{D}_1)$, such that
    \begin{equation}
        T_2\alpha_0=\alpha_1, \quad T_1\alpha_1=\alpha_0
    \end{equation}
\end{defi}

It is indeed an equivalence relation, because one has composition of annular vectors by annular concatenating, symmetricity by definition, and reflexivity by trivial annular vector.

\begin{rmk}
    Annular equivalence relation is not additive. 
\end{rmk}

Now fix a representative set of irreducible $k$-morphisms for each $k=0,1,\dots,n-1$.

\begin{defi}
    The global dimension of an irreducible $(n-1)$-morphism is defined to be $1$. 
    Suppose $n\ge 2$.
    For an irreducible $k$-morphism $\alpha$ of type $tp(\alpha)=\mathcal{D}^k$, the global dimension is defined inductively for $k=n-2\dots,1$ as
    \begin{equation}
        \mu(\alpha) = \sum_{\beta}\frac{Tr(\beta)^2}{\mu(\beta)}
    \end{equation}
    summing over our chosen representatives of irreducible $\alpha-\alpha$ bimodules $\beta$. 
    For $k=0$ the global dimension is defined as
    \begin{equation}
        \mu(\alpha) = \sum_{gr(\beta)=e}\frac{Tr(\beta)^2}{\mu(\beta)}
    \end{equation}
    summing over our chosen representatives of irreducible $\alpha-\alpha$ bimodules $\beta$ with trivial grading as a $1$-morphism. 
\end{defi}

For this to be well-defined, we need to ensure $\mu(\beta)\ne 0$ inductively, and that the summations are finite sums.
We impose these constraints as two axioms. 

\begin{defi}(SS)

    For a G-$S^n$-algebra $\mathcal{V}=(V,Z)$, it is called strong semisimple if $A(\mathcal{D}^{n-1}\times D^1)$ is a semisimple algebra for any $|\mathcal{D}^{n-1}|=D^{n-1}$ and the global dimension of any irreducible $k$-morphism is nonzero, for any $0\le k\le n-1$.
\end{defi}

\begin{eg}
    When G is trivial, a spherical fusion category can be viewed as a $2$-category with a unique $0$-morphism. 
    The global dimension of the $0$-morphism is just the global dimension of the category.
    It is nonzero proven in \cite{etingof2005fusion}.
    So any spherical fusion category is strong semisimple (SS). 
\end{eg}

\begin{defi}(CF)

    For a G-$S^n$-algebra $\mathcal{V}=(V,Z)$,
    \begin{itemize}
    \item
    it is called $n$-finite if for any G-equivariant empty disks $|\mathcal{S}|=S^{n-1}$, the vector space $V_{\mathcal{S},-}$ is finite dimensional. 
    \item
    for $2\le k\le n-1$, it is called $k$-finite, if for any G-labeled $|\mathcal{S}^{k-1}|=S^{k-1}$, there are finitely many annular equivalence classes of irreducible $k$-morphisms $\beta$ with link boundary $lb(\beta)=\mathcal{S}^{k-1}$. 
    \item
    it is called $1$-finite, if for any fixed $g\in G$, there are finitely many annular equivalence classes of irreducible $1$-morphisms with link boundary $S^0$ whose transport is $g$;
    \item 
    it is called $0$-finite, if there are finitely many annular equivalence classes of irreducible $0$-morphisms;
    \end{itemize}
    it is called compete finite (CF), if it is $k$-finite for $0\le k\le n$.
\end{defi}

Complete finiteness (CF) is crucial for the summations to be finite sums. 
It has been used in the definition of global dimensions, and will be used in the $\mathcal{S}^{k-1}$-relations.


\subsection{G-graded fusion categories}

When $n=1$, G-$S^1$ algebras are G-graded associative algebras with trace, as explained in Example~\ref{G-graded associative algebra}.

When $n=2$, suppose $|G|$ is finite and we are given a G-$S^2$-algebra $\mathcal{V}$ which is (SS), (CF), and suppose that the $0$-morphism is unique.
Then the emergent $2$-category is a G-graded spherical fusion category, as follows.

First let's see the 1-morphisms. 
Denote by $D^1_{g_1,\dots,g_m}$ the G-labeled $D^1$ with labels $g_1,\dots,g_n$ from left to right, and $A_{g_1,\dots,g_n}:=A(D^1_{g_1,\dots,g_n}\times D^1):=V_{\partial(D^1_{g_1,\dots,g_n}\times D^1),-}$ the associative algebra.
The minimal idempotents in 
\begin{equation}
    \hat{A}_g:=\bigoplus_{g_1,\dots,g_n=g}A_{g_1,\dots,g_n}
\end{equation}
cannot be annular equivalent to those in $\hat{A}_h$ for $h\ne g$, because the G-empty disks used in the annular equivalence relation are $V(D^1_{g_1,\dots,g_n}\times\{\frac{1}{2}\}\cup D^1_{h_1,\dots,h_m}\times\{1\} \cup S^0\times[\frac{1}{2},1])$ and its mirror, and it should have trivial total holonomy, which implies $g=h$.
We define those to have gradings $gr=g$.
Tensor product of 1-morphisms is the horizontal concatenation of $D^1_{g_1,\dots,g_n}\times D^1$ and $D^1_{h_1,\dots,h_m}\times D^1$, which is multiplicative in degrees.

Let's see the 2-morphisms. 
For 1-mirphisms $X\in A_{g_1,\dots,g_n}\subset \hat{A}_g$ and $Y\in A_{h_1,\dots,h_m}\subset \hat{A}_h$, the 2-morphisms $X\to Y$ should live in 
$V(D^1_{g_1,\dots,g_n}\times\{0\}\cup D^1_{h_1,\dots,h_m}\times\{1\} \cup S^0\times[0,1])$, so by the same reason it is nonzero only when $g=h$.

So the emergent spherical 2-category is a G-graded spherical fusion category when the 0-morphism is unique.

On the contrary, G-$S^2$ algebras can recover all the G-graded strict spherical fusion categories. 
Suppose $\mathcal{C}=\mathcal{C}_g$ is a G-graded strict spherical fusion category. 
One can reconstruct $\mathcal{C}$ by the following G-$S^2$-algebra. 
Denote by
\begin{equation}
    \Omega^0_g := \bigoplus_{gr(X)=g} X
\end{equation}
where we sum over irreducible objects $X$ in $\mathcal{C}_g$.
The vector space assigned to the G-empty disk $S^1_{g_1,\dots,g_n}$ labeled by $g_1,\dots,g_n$ on its $n$ stratification-points in a counterclockwise order with $\Pi_{i}g_i=e$ is 
\begin{equation}
    V(S^1_{g_1,\dots,g_n}):=\operatorname{Hom}(\mathbbm{1}, \bigotimes_{i}\Omega^0_{g_i})
\end{equation}
The tangle operation is just defined as the graphical calculus in $\mathcal{C}$. 
The only new thing is the bare $g$-string in the tangles, which is defined as $\Omega^0_g$. 
Tangle compositions are compositions in the category $\mathcal{C}$ together with the identity of composition of $1$-morphisms $\Omega^0_g \underset{1}{\times} X = X = X \underset{1}{\times} \Omega^0_g$ for any $X\in \mathcal{C}_g$. 
Partition function is defined by the canonical isomorphism $\operatorname{End}(\mathbbm{1})\cong \mathbb{C}$. 
Sphericality of $\mathcal{C}$ ensures isotopy invariance.
So this is a well defined G-$S^2$-algebra. 
It reconstructs $\mathcal{C}$ via the idempotent completion procedure described in this section as follows.

Let's find the 1-morphisms. 
Then minimal idempotents in
\begin{equation}
    \bigoplus_{g_1,\dots,g_n=g}A_{g_1,\dots,g_n}
\end{equation}
all have annular representatives in $A_g$, which is $\iota_X\circ p_{X}\in V(\partial(D^1_{g_1,\dots,g_n}\times D^1))=\operatorname{Hom}(\Omega^0_g, \Omega^0_g)$ for an irr. object $X\in \mathcal{C}_g$ in the original category.
And the 2-morphisms from $X\in\mathcal{C}_g$ to $Y\in \mathcal{C}_h$ are just the bimodules between $\iota_X\circ p_{X}$ and $\iota_Y\circ p_{Y}$ in $V(D^1_g\times\{-1/2\}\cup D^1_h\times\{+1/2\}\cup S^0\times[-1/2,+1/2])$, which is one dimensional when $g=h$ and $X\cong Y$ and zero otherwise.
As the result, one recovers the original G-graded spherical fusion category $\mathcal{C}$.

Any G-graded spherical fusion category is (SS) by \cite{etingof2005fusion}.

\subsection{Morita Invariance (MI)}

This is the new phenomenon in the G-equivariant setting.

\begin{defi}(MI)

    Given a G-$S^n$-algebra $\mathcal{V}=(V,Z)$ and its emergent $n$-category $\mathcal{C}$, $\mathcal{V}$ is said to be Morita invariant (MI) if for any irreducible $0$-morphism $\alpha$ in $\mathcal{C}$, 
    \begin{equation}
        \sum_{gr(\beta)=g}\frac{Tr(\beta)^2}{\mu(\beta)} = \mu(\alpha), \quad\forall g\in G
    \end{equation}
    summing over irreducible $\alpha-\alpha$ bimodules with grading $g$.
    That is, one can change the trivial grading sector to arbitrary sector, without changing the global dimension. 
\end{defi}

\begin{prop}
    Suppose $|G|$ is finite.
    In a G-$S^2$-algebra $\mathcal{V}$ is reconstructed by a strict faithful G-graded spherical fusion category, then $\mathcal{V}$ is (MI).
\end{prop}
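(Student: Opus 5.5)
The plan is to reduce (MI) to the classical fact that, in a faithful $G$-graded fusion category $\mathcal{C}=\bigoplus_g\mathcal{C}_g$ with $|G|$ finite, every graded component has the same dimension:
\[
\dim\mathcal{C}_g=\sum_{X\in\operatorname{Irr}(\mathcal{C}_g)}d_X^2=\dim\mathcal{C}_e,\qquad\forall g\in G,
\]
which is \cite{etingof2005fusion}. The work is in identifying the quantities in the (MI) identity with these categorical dimensions, using the reconstruction described in the previous subsection.

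First I identify the emergent $2$-category of $\mathcal{V}=\mathcal{V}_{\mathcal C}$. There the irreducible $0$-morphism $\alpha$ is unique up to annular equivalence, namely the empty region. Its $\alpha$–$\alpha$ bimodules are the irreducible $1$-morphisms, which have annular representatives $\iota_X\circ p_X\in\operatorname{Hom}(\Omega^0_g,\Omega^0_g)$, one for each simple $X\in\mathcal{C}_g$. The grading $gr(\beta)=g$ corresponds to $X\in\mathcal{C}_g$.

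Second I compute the ingredients. For the $1$-morphism $\beta_X=\iota_X\circ p_X$, the trace $Tr(\beta_X)$ is the closure of $\beta_X$ on the sphere, and $Z$ is given by $\operatorname{End}(\mathbbm{1})\cong\mathbb{C}$. So $Tr(\beta_X)$ is the categorical trace of the idempotent projecting $\Omega^0_g$ onto $X$, that is $Tr(\beta_X)=d_X$. Since $n=2$, the $1$-morphisms are $(n-1)$-morphisms, so $\mu(\beta_X)=1$ by definition. Hence
\[
\sum_{gr(\beta)=g}\frac{Tr(\beta)^2}{\mu(\beta)}=\sum_{X\in\operatorname{Irr}(\mathcal{C}_g)}d_X^2=\dim\mathcal{C}_g,
\]
and $\mu(\alpha)$ is the same expression with $g=e$, so $\mu(\alpha)=\dim\mathcal{C}_e$.

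Finally I invoke equality of component dimensions for a faithful grading. If a self-contained argument is wanted: for any $Y\in\mathcal{C}_g$, the object $Y\otimes Y^*$ lies in $\mathcal{C}_e$, and one shows $\sum_{X\in\mathcal{C}_g}d_X^2=\dim\mathcal{C}_e$ by the standard computation $\sum_{X\in\mathcal{C}_g} d_X d_Y = \sum_{Z\in\mathcal{C}_e} d_Z\cdot(\text{multiplicities})$, applied together with the regular element of $\mathcal{C}_e$. Faithfulness, meaning $\mathcal{C}_g\neq 0$, is needed so that the argument is not vacuous.

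The main obstacle is the identification step. One must check that the sum in (MI) runs exactly once over each simple object of $\mathcal{C}_g$. Concretely, distinct simples must give annularly inequivalent idempotents, and every minimal idempotent in $\bigoplus A_{g_1,\dots,g_m}$ with $\prod g_i=g$ must be annularly equivalent to some $\iota_X\circ p_X$, so that the chosen representatives are the $X$'s. One must also check that the trace normalization on the sphere introduces no extra factors. The first point follows from the annular vector spaces being $\operatorname{Hom}$ spaces, which are one-dimensional exactly when $X\cong Y$. The second point follows from strictness and sphericality.
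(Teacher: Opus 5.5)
Your proposal is correct and follows the paper's route: the paper's proof simply invokes the standard fact that all components of a faithful grading have equal dimension \cite{etingof2005fusion}, and your identifications $Tr(\beta_X)=d_X$, $\mu(\beta_X)=1$ and $\mu(\alpha)=\dim\mathcal{C}_e$ spell out the correspondence the paper leaves implicit. One correction: your optional self-contained sketch is garbled as written; the clean argument sets $R_g=\sum_{X\in\operatorname{Irr}(\mathcal{C}_g)}d_X X$, checks via Frobenius reciprocity that $Y\otimes R_g=d_Y R_{hg}$ for simple $Y\in\mathcal{C}_h$, and then takes dimensions and uses $d_Y\neq 0$, with faithfulness guaranteeing that such a $Y$ exists.
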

\begin{proof}
    This is a standard fact in the theory of G-graded fusion category.
\end{proof}

\subsection{A basis}

Let $\mathcal{V}=(V,Z)$ be a G-$S^n$ algebra.
We will derive a property that will be used in the proof of the Adjacent Move.

\begin{defi}(Notation)\label{Notation: labeled at (2)}

    Suppose $0\le k\le n-1$.
    Write $\mathcal{D}^k(\beta)\times S^{n-k}$ to denote the $\mathcal{V}$-labeled manifold $\mathcal{D}^k\times S^{n-k}$ constructed by setting an irreducible $k$-morphism $\beta$ of type $\mathcal{D}^k$ labeled at $ \mathcal{D}^k \times \{+1\}\times\epsilon D^{n-k}$ (where we think of $\{+1\}\times\epsilon D^{n-k} \subset S^{n-k}$). 
    Similarly for $\mathcal{D}^k(\beta)\times \epsilon S^{n-k}$.
    We used the terminology ``labeled at" in Def.~\ref{Notation: labeled at}. 
\end{defi}

\begin{lem}\label{isotopy WLOG}\label{App: isotopy WLOG}
    Suppose that $l$ is a V-labeled manifold with support $D^k\times S^{n-k}$ and boundary $\mathcal{S}^{k-1}\times S^{n-k}$.
    Then there is an ambient isotopy $F_t$ fixing $S^{k-1}\times S^{n-k}$ such that there is a local open disk $C\subset S^{n-k}$, $F_{1*}(l)|_{D^k\times C}=\mathcal{D}^k\times C$ for some $\partial \mathcal{D}^k=\mathcal{S}^{k-1}$.
\end{lem}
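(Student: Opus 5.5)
The idea is to find a small product region $D^k\times C$ that meets only those strata of $l$ which are products with $C$, i.e.\ strata crossing the $S^{n-k}$ direction vertically. The stratification of $l$ is finite near any compact set, and the position points $l^0$ form a finite set. I would choose a point $q\in S^{n-k}$ so that the slice $D^k\times\{q\}$ is transversal to the stratification $P_l$ and avoids $l^0$. Then a small open disk $C\ni q$ would satisfy $l|_{D^k\times C}\cong \mathcal{D}^k\times C$ with $\mathcal{D}^k:=l|_{D^k\times\{q\}}$. The boundary condition $\partial\mathcal{D}^k=\mathcal{S}^{k-1}$ comes for free, because $l$ restricted to $S^{k-1}\times S^{n-k}$ is already the product $\mathcal{S}^{k-1}\times S^{n-k}$.

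The steps, in order, are as follows.

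First, use the stratified collar of $\partial l=S^{k-1}\times S^{n-k}$ to see that near the boundary, $l$ is already of the form $\mathcal{S}^{k-1}\times S^{n-k}\times[0,\epsilon)$. So every slice $D^k\times\{q\}$ is automatically transversal and product-like in a collar, and any isotopy can be supported away from the boundary.

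Second, take the submanifold $N=D^k\times\{q\}$ for an arbitrary $q$ away from the finitely many points of $l^0$ projected to $S^{n-k}$. Triangulate $D^k\times S^{n-k}$ so that $N\times \bar C'$ is a subcomplex for a small closed disk $\bar C'$ around $q$. Theorem~\ref{transversal triangulation} (relative form, fixing the boundary, whose restriction is transversal by the first step) gives an ambient isotopy $F_t$ fixing $S^{k-1}\times S^{n-k}$ that makes the triangulation transversal to $P_l$. Equivalently, pulling back, $P_l$ becomes transversal to every simplex, in particular to the simplices of $N\times\bar C'$.

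Third, upgrade transversality of the subcomplex $N\times\bar C'$ to the product statement. Transversality of each simplex gives local charts $h:U\to V\times cS$ in which $N\times\bar C'$ is $Q\times cS$. These local product structures should be patched along the compact $N$, shrinking $C$ if necessary, to conclude that the strata of $F_{1*}(l)$ inside $N\times C$ are exactly (stratum of $N$)$\,\times C$. Position points must also be kept out. Since $0$-dimensional strata cannot meet a transversal simplex of dimension less than $n$ unless the dimension count $j+k-m\ge 0$ allows it, choosing $N\times\bar C'$ of dimension $n$ works only after excluding them by the choice of $q$ and of a small $C$.

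The main obstacle is the third step: passing from simplexwise transversality to a genuine product structure on a neighbourhood. I would first try to use that transversality gives a local stratified homeomorphism to $Q\times cS$ with $Q$ a product-compatible chart. I would then apply a relative PL uniqueness-of-collars / regular-neighbourhood argument to straighten the normal direction $C$ globally along $N$, absorbing the straightening into a further isotopy fixed on the boundary.
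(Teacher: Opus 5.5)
There is a genuine gap in your third step, and it is not a technicality: it is the whole content of the lemma. Simplexwise transversality of the slice $N=D^k\times\{q\}$ only gives, at each point of $N$, a local chart $U\cong V\times cS$ in which $N$ is $Q\times cS$. It does not produce a neighbourhood of $N$ on which every filtration term is a product along the $S^{n-k}$ direction, and it certainly does not produce one in the prescribed coordinates $D^k\times C$.

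The tools you name do not close this. Uniqueness of collars is a codimension-one statement, but the normal direction here is $(n-k)$-dimensional. Uniqueness of regular neighbourhoods moves neighbourhoods around without controlling how the strata sit inside them. To finish along your route you would need three things: a normal block bundle of $N$ retaining all filtration terms simultaneously (the kind of marked dual-cell construction the paper needs in Lemma~\ref{app:marked-core}), its triviality over the disk, and a comparison with the given product structure relative to $\partial D^k\times S^{n-k}$. The paper remarks explicitly that this lemma is not a consequence of local transversality of a disk.

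Your first step is also inaccurate. The stratified collar of $\partial l$ is an arbitrary one, not the radial collar of the product coordinates, so slices $D^k\times\{q\}$ need not be transverse or product-like near the boundary. This does not hurt Step 2, since relative positioning only needs transversality on $\partial$ itself. It does mean the straightening in Step 3 must also be carried out near the boundary while fixing it pointwise.

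The missing idea is to avoid transversality altogether and use the global projection $\pi:D^k\times S^{n-k}\to S^{n-k}$. The paper does this in five steps.
\begin{itemize}
\item Triangulate all filtration terms, the boundary and the finite set of position points as subcomplexes, and subdivide so that $\pi$ is simplicial.
\item Choose a small convex ball $C$ in the interior of a top-dimensional target simplex. Williamson's theorem (Lemma~\ref{app:williamson}) gives a PL homeomorphism $h:D\times C\to\pi^{-1}(C)$ with $h(x,y)=(h_y(x),y)$ and $h_p=\mathrm{id}$, preserving every subcomplex at once.
\item Consequently $\pi^{-1}(C)$ misses the position points, and its stratification is exactly the product of the central fiber with $C$. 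By the product-slice lemma, that fiber is a stratified $D^k$ with boundary $\mathcal{S}^{k-1}$.
\item Modify $h$ on a collar of $\partial D$ by $a(x,s,y)=(b^{-1}_{r(s,y)}(x),s,y)$, where $b_y=h_y|_{\partial D}$ and $r$ contracts $C$ to $p$. This gives $g=h\circ a$ with $g_y|_{\partial D}=\mathrm{id}$ for all $y$.
\item Spread $g$ by a PL homotopy $\rho_t:S^{n-k}\to C$ from the constant map $p$ to a retraction onto $C$. Then $H_t(x,y)=(g_{\rho_t(y)}(x),y)$ is a level-preserving ambient isotopy fixing $\partial D^k\times S^{n-k}$ and equal to $g$ over $C$ at $t=1$, and $F_t=H_t^{-1}$ does the job.
\end{itemize}
This yields exact product coordinates directly, with no patching of local charts.
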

\begin{proof}
    See the appendix.
\end{proof}

\begin{prop}(A basis)\label{A basis}

    When $0\le k\le n-1$ and suppose $A(\mathcal{D}^k\times D^{n-k})$ is finite dimensional semisimple for any $\mathcal{D}^k$ with link boundary $\mathcal{S}^{k-1}$, then:
    \begin{itemize}
        \item (1). the vector space $\widetilde{D^k\times S^{n-k}}_{\mathcal{S}^{k-1}\times S^{n-k}}(\mathcal{V})$ is spanned by $\mathcal{D}^k(\beta)\times S^{n-k}$ for irr. $k$-morphisms $\beta$ with $lb(\beta)=\mathcal{S}^{k-1}$;
        \item (2). any V-labeled manifold with support $D^k\times S^{n-k}$ with boundary V-labeling ${\mathcal{S}^{k-1}\times S^{n-k}}$ (i.e. a ``pure vector" in $D^k\times S^{n-k}_{\mathcal{S}^{k-1}\times S^{n-k}}(\mathcal{V})$) and a local part $\mathcal{D}^k\times\{+1\} \times 2\epsilon D^{n-k}$ can be written as a linear combination of $\mathcal{D}^k(\beta)\times S^{n-k}$'s for some $\beta$ and a null vector in which the local null relation is disjoint from $D^k\times\{+1\}\times \epsilon D^{n-k}$;
        \item (3). if $\beta'$ and $\beta$ are annular equivalent, then $\mathcal{D}^k_2(\beta)\times S^{n-k}$ and $\mathcal{D}_3^k(\beta')\times S^{n-k}$ are scalar multiples of each other;
        \item (4). any V-labeled manifold with support $D^k\times S^{n-k}$ with boundary V-labeling ${\mathcal{S}^{k-1}\times S^{n-k}}$ (i.e. a ``pure vector" in $D^k\times S^{n-k}_{\mathcal{S}^{k-1}\times S^{n-k}}(\mathcal{V})$) and a local part $\mathcal{D}^k\times \{+1\}\times 2\epsilon D^{n-k}$ can be written as a linear combination of $\mathcal{D}^k_2(\beta)\times S^{n-k}$'s for $\beta$'s in the representative set and null vectors in which the local null relation terms are disjoint from either $D^k\times\{+1\}\times \epsilon D^{n-k}$ or $D^k\times\{-1\}\times \epsilon D^{n-k}$.
    \end{itemize}
    
\end{prop}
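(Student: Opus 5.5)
We prove the four items in order; each builds on the previous one. Throughout, write $A_{\mathcal{D}}:=A(\mathcal{D}^k\times D^{n-k})$ for every $\mathcal{D}^k$ with $\partial\mathcal{D}^k=\mathcal{S}^{k-1}$.

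For (1), start from an arbitrary pure vector $l$ with support $D^k\times S^{n-k}$. By Lemma~\ref{isotopy WLOG}, after an ambient isotopy fixing $S^{k-1}\times S^{n-k}$ we may assume that over a small disk $C\subset S^{n-k}$ the labeling is a product $\mathcal{D}^k\times C$. Isotopies are modded out in the condensation space, so this changes nothing. Now insert the identity of $A_{\mathcal{D}}$ at $\mathcal{D}^k\times C$; this is a local move, so it is allowed modulo null vectors. By semisimplicity and finite dimensionality, the identity decomposes as a finite sum of minimal idempotents $1=\sum_i\beta_i$. Each term $l$ with $\beta_i$ inserted can then be rewritten by local relations, since $\beta_i$ is a minimal idempotent. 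The complement of the small disk $\mathcal{D}^k\times\epsilon C$ is a labeled piece $\mathcal{D}^k\times(S^{n-k}\setminus \epsilon C)$. Viewed through $\beta_i$, it acts on the corner $\beta_i A_{\mathcal{D}}\beta_i=\mathbb{C}\beta_i$. More precisely, contract the rest of $S^{n-k}$ into a disk $D^{n-k}$ adjacent to the $\beta_i$-insertion; the result is an element $x\in A_{\mathcal{D}}$, and $\beta_i x\beta_i=c_i\beta_i$ by minimality. Here one uses that $S^{n-k}$ is a union of two disks, so the whole labeling is $\beta_i\cdot x$ closed up along the sphere direction. Closing up is trace-like, so $\beta_i x$ and $\beta_i x\beta_i$ give the same sphere-closure. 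This trace property follows from isotopy invariance (rotating the $\beta_i$ factor around $S^{n-k}$), exactly as in the proof that $Tr(xy)=Tr(yx)$. Hence $l\equiv\sum_i c_i\,\mathcal{D}^k(\beta_i)\times S^{n-k}$.

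For (2), rerun the argument without the isotopy step, using the given local product part $\mathcal{D}^k\times\{+1\}\times 2\epsilon D^{n-k}$ as $C$. All local relations used (inserting $1=\sum\beta_i$, and replacing $\beta_i x\beta_i$ by $c_i\beta_i$) can be arranged to take place in a disk. The identity insertion happens in the annulus $2\epsilon D\setminus\epsilon D$ and is then pushed into $\epsilon D$. The move $\beta_i x\beta_i\to c_i\beta_i$ is supported on the complement of $\epsilon D$ union a thin collar. I would bookkeep these supports explicitly, so that the null vectors are disjoint from $D^k\times\{+1\}\times\epsilon D^{n-k}$.

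For (3), let $T_1,T_2$ be the annular vectors with $T_2\beta=\beta'$ and $T_1\beta'=\beta$, where $tp(\beta)=\mathcal{D}_2$ and $tp(\beta')=\mathcal{D}_3$. Realize $\mathcal{D}_2(\beta)\times S^{n-k}$, with $\beta=T_1\beta'$, as the labeling in which $\beta'$ sits at the $+1$ pole and the annulus $T_1$ surrounds it. Along the $S^{n-k}$-direction, the annulus $T_1$ reads as an element conjugating $\mathcal{D}_3$ into $\mathcal{D}_2$. Isotope the outer boundary of the annulus around the sphere to the opposite side. The result is $\mathcal{D}_3(\beta')\times S^{n-k}$ with an extra inserted piece $y$, built from the $T_1$ material, away from the pole. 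Applying (1)/(2) to the $\beta'$-corner gives a scalar times $\mathcal{D}_3(\beta')\times S^{n-k}$.

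Item (4) follows by combining (2) and (3). First use (2) to expand in the $\beta$'s at the $+1$ pole, with null relations avoiding that pole. Then replace each $\beta$ by its chosen representative using (3). The isotopy and the annulus manipulation in (3) can be carried out near the $-1$ pole, so the new null relations avoid $D^k\times\{-1\}\times\epsilon D^{n-k}$.

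I expect the main obstacle to be the trace/closure step in (1) and (3). It identifies the sphere closure of $\beta x$ with that of $\beta x\beta$, and it moves the annulus around $S^{n-k}$. Both require an isotopy in the $S^{n-k}$-direction that is compatible with the $G$-labels; the $G$-flatness is preserved since isotopies preserve labels. For $k=n-1$ the algebra is not commutative, so the conjugation argument must be done honestly rather than by commuting.
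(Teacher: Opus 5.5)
\textbf{Gap in (2) and (4): where the null relations are supported.} Your arguments for (1) and (3) are sound. In (1) you take a different route from the paper. You insert $1=\sum_i\beta_i$ and compress $\beta_i x\beta_i=c_i\beta_i$ using cyclicity of the sphere closure. The paper instead reads off the element $x\in A(\mathcal{D}^k\times D^{n-k})$ carried by the complementary disk $S^{n-k}\setminus C$ and expands $x$ itself in minimal idempotents.

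That difference matters in (2), which is where the real content lies. The null relations must avoid $D^k\times\{+1\}\times\epsilon D^{n-k}$, so that this product region survives in every term and the second cut in the Adjacent Move and in the global relations can pass through it. Your plan pushes the inserted idempotent into $\epsilon D^{n-k}$ at the $+1$ pole. That isotopy, and the duplication $\beta_i=\beta_i^2$ you need for cyclicity, are null relations meeting exactly the forbidden region. In fact ``$\beta$ sits in $\{+1\}\times\epsilon D^{n-k}$'' and ``no null relation meets that region'' cannot both hold, since the given vector is a bare product there; $\beta$ has to end up in the complementary disk.

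Worse, for $k=n-1$ the compression route cannot be repaired. Modulo relations supported in $R=D^{n-1}\times(S^1\setminus\epsilon D^1)$, the space is just $V_{\partial R}=A(\mathcal{D}^{n-1}\times D^1)$. There $\beta_i x\neq\beta_i x\beta_i$ in general, and the cyclic move identifying their closures drags a copy of $\beta_i$ around $S^1$ through the pole arc. In this setting (2) is equivalent to the noncommutative semisimple algebra being spanned by its minimal idempotents. That is true: in a matrix block, $E_{ij}=(E_{ii}+E_{ij})-E_{ii}$ with $E_{ii}+E_{ij}$ a rank-one idempotent. It is exactly what the paper's direct expansion uses. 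For $k\le n-2$ your compression can be salvaged, but only by carrying it out inside $R$, using commutativity of $A$ there.

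For (4), your claim that the manipulations of (3) can be localized near one pole, so that all new null relations avoid it, is false. As written it also contradicts itself: ``carried out near the $-1$ pole, so they avoid $D^k\times\{-1\}\times\epsilon D^{n-k}$.'' In general $tp(\beta)\neq tp(\beta')$, and passing from $\mathcal{D}^k_3(\beta')\times S^{n-k}$ to a multiple of $\mathcal{D}^k_2(\beta)\times S^{n-k}$ changes the labeling near \emph{both} pole regions. So the null relations effecting it cannot all avoid the same pole. Concretely, the compression $\beta a\beta=c\beta$ lives on the complement of a small disk, so it avoids at most the pole where that disk sits. The rewriting $\beta'=T(\beta)$, and the duplication and isotopy that set up the compression, are supported at that same pole. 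This is why (4) only asks each null term to avoid one pole or the other. The paper supports the rewriting and duplication in $D^k\times\{+1\}\times 2\epsilon D^{n-k}$ (avoiding the $-1$ side) and the compression in $D^k\times(S^{n-k}\setminus C)$ (avoiding the $+1$ side). You need to redo the support bookkeeping of (2) and (4) along these lines.
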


\begin{proof}
    (1). 
    Take $l=\sum_j l_j\in \widetilde{D^k\times S^{n-k}}_{\mathcal{S}^{k-1}\times S^{n-k}}(\mathcal{V})$ where each $l_j$ is a single V-labeled manifold.
    For each $l_j$ let's prove that it is equivalent to a linear combination of standard vectors $\mathcal{D}^k(\beta)\times S^{n-k}$.
    By Lem.~\ref{isotopy WLOG} one can find an isotopy $F_j$ for each $j$, a point $p\in S^{n-k}$ and $C$ its local disk neighborhood in $S^{n-k}$ such that $F_{j*}(l_j)|_{D^k \times C}=\mathcal{D}_j^k\times C$ for a G-labeled manifold $\partial \mathcal{D}^k_j=\mathcal{S}^{k-1}$ for each $j$. 
    For later convenience let's denote it also as $l_j$.
    
     $l_j|_{D^k\times(S^{n-k}\setminus C)}$ is a vector in $A(\mathcal{D}^k_j\times D^{n-k})$ if we identify $S^{n-k}\setminus C$ with $D^{n-k}$.
     Use the decomposition $A=\sum\mathbb{C}\beta$ summing over minimal idempotents.
    The region of the summation is disjoint from $D^k\times C$.
    So $l_j$ can be written as a linear combination of $\mathcal{D}^k_j(\beta)\times S^{n-k}$ and a null relation inside $D^k\times (S^{n-k}\setminus C)$.
    So these span the vector space $\widetilde{D^k\times S^{n-k}}_{\mathcal{S}^{k-1}\times S^{n-k}}(\mathcal{V})$.

    (2). By the proof of (1) for a single V-labeled manifold. 
    $C=\{+1\}\times \frac{3}{2}\epsilon \mathring{D^{n-k}}\subset S^{n-k}$ in this case.

    (3). Since $\beta'=T(\beta)$ for an annular action $T$, one can rewrite $\mathcal{D}^k_3(\beta')\times S^{n-k}$ as a V-labeled manifold with support $D^k\times S^{n-k}$ and a local part $T(\beta )$.
    The local null relation term in this step is supported on $D^k\times\{+1\}\times \epsilon D^{n-k}$, avoiding the negative side.
    Now one can make a copy of $\beta$: $T(\beta)=T(\beta^2)$ and isotope $\beta$ together with lower half of the annular crossing $D^k\times \{+1\}\times \epsilon D^{n-k}$.
    The local null relation in this step is supported on $D^k\times \{+1\}\times 2\epsilon D^{n-k}$, avoiding the negative side.
    Now the region $D^k\times (S^{n-k}\setminus C)$ is $\beta a \beta=c\beta$ by semisimplicity of $A(\mathcal{D}^k\times D^{n-k})$ and the local null relation is supported on $D^k\times(S^{n-k}\setminus C)$, avoiding the positive side.

    (4). By the proof of (1) and (3).
\end{proof}

%% file: G-Alterfold_cobordism.tex
\section{G-Alterfold cobordism category}

Let $\mathcal{V}=(V,Z)$ be a G-equivariant $S^n$-algebra. 

\subsection{Bulk vectors}

In this subsection we are going to define the ``bulk vectors", to which we are going to define the partition function in the next section. 

Now we are ready to define the bulk vectors, to which we are going to define the partition function. 

\begin{defi}
    We call the triple $(B,\mathcal{M}, P_B)$ a bulk vector:
    \begin{itemize}
        \item $(B,P_B)$ is a compact G-labeled $(n+1)$ manifold
        \item $\mathcal{M}=(M=\partial B,P_B|_{M},\{v_p\}_p)$ is a $\mathcal{V}$-labeled $n$ manifold with support $\partial B$, called the Space Boundary
    \end{itemize}
\end{defi}

In order for the G-labeling to really represent a G-structure, we need to introduce equivalence relations.
The crucial point here is that gauge transformations are required to be trivial on the boundary, for the following two different reasons. 
Fix the space boundary is to let local relations(moves) to be well defined. 
Fix the time boundary is to let the gluing to be well defined. 

Now we introduce the gauge transformations of bulk vectors. 
Gauge equivalence classes will correspond to manifolds with G-bundles, under suitable identifications of the bundle on the boundary.

\begin{defi}(Gauge)\label{Gauge}

    Two $(n+1)$-dimensional G-labeled manifolds $(B,P_1)$ and $(B,P_2)$ with $P_1|_{\partial B}=P_2|_{\partial B}$ are called gauge equivalent, if $P_1$ and $P_2$ can be identified by a sequence of the followings and their inverses:
    \begin{itemize}
        \item 
        PL ambient isotopy (relative to the boundary): 
        given a PL ambient isotopy $F_t$ relative to the boundary (i.e. $F_t|_{\partial B}=id_{\partial B}$), $P_2=F_{1*}(P_1)$
        \item 
        refinement (relative to the boundary):
        $P_2\succcurlyeq P_1$ is a refinement of stratification (Def.~\ref{refinement}), and the G-label of any new wall $e'\in P_2^{(n)}$ is:
        \begin{equation}
            \tilde{g}(e')=\begin{cases}
                g(e), \quad \text{if $e'\subset e$ for a wall $e\in P_1^{(n)}$}
                \\
                1,\quad \text{if $e'\subset c$ for a chamber $c\in P_1^{(n+1)}$}
            \end{cases}
        \end{equation}
        where we regard the coorientation to inherit from the old ones;
        \item 
        chamber gauge transformation (relative to the boundary): assign each $(n+1)$-chamber $c\in P^{(n+1)}$ an element $a_c\in G$, and define:
        \begin{equation}
            \tilde{g}(e)=a(c_-(e)) g(e) a(c_+(e))^{-1}, \quad\forall e\in P^{(n)}
        \end{equation}
        where $c_-(e),c_+(e)\in P^{(n+1)}$ denotes the adjacent $(n+1)$-chamber of $e$. 
        We demand that
        \begin{equation}
            a(c) = 1,\quad \forall \overline{c}\cap\partial B\ne \emptyset
        \end{equation}
    \end{itemize}
\end{defi}

\begin{defi}

    Two bulk vectors $(B,\mathcal{M}, P_1)$ and $(B,\mathcal{M}, P_2)$ are called gauge equivalent, if $(B,P_1)$ and $(B,P_2)$ are gauge equivalent as G-labeled manifolds.
\end{defi}

Since gauge transformations are trivial on the boundary, it is consistent with the $V$-labeling.

\begin{eg}
    Small ball $\frac{1}{2}D^{n+1}$ inside big ball $D^{n+1}$. 
    Consider the bulk vector $D^{n+1}\setminus \frac{1}{2}D^{n+1}$ with a wall $\frac{3}{4}S^n$ with a G-labeling $g$.
    Different open holonomies $g$ (relative potential) represent non-equivalent bulk vectors.
\end{eg}

\subsection{Spacetime cobordism category}

In this subsection we are going to define the G-equivariant Spacetime cobordism category, to which we are going to define the G-equivariant TQFT functor in the next section. The Spacetime cobordism category of \cite{liu2024functional} is the case where the group $G=\{e\}$ is trivial. 

The objects in this category are called ``G-equivariant Time Boundaries".

\begin{defi}
    A G-equivariant Time Boundary $(F, P_F)$ is just a compact G-labeled $n$ manifold.
\end{defi}

\begin{rmk}
Note that there exists G-label structure on Time Boundary itself, as well as on Space Boundary. But Time Boundaries do not permit $V$-labeling. 
\end{rmk}

The morphisms in this category are the so called ``G-equivariant Spacetime cobordisms".

\begin{defi}
    A G-equivariant Spacetime cobordism with G-equivariant Time Boundary $\mathcal{F}=(F, P_F)$ is a $\mathcal{B}=(B,P_B,M, M_V,\iota)$ such that:
    \begin{itemize}
        \item $(B,P_B)$ is a compact G-labeled $(n+1)$-manifold, $P_B$ is called the virtual phase
        \item $M\subset \partial B$ is a compact $n$ PL submanifold, $M_\mathcal{V}$ a $\mathcal{V}$-labeled structure on $M$, with $\mathcal{M}:=(M,P|_M,M_\mathcal{V})$ called the Space Boundary
        \item $\iota:F\to \partial B$ is an embedding of $G$-labeled manfolds
        \item $M \cup \iota(F) = \partial B$
        \item $M \cap \iota(F) = \partial M = -\partial \iota(F)$
        \item $\iota^*(P_B|_{\iota(F)})=P_F$
        \item $\iota(F)$ admits a stratified collar in $B$: there is a stratified PL homeomorphism $c:F\times(-\epsilon,0]\to U$ onto an open neighborhood $U$ of $\iota(F)$ in $B$ preserving the G-labeling, and $c(x,0)=\iota(x),\forall x\in F$, and $c(\partial F\times (-\epsilon,0])\subset \partial B$.
    \end{itemize}
\end{defi}

\begin{defi}
    Two Spacetime cobordisms $\mathcal{B}_i=(B_i,P_i,M_i, M_{V,i},\iota_i), i=1,2$ are called gauge equivalent if there is a PL homeomorphism $f:B_1\to B_2$ $f(M_1)=M_2,f_*(M_{V,1})=M_{V,2}, f\circ \iota_1=\iota_2$ and $(B_2,P_2)$ and $(B_2,f_*(P_1))$ are gauge equivalent as G-labeled manifolds.
\end{defi}

\begin{defi}
    G-equivariant Spacetime cobordism category is defined as the category whose:
    \begin{itemize}
        \item objects are G-equivariant time boundaries
        \item morphisms from $\mathcal{F}_1$ to $\mathcal{F}_2$ are gauge equivalence classes of G-equivariant Spacetime cobordisms with G-equivariant Time Boundary $\bar{\mathcal{F}_1}\sqcup \mathcal{F}_2$
        \item Compositions are defined by gluing in the obvious way
    \end{itemize}
\end{defi}

This cobordism category is a symmetric monoidal category. 
Tensor product is defined as the horizontal concatenation. 

\begin{defi}
    A G-equivariant Spacetime TQFT or a G-equivariant TQFT with spacetime boundary is defined as a symmetric monoidal functor from G-equivariant Spacetime cobordism category to the category of finite dimensional vector spaces.
\end{defi}

Sometimes when there is no confusion, we may abbreviate and not to say ``G-equivariant".

When the G-$S^n$-algebra is (RP), the Spacetime cobordism category also has a dagger structure, which is defined as the mirror image. 
Namely, the manifold is orientation reversed, the G-labelings remain the same (though its simplicies are orientation reversed), and the $\mathcal{V}$-labeling is changed to its reflection. 

\begin{defi}
    A unitary G-equivariant Spacetime TQFT or a unitary G-equivariant TQFT with spacetime boundary is defined as a symmetric monoidal functor from G-equivariant Spacetime cobordism catefory to the category of finite dimensional Hilbert spaces which is a dagger functor.
\end{defi}

%% file: G-equivariant_TQFT.tex
\section{G-equivariant Alterfold TQFT}

Let $\mathcal{V}=(V,Z)$ be a G-equivariant $S^n$-algebra. 
Fix a representative set of irreducible $k$-morphisms for each $k=0,1,\dots,n-1$.

All the summations involving $k$-morphisms for $0\le k\le n-1$ are understood to involve only the ones in the representative set.

\subsection{$\mathcal{S}^{k-1}$-relations}

Before defining TQFT partition function, let's introduce the so called $S^{k-1}$-relations, which is the most important gredient of this TQFT construction. 
They provide both the triangulation-based definition of the partition function and global relations, which do not necessarily perform on a triangulation. 

\begin{defi}\label{Sk-1 relations}
    Suppose that 
    \begin{itemize}
        \item 
        $\mathcal{B}=(B,\mathcal{M},P)$ is a bulk vector;
        \item 
        $\mathcal{U}\subset \mathcal{B}$ is an $(n+1)$ compact stratified submanifold and we have a stratified homeomorphidm $\phi:\mathcal{U}\to \mathcal{D}^{k}\times D^{n+1-k}$ preserving G-labelings.
        Denote by $\mathcal{S}^{k-1}=\partial \mathcal{D}^k$;
        \item 
        $\phi(\mathcal{U}\cap \partial B)=\mathcal{S}^{k-1}\times D^{n+1-k}$;
    \end{itemize}
    and set $V=\phi^{-1}(D^k\times \frac{1}{2}D^{n+1-k})$, $W=\phi^{-1}(D^k\times \frac{1}{4}D^{n+1-k})$, $\epsilon<\frac{1}{8}$.
    
    We define an $\mathcal{S}^{k-1}$-relation (also called Move-$k$) for this bulk vector, where in the followings we identify $\mathcal{U}$ with $\mathcal{D}^{n+1}$ by $\phi$.
    (Or abuse of language, Spacetime cobordism $(\mathcal{D}^{n+1}, \mathcal{S}^{k-1}\times D^{n-k+1})$ with Time Boundary supported on $D^k \times S^{n-k}$.)
    (We use the notation in Def.~\ref{Notation: labeled at (2)})
    \begin{itemize}
        \item 
        When $k=0$,
        \begin{equation}
            \Phi(\mathcal{D}^{n+1}, \emptyset)
            =
            \sum_{\alpha_0} \frac{Tr(\alpha_0)}{\mu(\alpha_0)}
            (D^{n+1}\setminus \epsilon D^{n+1}, 
            \mathcal{D}^0(\alpha_0)\times\epsilon S^{n},
            P')
        \end{equation}
        summing over representatives of $0$-morphisms $\alpha_0$.
        The new G-labeled structure $P'$ in the new bulk is arbitrarily chosen so that 
        \begin{equation}
        \begin{cases}
            P'=P \text{ on }B\setminus W\\
            \text{the radial transport is the identity}
        \end{cases}
        \end{equation}
        and coincide with the new Space Boundary. 
        
        \item 
        When $k=1$,
        \begin{equation}
            \Phi(\mathcal{D}^{n+1}, \mathcal{S}^{0}\times D^{n})
            =
            \sum_{lb(\alpha_1)=\mathcal{S}^0} \frac{Tr(\alpha_1)}{\mu(\alpha_1)}
            (D^{n+1}\setminus D^1\times \epsilon D^{n}, 
            \mathcal{S}^{0}\times (D^{n}\setminus \epsilon D^{n})\cup \mathcal{D}^1_2(\alpha_1) \times \epsilon S^{n-1}
            ,P')
        \end{equation}
        summing over summing over representatives of irr. $1$-morphisms $\alpha_1$ with link boundary $\mathcal{S}^{0}$.
        (Here $\mathcal{D}^1_2$ is the type of $\alpha_1$.)
        The new G-labeled structure $P'$ in the new bulk is arbitrarily chosen so that 
        \begin{equation}
            P'=P \text{ on }B\setminus W
        \end{equation}
        and coincide with the new Space Boundary. 
        
        \item 
        When $2\le k\le n-1$,
        \begin{equation}
        \begin{aligned}
            \Phi(\mathcal{D}^{n+1}, \mathcal{S}^{k-1}\times D^{n-k+1})
            =
            \sum_{lb(\alpha_k)=\mathcal{S}^{k-1}} 
            &\frac{Tr(\alpha_k)}{\mu(\alpha_k)}
            (D^{n+1}\setminus D^k\times \epsilon D^{n+1-k}, 
            \\
            &\mathcal{S}^{k-1}\times (D^{n+1-k}\setminus \epsilon D^{n+1-k})\cup \mathcal{D}^k_2(\alpha_k) \times \epsilon S^{n-k},
            P')
        \end{aligned}
        \end{equation}
        summing over representatives of irr. $k$-morphisms $\alpha_k$ with link boundary $\mathcal{S}^{k-1}$.
        (Here $\mathcal{D}^k_2$ is the type of $\alpha_k$.)
        The new G-labeled structure $P'$ in the new bulk is arbitrarily chosen so that 
        \begin{equation}
            P'=P \text{ on }B\setminus W
        \end{equation}
        and coincide with the new Space Boundary. 
        
        \item 
        When $k=n\ge 2$,
        \begin{equation}
        \begin{aligned}
            \Phi(\mathcal{D}^{n+1}, \mathcal{S}^{n-1}\times D^{1})
            &=
            \sum_{\alpha_n \in bs(V_{\mathcal{S}^{n-1}})}
            (D^{n+1}\setminus D^n\times \epsilon D^{1}, 
            \mathcal{S}^{n-1}\times (D^{1}\setminus \epsilon D^{1})
            \\
            &\cup \mathcal{D}^n(\alpha_n) \times \{-\epsilon\}
            \cup \mathcal{D}^n(\alpha_n^\vee) \times \{+\epsilon\},
            P')
        \end{aligned}
        \end{equation}
        summing over $\alpha_n$ on a basis of $V_{\mathcal{S}^{n-1}}$ with $\alpha_n^\vee$ in the dual basis, 
        (Here $\mathcal{D}^n=\Lambda\mathcal{S}^{n-1}$.)
        The new G-labeled structure $P'$ in the new bulk is arbitrarily chosen so that 
        \begin{equation}
            P'=P \text{ on }B\setminus W
        \end{equation}
        and coincide with the new Space Boundary. 
        \item 
        When $k=n+1$,
        \begin{equation}
            \Phi(\mathcal{D}^{n+1}, \mathcal{S}^{n})
            =
            Z(\mathcal{S}^n)
        \end{equation}
    \end{itemize}
\end{defi}

\begin{rmk}
Note that here the $S^{k-1}$-relations are the G-equivariant version of that of \cite{liu2024functional}, but the $k=1$-relation (or called Move-1) is different from $2\le k\le n-1$. 
\end{rmk}

\begin{rmk}
For the "substituting area" $\mathcal{D}^k \times \epsilon^2 D^{n-k}$, its boundary
\begin{equation}
    \partial(\mathcal{D}^k \times \epsilon^2 D^{n-k})
    =
    \mathcal{S}^{k-1} \times \epsilon^2 D^{n-k}
    \cup
    \mathcal{D}^k \times \epsilon^2 S^{n-k-1}
\end{equation}
has two parts. 
The first part $\mathcal{S}^{k-1} \times \epsilon^2 D^{n-k}$ is the link boundary of $\alpha_k$ and is completely determined by the Space Boundary in which the relation is performed. 
The second part $\mathcal{D}^k \times \epsilon^2 S^{n-k-1}$ is the type of $\alpha_k$ and can be freely chosen, as long as they coincide at the intersection. 
This is the case when $2\le k\le n-1$. 

For $k=1$, the open holonomy between $\{+1\} \times \epsilon^2 D^{n-1}$ and $\{-1\} \times \epsilon^2 D^{n-1}$ is undetermined by the local Space Boundary on which the relation is performed, yet constrained by the global holonomy structure, i.e. constrained by the other parts of the Space Boundary or the bulk.
The admissible summation (such that $P'$ exists) in Move-1 is over all $\alpha_1$ with degree
\begin{equation}
    \prod_{i} g_i^{\epsilon_i}
\end{equation}
where the product is over all the $codim=1$-stratification $i\in P^{n}$ along the intersection of longitude $D^1$ with $P$, and $\epsilon_i$ is determined by the orientation. 

The above are summarized in Lemma~\ref{Filling: Sector is determined by homotopy group}.
\end{rmk}

\begin{lem}\label{Filling: anything}\label{App: Filling: anything}

    (1). Suppose that $\mathcal{S}$ is a G-labeled manifold with support $S^k$.
    Then there is a G-labeled manifold $\mathcal{F}$ with support $D^{k+1}$ such that $\partial \mathcal{F}=\mathcal{S}$, given $k\ge 2$ or $k=1$ and the holonomy of $\mathcal{S}$ is trivial.

    (2). Suppose that $\mathcal{B}=(B,\mathcal{M},P)$ is a bulk vector. 
    If $S$ is PL submanifold of $\mathcal{M}$ with support $S^1$, transversal to $\mathcal{M}$ and is contractible in $\mathcal{B}$.
    Then the holonomy of $\mathcal{S}$ is trivial.
\end{lem}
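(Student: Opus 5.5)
Part (1) is an inductive cone construction, modeled on the definition of the $G$-equivariant empty disk. I will take the closed cone $\mathcal{F}=\Lambda\mathcal{S}$, with the stratification $(\Lambda\mathcal{S})^{j+1}=\Lambda(\mathcal{S}^j)$ and cone point $v$. Each wall of $\Lambda\mathcal{S}$ is the cone on a wall of $\mathcal{S}$, so it inherits that wall's label and coorientation. Flatness then has to be checked at codimension-two defects of $\Lambda\mathcal{S}$.

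For $k\ge 2$, every codimension-two defect of $\Lambda\mathcal{S}$ is of the form $c(f)$ for a codimension-two defect $f$ of $\mathcal{S}$, because $(c\mathcal{S})^{k-1}=c(\mathcal{S}^{k-2})$ when $k-1\ge 1$. The cyclic sequence of walls around $c(f)$ is the same as around $f$, so flatness is inherited. This is exactly the observation in the Remark after the definition of an empty disk.

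For $k=1$, the only new codimension-two defect is the cone point $v$. The walls around $v$ are the cones on the points of $\mathcal{S}^0$, taken in cyclic order. The flatness product at $v$ is therefore the holonomy of $\mathcal{S}$, which is trivial by hypothesis.

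One caveat: if $\mathcal{S}$ has no strata in some dimension, $v$ becomes a bare $0$-stratum. I would remark that this is harmless here, since $\mathcal{F}$ is only required to be a $G$-labeled manifold and carries no $\mathcal{V}$-labels. If one wants to avoid it, the cone point can be pushed into a top stratum by merging strata, or handled by a refinement.

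For (2), I would pass from the labeling to a flat bundle. The first step is to put $S$ in general position, so that it meets only walls, transversally and in finitely many points. This uses transversality (Theorem~\ref{transversal triangulation} applied to a triangulation containing $S$), together with the collar of $\partial B$ to push $S$ slightly into the interior.

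Next I would define the holonomy of any transverse loop as the ordered product $\prod g_i^{\epsilon_i}$ over its wall crossings. The key claim is that this product is invariant under transverse homotopy. A generic homotopy $H\colon D^2\to B$ (after PL general position) meets walls in arcs and meets codimension-two defects in isolated points. Moving across an arc endpoint inserts or cancels a pair $g g^{-1}$. Passing across a codimension-two defect $f$ changes the word by the cyclic product around $f$, which is $1$ by flatness.

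Since $S$ is contractible in $B$, it bounds such a generic disk. Reading off the boundary word of $H(D^2)$ then shows that the holonomy is trivial.

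The main obstacle is the general-position step in (2): producing a PL disk transverse to the stratification whose singular points are only crossings of codimension-two defects. I would triangulate $D^2$, take $H$ simplicial, and apply Theorem~\ref{transversal triangulation} to the pulled-back stratification so that the $1$-skeleton avoids codimension $\ge 2$ strata. The holonomy around the boundary is then the product of the holonomies around the $2$-simplices, and each of these is trivial by the local cone structure $\mathbb{P}_k\times c\mathcal{S}$ together with flatness.
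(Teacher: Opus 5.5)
Your proposal follows the paper's proof. In (1) you take the closed cone with radial walls inheriting their labels, and the vertex gives a new flatness equation only when $k=1$, namely the holonomy of $\mathcal{S}$. In (2) you push the loop into the boundary collar and read a generic null-homotopy whose wall preimage is a graph with only two kinds of vertices: tangency vertices (inverse pairs $g,g^{-1}$) and codimension-two vertices (flatness words).

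The one step that fails as written is how you propose to achieve general position. The pulled-back filtration $H^{-1}(B^i)$ is not a stratification of $D^2$ unless $H$ is already generic, since a whole simplex may map into a codimension-two or codimension-three stratum. Moreover, an isotopy of $D^2$ only reparametrizes the domain and never moves $H(D^2)$. So applying Theorem~\ref{transversal triangulation} presupposes exactly what it is meant to provide. The paper instead perturbs the map $H$ itself by PL general position, relative to its already transverse boundary. The count $2+(d-3)-d<0$ then lets the disk miss the codimension-$\ge 3$ skeleton.

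That step is also what makes your small-simplex decomposition work. If a small $2$-simplex passed through a codimension-$\ge 3$ point, its boundary loop would lie in a chart $\mathbb{R}^j\times c\mathcal{L}$ with $\dim\mathcal{L}\ge 2$. Its holonomy is then not trivial ``by flatness'' directly, but only by the same lemma applied to the lower-dimensional link $\mathcal{L}$.

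Finally, drop the aside about moving the cone point into a top stratum. Walls emanate from $v$, so this is impossible in general. It is also unnecessary: $G$-labeled manifolds, unlike $\mathcal{V}$-labeled ones, may have $0$-dimensional strata, and $v$ is always the $0$-stratum of $\Lambda\mathcal{S}$.
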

\begin{proof}
    One only need to notice that
    \begin{equation}
        \pi_1(BG)=G
    \end{equation}
    \begin{equation}
        \pi_{\ge 2}(BG)=0
    \end{equation}
    For the detailed proof, see the appendix.
\end{proof}

\begin{lem}(Sector)\label{Filling: Sector is determined by homotopy group}

    In order for the existence of the new G-labeled structure $P'$ in the $\mathcal{S}^{k-1}$ relations, for $2\le k\le n-1$ it is equivalent that the link boundary $lb(\alpha)=\mathcal{S}^{k-1}$ is fixed, without any constraint of the type $\mathcal{D}^k_2$.
    
    For $k=1$, the existence of $P'$ is equivalent to that the transport of the type $\mathcal{D}^1_2$ is equal to the transport of the type $\mathcal{D}^1$. 
\end{lem}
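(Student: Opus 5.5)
The plan is to reduce the existence of $P'$ to an extension problem for a flat $G$-labeling over a product region, and to solve it with the same homotopy input as Lemma~\ref{Filling: anything}: $\pi_1(BG)=G$ and $\pi_{\ge 2}(BG)=0$.

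\textbf{Setting up the extension problem.} Identify $\mathcal U$ with $\mathcal D^k\times D^{n+1-k}$ via $\phi$. Since $P'$ must agree with $P$ off $W$, the only freedom is on
\[
R:=\overline{W}\setminus \bigl(D^k\times \mathring{(\epsilon D^{n+1-k})}\bigr)\;\cong\; D^k\times S^{n-k}\times[0,1].
\]
The boundary of $R$ has three pieces.
\begin{itemize}
\item The outer end $D^k\times\frac14 S^{n-k}$ carries the restriction of $P$.
\item The inner end $D^k\times\epsilon S^{n-k}$ carries the type $\mathcal D^k_2$ of $\alpha_k$ times $S^{n-k}$.
\item The side $S^{k-1}\times(\frac14 D^{n+1-k}\setminus\epsilon D^{n+1-k})$ carries the space-boundary labeling, which is the product of the link boundary $\mathcal S^{k-1}$ with an annulus.
\end{itemize}
After a small relative isotopy these pieces form a transverse $G$-labeling of $\partial R$. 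The question becomes: when does this labeling extend to a $G$-labeling of $R$ that is a product (stratified collar) near $\partial R$? Necessity of the link-boundary condition is immediate, because the side labeling of $\alpha_k$ must coincide with the space boundary.

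\textbf{Obstruction step.} Translate $G$-labelings into flat $G$-bundles, i.e.\ classifying maps to $K(G,1)$, as in the proof of Lemma~\ref{Filling: anything}. The obstructions to extending a map from $\partial R$ to $R$ lie in $H^{j+1}(R,\partial R;\pi_j BG)$, so only $j=1$ matters. The extension therefore exists if and only if the holonomy homomorphism $\pi_1(\partial R)\to G$ kills $\ker\bigl(\pi_1(\partial R)\to\pi_1(R)\bigr)$, and in particular if and only if every loop in $\partial R$ that is null-homotopic in $R$ has trivial holonomy.

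Every loop in the $S^{n-k}$-direction has trivial holonomy. This is relevant only when $n-k=1$. Such a loop can be pushed into the side, where it lies in $\{pt\}\times(\text{annulus})$. There it bounds a disk in $S^{k-1}\times\frac14 D^{n+1-k}$, which is labeled by a product with the link boundary, so part (2) of Lemma~\ref{Filling: anything} applies.

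\textbf{Case $2\le k\le n-1$.} Here $S^{k-1}$ is connected. By van Kampen, $\pi_1(\partial R)$ is generated by loops in a single slice $\{pt\}\times S^{n-k}\times\{t\}$ together with loops inside each end, and all of these have trivial holonomy by the previous paragraph. Hence the holonomy on $\partial R$ is trivial, and the labeling extends for every type $\mathcal D^k_2$ with $\partial\mathcal D^k_2=\mathcal S^{k-1}$.

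\textbf{Case $k=1$.} Here $S^0\times S^{n-1}\times[0,1]$ has two components. There is a new loop $\gamma$: cross the outer end along $D^1$, run down one side component, cross back along the inner end through $\mathcal D^1_2$, and return up the other side component. Its holonomy is $\mathrm{tr}(\mathcal D^1)\,\mathrm{tr}(\mathcal D^1_2)^{-1}$, since the side pieces contribute trivially. The loop $\gamma$ is null-homotopic in $R\cong D^1\times S^{n-1}\times I$. When $n=2$, $\gamma$ is homotopic to a loop in a $D^1\times\{pt\}\times I$ slice, which is again trivial in $\pi_1(R)$. So the extension exists if and only if $\mathrm{tr}(\mathcal D^1_2)=\mathrm{tr}(\mathcal D^1)$. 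This gives both directions of the claim.

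\textbf{Main obstacle.} The hardest step is turning the homotopy-theoretic extension back into an honest stratified PL $G$-labeling of $R$ relative to $\partial R$. The labeling must have product collars at the ends and no bare $0$-strata in the interior. I would proceed as in the appendix proof of Lemma~\ref{Filling: anything}:
\begin{itemize}
\item take a full transverse triangulation of $R$ extending one of $\partial R$ (Theorem~\ref{transversal triangulation extend to bulk});
\item define labels on the dual cell structure, assigning a group element to each dual wall of an interior $(n)$-simplex, by choosing elements along a spanning tree and using trivial holonomy to make the choices consistent;
\item check flatness at codimension-two dual strata, which follows from the vanishing of holonomy on small linking circles.
\end{itemize}
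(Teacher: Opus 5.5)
Your argument is sound at the level of flat bundles, but it takes a different and heavier route than the paper. The paper never sets up an extension problem on the whole shell $R$. It observes that, away from the coupon, the boundary data of $R\cong (D^k\times I)\times S^{n-k}$ is a product $\bigl(\mathcal D^k\cup \mathcal S^{k-1}\times I\cup \mathcal D^k_2\bigr)\times S^{n-k}$. So it suffices to fill the $G$-labeled $k$-sphere $\mathcal D^k\cup\mathcal S^{k-1}\times I\cup\mathcal D^k_2$ by a $G$-labeled ball $\mathcal C_{k+1}$. Lemma~\ref{Filling: anything}(1) does this unconditionally for $k\ge 2$, and for $k=1$ exactly when $\mathrm{tr}(\mathcal D^1)\mathrm{tr}(\mathcal D^1_2)^{-1}=1$. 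The paper then takes $\mathcal C_{k+1}\times S^{n-k}$ and refills the small ball around the $\alpha_k$ coupon by a second cone filling, whose boundary is a $G$-labeled $S^n$ with $n\ge 2$. Necessity for $k=1$ is the same as yours: the loop $\partial(D^1\times I)\times\{p\}$ bounds a disk in the shell, so Lemma~\ref{Filling: anything}(2) forces trivial holonomy. Your obstruction-theoretic version gives a uniform criterion that does not need the shell to be a product. It also makes transparent why $\pi_{\ge 2}(BG)=0$ removes all constraints for $k\ge 2$, and why $n-k=1$ adds nothing. The paper's version makes the step you call the main obstacle disappear, because the cone filling is already an explicit stratified PL ball with the prescribed boundary collar.

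Three points in your write-up need tightening.
\begin{itemize}
\item Your criterion ``extends iff the holonomy kills $\ker(\pi_1(\partial R)\to\pi_1(R))$'' is correct here only because $\pi_1(\partial R)\to\pi_1(R)$ is surjective: $R\simeq S^{n-k}$, and $\partial R\cong S^k\times S^{n-k}$ contains that factor. In general the holonomy must factor through $\pi_1(R)$.
\item More substantively, your realization sketch does not yet give a labeling that restricts \emph{exactly} to the prescribed strata on $\partial R$ with a stratified collar. Top simplices of a triangulation that is merely transverse to the boundary stratification straddle boundary walls, so labels attached to simplices cannot match the given chamber trivializations there. You would have to insert the product collar of the boundary data, use a triangulation adapted to it (filtration terms as subcomplexes), and verify the conical charts at the seam, as in Lemma~\ref{app:local-refinement}. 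That is doable, but it is precisely the work the product reduction avoids.
\item Your worry about bare $0$-strata is unnecessary. That prohibition applies to $\mathcal V$-labeled space boundaries and tangles, not to the bulk, and the paper's own cone fillings create such bulk strata.
\end{itemize}
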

\begin{proof}
    Take any summand $\alpha_k$ in the $\mathcal{S}^{k-1}$ relation, and denote by $\mathcal{D}^k_2=tp(\alpha_k)$.
    By Lem.~\ref{Filling: anything}, one can choose a G-labeled manifold $\mathcal{C}_{k+1}$,
    \begin{equation}
        |\mathcal{C}_{k+1}|=D^k\times [+\epsilon,+1],
        \quad
        \mathcal{C}_{k+1}|_{D^k\times \{+1\}}=\mathcal{D}^k
        \quad
        \mathcal{C}_{k+1}|_{D^k\times \{+\epsilon\}}=\mathcal{D}^k_2
    \end{equation}
    and first take $P_2=\mathcal{C}_{k+1}\times S^{n-k}$.
    Thus $P_2$ satisfies the geometric Spacetime cobordism condition, but without labeling $\alpha_k$.
    
    Then one can label $\alpha_k$ at $D^k \times \epsilon^2 D^{n-k}\times\{+\epsilon\}$ and uses Lem.~\ref{Filling: anything} again to change the filling of the small ball neighborhood $D^k \times \epsilon^2 D^{n-k}\times[+\epsilon,+\epsilon+\epsilon^2]$.
    This proves the ``if" part.

    For the ``only if" part for $k=1$, let's choose a point $p\in \epsilon S^{n-1}$ avoiding the $\alpha_1$ label, and thus $\mathcal{D}_2^1\times \{p\}\cup \mathcal{S}^{0}\times [p,p/\epsilon] \cup \mathcal{D}^1\times\{p/\epsilon\}$ is a PL submanifold transversal to the new Space Boundary.
    By Lem.~\ref{Filling: anything}, one must have its holonomy to be trivial, or equivalently, transports of $\mathcal{D}^1$ and $\mathcal{D}^1_2$ to be equal.
\end{proof}

\begin{eg}
    Torus with nontrivial longitude closed holonomy. Local data does not determine degree of Move-1 completely.
\end{eg}

In the rest of this subsection, we introduce a property satisfied by the hyper sphere functional.

\begin{defi}(Adjacent Move)

    Suppose $\mathcal{S}^n$ is $\mathcal{V}$-labeled with a local part $\mathcal{S}^{k-1}\times \epsilon D^{n-k+1}$ and a local part $\mathcal{S}^k \times \epsilon D^{n-k}$. 
    We first apply $\mathcal{S}^{k-1}$ relation to the local part $\mathcal{S}^{k-1}\times \epsilon D^{n-k+1}$, and then the $\mathcal{S}^k_\pm$ relation, where
    \begin{equation}
    \begin{aligned}
        \mathcal{S}^k_+ := (\mathcal{D}^k \times\{\epsilon\} \cup \mathcal{S}^k|_{x_{k+1}\ge \epsilon})\times O_{n-k}
        \\
        \mathcal{S}^k_- := (\rho_{k,k+1}(\mathcal{D}^k \times\{\epsilon\}) \cup \mathcal{S}^k|_{x_{k+1}\le -\epsilon})\times O_{n-k}
    \end{aligned}
    \end{equation}
    where $\rho_{k,k+1}$ denote the 180 rotation of the $(k,k+1)$-th coordinate.
    The resulted $\mathcal{V}$-labeled manifold has support homeomorphic to $S^n$.
    The composition of the $\mathcal{S}^{k-1}$ relation and the $\mathcal{S}^{k}_\pm$ relation is denoted as $F^k_\pm$.
\end{defi}

\begin{rmk}
    We need two $F^k$'s to do induction in order to deal with the problem that the local null relation may intersect the second cut.
\end{rmk}

\begin{thm}(Adjacent identity)\label{Adjacent Move}

    If the G-equivariant $S^n$-algebra $\mathcal{V}$ is (CF) and (SS), the $S^n$ functional $Z$ is invariant under the Adjacent Move $(k,k+1)$ for $1\le k$: $Z\circ F^k_\pm=Z$. 
    If $\mathcal{V}$ is further (MI), $Z$ is also invariant under the Adjacent Move $(0,1)$.
\end{thm}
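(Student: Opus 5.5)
We follow the strategy of the corresponding adjacent identity in \cite{liu2024functional} and track the $G$-labels along the way. Fix a $\mathcal{V}$-labeled $\mathcal{S}^n$ with the two local parts $\mathcal{S}^{k-1}\times\epsilon D^{n-k+1}$ and $\mathcal{S}^k\times\epsilon D^{n-k}$.

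\textbf{Step 1: the first relation.} We apply the $\mathcal{S}^{k-1}$ relation, which replaces the first local part by
\[
\sum_{\alpha_k}\frac{Tr(\alpha_k)}{\mu(\alpha_k)}\,\bigl(\text{cut along }\mathcal{D}^k_2(\alpha_k)\times\epsilon S^{n-k}\bigr),
\]
summed over representatives with $lb(\alpha_k)=\mathcal{S}^{k-1}$. After the cut, the second local part becomes a sphere $\mathcal{S}^k_\pm$. Part of this sphere is the new slice $\mathcal{D}^k\times\{\epsilon\}$, and it carries the label $\alpha_k$.

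\textbf{Step 2: the second relation.} We apply the $\mathcal{S}^k_\pm$ relation, which introduces a sum over irreducible $(k+1)$-morphisms $\beta$ with $lb(\beta)=\mathcal{S}^k_\pm$. By Proposition~\ref{A basis}(2), we rewrite the $(k+1)$-dimensional neighbourhood carrying $\beta$ as a combination of standard vectors $\mathcal{D}^{k+1}(\beta)\times S^{n-k-1}$. The local null relations in this rewriting can be kept disjoint from the region where the previous cut was made. This is why both $F^k_+$ and $F^k_-$ are needed: we run an induction in which the two cuts alternate sides, as in the remark. Part (3) of Proposition~\ref{A basis} then replaces each $\beta$ by its chosen representative, at the cost of a scalar.

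Because $\beta$ is a bimodule over $\alpha_k$ on the glued face, the surviving terms are exactly the irreducible $\alpha_k$--$\alpha_k$ bimodules. Each contributes a factor $Tr(\beta)^2/\mu(\beta)$: one $Tr(\beta)$ is the coefficient in the relation, and the other comes from evaluating the minimal idempotent against its dual inside the sphere. Collecting terms gives, for each fixed $\alpha_k$,
\[
\frac{Tr(\alpha_k)}{\mu(\alpha_k)}\sum_{\beta}\frac{Tr(\beta)^2}{\mu(\beta)}\cdot(\text{reglued sphere}) .
\]
The reglued sphere, with $\alpha_k$ reabsorbed, is the original $\mathcal{S}^n$ times $1/Tr(\alpha_k)$. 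This uses the trace normalization and semisimplicity of $A(\mathcal{D}^k\times D^{n-k})$ from (SS). The sum over $\beta$ is finite by (CF), and the global dimensions are nonzero by (SS).

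\textbf{Step 3: the cases $k\ge1$.} By the definition of $\mu$, the sum over $\beta$ equals $\mu(\alpha_k)$. One point needs care. For $k\ge 2$, Lemma~\ref{Filling: Sector is determined by homotopy group} shows that every bimodule $\beta$ is admissible, since only the link boundary is constrained. For $k=1$, the bimodules $\beta$ are $2$-morphisms, whose link boundary $\mathcal{S}^1$ is fixed, so again no sector restriction arises. The contribution therefore collapses to $Z(\mathcal{S}^n)$. For $k=n-1$, the $\mathcal{S}^n$-relation uses a dual basis rather than minimal idempotents. The same computation then goes through with $\mu=1$, using the non-degeneracy of the pairing \eqref{non-degenracy}.

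\textbf{Step 4: the case $(0,1)$, the main obstacle.} Here Move-$0$ sums over $\alpha_0$, and Move-$1$ sums over $1$-morphisms $\beta$ with fixed $lb$. By the remark after Definition~\ref{Sk-1 relations} and Lemma~\ref{Filling: Sector is determined by homotopy group}, only $\beta$ of a single degree $g$ are admissible. This degree is the transport along the core interval, determined by the surrounding labels, and it need not be $e$. The resulting factor is
\[
\frac{1}{\mu(\alpha_0)}\sum_{gr(\beta)=g}\frac{Tr(\beta)^2}{\mu(\beta)} .
\]
The definition of $\mu(\alpha_0)$ uses only the $e$-sector, so this factor equals $1$ exactly by (MI), and the contribution again collapses to $Z(\mathcal{S}^n)$. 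The delicate point is to verify that the bookkeeping in Step~2 is insensitive to the background. Concretely, the choice of $P'$ must not affect $Z$: by Lemma~\ref{Filling: anything}, any two fillings differ by a gauge change inside a ball, and inside a ball $Z$ is invariant by isotopy invariance of the $D^n$-operad. We also need that the null relations from Proposition~\ref{A basis} respect the $g$-sector. This holds because annular equivalence preserves grading, which follows from the holonomy argument in the discussion of $G$-graded fusion categories.
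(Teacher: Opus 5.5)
Your skeleton is the right one: first relation, then second relation, then collapse through the recursion defining $\mu$, with (MI) handling the $(0,1)$ case in the sector $g$ fixed by the transport. The gap is in Step~2, which skips the key reduction, and its bookkeeping does not give the claimed value.

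\textbf{Missing reduction of the original sphere.} The part of $\mathcal{S}^n$ outside the first local region is an \emph{arbitrary} labeling of $D^k\times S^{n-k}$ with boundary $\mathcal{S}^{k-1}\times S^{n-k}$. So the face of the second disk lying on the original sphere carries no particular $k$-morphism. You therefore cannot conclude that the surviving $(k+1)$-morphisms are $\alpha_k$--$\alpha_k$ bimodules. The claim that the reglued sphere equals $Z(\mathcal{S}^n)/Tr(\alpha_k)$ is false in general: it would make every $\alpha_k$ contribute $Z(\mathcal{S}^n)$, so the sum over $\alpha_k$ would overcount. Moreover, the terminal sphere is a single number, which you have already used as the second factor $Tr$ in your $Tr(\beta)^2/\mu(\beta)$.

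Proposition~\ref{A basis} must be applied to that complement. It should not be applied to the $(k+1)$-region created by the second move, which is already in standard form. Applied to the complement, it writes the labeling as a combination of $\mathcal{D}^k_1(\beta)\times S^{n-k}$, with $\beta$ an irreducible $k$-morphism in the representative set, plus null terms. Write $\hat\beta$ for the summand of the first relation and $\gamma$ for that of the second. For such a basis vector the terminal value $Z(\hat\beta\times\gamma\times\beta)$ vanishes unless $\hat\beta=\beta$. So exactly one summand of the first relation survives, and it gives $\frac{Tr(\beta)}{\mu(\beta)}\sum_{\gamma}\frac{Tr(\gamma)^2}{\mu(\gamma)}=Tr(\beta)=Z(\mathcal{D}^k_1(\beta)\times S^{n-k})$. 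Linearity then finishes.

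\textbf{Null terms and the role of $F^k_\pm$.} The null terms of this reduction are exactly where $F^k_\pm$ enter, and ``an induction in which the two cuts alternate sides'' does not supply what is needed. Proposition~\ref{A basis}(4) only gives $N_++N_-$, where $N_+$ avoids the half of $\mathcal{S}^k$ used by $F^k_+$ and $N_-$ avoids the half used by $F^k_-$. You thus get $Z\circ F^k_+(N_+)=0$ and $Z\circ F^k_-(N_-)=0$, but not both for a single move. The missing step is $Z\circ F^k_+=Z\circ F^k_-$. It follows from the geometric identity $F^{k+1}_-\circ F^k_+=F^{k+1}_+\circ F^k_-$ together with $Z\circ F^{k+1}_\pm=Z$, obtained by a \emph{downward} induction on $k$.

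That induction must be anchored at the top:
\begin{itemize}
\item You omit the case $(n,n+1)$. It is the dual-basis identity $\langle\alpha_+,\alpha_-\rangle=\sum_{\beta_n}\langle\alpha_+,\beta_n\rangle\langle\beta_n^\vee,\alpha_-\rangle$.
\item In the case $(n-1,n)$ the actual input is $Tr(x\mapsto\beta x b)=\dim(\beta\,V\,b)=1$ for the single surviving $b$. This is more than non-degeneracy.
\end{itemize}

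\textbf{Fillings.} Independence of the fillings $P'$ needs no argument, since $Z$ only sees the boundary diagram; their only role is which sector is admissible. Invoking gauge changes is unnecessary. Gauge invariance is also only proved later, and that proof relies on this theorem.
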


\begin{proof}
    (1). For $(k,k+1)=(n,n+1)$, this follows from
    \begin{equation}
        \langle \alpha_+,\alpha_- \rangle = \sum_{\beta_n \in bs(\tilde{V}_{\mathcal{S}^{n-1},-})}
        \langle \alpha_+,\beta_n \rangle \langle \beta_n^\vee,\alpha_- \rangle,\quad
        \forall \alpha_\pm \in V_{\mathcal{S}^{n-1},\pm}
    \end{equation}
    
    (2). For $(k,k+1)=(n-1,n)$, first apply $\mathcal{S}^{n-2}$-relation the Space Boundary becomes
    \begin{equation}
        \mathcal{S}^n\setminus(\mathcal{S}^{n-2}\times \epsilon D^{2}) \cup \sum_{lb(b)=\mathcal{S}^{n-2}}
        \frac{Tr(b)}{\mu(b)}\mathcal{D}^{n-1}(b)\times \epsilon S^{1}
    \end{equation}
    The second part is the newly created S.B. by the Move. 
    The support of the first part, along its G-labeled boundary, is homeomorphic (denoted by $\phi$) to $D^{n-1}\times S^{1}$ with boundary $\mathcal{S}^{n-2}\times S^{1}$.
    By Prop.~\ref{A basis} we may assume that it is $\phi^{-1}(\mathcal{D}^{n-1}_1(\beta)\times S^{1})+\text{local null relation term } N$, for some $\partial(\mathcal{D}^{n-1}_1)=\mathcal{S}^{n-2}$ and $\beta$ not necessarily lying in the representative set.

    The result follows from a direct computation as follows.
    The local null relation term contributes to $0$ since it is disjoint from the surgery regions and thus is intact during the move and its final evaluation vanishes (later on we will omit this argument).
    The first term contributes to
    \begin{equation}
    \begin{aligned}
         &\sum_{lb(\widehat{\beta})=\mathcal{S}^{n-2}}
         Tr(\widehat{\beta})
         \sum_{\gamma\in bs(\tilde{V}_{tp(\widehat{\beta})\cup_{\mathcal{S}^{n-2}} tp(\beta)})}
         Z(\beta\underset{n}{\times}\gamma\underset{n}{\times}\widehat{\beta}\underset{n}{\times}\gamma^\vee)
         \\
         &=
         Tr(b)
         \sum_{\gamma\text{ is a basis of $V_{tp(b)\cup_{\mathcal{S}^{n-2}}tp(\beta)}$}}
         Z(\beta\underset{n}{\times}\gamma\underset{n}{\times}b\underset{n}{\times}\gamma^\vee)
         \\
         &=
         Tr(b)Tr(x\mapsto \beta x b)=Tr(b)dim(\beta V_{tp(\beta)\cup_{\mathcal{S}^{n-2}} tp(b)} b)
         \\
         &=Tr(b)=Tr(\beta)
         =Z(id_{\beta})
    \end{aligned}
    \end{equation}
    we used in the first line that the only term with $\widehat{\beta}\sim \beta$ contributes, which is denoted $b$.
    
    (3). $(k,k+1)=(1,2)\dots(n-2,n-1)$. 

    We first prove that $Z\circ F^k_+=Z\circ F^k_-$.
    One can directly verify that $F^{k+1}_-\circ F^k_+=F^{k+1}_+\circ F^k_-$.
    By induction hypothesis, $Z\circ F^{k+1}_-=Z\circ F^{k+1}_+=Z$.
    So $Z\circ F^k_+=Z\circ F^{k+1}_-\circ F^k_+=Z\circ F^{k+1}_+\circ F^k_-=Z\circ F^k_-$.

    At last let's prove that $Z\circ F^k_\pm=Z$.
    First apply $\mathcal{S}^{k-1}$-relation and the Space Boundary becomes
    \begin{equation}
        \mathcal{S}^n\setminus(\mathcal{S}^{k-1}\times \epsilon D^{n-k+1}) \cup \sum_{lb(b)=\mathcal{S}^{k-1}}
        \frac{Tr(b)}{\mu(b)}\mathcal{D}^k(b)\times \epsilon S^{n-k}
    \end{equation}
    The second part is the newly created S.B. by the Move. 
    The support of the first part, along its G-labeled boundary, is homeomorphic (denoted by $\phi$) to $D^k\times S^{n-k}$ with boundary $\mathcal{S}^{k-1}\times S^{n-k}$.
    By Prop.\ref{A basis}, we may assume that $\mathcal{S}^n|_{D^k\times S^{n-k}}=\mathcal{D}^k_1(\beta)\times S^{n-k}+\text{``local null relation terms" }N_++N_-$, for some $\partial(\mathcal{D}^k_1)=\mathcal{S}^{k-1}$ and $\beta$ in the representative set, and the terms of the null relation $N_\mp$ disjoint from $S^k|_{x_{k+1}\le -\epsilon}\times \epsilon D^{n-k}$ and $S^k|_{x_{k+1}\ge +\epsilon}\times \epsilon D^{n-k}$respectively.
    (We map $D^k\times\{-1\}\times \epsilon D^{n-k}$ in Prop.~\ref{A basis} to $S^k|_{x_{k+1}\le -\epsilon}\times \epsilon D^{n-k}$ in this setting).
    
    One can compute the second term to be $Z\circ F^k_\pm(N_\pm)=0$.
    The first term is computed to be
    \begin{equation}
    \begin{aligned}
        &Z\circ F^k_-(\mathcal{D}^k_1(\beta)\times S^{n-k})
        \\
        &\overset{Lem.~\ref{Filling: Sector is determined by homotopy group}}{=}\sum_{lb(\widehat{\beta})=\mathcal{S}^{k-1}}\frac{Tr(\widehat{\beta})}{\mu(\widehat{\beta})}
        \sum_{lb(\gamma)=tp(\widehat{\beta})\times\{-1\}\cup tp(\beta)\times\{+1\}\cup \mathcal{S}^{k-1}\times D^1} \frac{Tr(\gamma)}{\mu(\gamma)}
        Z(\widehat{\beta}\underset{k+1}{\times}\gamma\underset{k+1}{\times}\beta)
        \\
        &=
        \frac{Tr(\beta)}{\mu(\beta)}
        \sum_{\gamma\text{ is a $\beta-\beta$ bimodule}} 
        \frac{Tr(\gamma)}{\mu(\gamma)}Tr(\gamma)
        =Tr(\beta)
    \end{aligned}
    \end{equation}
    In the first equality we used Lemma~\ref{Filling: Sector is determined by homotopy group}.
    All of the bimodules $\gamma$ appear in the summation since $k+1\ge 2$.

    (4). For $(k,k+1)=(0,1)$, by the same argument the null relation term can be computed separately and contributes to zero, and one need only to compute the contribution of the first term with boundary $\phi^{-1}(\mathcal{D}^0_1(\beta)\times S^{n})$ for a $\beta$ in the representative set.
    
    By Lemma~\ref{Filling: Sector is determined by homotopy group}, the summation of $\gamma$ is changed to the summation over a fixed ``sector" which is graded by G. 
    Suppose the open holonomy of $\mathcal{D}^1$ is $g$, then this is
    \begin{equation}
    \begin{aligned}
        &\overset{Lem.~\ref{Filling: Sector is determined by homotopy group}}{=}
        \frac{Tr(\beta)}{\mu(\beta)}
        \sum_{\gamma\text{ is a $\beta-\beta$ bimodule and $gr(\gamma)=g$}} 
        \frac{Tr(\gamma)}{\mu(\gamma)}Tr(\gamma)
        \\
        &=
        \frac{Tr(\beta)}{\mu(\beta)}\mu_g(\beta)
        =Tr(\beta)
    \end{aligned}
    \end{equation}
    In the last equality we used the MI condition: $\mu_g(\beta)=\mu(\beta)$.
\end{proof}

\subsection{Partition function}

From now on suppose that $\mathcal{V}$ is (CF), (SS) and (MI).

In this subsection we are going to define the partition function for bulk vectors and prove that it is well-defined.

\begin{defi}(Algorithm for the partition function)

    Given a bulk vector $\mathcal{B}=(B,P_B,\mathcal{M})$, we define its partition function as follows. 
    Take a full transversal triangulation $h:\Delta\to B$ of $B$. 

    Ordering the simplices of $\Delta$  of dimensional $0,...,n$ (excluding top dimensional simplices) not contained in the boundary of $B$ in a sequence $\sigma_1,...,\sigma_N$ such that each face of each simplex is at the former of itself.

    Set $\mathcal{B}_0=\mathcal{B}$. 
    Do the following inductively for $s=1,...,N$:
    \begin{itemize}
        \item 
        Take the $k_s$-simplex $\sigma_s\in \Delta$, and take a linear term, i.e. a bulk vector $\mathcal{B}_{s-1}$ from the last iteration.
        Denote by $\mathcal{D}_s:=h(|\sigma_s|)\cap \mathcal{B}_{s-1}$ and $\mathcal{S}_s=h(|\sigma_s|)\cap \partial \mathcal{B}_{s-1}$.
        Denote by $\mathcal{S}^{k_s-1}=\partial \mathcal{D}^{k_s}_1$.
        \item
        According to Lem.~\ref{Filling: algorithm can continue}, one has a stratified homeomorphism $\phi_s:\mathcal{D}^{k_s}_1\times D^{n+1-k_s}\to \mathcal{U}$ onto an stratified submanifold $\mathcal{U}$ preserving the G-labelings and $\phi_s(D^{k_s}_1\times \{O_{n+1-k_s}\})=D_s$ and $\phi_s(S^{k_s-1}_1\times D^{n+1-k_s})=U\cap \partial B_{s-1}$.
        \item 
        Perform $\mathcal{S}^{k_s-1}$-relation to $(\mathcal{U},\mathcal{U}\cap \partial B_{s-1})=\phi_s(\mathcal{D}^{k_s}_1\times D^{n+1-k_s}, \mathcal{S}^{k_s-1}\times D^{n+1-k_s})$.
        Use the fillings in Lem.~\ref{Filling: algorithm can continue};
        \item 
        Apply the above for each summand created by the previous step and extend linearly.
        We get a linear combination of bulk vectors $\sum_{\lambda} a_\lambda \mathcal{B}_{s,\lambda}$ with support
        \begin{equation}
            B_s := \overline{B_{s-1}\setminus \phi_s(D^{k_s}_1\times \epsilon D^{n+1-k_s})}^{B_{s-1}}
        \end{equation}
    \end{itemize}
    At last, use Move $(n+1)$ to each remaining ball.

    Eventually, we obtain a scalar. It is defined as the partition function $Z(\mathcal{B})$.
\end{defi}

The topological details ensuring the continuation of the algorithm is treated in the following lemma.

\begin{lem}(Continuation)\label{Filling: algorithm can continue}\label{App: Filling: algorithm can continue}

    Suppose that $\mathcal{B}$ is a bulk vector, $\Delta$ is a full transversal triangulation.
    Then at each step of the algorithm, one can guaranty that
    \begin{itemize}
        \item 
        $\mathcal{D}_s:=h(|\sigma_s|)\cap \mathcal{B}_{s-1}$ is supported on $D^{k_s}$, and $\mathcal{D}_s\cap \partial B_{s-1}=\partial\mathcal{D}_s$;
        \item 
        one has a stratified homeomorphism $\phi_s:\mathcal{D}^{k_s}_1\times D^{n+1-k_s}\to \mathcal{U}$ onto a stratified submanifold $\mathcal{U}$ preserving the G-labelings and $\phi_s(D^{k_s}_1\times \{O_{n+1-k_s}\})=D_s$ and $\phi_s(S^{k_s-1}_1\times D^{n+1-k_s})=U\cap \partial B_{s-1}$;
        \item 
        There exists a filling inside $\phi_s(D^{k_s}\times (D^{n+1-k_s}\setminus \epsilon D^{n+1-k_s}))$ that guaranties that the next step satisfies the above conditions.
    \end{itemize}
    At the end we are left with balls.
\end{lem}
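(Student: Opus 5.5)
We argue by induction on the step $s$ of the algorithm, maintaining a stronger invariant than the three bullets. The invariant is that $B_{s-1}$ is PL homeomorphic to the complement in $B$ of a regular neighbourhood $N_{s-1}$ of the union of the interiors of $\sigma_1,\dots,\sigma_{s-1}$ together with $\partial B$. Moreover, for every simplex $\tau$ not yet processed, $h(|\tau|)\cap B_{s-1}$ is the complement in $\tau$ of a regular neighbourhood of those faces of $\tau$ already processed or lying in $\partial B$. Since the ordering processes faces first, when $\sigma_s$ (of dimension $k=k_s$) is reached, all its proper faces are either processed or contained in $\partial B$. Fullness guarantees that $\sigma_s$ itself is not contained in $\partial B$, so its interior is untouched. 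Hence $\mathcal D_s=h(|\sigma_s|)\cap B_{s-1}$ is $\sigma_s$ minus a regular neighbourhood of $\partial\sigma_s$, which is a $k$-disk meeting $\partial B_{s-1}$ exactly in its boundary sphere. This gives the first bullet.

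For the second bullet we build the product chart $\phi_s$. In the combinatorial manifold $\Delta$, the link of $\sigma_s$ is a sphere $S^{n-k}$ or a disk. For an interior simplex it is $S^{n-k}$, and by fullness every simplex with a vertex off $\partial B$ is interior enough. The star of $\sigma_s$ therefore has the form $\sigma_s\star lk(\sigma_s)$, and intersecting it with $B_{s-1}$ yields $\mathcal D_s\times D^{n+1-k}$ up to PL homeomorphism, with $\partial\mathcal D_s\times D^{n+1-k}$ lying in $\partial B_{s-1}$. To make this stratified and $G$-label preserving, we use transversality of $\Delta$ to $P_B$: each closed simplex is locally $Q\times c\mathcal S$, so the normal directions to $\sigma_s$ can be chosen to carry the stratification as a product. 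We then pull $P_B$ back along the projection to $\mathcal D_s$ and take $\mathcal D^{k}_1:=P_B|_{\mathcal D_s}$. Near $\partial B$ this product structure agrees with the stratified collar supplied by the definition of bulk vectors and spacetime cobordisms.

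The third bullet requires choosing the new $G$-labeling $P'$ in $\phi_s(D^{k}\times(D^{n+1-k}\setminus\epsilon D^{n+1-k}))$ so that it is again a product along the directions of the remaining unprocessed simplices. We choose $P'$ to be $\mathcal D^k_1\times(\text{radial})$ away from a thin neighbourhood of $\mathcal D^k_2(\alpha)\times\epsilon S^{n-k}$, and interpolate between $\mathcal D^k_2$ and $\mathcal D^k_1$ in a collar $D^k\times[\epsilon,2\epsilon]$, as in the proof of Lemma~\ref{Filling: Sector is determined by homotopy group}. The existence of this interpolation follows from Lemma~\ref{Filling: anything}. For $k=1$ it additionally needs the transports to agree, which holds because the move only sums over the admissible sector. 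The filling is supported in $W$, which we may shrink to lie inside the open star of $\sigma_s$ away from the higher simplices' processed cores. The transversality of the remaining simplices to the new stratification is then inherited, since inside $W$ they meet it as products.

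Finally, once all simplices of dimension at most $n$ not in $\partial B$ have been processed, each top simplex $\Sigma$ meets $B_N$ in $\Sigma$ minus a regular neighbourhood of its boundary, which is an $(n+1)$-ball. Distinct top simplices are separated, because every interior $n$-face has been drilled. We expect the main obstacle to be the second bullet: producing a genuinely stratified product chart near a partially drilled simplex that is simultaneously compatible with the earlier fillings and with the boundary collar. Here we would argue via derived subdivision and Theorem~\ref{transversal subdivision preparation}, isotoping the stratification rel $\Delta$ to be a product in a derived neighbourhood, with the detailed PL verification deferred to the appendix.
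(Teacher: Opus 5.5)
\paragraph{Assessment: genuine gap.} Your skeleton matches the paper's: induction over the steps, disk cores, a cylinder filling from Lemma~\ref{Filling: anything} with the sector constraint at $k=1$, and balls at the end. However, the step you defer is the actual content of the lemma, and the route you sketch for it does not deliver it.

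\paragraph{The stratified product chart (second bullet).} This bullet needs one stratified, $G$-label preserving product $\mathcal D^k_1\times D^{n+1-k}$ over the \emph{whole} core. It must meet $\partial B_{s-1}$ exactly in $\partial D_s\times D^{n+1-k}$, and it must do so for the stratification \emph{as already modified by all earlier fillings}.
\begin{itemize}
\item Transversality only gives charts $Q\times c\mathcal S$ around individual points. These do not glue into a global normal product.
\item Theorem~\ref{transversal subdivision preparation} again only yields transversality to a subdivision, which is another pointwise condition.
\item A one-time preparation of $P_B$ says nothing about the stratification after the fillings. Re-isotoping at each step would move earlier fillings; one has to pull neighbourhoods back instead.
\end{itemize}
The paper's mechanism is different. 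It converts transversality into simultaneous collars of whole cell boundaries that preserve all filtration terms at once (Lemma~\ref{app:criterion}). It carries \emph{this} collar structure on the truncated cells and cut frontiers through the induction. At each step it builds a single marked normal block bundle of the core and trivializes it over the disk, extending a central transverse-star product (Lemma~\ref{app:marked-core}).

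\paragraph{Consequence for the filling (third bullet).} The same issue undermines your claim that the remaining simplices meet the filling as products. Two things are needed that your argument does not arrange.
\begin{itemize}
\item The normal product must keep every future coface of $\sigma_s$ as base $\times$ angular face. The paper arranges this only on an inner disk of the core, since the coface fan need not be straightened near $\partial D_s$. So the cylinder $\mathcal C^{k+1}$ must agree with $\mathcal D^k_1\times I$ near the ends of the core.
\item The $\alpha_k$ label must sit in a top-dimensional angular face, so that no later nonterminal core passes through it.
\end{itemize}
Also, ``transversality of the remaining simplices'' is not an invariant that makes sense once the cells are truncated.

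\paragraph{The first bullet misdescribes the algorithm.} The algorithm never cuts away a neighbourhood of $\partial B$, and cores can end on the Space Boundary. For example, the edge $[v,w]$ with $v\in\partial B$ ends at $v$ after the interior vertex $w$ has been processed. The Move then uses a disk of $\partial B$ as part of $\mathcal S^{k-1}\times D^{n+1-k}$.

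Analysing this contact is exactly where fullness enters. Write $\sigma_s\cap\partial B=\beta$, a single face by fullness, and $\sigma_s=\beta\star\gamma$.
\begin{itemize}
\item If $\gamma$ has two or more vertices, the already processed faces $\beta\star w$ remove all of $\beta$.
\item If $\gamma=\{w\}$, the surviving contact lies in the interior of the facet $\beta$. There the normal model is the half-space/inward-ray pair $(c\,lk(\beta),\,c\{w\})$, which gives proper local flatness.
\end{itemize}
Your uses of fullness are not this. That $\sigma_s\not\subset\partial B$ is simply the algorithm's choice of simplices, and ``interior enough'' is not an argument.

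\paragraph{The star is not a valid $\mathcal U$.} The order is only face-first, so faces of $lk(\sigma_s)$ and cofaces of proper faces of $\sigma_s$ may already have been cut. Hence $st(\sigma_s)\cap B_{s-1}$ meets $\partial B_{s-1}$ in more than $\partial D_s\times D^{n+1-k}$ and cannot serve as $\mathcal U$. Only a thin neighbourhood can. The paper controls its corners with explicit simplicial level functions $f_\sigma$ and successively small generic levels. This makes every residual cell a convex polytope and every cut locally an orthant times a normal cone (Lemma~\ref{app:ordinary-cores}).
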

\begin{proof}
    For the detailed proof, see the appendix.
\end{proof}

The algorithm is independent of the fillings $P'$ of each $\mathcal{S}^{k-1}$ relation as long as the algorithm is legal w.r.t. this filling, since the link boundary (and the transport for Move $1$) for the later $\mathcal{S}^{k-1}$ relations are independent of the fillings (by assumptions on Move $0$ filling), and the hyper sphere function is independent of the bulk.

\begin{prop}\label{App: order}
    The algorithm is independent of the order of steps, as long as the condition that ``each face of each simplex is at the former of itself" is satified.
\end{prop}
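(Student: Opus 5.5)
The plan is to reduce the statement to a local commutation property and then use the combinatorics of linear extensions of the face poset. Any two admissible orderings $\sigma_1,\dots,\sigma_N$ and $\sigma'_1,\dots,\sigma'_N$ are linear extensions of the same partial order: $\tau\prec\sigma$ if $\tau$ is a proper face of $\sigma$. Any two linear extensions of a finite poset are connected by a sequence of adjacent transpositions, each swapping two consecutive elements that are incomparable. Every intermediate ordering remains admissible, because a swap of incomparable neighbours never moves a face behind a simplex containing it. So it suffices to show that if $\sigma_s,\sigma_{s+1}$ are consecutive and neither is a face of the other, then performing Move-$k_s$ along $\sigma_s$ followed by Move-$k_{s+1}$ along $\sigma_{s+1}$ gives the same linear combination of bulk vectors as the reverse order. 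Equality is needed only up to gauge equivalence and the choice of fillings, which the algorithm has already shown do not affect the final scalar.

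For the local commutation, first use Lemma~\ref{Filling: algorithm can continue} to choose the tubular neighbourhoods $\mathcal U_s=\phi_s(\mathcal D^{k_s}_1\times D^{n+1-k_s})$ and $\mathcal U_{s+1}$. Since $\sigma_s\cap\sigma_{s+1}$ is a common face, it is strictly lower in the order and has already been removed: it lies in $\partial B_{s-1}$ after the earlier surgeries. Hence the interiors of $h(|\sigma_s|)\cap B_{s-1}$ and $h(|\sigma_{s+1}|)\cap B_{s-1}$ are disjoint. Taking the collar radii $\epsilon$ small enough, one can choose $\mathcal U_s$ and $\mathcal U_{s+1}$ meeting only inside $\partial B_{s-1}$, away from the substituting regions $\phi(D^k\times\epsilon D^{n+1-k})$.

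Each $\mathcal S^{k-1}$-relation modifies the bulk only inside its $W$ and creates new space boundary only there. Therefore the second move sees, in its neighbourhood, exactly the same link boundary $\mathcal S^{k_{s+1}-1}$ whichever move is done first. The summation index sets, the coefficients $Tr(\alpha)/\mu(\alpha)$, and the produced summands then agree term by term, giving the same double sum. For $k\ge 2$ this is immediate from Lemma~\ref{Filling: Sector is determined by homotopy group}, since the admissible sector depends only on the link boundary.

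The main obstacle is the case where one of the two moves is Move-$1$, or Move-$0$ interacting with Move-$1$. The admissible degree of $\alpha_1$ is fixed by the transport along the core interval, which is a global quantity determined through the bulk $G$-labeling $P'$. A filling chosen during the other move could alter it. I would handle this with Lemma~\ref{Filling: anything}(2) and the ``only if'' part of Lemma~\ref{Filling: Sector is determined by homotopy group}. The transport of the core of $\sigma_{s+1}$ equals the holonomy of a loop made from that core and a path in the current space boundary. This loop is unaffected by the other surgery, because the surgery happens in a disjoint ball. For Move-$0$ this uses the requirement that the radial transport be the identity. Consequently the sector is the same in both orders. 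Finally, apply the remark preceding the proposition to identify the resulting bulk vectors regardless of the fillings chosen. Then conclude by induction on the number of transpositions.
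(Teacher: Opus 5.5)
Your global reduction matches the paper's. Any two admissible orders are linear extensions of the face poset of the nonboundary simplices, and they are connected by swaps of consecutive incomparable simplices. When the two remaining cores are disjoint, the two relations have disjoint supports and commute.

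\textbf{The gap} is the step ``$\sigma_s\cap\sigma_{s+1}$ is strictly lower in the order and has already been removed''. The algorithm only processes simplices \emph{not} contained in $\partial B$. A common face lying in $\partial B$ is therefore not in the order and is never removed. By fullness, this is exactly the case in which the cores can still meet: $\sigma_s=\beta*w_1$ and $\sigma_{s+1}=\beta*w_2$, where $\beta$ is the common face spanned by the boundary vertices and $w_1,w_2$ are interior vertices. For instance, take two edges $[a,w_1]$, $[a,w_2]$ at a boundary vertex $a$. Then $D_s$ and $D_{s+1}$ both contain the surviving truncated part of $\beta$ in their boundaries.

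Each relation removes $\phi(D^k\times\epsilon D^{n+1-k})$, which meets $\partial B_{s-1}$ in $S^{k-1}\times\epsilon D^{n+1-k}$. So it is a neighbourhood of the \emph{whole} core, including $\partial D\subset\partial B_{s-1}$. Consequently the two removed regions overlap in the interior near $\beta$, however small $\epsilon$ is. Hence $\mathcal U_s,\mathcal U_{s+1}$ cannot be chosen to meet only in $\partial B_{s-1}$. Two of your claims are then unjustified in this case: that the second move sees ``exactly the same link boundary'', and that the transport loop is unaffected ``because the surgery happens in a disjoint ball''. After the first move, the second core is shortened and part of its link boundary lies on space boundary created by the first move.

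\textbf{The missing idea} is the paper's shared-cap construction. Over the remaining part of $\beta$, in a marked normal chart, the two cores are two inward rays in the half-space cone $c\operatorname{lk}_K\beta$. Take a short cap $C$ around $\beta$, and disjoint thin tubes $T_1,T_2$ around the two rays outside $C$. Whichever move is done first removes $C$ together with its own tube. The other core is shortened by the boundary collar, starts at the cap frontier, and removes only its tube. So both orders remove the same set $C\cup T_1\cup T_2$, and both are legal, using the common marked corner collar over $\beta$.

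Keep the fillings unchanged in $C$ and in the end collars, and place the coupons in the disjoint central regions. Then the new core types differ only by the unchanged collars. For Move-$1$ this extra short collar has transport $1$, so the admissible sector is the same in both orders. The index sets, weights and final labeled diagrams therefore match label by label, and the two finite sums commute. Without this comparison, or an equivalent treatment of the overlapping surgeries, the transposition step is not established for simplices meeting along the true boundary.
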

\begin{proof}
    See the appendix.
\end{proof}

\begin{defi}
    Suppose $\Delta^n=[e_0,...,e_n]$ is a linear $n$-simplex and $p$ is a point in the interior of $\Delta^k=[e_0,...,e_k]$.
    The $p$-subdivision of $\Delta^n$ is the subcomplex
    \begin{equation}
        \{[e_0,...,\hat{e_i},...,e_k]\star p\star [e_{k+1},...,e_n]\mid i=0,...,k\}
    \end{equation}
\end{defi}

This is called stellar and weld in other literature, for example \cite{lickorish1999simplicial}.
The Adjacent identity is just the invariance of the partition function under $p$-subdivisions.

\begin{defi}
    A triangulation of a ball $D^{n+1}$ is called shellable, if its $(n+1)$ dimensional simplices can be order as $\sigma_1,...,\sigma_N$ such that for $i=2,...,N$,
    \begin{equation}
        \sigma_i \cap \bigcup_{j\le i-1} \sigma_j
    \end{equation}
    is a union of some $n$-dimensional sub simplices of $\sigma_i$ and is a combinatorial ball.
\end{defi}

\begin{lem}(Shellable ball)\label{Shellable}\label{App: Shellable}

    Suppose that the algorithm reaches a step in which there is a PL ball $|\mathcal{Q}|\sim_{PL}D^{n+1}$, and the remaining triangulation in $Q$ is shellable.
    Then if we continue the algorithm for $\mathcal{Q}$ (the factor is denoted temporarily as $Z_{\text{tail}}(\mathcal{Q})$, which equals the algorithm output $Z(\mathcal{Q})$ after Coro.~\ref{partition function ball eval}), the factor is the evaluation of the hypersphere functional
    \begin{equation}
        Z_{\text{tail}}(\mathcal{Q})=Z(\mathcal{Q}|_{\partial Q}).
    \end{equation}
\end{lem}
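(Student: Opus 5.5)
We argue by induction on the number $N$ of top-dimensional simplices in the shellable triangulation of $Q$, peeling off the last simplex of the shelling. Let $\sigma_1,\dots,\sigma_N$ be a shelling order. Since the algorithm is independent of the order of steps (Prop.~\ref{App: order}), we may first process all remaining lower-dimensional simplices of $Q$ that are not contained in $Q' := \bigcup_{j\le N-1}\sigma_j$. These are the faces of $\sigma_N$ not lying in $R := \sigma_N\cap Q'$. We order them compatibly with the face order and then continue with the interior simplices of $Q'$. The claim is that the first block of surgeries, applied to $\sigma_N$ alone, reproduces $Z(\mathcal{Q}|_{\partial Q})$ up to a term that reduces to $Z(\mathcal{Q}'|_{\partial Q'})$. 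Then the inductive hypothesis for the shellable ball $Q'$ finishes the proof. The base case $N=1$ is a single $(n+1)$-simplex whose faces are all on the boundary, which by fullness means no inner simplices remain, so Move-$(n+1)$ gives $Z(\mathcal{Q}|_{\partial Q})$ directly.

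For the inductive step we use the shelling condition: $R$ is a combinatorial $n$-ball made of $n$-faces of $\sigma_N$. Hence the faces of $\sigma_N$ not in $R$ (and not already in $\partial Q$) are precisely the faces $\tau$ with $\tau\not\subset R$. A standard combinatorial fact about shellings is that these form an interval in the face lattice: there is a unique minimal such face $\tau_0$, and the new faces are exactly those containing $\tau_0$. Processing $\tau_0$ (of dimension $k$) first is an $\mathcal{S}^{k-1}$-relation along a handle whose attaching region lies in $\partial Q$. The remaining new faces, in pairs $\tau\subset\tau'$ with $\dim\tau'=\dim\tau+1$, are then arranged so that each consecutive surgery is exactly the configuration of an Adjacent Move $F^{k}_\pm$ on the boundary sphere of the current ball. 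Geometrically, removing $\sigma_N$ from $Q$ is an elementary shelling move, so $\partial Q$ and $\partial Q'$ differ by replacing the disk $\partial\sigma_N\setminus R$ by $R$. Pushed into the bulk, the surgeries along the new faces of $\sigma_N$ implement this replacement. We will use Theorem~\ref{Adjacent Move}, including the $(0,1)$ case supplied by (MI), to say that the sphere functional is unchanged. This gives an identity expressing $Z(\mathcal{Q}|_{\partial Q})$ as the evaluation of the sphere functional on $\partial Q'$, labeled with the newly created space-boundary data. Transversality and the existence of fillings at each stage come from Lemma~\ref{Filling: algorithm can continue} and Lemma~\ref{Filling: Sector is determined by homotopy group}. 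In particular, for a Move-$1$ the transport sector is fixed by the bulk, and this sector is exactly the one appearing in the adjacent identity.

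The main obstacle will be the precise matching between the algorithm's surgeries on $\sigma_N$ and the Adjacent Moves. Concretely, we must show that after the surgery along $\tau_0$, each subsequent surgery along a new face acts on a $\mathcal{V}$-labeled sphere in the local form $\mathcal{S}^{k-1}\times\epsilon D^{n-k+1}$ together with $\mathcal{S}^{k}\times\epsilon D^{n-k}$ required by the definition of $F^k_\pm$. We must also handle the local null-relation terms that appear when we rewrite boundary data via Prop.~\ref{A basis}, choosing $F^k_+$ or $F^k_-$ so that these terms avoid the second cut. We will try to organize this via a join decomposition $\sigma_N=\tau_0\star\tau_0^c$, with $R=\partial\tau_0\star\tau_0^c$. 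The new faces are then $\tau_0\star\rho$ for faces $\rho$ of $\tau_0^c$. This reduces the bookkeeping to an induction on $\dim\tau_0^c$, where in each step one pair of surgeries (a face $\tau_0\star\rho$ and a coface $\tau_0\star\rho'$) is collapsed by one Adjacent Move, leaving only the Move-$(n+1)$ evaluation on the final sphere $\partial Q'$.
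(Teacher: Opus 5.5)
There is a genuine gap: you have paired the wrong faces. With $\sigma_N=\tau_0\star\tau_0^c$ and $R=\partial\tau_0\star\tau_0^c$, every face $\tau_0\star\rho$ with $\rho\subsetneq\tau_0^c$ lies in $\tau_0\star\partial\tau_0^c$. This is the part of $\partial\sigma_N$ outside $R$, and it lies in $\partial Q$ because $\sigma_N$ is the last simplex of the shelling. So the only face of $\sigma_N$ that is outside $Q'$ and not in $\partial Q$ is $\sigma_N$ itself. In particular $\tau_0$ is a boundary face, no $\mathcal{S}^{k-1}$-relation is performed along it, and your ``first block of surgeries'' is empty. For example, glue two tetrahedra along a triangle: $\tau_0$ is the vertex opposite the shared triangle, a boundary vertex, and the only face the algorithm processes is the shared triangle, which lies in $Q'$.

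Conversely, the faces that become interior when $\sigma_N$ is attached are the $\eta\star\tau_0^c$ with $\eta\subsetneq\tau_0$, which form the relative interior of $R$. They lie on $\partial Q'$, so your inductive hypothesis for $Q'$ does not process them, yet the algorithm on $Q$ must. The induction step therefore never accounts for these surgeries, and the reduction from $Q$ to $Q'$ is not established. A lone surgery along $\tau_0$ would in any case not be a $Z$-identity without a partner.

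The correct interval is $[\tau_0^c,\sigma_N]$, not $[\tau_0,\sigma_N]$. Fix $v\in\tau_0$ and pair $(\tau_0^c\star\eta,\ \tau_0^c\star v\star\eta)$ for $\eta\subseteq\tau_0\setminus\{v\}$, processed in increasing $\dim\eta$.
\begin{itemize}
\item All other faces of these simplices lie in $\partial R$ or in $\tau_0\star\partial\tau_0^c$, hence in $\partial Q$, so this block may legally be processed before the interior faces of $Q'$.
\item Each pair is a cancelling pair realizing exactly the two cuts of an adjacent move $F^k_\pm$. The last pair (a facet and $\sigma_N$) is the $(n,n+1)$ case, which isolates and evaluates the ball of $\sigma_N$. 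When $\tau_0^c$ is a vertex, the first pair is the $(0,1)$ case, which uses (MI).
\item After the block, what remains is $Q'$ with a collar bulge, and its remaining faces are exactly the interior faces of $Q'$. Your induction on $N$ then applies, and you finish by running backwards through the pairs using $Z\circ F^k_\pm=Z$.
\end{itemize}
This corrected argument is the paper's. The paper orders the interior faces as the reverse of a collapse of $Q\setminus\operatorname{Int}\Omega$ onto $\partial Q$, which produces precisely these pairs shelling step by shelling step, and then uses backward induction on the tail.

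Two smaller points. No rewriting via Prop.~\ref{A basis} or null-relation bookkeeping is needed, because the adjacent identity is already proved for arbitrary labeled spheres and is applied to the complete current boundary diagram. The base case does not use fullness (the remaining cells need not be full): a single simplex simply has no interior proper faces.
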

\begin{proof}
    Use the Adjacent identity.
    See the appendix.
\end{proof}

\begin{rmk}
    Shellableness does not imply fullness. 
    So one cannot say that one is performing the algorithm for the bulk vector $\mathcal{Q}$.
\end{rmk}

\begin{prop}\label{App: p-subdivision invariance}
    The partition function is invariant under $p$-subdivisions.
\end{prop}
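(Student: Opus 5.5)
We compare the algorithm on a full transversal triangulation $\Delta$ with the algorithm on its $p$-subdivision $\Delta'$, where $p$ lies in the interior of a simplex $\tau=[e_0,\dots,e_k]$ and the subdivision is performed inside the star of $\tau$ (equivalently, on each top simplex $[e_0,\dots,e_n]$ containing $\tau$). By Prop.~\ref{App: order} we may choose the orders of simplices freely, subject to faces coming first. We run both algorithms identically on all simplices of $\Delta$ that are not contained in $st_\Delta(\tau)$ and do not contain $\tau$; these are also simplices of $\Delta'$. We also process all proper faces of simplices containing $\tau$ that do not themselves contain $\tau$. At this stage both computations produce the same linear combination of bulk vectors, with the same remaining region $Q$: a neighborhood of $\overline{ost_\Delta(\tau)}$ minus the thickened faces already processed.

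\textbf{Case 1: $\tau\subset\partial B$.} Fullness and the product collar imply that $p\in\partial B$. The new simplices of $\Delta'$ through $p$ with all vertices on $\partial B$ are then boundary simplices and are not processed. For the remaining simplices the argument of Case 2 applies.

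\textbf{Case 2: $\tau$ not in the boundary.} In $\Delta$ the remaining steps process $\tau$ and then the simplices $\tau\star\eta$ for $\eta\in lk(\tau)$, in order of dimension. In $\Delta'$ the remaining steps process the vertex $p$ (a Move-$0$), the edges $[p,e_i]$, the faces $[e_0,\dots,\hat e_i,\dots,e_k]\star\eta$ that are new interior simplices, and the cones $p\star(\cdot)$.

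We argue locally around each top simplex $\sigma\supset\tau$. After processing the common faces, $Q\cap\sigma$ is a PL ball whose remaining triangulation is $\tau\star(\text{link part})$ in one case and the $p$-subdivision in the other. Both are shellable: cones are shellable, and the $p$-subdivision of a simplex is shellable by shelling the pieces $[e_0,\dots,\hat e_i,\dots,e_k]\star p\star[e_{k+1},\dots,e_n]$ in order.

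The main tool is Lemma~\ref{Shellable}. Applied to the remaining ball, it shows that both tails equal the hypersphere functional $Z$ evaluated on the same $\mathcal V$-labeled boundary sphere $\mathcal{Q}|_{\partial Q}$. Hence the two partition functions agree term by term, and invariance follows by linearity. If Lemma~\ref{Shellable} applies directly to $\overline{ost(\tau)}$ after the common steps, with $Q$ a ball, this finishes the proof in one stroke. We take this route: the closed star of an interior simplex in a combinatorial manifold is a PL ball, and the star of $\tau$ and its $p$-subdivision are both cones over the boundary of the star, hence shellable after ordering.

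\textbf{Main obstacle.} The hard step is justifying that after the common initial steps the remaining region is a single PL ball with shellable remaining triangulation, and that both algorithms leave the same boundary labeling. This requires two checks. First, the Move-$1$ sectors on the new edges $[p,e_i]$ must be compatible with the transport fixed by $P$, which follows from Lemma~\ref{Filling: Sector is determined by homotopy group}. Second, the fillings used at each step (from Lemma~\ref{Filling: algorithm can continue}) must not affect later link boundaries, as remarked after the algorithm. The Adjacent identity (Theorem~\ref{Adjacent Move}), with (MI) for the $(0,1)$ case arising at $p$, is what powers Lemma~\ref{Shellable}, so it enters only through that lemma.
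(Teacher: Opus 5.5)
\textbf{The gap is the shellability claim.} For a general simplex $\tau$ the stars you compare need not be shellable, so Lemma~\ref{Shellable} cannot be invoked.

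Your strategy is the paper's argument in the special case where $L=\operatorname{lk}_\Delta(\tau)$ is shellable (Proposition~\ref{app:shellable-stellar}). You process every simplex not containing $\tau$ as a common prefix, which leaves the same boundary diagram on a ball around the star. Then both remaining tails are evaluated by Lemma~\ref{Shellable}. The step that fails is your claim that $\tau\star L$ and $p\star\partial\tau\star L$ are shellable ``because they are cones''. Shellability passes to links of faces. $L$ is the link of $\tau$ in $\tau\star L$. It is also the link of any facet of $\partial\tau$ inside $\partial\tau\star L=\operatorname{lk}(p)$. So each of the two stars is shellable if and only if $L$ is. (Also, $\tau\star L$ is not a cone over the boundary of the star.)

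PL spheres and balls need not be shellable; examples are Lickorish's unshellable $3$-spheres and Rudin's $3$-ball. Such complexes occur as links of vertices or edges in combinatorial manifolds of dimension $n+1\ge 4$. Hence for $n\ge 3$ your argument is incomplete; it works only when all links are shellable, for instance $n\le 2$. Your earlier per-top-simplex localization fails for a separate reason: neighbouring top simplices of the star share the faces $\tau\star\eta$, which lie in neither common prefix.

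\textbf{What is missing is a reduction to moves with shellable links}, Lemma~\ref{app:stellar-factor}.
\begin{enumerate}
\item Make $L$ shellable by stellar subdivisions whose centers already have shellable links. First make the vertex links shellable, by induction on dimension. Then apply enough derived subdivisions, which are shellable by \cite[Corollary~I.3.10]{adiprasito2017subdivisions}; implement these as stellar moves in decreasing dimension.
\item Lift each such center $C\subset L$ to both sides of the desired move via the exchange identity $s^p_\tau s^b_{\tau\star C}K=s^b_{p\star C}s^p_\tau K$. The lifted centers have links $H=\operatorname{lk}_L C$ and $\partial\tau\star H$, both shellable.
\item Subdivide up on one side, perform $s^p_\tau$ once its link is shellable, and come back down on the other side. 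All intermediate complexes are full, since they are subdivisions of full ones.
\item The intermediate triangulations need not be transverse to the bulk. So take a common subdivision of all of them and apply Theorem~\ref{transversal subdivision preparation}: a $\Delta'$-cell-preserving isotopy of the bulk makes it transverse to all of them. Compare the sums by cell-preserving naturality (Proposition~\ref{app:algorithm-choices}), not by general isotopy invariance.
\end{enumerate}
You also cannot shortcut this by first passing to a subdivision with shellable vertex stars, as in Lemma~\ref{subdivision shellable}. Invariance under such a subdivision is exactly what this proposition establishes, and isotopy invariance is proved later from it.
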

\begin{proof}
    See the appendix.
\end{proof}

In order to prove the invariance of the partition function of the triangulations, we need to prove its invariance under subdivisions and isotopies.
Invariance under subdivisions are compositions of invariance under $p$-subdivisions, but one should take care of the transversal properties.

\begin{lem}\label{App: p-subdivision sequence}
    Suppose that the triangulation $\Gamma$ is a subdivision of the full triangulation $\Delta$. Then they are connected by a sequence of $p$-subdivisions and their inverses, which are all full.
\end{lem}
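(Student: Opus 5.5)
The plan is to reduce to the classical result on subdivisions and stellar moves, and then handle fullness with a localized correction. A triangulation $\Gamma$ subdividing $\Delta$ (both combinatorial) is connected to $\Delta$ by a finite sequence of stellar subdivisions and welds. This is Alexander--Newman; see \cite{lickorish1999simplicial}, which gives the relative version that any subdivision is stellar equivalent, and \cite{adiprasito2017subdivisions}, which gives a derived-subdivision refinement. A stellar subdivision at a point $p$ in the interior of a simplex $\tau$ restricts, on each top simplex containing $\tau$, to exactly the $p$-subdivision of the definition above. So each elementary move is a $p$-subdivision or its inverse. The only real content is arranging that every intermediate triangulation stays \emph{full}.

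The steps are as follows.

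\textbf{Step 1.} First pass from $\Delta$ to its derived subdivision $\Delta'$, and likewise from $\Gamma$ to $\Gamma'$. A derived subdivision is obtained by starring the barycenters of the simplices in order of decreasing dimension. Each starring at an interior point $p$ of $\tau$ keeps fullness. The new simplices all contain $p$. If $\tau\not\subset\partial B$, then $p\notin\partial B$, so new simplices with all vertices on $\partial B$ do not arise. If $\tau\subset\partial B$, then $p\in\partial B$, and a new simplex $\rho\star p$ with all vertices in $\partial B$ has $\rho\cup\tau$ spanning a simplex of the old complex. That simplex has all its vertices in $\partial B$, so by fullness of the old complex it lies in $\partial B$, and hence so does $\rho\star p$.

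This argument uses only that the starring point lies in $\partial B$ exactly when $\tau$ does. Therefore \emph{any} stellar subdivision of a full triangulation is full.

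\textbf{Step 2.} It remains to connect $\Delta'$ and $\Gamma'$, where $\Gamma'$ is a subdivision of $\Delta'$. Apply the stellar-equivalence theorem to get a chain $\Delta'=K_0,K_1,\dots,K_m=\Gamma'$ of stellar moves. Replace the chain by one whose members are all obtained from $\Delta'$ by stellar subdivisions only, i.e.\ where welds appear only as inverses of subdivisions. Since stellar subdivisions preserve fullness by Step 1, the members are then full.

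For this I would use the standard fact that two stellar-equivalent complexes have a common stellar subdivision; this is Newman's theorem in its ``subdivide both sides'' form. Concretely, given $K_i$ and $K_{i+1}$ related by a weld, $K_{i+1}$ is the one further down, so $K_i$ is a stellar subdivision of $K_{i+1}$. Then I replace the zigzag $\Delta'\to\cdots$ by a path of the form $\Delta'\nearrow L \swarrow \Gamma'$, where $L$ is a common stellar subdivision of $\Delta'$ and $\Gamma'$. Both legs consist of stellar subdivisions of full complexes, so all intermediate complexes are full.

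\textbf{Step 3.} Concatenate: $\Delta\nearrow\Delta'\nearrow L\swarrow\Gamma'\swarrow\Gamma$. Every arrow is a $p$-subdivision or its inverse between full triangulations, which proves the lemma.

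The main obstacle is Step 2. I need the existence of a common \emph{stellar} subdivision for a subdivision pair, and I need it relative to $\partial B$, so that fullness transfers through Step 1's observation. If the general common-stellar-subdivision statement is not directly citable, I would instead invoke the Adiprasito--Benedetti result that some iterated derived subdivision of $\Delta$ is a stellar subdivision of $\Gamma$ (after passing to deriveds). Since iterated derived subdivisions of $\Delta$ are stellar subdivisions of $\Delta$, this supplies $L$ directly.
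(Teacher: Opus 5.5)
Your architecture matches the paper's. You produce a common iterated stellar subdivision $R$, which gives a chain $\Delta\to\cdots\to R\leftarrow\cdots\leftarrow\Gamma$ of $p$-subdivisions, and you then check fullness along both legs. The gap is in how you get $R$. The Alexander--Newman theorem gives only a zigzag of stellar subdivisions and welds. There is no general procedure for pushing the welds past the subdivisions, and the ``subdivide both sides'' form you call standard is not part of that theorem. The existence of a common stellar subdivision is a deep statement of strong-factorization type. It is exactly what the paper imports from \cite{adiprasito2026all}, Theorem~1.1.

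Your fallback does not close the gap either. \cite{adiprasito2017subdivisions} is about shellability after derived subdivisions. Moreover, the claim ``some iterated derived subdivision of $\Delta$ is a stellar subdivision of $\Gamma$'' would itself produce a common stellar subdivision of $\Delta$ and $\Gamma$. So it is the same statement, not an easier one. As written, Step~2 rests on an unsupported claim; with the correct citation it goes through.

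There is also a smaller gap in the fullness bookkeeping. Your Step~1 argument is correct, but it covers only \emph{stellar} subdivisions. Here $\Gamma$ is an arbitrary subdivision of $\Delta$, and its fullness is part of the conclusion, not a hypothesis. Without it you cannot conclude that the stellar subdivisions of $\Gamma'$ on the right leg are full.

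The paper avoids this with the general fact that every subdivision of a full triangulation is full. For an old simplex $\sigma$, fullness (with $\partial B$ a subcomplex) shows that $|\sigma|\cap\partial B$ is the face spanned by the boundary vertices of $\sigma$. Hence any new simplex inside $|\sigma|$ with all vertices on $\partial B$ lies in that face. Every member of both legs is a subdivision of $\Delta$, so this settles fullness everywhere at once.

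Finally, the detour through derived subdivisions is unnecessary. Your assertion that $\Gamma'$ subdivides $\Delta'$ is also false in general, since the barycenters of simplices of $\Delta$ need not be vertices of $\Gamma'$. Apply the common-stellar-subdivision theorem to $\Delta$ and $\Gamma$ directly, as the paper does.
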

\begin{proof}
    This is a slight generalization of a theorem in \cite{lickorish1999simplicial}.
    For the details, see the appendix.
\end{proof}

\begin{prop}\label{partition function subdivision inv}
    The partition function is invariant under subdivisions.
\end{prop}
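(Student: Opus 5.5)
We plan to derive Prop.~\ref{partition function subdivision inv} by combining Lem.~\ref{App: p-subdivision sequence} with Prop.~\ref{App: p-subdivision invariance}. Let $\Delta$ be a full transversal triangulation of the bulk vector $\mathcal{B}$, and let $\Gamma$ be a subdivision of $\Delta$ which is also full and transversal. We want to show that the algorithm gives the same scalar for $\Gamma$ and for $\Delta$. By Lem.~\ref{App: p-subdivision sequence}, there is a chain $\Delta=\Delta_0,\Delta_1,\dots,\Delta_m=\Gamma$ of full triangulations, where consecutive terms differ by a $p$-subdivision or its inverse. If every $\Delta_i$ were transversal to $P_B$, Prop.~\ref{App: p-subdivision invariance} would give $Z_{\Delta_i}(\mathcal{B})=Z_{\Delta_{i+1}}(\mathcal{B})$, and the claim would follow by chaining these equalities.

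The intermediate triangulations need not be transversal, and this is the main obstacle. To handle it we move the stratification rather than the triangulation. All the $\Delta_i$ are subdivisions of $\Delta$ or refine to a common subdivision along the chain. We therefore choose a single combinatorial subdivision $\Gamma'$ that refines every $\Delta_i$; for instance, a common subdivision of the finitely many complexes as in Thm.~\ref{common subdivision}, taken relative to $\Delta$. We then apply Thm.~\ref{App: transversal subdivision preparation} to $\Delta$ and $\Gamma'$. This yields an isotopy $h_t$ which preserves every simplex of $\Delta$ and makes $h_{1*}(P_B)$ transversal to $\Gamma'$.

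We need transversality to $\Gamma'$ to pass down to each $\Delta_i$, since each $\Delta_i$ is a subcomplex-level coarsening of $\Gamma'$. The reason is that a simplex of $\Delta_i$ is a union of simplices of $\Gamma'$, each transversal. We expect this to need an argument parallel to Prop.~\ref{App: transversal of refinement}, and we flag it as the delicate point. Since $h_t$ fixes $\partial B$ setwise simplex by simplex, we must also check that it is rel boundary where required. We can arrange this by first using the collar and Thm.~\ref{App: transversal triangulation}, so that $h_{1*}P_B$ is gauge equivalent to $P_B$ in the sense of Def.~\ref{Gauge}.

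With this set up, we will compare $Z_\Delta$ computed with $P_B$ and with $h_{1*}P_B$. Two triangulations transversal to the same stratification, related by moving the stratification by an isotopy through stratifications transversal to $\Delta$, give equal partition functions. To show this, we decompose the isotopy into the vertex-star-supported pieces of Thm.~\ref{App: transversal isotopy step by step}. Each piece changes only the labeled data inside $ost_\Delta(v_j)$, and there the local surgeries are unchanged up to isotopy of the labeled space boundaries. By Prop.~\ref{App: order} we may process the simplices containing $v_j$ last, which localizes the computation to a ball. The sphere functional is isotopy invariant, and the filling-independence noted after Lem.~\ref{Filling: algorithm can continue} then gives equality. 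The same argument applies to $\Gamma$.

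Finally, we will run the chain $\Delta_0,\dots,\Delta_m$ with respect to $h_{1*}P_B$, applying Prop.~\ref{App: p-subdivision invariance} at each step. Combining this with the two comparisons above gives $Z_\Delta(\mathcal{B})=Z_\Gamma(\mathcal{B})$.
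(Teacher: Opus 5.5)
\textbf{Genuine gap.} Your overall skeleton matches the paper's: the $p$-subdivision chain, a common subdivision of all intermediate triangulations, a preparation isotopy, and then $p$-subdivision invariance along the chain. The gap is the step comparing $Z_\Delta(\mathcal{B})$ with $Z_\Delta(h_{1*}\mathcal{B})$, and likewise for $\Gamma$.

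What you sketch there is the isotopy-invariance argument of Prop.~\ref{partition function isotopy invariance}. In the paper that argument is proved after, and by means of, the present proposition, so using it here is circular. Concretely:
\begin{itemize}
\item An isotopy supported in $ost_\Delta(v_j)$ moves the strata relative to the fixed simplices of the star. The cores, their link boundaries and the Move-$1$ sectors inside the star therefore genuinely change. The two computations are not the same moves up to an isotopy of labeled boundaries.
\item To compare them you must show that the complete tail inside the star equals $Z$ of its boundary diagram. That is Lem.~\ref{Shellable}, which needs a shellable star.
\item The vertex stars of $\Delta$ need not be shellable. Making them so (Lem.~\ref{subdivision shellable}) means passing to a subdivision, which is exactly the invariance you are proving. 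Cor.~\ref{partition function ball eval} is unavailable for the same reason.
\item Your rel-boundary/gauge fix does not help, because gauge invariance (Thm.~\ref{Gauge invariance}) is established only after well-definedness.
\end{itemize}

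\textbf{The missing observation.} No isotopy invariance is needed, because the preparation isotopy is cell-preserving.
\begin{itemize}
\item $Z(\mathcal{B},\Delta)=Z(h_*\mathcal{B},h(\Delta))$ is a tautology. The algorithm is natural under PL homeomorphisms transporting all data, including the space-boundary labels, so it is irrelevant that $h$ moves $\partial B$.
\item Since $h$ maps each simplex of $\Delta$ onto itself, $h(\Delta)$ and $\Delta$ have the same cells. So the value equals $Z(h_*\mathcal{B},\Delta)$ by cell-preserving naturality (Prop.~\ref{app:algorithm-choices}).
\item For the $\Gamma$ end to work the same way, apply Thm.~\ref{App: transversal subdivision preparation} with the finer transversal triangulation $\Gamma$ as base, and a common subdivision $\Lambda$ of all the $K_j$ as the subdivision. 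With $\Delta$ as base, as in your proposal, $h$ need not preserve the simplices of $\Gamma$. With $\Gamma$ as base, $h$ preserves every simplex of $\Gamma$, hence of $\Delta$.
\item Transversality to $\Lambda$ passes to each $K_j$ by the collar criterion and amalgamation (Lemma~\ref{app:criterion}, Cor.~\ref{app:amalgamation}), rather than by the refinement proposition.
\end{itemize}
With $\mathcal{B}'=h_*\mathcal{B}$, the chain is $Z(\mathcal{B},\Delta)=Z(\mathcal{B}',\Delta)=Z(\mathcal{B}',K_1)=\cdots=Z(\mathcal{B}',\Gamma)=Z(\mathcal{B},\Gamma)$. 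The middle equalities are $p$-subdivision invariance.
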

\begin{proof}
    Suppose that $\Delta$ and $\Gamma$ are full transversal triangulations of the bulk vector $\mathcal{B}$.
    By the above lemma one can connect them by a full sequence $\Delta = K_0 \leftrightarrow K_1 \leftrightarrow ... \leftrightarrow K_m =\Gamma$.
    Note that $K_j$'s may not be transversal to $\mathcal{B}$, but one can find their common subdivision $\Lambda$ by Thm.~\ref{common subdivision} and an isotopy $h_*(\mathcal{B})=\mathcal{B}'$ such that $\Lambda$ is transversal to $\mathcal{B}'$ and $h(\Gamma)$ preserves simplices of $\Gamma$ by Thm.~\ref{transversal subdivision preparation}.
    Then all the $K_j$'s are full and transversal to $\mathcal{B}'$.
    Then one has 
    \begin{equation}
        Z(\mathcal{B},\Delta) = Z(h_*(\mathcal{B}),h(\Delta)) = Z(\mathcal{B}',\Delta) = Z(\mathcal{B}',K_j) = ... = Z(\mathcal{B}',\Gamma) = Z(h_*(\mathcal{B}),h(\Gamma)) = Z(\mathcal{B},\Gamma)
    \end{equation}
    where the first and the last equalities are tautologies, the second and the second last are by assumption $h(\Gamma)$ preserve simplices of $\Gamma$, and the middle equalities are by the Adjacent Move Thm.~\ref{Adjacent Move}.
\end{proof}

\begin{coro}\label{partition function ball eval}
    Suppose that $\Delta$ is a full transversal triangulation of a bulk vector $\mathcal{Q}$ with support $D^{n+1}$. Then
    \begin{equation}
        Z(\mathcal{Q},\Delta) = Z(\mathcal{Q}|_{S^n})
    \end{equation}
    where the LHS is the partition function, and the RHS is the hyper sphere functional.
\end{coro}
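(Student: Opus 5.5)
The plan is to combine Proposition~\ref{partition function subdivision inv} with Lemma~\ref{Shellable}. A full transversal triangulation may itself fail to be shellable, so we will pass to a shellable one.

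First, choose a full transversal triangulation $\Gamma$ of $\mathcal{Q}$ that subdivides $\Delta$ and is shellable. By the results of Adiprasito and Benedetti \cite{adiprasito2017subdivisions}, every PL triangulation of $D^{n+1}$ becomes shellable after sufficiently many derived subdivisions. Let $\Gamma$ be an iterated derived subdivision of $\Delta$. Derived subdivisions of a full triangulation remain full, because a new simplex with all vertices in $\partial Q$ has those vertices as barycenters of boundary simplices, so it lies in a derived subdivision of $\partial Q$. The subdivision $\Gamma$ need not be transversal. We therefore apply Thm.~\ref{transversal subdivision preparation} to obtain an isotopy $h_t$ fixing the simplices of $\Delta$ setwise, with $h_{1*}(\mathcal{Q})$ transversal to $\Gamma$. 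Exactly as in the proof of Prop.~\ref{partition function subdivision inv}, this gives
\begin{equation}
Z(\mathcal{Q},\Delta)=Z(h_{1*}\mathcal{Q},\Delta)=Z(h_{1*}\mathcal{Q},\Gamma).
\end{equation}
The isotopy should also fix $\partial Q$, which Thm.~\ref{transversal subdivision preparation} allows since it preserves each simplex of $\Delta$ and hence $\partial Q$. Consequently $h_{1*}\mathcal{Q}|_{S^n}$ agrees with $\mathcal{Q}|_{S^n}$, up to an isotopy of $S^n$ under which $Z$ is invariant.

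Next, run the algorithm for $(h_{1*}\mathcal{Q},\Gamma)$. At step $0$, before any surgery, the region is the ball $Q$ itself with the whole shellable triangulation $\Gamma$ remaining. Lemma~\ref{Shellable} therefore applies with $\mathcal{Q}$ the entire ball and $Z_{\text{tail}}=Z(\mathcal{Q})$. It yields $Z(h_{1*}\mathcal{Q},\Gamma)=Z(h_{1*}\mathcal{Q}|_{\partial Q})=Z(\mathcal{Q}|_{S^n})$, which completes the chain.

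The main obstacle is to make the shellable refinement simultaneously full, transversal, and a subdivision of $\Delta$. The hardest part is checking that fullness survives derived subdivision near the boundary, including the boundary collar used in Lemma~\ref{Filling: algorithm can continue}. If iterated derived subdivisions of $\Delta$ are not directly covered by \cite{adiprasito2017subdivisions}, the fallback is to take a shellable triangulation $\Gamma'$ of the ball and a common subdivision with $\Delta$. Since Prop.~\ref{partition function subdivision inv} handles any pair of full transversal triangulations related by subdivision, it would then suffice to derive-subdivide that common subdivision until it becomes shellable and full.
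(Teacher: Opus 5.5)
Your proof is correct. It has the same skeleton as the paper's: reach a full, transversal, shellable triangulation using Prop.~\ref{partition function subdivision inv} and the preparation isotopy of Thm.~\ref{transversal subdivision preparation}, apply Lem.~\ref{Shellable} at step $0$ to the whole ball, and finish with isotopy invariance of the sphere functional on $\partial Q$.

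One wording point: an isotopy preserving each simplex of $\Delta$ preserves $\partial Q$ only setwise, not pointwise. The step is carried by your isotopy-invariance clause, exactly as in the paper's last equality.

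\textbf{Where the routes differ.} The difference is the source of the shellable triangulation.
\begin{itemize}
\item The paper takes a shellable transversal triangulation of the sphere $\partial Q$ (e.g.\ transported from the boundary of a simplex) and cones it to an interior point. The cone $\Delta'$ is automatically full, since every nonboundary simplex contains the apex. It is also shellable, as the cone of a shellable sphere, so the shellability input is elementary. However, $\Delta'$ is unrelated to $\Delta$. The paper therefore needs a common subdivision $\Gamma$, subdivision invariance twice ($\Delta\to\Gamma\leftarrow\Delta'$), and the fact that transversality to $\Gamma$ passes to $\Delta'$.
\item You instead subdivide $\Delta$ itself. You invoke the Adiprasito--Benedetti theorem that a PL triangulation of a ball becomes shellable after finitely many derived subdivisions; the paper cites this same result in Cor.~\ref{buyaotouhuangainian}. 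This is a deeper input, but it needs only one subdivision-invariance step and no common subdivision.
\end{itemize}

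\textbf{The flagged obstacle is not real.} A simplex of a derived subdivision corresponds to a chain $\sigma_0<\cdots<\sigma_r$ and lies in $\sigma_r$. If its vertex $b_{\sigma_r}$ lies in $\partial Q$, then $\sigma_r\subset\partial Q$, because $\partial Q$ is a subcomplex. Hence derived subdivisions are full outright, and the appendix also records that every subdivision of a full triangulation is full. The collars in Lem.~\ref{Filling: algorithm can continue} are built afterwards for whatever full transversal triangulation is given. So your fallback is unnecessary.
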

\begin{proof}
    Take a shellable transversal triangulation of the boundary sphere $\partial Q$, and take the cone. 
    Using Thm.~\ref{transversal triangulation} one can WLOG suppose that it is transversal to $\mathcal{Q}$. 
    It is denoted $\Delta'$.
    By Thm.~\ref{common subdivision}, there is a common subdivision $\Gamma$ of $\Delta,\Delta'$.
    Using Thm.~\ref{transversal subdivision preparation}, there is an isotopy $F$ fixing the simplices of $\Delta$ and make $\Gamma$ transversal.
    Then 
    \begin{equation}
        Z(\mathcal{Q},\Delta)=Z(F_*(\mathcal{Q}),\Delta)=Z(F_*(\mathcal{Q}),\Gamma)=Z_{\text{tail}}(F_*(\mathcal{Q}),\Delta')=Z(F_*(\mathcal{Q})|_{\partial Q})=Z(\mathcal{Q}|_{\partial Q})
    \end{equation}
    where we used subdivision invariance Prop.~\ref{partition function subdivision inv} and Lem.~\ref{Shellable}.
\end{proof}

\begin{lem}\label{subdivision shellable}
    For a transversal triangulation, it has a subdivision in which the star of each vertex is shellable.
\end{lem}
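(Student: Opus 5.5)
The plan is to use the derived (barycentric) subdivision results of Adiprasito and Benedetti \cite{adiprasito2017subdivisions} and then restore transversality with Thm.~\ref{transversal subdivision preparation}.

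\textbf{Step 1: a shellable subdivision.} Let $\Delta$ be the given transversal triangulation of the compact stratified PL manifold $\mathcal{B}$. The star $st_\Delta(v)$ of a vertex $v$ is the cone $v\star lk_\Delta(v)$ over its link. Since $\Delta$ is combinatorial, this link is a PL sphere, or a PL ball for boundary vertices. By Adiprasito--Benedetti, for every PL ball or sphere $L$ there is an $r$ such that the $r$-th derived subdivision of $L$ is shellable. There are finitely many vertices, so we can take $r$ uniform over all links. We then take $\Gamma$ to be an $r$-fold derived subdivision of $\Delta$, in a form adapted to stars. Concretely, we want the star of each vertex $w$ of $\Gamma$ to be a cone over a subdivided link. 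That link is a join of derived subdivisions of pieces of $\Delta$: faces of a simplex and links in $\Delta$. A cone over a shellable complex is shellable, and joins of shellable complexes are shellable. So it suffices that each factor in this join decomposition is shellable. The boundaries of simplices are shellable already, and the link factors are shellable after enough derivations.

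\textbf{Step 2: transversality.} The subdivision $\Gamma$ may fail to be transversal to $\mathcal{B}$. Thm.~\ref{transversal subdivision preparation} gives an isotopy $h_t$ with $h_t(\sigma)=\sigma$ for every $\sigma\in\Delta$, such that $\Gamma$ is transversal to $h_{1*}(\mathcal{B})$. Equivalently, $h_1^{-1}(\Gamma)$ is a subdivision of $\Delta$ that is transversal to $\mathcal{B}$. Shellability of vertex stars is combinatorial, so it is preserved by this relabeling. If fullness is also wanted, note that derived subdivisions of a full triangulation with the boundary as a subcomplex remain full. Indeed, a new simplex with all vertices on $\partial B$ has as vertices barycenters of simplices lying in $\partial B$, so it lies in $\partial B$.

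\textbf{Main obstacle.} The hard point is justifying that the stars of the new vertices in an iterated derived subdivision are shellable. The new vertices are barycenters of lower-dimensional simplices, and their links mix simplex faces with links of $\Delta$ in an iterated way. I would first handle a single derived subdivision, where the link of a barycenter $\hat\tau$ is $\partial\tau'\star lk_\Delta(\tau)'$. This is a join of a shellable sphere with a derived link. I would then induct on $r$ using the Adiprasito--Benedetti statement applied to all the finitely many links $lk_\Delta(\tau)$ simultaneously. If that bookkeeping fails, the fallback is to subdivide star by star: replace each $st(v)$ by a shellable subdivision of itself that agrees on the boundary, processing vertices in order. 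Each such step only needs a shellable subdivision of a single ball relative to its boundary, again from \cite{adiprasito2017subdivisions}.
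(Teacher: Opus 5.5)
There is a genuine gap in Step~1, at the point you yourself flag as the main obstacle.

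\textbf{The join decomposition fails for $r\ge 2$.} The link of a vertex of $\Delta^{(r)}$ is not a join of derived subdivisions of simplex boundaries and of links of $\Delta$. If $w=\hat\tau$ with $\tau\in\Delta^{(r-1)}$, then $lk_{\Delta^{(r)}}(w)=(\partial\tau)'\star(lk_{\Delta^{(r-1)}}\tau)'$. For $r\ge2$, $lk_{\Delta^{(r-1)}}\tau$ is in general itself a join, so the second factor is a derived subdivision of a join. Already for $r=2$, take $\tau$ to be the edge of $\Delta'$ given by the chain $v<t$, where $v$ is a vertex of a triangle $t$. Then
\[
lk_{\Delta''}(\hat\tau)=(\partial\tau)'\star\bigl(S^0\star(lk_\Delta t)'\bigr)',
\]
and $lk_\Delta t$ has been derived only once before being joined.

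\textbf{Why Adiprasito--Benedetti do not cover this.} What you take from them is shellability of $(lk_\Delta\sigma)^{(r)}$ for large $r$. That says nothing about complexes of the form above, for two reasons. Derived subdivision does not commute with joins. Shellability is not inherited by arbitrary subdivisions; Rudin's ball is a non-shellable subdivision of a tetrahedron.

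\textbf{Why your proposed repairs do not close the gap.} Your one-step formula $(\partial\tau)'\star(lk_\Delta\tau)'$ helps only if a single derivation already makes every link shellable. If you derive further to repair the links, you create new vertices. Their links again contain a link factor derived only once inside a join, so the induction on $r$ never terminates. The fallback aims at the wrong property. The lemma, and its use in Prop.~\ref{partition function isotopy invariance} (where the isotopy steps are supported in open stars of vertices of the triangulation actually used), needs every vertex of the final triangulation to have a shellable star, i.e.\ a shellable link. Replacing $st_\Delta(v)$ by a shellable subdivided ball does not make the new, smaller star of $v$ shellable. It says nothing about the new vertices inside, and later subdivisions of overlapping stars can undo it.

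\textbf{What is missing.} You need a construction in which every new vertex is born with a shellable link and existing links change only by stellar subdivisions, which preserve shellability.
\begin{itemize}
\item Treat the original vertices one at a time. By induction on dimension, turn $lk(v)$ into a shellable complex by stellar subdivisions at centers $C$ whose links are shellable.
\item Realize each such move as the stellar subdivision of the whole complex at $v\star C$. This changes $lk(v)$ by the stellar move at $C$.
\item The new vertex gets the link $\partial(v\star C)\star lk_{lk(v)}(C)$, a join of shellable complexes. Every other vertex link either undergoes a stellar subdivision or is unchanged.
\item Adiprasito--Benedetti enter only inside the induction. They make a lower-dimensional sphere or ball, whose vertex links are already shellable, into a shellable complex by derived subdivisions carried out as stellar moves in decreasing dimension.
\end{itemize}
This is the preparation step in the proof of Lemma~\ref{app:stellar-factor}. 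Since each vertex star is the cone over its link, this proves the lemma. Your Step~2 (restoring transversality by Thm.~\ref{transversal subdivision preparation}, and the fullness check) then applies unchanged.
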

\begin{proof}
    See \cite{adiprasito2017subdivisions}.
\end{proof}

\begin{prop}\label{partition function isotopy invariance}
    The partition function is invariant under isotopy.
\end{prop}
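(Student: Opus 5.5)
Here is the plan. Let $\mathcal{B}$ be a bulk vector and $F_t$ an ambient isotopy relative to the boundary. We must show $Z(\mathcal{B},\Delta)=Z(F_{1*}(\mathcal{B}),\Delta')$ for full transversal triangulations $\Delta$ of $\mathcal{B}$ and $\Delta'$ of $F_{1*}(\mathcal{B})$. Transporting the triangulation along with the stratification, $Z(\mathcal{B},\Delta)=Z(F_{1*}\mathcal{B},F_1(\Delta))$ is a tautology. Hence, by Prop.~\ref{partition function subdivision inv} and Thm.~\ref{common subdivision}, it suffices to prove the following: for a \emph{fixed} full triangulation $\Delta$ that is transversal both to $\mathcal{B}$ and to $F_{1*}(\mathcal{B})$, we have $Z(\mathcal{B},\Delta)=Z(F_{1*}\mathcal{B},\Delta)$.

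To reach this reduction, first move $F_1(\Delta)$ and $\Delta$ to a common subdivision. Thm.~\ref{transversal subdivision preparation} gives an isotopy preserving the simplices of each triangulation. Subdivision invariance handles the change of triangulation. Throughout, the isotopies fix $\partial B$, so fullness and the space boundary data are untouched.

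Next, apply Thm.~\ref{transversal isotopy step by step} to $\Delta$ and $F_t$. This produces vertex-supported isotopies $g_{j,t}$, each the identity off $ost_\Delta(v_j)$, with $\Delta$ transversal to every intermediate $G_{j*}\mathcal{B}$. Since $F_t$ fixes the boundary, we may take each $v_j$ in the interior: a vertex on $\partial B$ would force the isotopy to be trivial on its star near the boundary, or we absorb it using the collar. It therefore suffices to treat a single step, where $\mathcal{B}$ and $\mathcal{B}'=g_{1*}\mathcal{B}$ agree outside the open star of one interior vertex $v$ and $\Delta$ is transversal to both.

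Before that single step, pass via Lem.~\ref{subdivision shellable} to a subdivision in which $st(v)$ is shellable. We must arrange that the subdivision remains transversal to both stratifications, again using Thm.~\ref{transversal subdivision preparation} with an isotopy preserving the simplices of $\Delta$. By Prop.~\ref{App: order}, order the algorithm so that every simplex outside $ost(v)$, and in particular every simplex on $lk(v)$, is processed first. Also process the simplices of $st(v)$ containing $v$ as late as possible.

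The key claim is that once all faces not containing $v$ have been processed, the remaining region is a PL ball $Q$ carrying a shellable triangulation, namely the star of $v$. Its boundary labeling is identical for $\mathcal{B}$ and $\mathcal{B}'$, because the two agree off $ost(v)$ and all moves so far occurred there with the same fillings. Lem.~\ref{Shellable} then evaluates the remaining factor as $Z(\mathcal{Q}|_{\partial Q})$ in both cases, which depends only on the boundary. Summing over the identical linear terms produced by the earlier moves gives equality.

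The main obstacle is this last step. The fillings $P'$ chosen in the earlier moves on $lk(v)$ must be legal for both $\mathcal{B}$ and $\mathcal{B}'$ simultaneously. Moreover, the sector constraints of Move-1 (Lem.~\ref{Filling: Sector is determined by homotopy group}) for edges meeting $st(v)$ must coincide. For the sectors, I would argue that each edge outside $ost(v)$ lies in the region where the stratifications agree, so its transport is the same. For the fillings, the independence of the algorithm from fillings stated after Lem.~\ref{Filling: algorithm can continue} lets us choose them identically outside $ost(v)$.
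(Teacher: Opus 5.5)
Your overall strategy matches the paper's: reduce to one full transversal triangulation, fragment the isotopy into vertex-star-supported steps by Thm.~\ref{transversal isotopy step by step}, process everything outside the star first, and evaluate the remaining star by Lem.~\ref{Shellable}. The gap is in \emph{when} you arrange shellability. You fragment with respect to $\Delta$, and only afterwards, for a single step at $v$, do you pass via Lem.~\ref{subdivision shellable} to a subdivision $\Gamma$ ``in which $st(v)$ is shellable''.

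That lemma makes the vertex stars of $\Gamma$ shellable. But $\operatorname{st}_\Gamma(v)$ is in general strictly smaller than $\operatorname{st}_\Delta(v)$, while the step $g_{j,t}$ is only known to be the identity off $\operatorname{ost}_\Delta(v)$. So after processing all $\Gamma$-faces not containing $v$, the outside factors are computed on a region where $\mathcal{B}_j\neq\mathcal{B}_{j+1}$, and your key claim fails. If you instead mean the coarse ball $|\operatorname{st}_\Delta(v)|$ triangulated by $\Gamma$, then its shellability is not what Lem.~\ref{subdivision shellable} provides, and ``faces not containing $v$'' is no longer the right prefix.

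There is a second problem with the per-step subdivision. One application of Thm.~\ref{transversal subdivision preparation} moves only one stratification. It therefore does not make $\Gamma$ transversal to both $\mathcal{B}_j$ and $\mathcal{B}_{j+1}$ while keeping them equal off the star.

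The repair, which is what the paper does, is to subdivide once, before fragmenting:
\begin{itemize}
\item Choose $\Gamma$ with all vertex stars shellable.
\item Prepare the two endpoints separately by isotopies $F,F'$ preserving the simplices of $\Delta$. Then $Z(\mathcal{B},\Delta)=Z(F_*\mathcal{B},\Gamma)$ and $Z(\mathcal{B}',\Delta)=Z(F'_*\mathcal{B}',\Gamma)$.
\item Replace the isotopy by $F'\circ F_1\circ F^{-1}$.
\item Only now apply Thm.~\ref{transversal isotopy step by step} to $\Gamma$.
\end{itemize}
Every step is then supported in a shellable open star, and your last two paragraphs go through as in the paper.

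Your restriction to interior vertices is also unjustified, though it turns out to be unnecessary. Thm.~\ref{transversal isotopy step by step} may return boundary vertices and explicitly allows the boundary to move. Your own preparation isotopies preserve boundary simplices only setwise, so ``throughout, the isotopies fix $\partial B$'' is false. Also, a step in the star of a boundary vertex can move interior points of that star nontrivially, so it is not forced to be trivial. None of this matters, because boundary vertices need no special treatment:
\begin{itemize}
\item For a boundary vertex, $Q=\operatorname{st}(v)$ is still a ball, whose boundary is the boundary star together with the link.
\item After the nonboundary faces outside $\operatorname{ost}(v)$ are processed, Lem.~\ref{Shellable} evaluates the tail as $Z$ of the complete diagram on $\partial Q$.
\item The two diagrams differ at most by an isotopy of $S^n$ fixing the link, and $Z$ is invariant under such isotopies.
\end{itemize}
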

\begin{proof}
    Take a full transversal triangulation $\Delta$.
    We are to prove that $Z(\mathcal{B},\Delta)=Z(\mathcal{B}',\Delta)$. 
    By subdivision invariance Prop.~\ref{partition function subdivision inv}, one can pass to a subdivision $\Gamma$ in which each vertex star is shellable by Lem.~\ref{subdivision shellable}, and using an isotopy $F$ by Thm.~\ref{transversal subdivision preparation} to make it transversal while fixing the simplices of the parent triangulation $\Delta$.
    Then 
    \begin{equation}
        Z(\mathcal{B},\Delta)=Z(F_*(\mathcal{B}),\Delta)=Z(F_*(\mathcal{B}),\Gamma)
    \end{equation}
    where we only used subdivision invariance.
    This left us to prove that $Z(F_*(\mathcal{B}),\Gamma)=Z(F'_*(\mathcal{B}'),\Gamma)$.
    
    So one can WLOG assume that $\Delta$ is full transversal and each vertex star is shellable (replace the triangulation by $F^{-1}_*(\Gamma)$ and isotopy by its composition with $F^{-1}\circ F'$).
    
    By Thm.~\ref{transversal isotopy step by step}, one can divide the isotopy $\mathcal{B}'=h_1(\mathcal{B})$ by a finite number of steps $\mathcal{B}=\mathcal{B}_0\to \mathcal{B}_1\to\dots \mathcal{B}_T=\mathcal{B}'$ such that $\mathcal{B}_j$'s are all transversal to $\Delta$, $\mathcal{B}_{j+1}=g_{j,1}(\mathcal{B}_j)$ and $g_{j,t}|_{B\setminus ost_K(v_j)}=id$ for vertices $v_1,\dots,v_T\in \Delta^0$.
    Thus one only need to prove the invariance of the state sum $Z$ under an isotopy inside the star $Q$ of a vertex. Write $P=\overline{B\setminus Q}$.
    Since the triangulation is combinatorial, the star is supported on $D^{n+1}$. 

    In the state sum formula of $Z(\mathcal{B},\Delta)$, we regroup the summation according to the decorations created on $\partial Q$. When the decoration $l$ on $\partial Q$ is fixed, the summation inside $Q$ and outside $Q$ is just $Z(P(l),\Delta)$ and $Z(\mathcal{Q}(l),\Delta)$.
    So the summation can be recast as $\sum_{l} w(l)Z_{\text{tail}}(P(l),\Delta)Z_{\text{tail}}(\mathcal{Q}(l),\Delta)$, a summation over a finite number of some certain $l\in \widehat{S^n}(V)$ decorated on $\partial Q\sim_{PL}S^n$ (where we used an identification), and $P(l),Q(l)$ denote the V-labeled manifold after performing the algorithm on the intersection $Q\cap \overline{B\setminus Q}$.
    ($Z_{\text{tail}}(Q(l),\Delta)$ is the continuation of the algorithm rather than the algorithm of the bulk vector $Q$, since the star may not be full).
    
    Because $\mathcal{B}_j$ and $\mathcal{B}_{j+1}$ is the same outside $Q$, $Z_{\text{tail}}(P_j(l),\Delta)=Z_{\text{tail}}(P_{j+1}(l),\Delta)$.
    By assumption $Q$ is shellable under $\Delta$, and by Lem.~\ref{Shellable}, the state sum $Z$ inside the star is equal to the evaluation of the hypersphere functional $Z$ on the link, i.e. $Z_{\text{tail}}(\mathcal{Q}(l),\Delta)=Z(l)$, which remains the same for an isotopy inside $Q$.

    So we are done.
\end{proof}

\begin{thm}(Well-defined)\label{Well-defined}

    If the G-equivariant $S^n$-algebra $\mathcal{V}$ is (CF), (SS) and (MI), then the partition function is independent of the transversal triangulation $\Delta$.
\end{thm}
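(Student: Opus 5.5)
The plan is to combine subdivision invariance (Prop.~\ref{partition function subdivision inv}) with isotopy invariance (Prop.~\ref{partition function isotopy invariance}) through a common subdivision. Let $\Delta_1,\Delta_2$ be two full transversal triangulations of the bulk vector $\mathcal{B}$. By Thm.~\ref{common subdivision} they have a common combinatorial subdivision $\Gamma$. In general $\Gamma$ is neither transversal to $\mathcal{B}$ nor full, so it cannot be fed to the algorithm directly. The argument therefore has three steps: make $\Gamma$ full, make it transversal while keeping control of $\Delta_1$, and then handle $\Delta_2$.

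\emph{Step 1 (fullness).} Replace $\Gamma$ by its barycentric (derived) subdivision. Any simplex of the derived complex whose vertices all lie in $\partial B$ is spanned by barycenters of boundary simplices, so it lies in $\partial B$. The derived complex is therefore full, and it is still a common subdivision of $\Delta_1$ and $\Delta_2$; call it $\Gamma$ again.

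\emph{Step 2 (transversality relative to $\Delta_1$).} Apply Thm.~\ref{transversal subdivision preparation} to the pair $\Delta_1\subset\Gamma$. This gives an isotopy $h_t$ with $h_t(\sigma)=\sigma$ for every $\sigma\in\Delta_1$, such that $\Gamma$ is transversal to $\mathcal{B}':=h_{1*}(\mathcal{B})$. Since $h$ fixes each simplex of $\Delta_1$ setwise, $(\mathcal{B},\Delta_1)$ and $(\mathcal{B}',\Delta_1)$ are related by the PL homeomorphism $h_1$ carrying $\Delta_1$ to itself, so the algorithm returns the same scalar: $Z(\mathcal{B},\Delta_1)=Z(\mathcal{B}',\Delta_1)$. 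This is exactly the tautology used in the proof of Prop.~\ref{partition function subdivision inv}. Subdivision invariance then gives $Z(\mathcal{B}',\Delta_1)=Z(\mathcal{B}',\Gamma)$.

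\emph{Step 3 (the second triangulation).} The issue here is that $\Delta_2$ is transversal to $\mathcal{B}$, not necessarily to $\mathcal{B}'$. However, $\Gamma$ is transversal to $\mathcal{B}'$ and subdivides $\Delta_2$. I expect each closed simplex of $\Delta_2$, being a union of simplices of $\Gamma$ forming a PL ball, to inherit transversality to $\mathcal{B}'$. This is the same reason the intermediate $K_j$ in the proof of Prop.~\ref{partition function subdivision inv} are transversal. Granting this, subdivision invariance gives $Z(\mathcal{B}',\Gamma)=Z(\mathcal{B}',\Delta_2)$. Finally, $\mathcal{B}'=h_{1*}(\mathcal{B})$ with $h_t$ an ambient isotopy relative to the boundary, and $\Delta_2$ is full transversal to both $\mathcal{B}$ and $\mathcal{B}'$. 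Prop.~\ref{partition function isotopy invariance} then yields $Z(\mathcal{B}',\Delta_2)=Z(\mathcal{B},\Delta_2)$. Chaining the equalities gives $Z(\mathcal{B},\Delta_1)=Z(\mathcal{B},\Delta_2)$.

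The main obstacle is Step 3, specifically the claim that transversality to $\Gamma$ descends to the coarser $\Delta_2$. If that inheritance fails, the fallback is to apply Thm.~\ref{transversal subdivision preparation} a second time, to the pair $\Delta_2\subset\Gamma$ and the original $\mathcal{B}$. This produces $\mathcal{B}''$, isotopic to $\mathcal{B}$ and transversal to $\Gamma$, with $Z(\mathcal{B},\Delta_2)=Z(\mathcal{B}'',\Gamma)$. It then remains to compare $Z(\mathcal{B}',\Gamma)$ with $Z(\mathcal{B}'',\Gamma)$. Both are computed with the same full triangulation $\Gamma$, which is transversal to both stratifications, and $\mathcal{B}'$ and $\mathcal{B}''$ are isotopic relative to the boundary. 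So Prop.~\ref{partition function isotopy invariance}, whose proof uses Thm.~\ref{transversal isotopy step by step} and the shellable vertex stars of Lem.~\ref{subdivision shellable}, gives the required equality directly.
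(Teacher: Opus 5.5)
Your argument is correct and is essentially the paper's proof. You pass to a common subdivision $\Gamma$, move the bulk to some $\mathcal{B}'$ transverse to $\Gamma$, and chain $Z(\mathcal{B},\Delta_1)=Z(\mathcal{B}',\Delta_1)=Z(\mathcal{B}',\Gamma)=Z(\mathcal{B}',\Delta_2)=Z(\mathcal{B},\Delta_2)$ using subdivision and isotopy invariance. Transversality of $\Delta_1,\Delta_2$ to $\mathcal{B}'$ is inherited from $\Gamma$ by amalgamating collars, which is exactly the inheritance you anticipated in Step 3. Your only variation is obtaining the first equality from cell-preserving naturality rather than from isotopy invariance.

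Two small inaccuracies, both harmless:
\begin{itemize}
\item Step 1 is unnecessary, because any subdivision of a full triangulation is already full.
\item The isotopy from Thm.~\ref{transversal subdivision preparation} preserves simplices only setwise, so it need not be relative to $\partial B$. This does not matter, since the paper's isotopy invariance (Prop.~\ref{partition function isotopy invariance}) allows the boundary, together with its labels, to move.
\end{itemize}
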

\begin{proof}
    Suppose $\Delta_0,\Delta_1$ are two full transversal triangulations of the bulk vector $\mathcal{B}=(B,\mathcal{M},P_B)$.
    By Thm.~\ref{common subdivision}, $\Delta_0,\Delta_1$ have a common subdivision $\Gamma$, which is automatically full.
    By Thm.~\ref{transversal triangulation}, one can isotope $\mathcal{B}$ to $\mathcal{B}'$ such that it is transversal to $\Gamma$.
    We are going to prove $Z(\mathcal{B},\Delta_0)=Z(\mathcal{B},\Delta_1)$ by the following sequence
    \begin{equation}
    Z(\mathcal{B},\Delta_0)=Z(\mathcal{B}',\Delta_0)=Z(\mathcal{B}',\Gamma)=Z(\mathcal{B}',\Delta_1)=Z(\mathcal{B},\Delta_1)
    \end{equation}
    
    The middle two equalities are by subdivision invariance Prop.~\ref{partition function subdivision inv}, and the first and the last are by isotopy invariance Prop.~\ref{partition function isotopy invariance}.
    
    So we are done.
\end{proof}

\begin{coro}\label{buyaotouhuangainian}
    Suppose a bulk vector $\mathcal{B}=(B,\mathcal{M},P)$ has a null vector labeled on a local disk region on $\mathcal{M}$, then its partition function is zero.
\end{coro}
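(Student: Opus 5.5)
The plan is to use the freedom, granted by Theorem~\ref{Well-defined}, to choose a convenient full transversal triangulation, and to arrange that the local disk carrying the null vector lies in the boundary of a single region that the algorithm eventually reduces to a hypersphere evaluation. Let $U\subset M$ be the local disk on which $\sum_i a_i\mathcal{M}_i|_U=0$ in $V_{\mathcal{M}|_{\partial U},-}$; the bulk vectors $\mathcal{B}_i$ agree outside $U$. By linearity of the algorithm in the space-boundary labels, $Z(\sum_i a_i\mathcal{B}_i,\Delta)=\sum_i a_i Z(\mathcal{B}_i,\Delta)$ for a common triangulation $\Delta$. First I isotope the stratifications of the $\mathcal{B}_i$, relative to $M\setminus U$, so that every $\mathcal{M}_i|_U$ is replaced by a single position point in a small disk $U'\subset U$. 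This is allowed because Proposition~\ref{partition function isotopy invariance} gives isotopy invariance, and the ``labeled at'' identification of a disk region with one vector is exactly what the null-vector quotient encodes. After this step the $\mathcal{B}_i$ differ only in the vector $x_i$ placed at one point $p\in U'$, and $\sum_i a_ix_i=0$.

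Next I choose a full transversal triangulation $\Delta$ common to all the $\mathcal{B}_i$, which is possible because their stratifications now coincide. I take $\Delta$ so that $p$ lies in the open star of a single boundary $n$-simplex $\tau$, with $U'$ inside $\tau$ and away from the other simplices; Theorem~\ref{transversal triangulation} together with an isotopy moving $U'$ provides this. The algorithm then performs the same sequence of $\mathcal{S}^{k-1}$-relations for every $i$, with identical coefficients and fillings. The coefficients and fillings depend only on link boundaries and transports, which are the same for all $i$, and the surgery regions can be taken disjoint from a neighbourhood of $p$. So in every summand the label $x_i$ survives untouched on the boundary of one of the final balls, and the final Move $(n+1)$ evaluates $Z$ on a $\mathcal{V}$-labeled sphere containing $x_i$ at $p$.

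By linearity, the total equals a sum of terms $Z(\text{sphere with }\sum_i a_ix_i\text{ at }p)$. Each such sphere is a null vector in $\widehat{S}^n(V)$, and $Z$ is defined on the condensation quotient, so every term vanishes. The main obstacle is the middle step: one must make sure that the surgery regions $\phi_s(D^{k_s}\times D^{n+1-k_s})$ chosen in Lemma~\ref{Filling: algorithm can continue} can always avoid a fixed small neighbourhood of $p$, so that the move is applied uniformly in $i$. I would handle this by placing $p$ in the interior of the boundary simplex $\tau$, where the only relations touching $\tau$'s neighbourhood are those for interior simplices meeting it. By shrinking the collar parameters $\epsilon$ (the tubes around $\sigma_s$ can be taken arbitrarily thin), their boundary footprints miss $U'$.
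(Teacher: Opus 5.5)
\textbf{The gap is your first step, and the rest of the argument depends on it.} An ambient PL isotopy is a stratified homeomorphism at every time. It preserves the number of position points and the combinatorics of the walls, so it cannot turn an arbitrary labeled disk $\mathcal{M}_i|_U$ into a single position point carrying $x_i$. What you actually invoke is the ``labeled at'' identification $\mathcal{M}_i\sim(\mathcal{M}$ with $x_i$ labeled at $U)$, which is a relation of the condensation space. The partition function, however, is defined on individual bulk vectors with $\mathcal{V}$-labeled space boundary. The claim that it respects this relation is exactly the corollary, applied to the null combination $\mathcal{M}_i-(\mathcal{M}$ with $\mathcal{M}_i|_U$ labeled at $U)$. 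So step~1 is circular. Once it is granted, what remains is only linearity in one label. Your common triangulation, chosen ``because their stratifications now coincide'', also rests on step~1. Without it, the $\mathcal{B}_i$ differ on $U$ and, through the product-collar condition, in the bulk near $U$ as well.

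\textbf{What survives and what is missing.} Your later geometric observation is sound. Take a disk in the interior of a boundary facet $\tau$ of a top simplex $\Sigma=\tau*w$, away from $\partial\tau$. No cut touches it, and it ends up on the sphere of $\Sigma$'s final ball. For the final step you never needed a single point: a sphere carrying $\sum_i a_i\mathcal{M}_i|_{U'}$ on a disk is already null in $\widetilde{S}^n(V)$. A repaired version would therefore have to show the following. There is one full triangulation, transverse to every $\mathcal{B}_i$ simultaneously, such that the whole region where the $\mathcal{B}_i$ differ, bulk part included, lies in $\operatorname{int}\Sigma\cup\operatorname{int}\tau$. The cited results do not give this. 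Theorem~\ref{transversal triangulation} treats one stratification and gives no control over where $U$ ends up. Shrinking $U$ by an isotopy stretches its collar walls across other simplices and destroys transversality there.

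\textbf{How the paper avoids this.} It never needs the difference region to be small relative to the triangulation. It encloses that region in a ball $H$ whose frontier $T=\overline{\partial H\setminus\partial B}$ has a stratified collar. It then takes triangulations with $T$ as a subcomplex: common outside $H$, chosen separately and made shellable inside $H$. It processes $T$ first (Proposition~\ref{App: order}), which gives $Z(\mathcal{B})=\sum_i a_i\sum_s w_s Z_{i,\mathrm{tail}}(H_{i,s})Z_{\mathrm{tail}}((B\setminus H)_s)$. Finally, Lemma~\ref{Shellable} gives $\sum_i a_i Z_{i,\mathrm{tail}}(H_{i,s})=Z(\sum_i a_i H_{i,s}|_{\partial H})=0$, because the null region is intact on $\partial H$.
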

\begin{proof}
    By assumption one can find a local ball $H\subset B$, $T:=\overline{\partial H\setminus \partial B}$ has a stratified collar in $B$, and $H\cap \partial B$ contains a neighborhood of the local null relation $0=\sum_i a_i L_i$ in $\mathcal{V}$.
    
    First let's prove that one can find a transversal triangulation $\Delta_i$ for each $i$ such that $T$ is a subcomplex of $\Delta_i$'s, $\Delta_i=\Delta_j$ outside $H$ and $\Delta_i$'s are shellable inside $H$.
    In fact one can find a full transversal triangulation of $T$ once for all $i$, and extend it into a transversal one by Thm.~\ref{transversal triangulation extend to bulk} once for all outer bulk, and the inner triangulation is extended separately for $i$'s.
    By \cite{adiprasito2017subdivisions}(Coro.~I.3.10) one can take a common time of derived subdivisions inside $H$ to make each one shellable, and the triangulations on $T$ are the same for all $i$'s.
    Then use Thm.~\ref{transversal triangulation} to isotope the triangulation on $T$ into a transversal one, and then use isotopy inside $H$ to make each transversal, while fixing $T$ pointwise.
    Then one extend it again to a full transversal triangulation of the outside once for all $i$'s by Thm.~\ref{transversal triangulation extend to bulk} and isotopy fixing $T$.
    
    Secondly, by Prop.~\ref{App: order}, one can eliminate the simplices on $\overline{\partial H\setminus \partial B}$ not contained in $\partial B$ first, so the state sum becomes
    \begin{equation}
        Z(\mathcal{B})=\sum_i a_i \sum_s w_{s} Z_{i,\text{tail}}(H_{i,s})Z_{\text{tail}}((B\setminus H)_{s})
    \end{equation}
    and by Lem.~\ref{Shellable}, $\sum_i a_i Z_{i,\text{tail}}(H_{i,s})=Z(\sum_i a_i H_{i,s}|_{\partial H})=0$, since the local null relation region is intact in $H_{i,s}$.
\end{proof}

Now we have a partition function independent of the triangulation.
Next we prove that the $\mathcal{S}^{k-1}$-relations hold for arbitrary surgery moves and arbitrary fillings, not restricted to skeletons and the special fillings. 

At first we state a topological lemma guarantying the validity for a second cut (Move $(k+1)$) after a cut Move $k$ for special bulk vectors.

\begin{lem}\label{Filling: global relation can cut second for any general filling}\label{App: Filling: global relation can cut second for any general filling}
    Suppose that $0\le k\le n-1$ and $\mathcal{B}$ is a bulk vector with support $D^k\times [+\epsilon,+1]\times S^{n-k}$ and boundary $\mathcal{D}^k_2(\alpha_2)\times\{\epsilon\}\times  S^{n-k}\cup \mathcal{S}^{k-1}\times [\epsilon,+1]\times S^{n-k}\cup \mathcal{D}^k_1(\alpha_1)\times \{+1\}\times S^{n-k}$ for some $\partial \mathcal{D}^k_2=\partial \mathcal{D}^k_1=\mathcal{S}^{k-1}$ and $tp(\alpha_j)=\mathcal{D}^k_j$.
    Then there is an ambient isotopy $F$ fixing the boundary such that there is a G-labeled manifold $\mathcal{D}^{k+1}$ with support $D^k\times[\epsilon,+1]$ and $\mathcal{D}^{k+1}\times \epsilon^2 D^{n-k}$ is a stratified submanifold of $F_*(\mathcal{B})$ preserving G-labelings (where we regarded $\epsilon^2 D^{n-k}$ as a local disk region of $S^{n-k}$ avoiding label regions of $\alpha_j$'s).
\end{lem}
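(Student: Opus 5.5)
The plan is to find a properly embedded arc-times-disk region in $B=D^k\times[\epsilon,1]\times S^{n-k}$, make it a product for the stratification $P_B$, and read off the required $\mathcal{D}^{k+1}$ as the induced $G$-labeling on the slice $D^k\times[\epsilon,1]\times\{q\}$. Fix a point $q\in S^{n-k}$ together with a small closed disk $C=\epsilon^2 D^{n-k}$ around it, chosen disjoint from the regions where $\alpha_1,\alpha_2$ are labeled. By construction, the boundary labeling restricted to
\[
\partial(D^k\times[\epsilon,1])\times C
\]
is already a product $\mathcal{E}\times C$. Here $\mathcal{E}$ is the $G$-labeled sphere $\mathcal{D}^k_2\times\{\epsilon\}\cup\mathcal{S}^{k-1}\times[\epsilon,1]\cup\mathcal{D}^k_1\times\{1\}$. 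So the only task is to isotope the interior, rel boundary, until $P_B$ restricted to $D^k\times[\epsilon,1]\times C$ is also a product. The $\mathcal{D}^{k+1}$ obtained this way automatically satisfies $\partial\mathcal{D}^{k+1}=\mathcal{E}$.

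The first step is to make the slice transversal. Regard $N=D^k\times[\epsilon,1]\times\{q\}$ as a $(k+1)$-dimensional PL submanifold of $B$. Its boundary lies in $\partial B$ and is transversal to the boundary stratification, because near $C$ the boundary is a product. Following the argument of Theorem~\ref{transversal triangulation}, choose a triangulation of $B$ containing $N$ as a subcomplex, with the boundary part transversal. Then apply that theorem rel $\partial B$, obtaining an ambient isotopy $F$ fixing $\partial B$ after which $N$ is transversal to $F_*(P_B)$. This transversality statement is exactly what the appendix results are designed to provide, so I will invoke the relative version of Theorem~\ref{transversal triangulation} directly.

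The second step turns transversality into a product neighborhood. By the definition of transversality, each point $x\in N\cap(B^j\setminus B^{j-1})$ has a chart $h:U\to V\times cS$ with $h(N\cap U)=Q\times cS$. Restricted to $N$, this shows that $N\cap F_*(P_B)$ is a stratification of $N$ whose normal directions are locally trivial. Gluing these local normal structures, which is the standard PL tubular-neighborhood argument for a stratum-transverse submanifold, gives a stratified embedding $\mathcal{D}^{k+1}\times\epsilon^2D^{n-k}\hookrightarrow F_*(\mathcal{B})$, where $\mathcal{D}^{k+1}:=(N,F_*(P_B)|_N)$.

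The $G$-labels on the new walls of $N$ are inherited from those of $F_*(P_B)$. Flatness at the codimension-two defects of $N$ holds because these defects are transverse slices of codimension-two defects of $B$, so the same cyclic product relation applies. Near $\partial N$ the embedding agrees with the given boundary product, since $F$ fixes $\partial B$ and the collar of $\partial B$ is stratified.

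I expect the main obstacle to be this second step: producing a global normal product structure from the pointwise transverse charts. That requires a relative stratified regular-neighborhood theorem, and one must check that it is compatible with the fixed product structure near $\partial N\times C$. I would first try to derive it from a triangulation in which $N$ is full and transversal. Taking a second derived subdivision, the simplicial neighborhood of $N$ is then a stratified mapping cylinder, which is PL isomorphic to $N\times D^{n-k}$ by the usual regular-neighborhood uniqueness argument carried out chamber by chamber. Finally, I would shrink this neighborhood radially so that it lies within $\epsilon^2D^{n-k}$.
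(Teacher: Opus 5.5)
Your first step (isotoping rel $\partial B$ so that the fixed slice $N$ is transverse) and the label bookkeeping are fine, granted the relative positioning results. The argument breaks at your second step, which carries essentially all of the lemma's content.

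\textbf{The gap.} Transversality gives, around each point of $N$, \emph{some} chart in which the filtration looks like $Q\times cS$. These charts are unrelated on overlaps, and PL topology has no partition-of-unity gluing, so there is no ``standard tubular-neighborhood argument'' to invoke. What you actually need is one normal (block) structure $\xi$ on $N$ satisfying $X\cap E(\xi)=E(\xi|_{X\cap N})$ simultaneously for every closed filtration term $X$ (most of which are not manifolds). It must also be relative to the prescribed product $\mathcal{E}\times C$ along $\partial N$.

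Your second-derived sketch does not provide this. A derived neighborhood is only a mapping cylinder. Regular-neighborhood uniqueness compares two regular neighborhoods of the same subpolyhedron, so it yields a product structure only if you already have a stratified product neighborhood to compare with, which is what is to be proved. Moreover, uniqueness homeomorphisms obtained separately for different chamber closures need not agree on the lower strata they share, and that is exactly where the difficulty sits.

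Two further claims are unjustified. First, $F$ fixing $\partial B$ pointwise does not control how the stratification meets a neighborhood of $\partial N$ inside $B$, so agreement with $\mathcal{E}\times C$ near $\partial N$ is not automatic. Second, shrinking a tubular neighborhood radially gives a smaller tubular neighborhood, not the coordinate block $D^k\times[\epsilon,1]\times C$, $C\subset S^{n-k}$, that the statement refers to.

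\textbf{The missing idea.} The paper avoids both of your steps by not fixing the slice in advance.
\begin{enumerate}
\item Put $H=D^k\times[\epsilon,1]$. Triangulate all filtration terms, the boundary faces and the coupon carriers simultaneously, and subdivide so that the projection $\pi:H\times S^{n-k}\to S^{n-k}$ is simplicial.
\item Choose $p$ in the interior of a top-dimensional target simplex inside the coupon-free disk, and a small disk $C^+$ around $p$.
\item Williamson's simultaneous trivialization (Lemma~\ref{app:williamson}) gives $h:H\times C^+\to\pi^{-1}(C^+)$, $h(x,y)=(h_y(x),y)$, $h_p=\mathrm{id}$, carrying every subcomplex to a product. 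All strata thus become products at once, and the generic fiber over $p$ is automatically a stratified disk $\mathcal{D}^{k+1}$ with boundary $\mathcal{E}$. No transversality positioning is needed.
\item The boundary motion $b_y=h_y|_{\partial H}$ is undone on a filtered collar of $\partial H$ in the central fiber by $a(x,s,y)=(b^{-1}_{r(s,y)}(x),s,y)$, where $r$ contracts $C^+$ to $p$. This gives $g=h\circ a$ with $g_y|_{\partial H}=\mathrm{id}$.
\item Take a PL retraction $\rho:S^{n-k}\to C^+$ and a homotopy $\rho_t$ from the constant map $p$ to $\rho$. Then $\Phi_t(x,y)=(g_{\rho_t(y)}(x),y)$ is an ambient isotopy fixing the whole boundary pointwise and equal to $g$ over $C^+$.
\item Its inverse puts the structure in the form $\mathcal{D}^{k+1}\times C$ on the literal block $H\times C$, for any $C\subset\operatorname{int}C^+$.
\end{enumerate}

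If you prefer to keep your route, you would need two additional results. One is a simultaneous, boundary-relative block-transversality theorem for the whole filtration, via a dual-cell construction of the kind used in Lemma~\ref{app:marked-core}. The other is a relative uniqueness step that moves the resulting product neighborhood onto $H\times C$.
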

\begin{proof}
    See the appendix.
\end{proof}

\begin{thm}(Global relations)\label{Global relations}

    If the G-equivariant $S^n$-algebra $\mathcal{V}$ is (CF), (SS) and (MI), then the partition function $Z$ is invariant under the $\mathcal{S}^{k-1}$-relations.
\end{thm}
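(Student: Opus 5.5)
The plan is to compare both sides of an arbitrary $\mathcal{S}^{k-1}$-relation by choosing triangulations adapted to the surgery region, and then reduce to the hypersphere functional via Lemma~\ref{Shellable}. Fix a bulk vector $\mathcal{B}$ and a surgery region $\mathcal{U}\cong\mathcal{D}^k\times D^{n+1-k}$, with $\mathcal{U}\cap\partial B=\mathcal{S}^{k-1}\times D^{n+1-k}$. Let $\Phi(\mathcal{B})=\sum_{\alpha} c_\alpha\mathcal{B}_\alpha$ be the right-hand side with an arbitrary admissible filling $P'$ on each summand. The case $k=n+1$ is Corollary~\ref{partition function ball eval}. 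For $k\le n$ we must show $Z(\mathcal{B})=\sum_\alpha c_\alpha Z(\mathcal{B}_\alpha)$.

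\textbf{Step 1: reduce to the core cell.} Choose a full transversal triangulation $\Delta$ of $\mathcal{B}$ in which $\phi^{-1}(D^k\times\{O\})$ is a single $k$-simplex $\sigma$, or a subcomplex. This uses Theorem~\ref{transversal triangulation extend to bulk} starting from a transversal triangulation of $\partial\mathcal{U}\setminus\partial B$, together with Theorem~\ref{transversal triangulation} for the interior. By Proposition~\ref{App: order} we may order the simplices so that, after the faces of $\sigma$ (which lie in $\partial B$), the first surgery performed is on $\sigma$ itself. By Lemma~\ref{Filling: algorithm can continue} this step is exactly an $\mathcal{S}^{k-1}$-relation along $\mathcal{U}$, but with the \emph{special} filling used in the algorithm.

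\textbf{Step 2: compare with an arbitrary filling.} It remains to show that for each $\alpha$ the value $Z(\mathcal{B}_\alpha)$ does not depend on the admissible filling $P'$. Two admissible fillings differ only inside $W$ and agree with the new space boundary, so they agree on $\partial W'$ for a slightly smaller ball region $W'$. The plan is to show that they are gauge equivalent relative to the boundary, or, failing that, that the partition function only depends on them through the hypersphere functional.

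\begin{itemize}
\item By Theorem~\ref{Well-defined}, compute $Z(\mathcal{B}_\alpha)$ with a triangulation for which the modified region is a shellable ball $Q$, and eliminate the simplices of $\partial Q$ first.
\item Regroup the state sum by the decoration $l$ on $\partial Q$, as in the proof of Proposition~\ref{partition function isotopy invariance}.
\item By Lemma~\ref{Shellable}, the contribution of $Q$ is $Z(l)$, which is independent of the $G$-labeling in the interior of $Q$. This works because the sphere functional sees only $\partial Q$, and fillings of spheres exist by Lemma~\ref{Filling: anything}.
\end{itemize}

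When $k=1$, Lemma~\ref{Filling: Sector is determined by homotopy group} ensures that the admissible sector is the same for every filling, so the sets of summands coincide.

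\textbf{Step 3: handle the non-simplicial region.} If $\mathcal{U}$ is not itself adapted to a triangulation, there is more to do. First isotope $\mathcal{U}$ relative to $\partial B$ into standard position; this is harmless by Proposition~\ref{partition function isotopy invariance} and by isotopy invariance of the new space boundary labels. Then apply the construction of Step 1.

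I expect the main obstacle to be the case where the outer structure prevents a single cell from being the core. In that case I would do the following.
\begin{itemize}
\item Subdivide so that the core is a union of $k$-simplices.
\item Apply the algorithm to them in turn.
\item Use the Adjacent identity (Theorem~\ref{Adjacent Move}) together with Lemma~\ref{Filling: global relation can cut second for any general filling} to merge the successive cuts into one cut along $\mathcal{D}^k$ with a single $\alpha$-summation.
\end{itemize}
Lemma~\ref{Filling: global relation can cut second for any general filling} supplies the $(k+1)$-disk needed for the second cut under an arbitrary filling. The merging is an inductive computation over $\beta$-bimodules, like the one in part (3) of the proof of Theorem~\ref{Adjacent Move}, and it relies on Proposition~\ref{A basis} to absorb null terms (by Corollary~\ref{buyaotouhuangainian} these contribute $0$). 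For $k=1$, the (MI) normalization is needed in this merge exactly as in part (4) of that proof.
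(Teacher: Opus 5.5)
There is a genuine gap, and it sits in your Step~2, which is the heart of the statement. In the new bulk, two admissible fillings $P'$ agree only on $B\setminus W$ and on the new space boundary. They may therefore differ on the whole shell $\phi^{-1}\bigl(D^k\times(\tfrac14 D^{n+1-k}\setminus\epsilon\mathring{D}^{n+1-k})\bigr)\cong D^k\times I\times S^{n-k}$.

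\begin{itemize}
\item This region is not a ball. $W$ is a ball in the \emph{old} bulk, but the removed tube runs through it.
\item Its boundary contains the freshly labeled face $\mathcal{D}^k_2(\alpha)\times\epsilon S^{n-k}$.
\item In general it lies in no ball of the new bulk at all. For $k=0$ it contains a whole boundary component. For $n=2$, $k=1$, $B=\mathcal U=D^1\times D^2$, the new bulk is a solid torus and the shell contains its core circle.
\end{itemize}

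So there is no shellable ball $Q$ containing the modification with a filling-independent decoration $l$ on $\partial Q$. Lemma~\ref{Shellable} and the argument of Lemma~\ref{chamber inside ball} only localize changes supported in interior collared balls. Your fallback, showing the two fillings are gauge equivalent rel boundary, would need a classification of $G$-labelings under the moves of Def.~\ref{Gauge} that is not available. Step~3 is also not a proof. The Adjacent identity is a statement about $Z$ on a sphere. Merging several $k$-cuts along a triangulated core is itself a global relation, since the interior $(k-1)$-faces of the core must be cut first.

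The missing idea is a downward induction on $k$, which makes filling-independence unnecessary. Start from Move~$(n+1)$, which is Corollary~\ref{partition function ball eval}, and assume the global Move~$(k+1)$.

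\begin{itemize}
\item Triangulate $T=\overline{\partial V\setminus\partial B}$, where $V=\phi^{-1}(D^k\times\tfrac12D^{n+1-k})$. Process $T$ and the exterior first; this prefix is common to both sides and leaves only the ball $V$.
\item By Prop.~\ref{A basis} and Corollary~\ref{buyaotouhuangainian}, reduce the decoration on $\partial V$ to $\mathcal{D}^k_1(\beta)\times S^{n-k}$. The left side is then $Tr(\beta)$.
\item On the right side, Lemma~\ref{Filling: global relation can cut second for any general filling} supplies, after an isotopy fixing the boundary, a product disk $\mathcal{D}^{k+1}\times\epsilon^2D^{n-k}$ through the \emph{arbitrary} filling.
\item Apply the already established global Move~$(k+1)$ along that disk. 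This turns the shell into a ball evaluated by $Z$.
\item Only $\alpha_k=\beta$ survives, and the sum collapses to $\frac{Tr(\beta)}{\mu(\beta)}\sum_\gamma\frac{Tr(\gamma)^2}{\mu(\gamma)}=Tr(\beta)$. For $k\ge1$ the sum runs over all $\beta$--$\beta$ bimodules. For $k=0$ it runs over the single sector $g$ fixed by the transport, using (MI). The cases $k=n,n-1$ need only the obvious modifications via the pairing identity.
\end{itemize}

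In this argument the arbitrary filling never has to be compared with the algorithm's special filling.
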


\begin{proof}
    Under the settings in Def.~\ref{Sk-1 relations}, let's prove $Z(\mathcal{B})=Z(\mathcal{B}')$. 
    We prove this by induction on $k=n+1,n,\dots,0$.

    (0). For Move $(n+1)$, it is proven in Coro.~\ref{partition function ball eval}.
    
    Assume that $Z$ is invariant under any Move $(k+1)$, let's prove its Move $k$ invariance. 

    Since the partition function is independent of full transversal triangulations (Thm.~\ref{Well-defined}), one can choose a convenient one as follows.
    Set $V=\phi^{-1}(D^k\times \frac{1}{2}D^{n+1-k})\subset U$.
    We choose a full transversal triangulation on $T=\overline{\partial V\setminus \partial B}$ by Thm.~\ref{transversal triangulation}.
    By the assumption on $U$ that $V$ and $\overline{B\setminus V}$ have collars of $T=\overline{\partial V\setminus \partial B}$, one can extend it to a transversal triangulation $\Delta$ on $B\setminus \mathring{V}$ and a transversal triangulation $\Delta'$ on $V$ by Thm.~\ref{transversal triangulation extend to bulk}. 
    (One can WLOG assume that they are full, otherwise one can use subdivisions.)
    By Prop.~\ref{App: order} we can eliminate the simplices in $\partial V$ not contained in $\partial B$ first, and then those in $B\setminus \overline{V}$, and we are left with $V$ with its collar shortened.
    Identify $V$ with $D^k\times D^{n+1-k}$ in the rest of the proof for convenience (drop the $\frac{1}{2}$).

    (1). So for Move $n$,  we obtain a $\mathcal{V}$-labeled manifold with support $\phi^{-1}(D^n\times S^{0})$.
    Write the decoration on two $D^n$ to be $\alpha_\pm$. 
    It is then sufficient to check that
    \begin{equation}
        \langle \alpha_+,\alpha_- \rangle = \sum_{\beta_n \in bs(\tilde{V}_{\mathcal{S}^{n-1},-})}
        \langle \alpha_+,\beta_n \rangle \langle \beta_n^\vee,\alpha_- \rangle,\quad
        \forall \alpha_\pm \in V_{\mathcal{S}^{n-1},\pm}
    \end{equation}
    which is true. (We used Move $(n+1)$)

    (2). For $k=n-1$, we obtain $\phi^{-1}(D^{n-1}\times S^{1})$, and by Prop.~\ref{A basis} together with Coro.~\ref{buyaotouhuangainian} we need only to verify the case of $\phi^{-1}(\mathcal{D}^{n-1}(\beta)\times S^{1})$ for an $(n-1)$ morphism $\beta$ in the representative set. 
    Then we are to prove
    \begin{equation}
        \begin{aligned}
        &Z(\mathcal{D}^{n+1}, \mathcal{S}^{n-2}\times D^{2}\cup \mathcal{D}^{n-1}(\beta)\times S^{1})
        \\
        &=\sum_{lb(b)=\mathcal{S}^{n-2}} 
        \frac{Tr(b)}{\mu(b)}
        Z(D^{n+1}\setminus D^{n-1}\times \epsilon D^{2}, 
        \mathcal{D}^{n-1}_2(b) \times \epsilon S^{1}
        \cup\mathcal{S}^{n-2}\times (D^{2}\setminus \epsilon D^{2})
        \cup \mathcal{D}^{n-1}(\beta)\times S^{1},
        P')
        \end{aligned}
    \end{equation}
    LHS equals $Tr(\beta)$ by induction hypothesis Move $(n+1)$.
    RHS can be computed by induction hypothesis Move $n$
    \begin{equation}
        \begin{aligned}
         &=
         Tr(\beta)
         \sum_{\gamma\text{ is a basis of $V_{tp(\beta)\cup_{\mathcal{S}^{n-2}}tp(\beta)}$}}
         Z(\beta\underset{n}{\times}\gamma\underset{n}{\times}\beta\underset{n}{\times}\gamma^\vee)
         \\
         &=
         Tr(\beta).
    \end{aligned}
    \end{equation}
    (In the above computation one uses implicitly Lem.~\ref{Filling: global relation can cut second for any general filling} and ambient isotope before Move $n$ such that Move $n$ is legal.)
    
    (3). For $1\le k\le n-2$ we obtain a $\mathcal{V}$-labeled manifold with support $\phi^{-1}(D^k\times S^{n-k})$, and we need only to verify the cases of $\mathcal{D}^k_1(\beta)\times S^{n-k}$ by Prop.~\ref{A basis} together with Coro.~\ref{buyaotouhuangainian}.
    Now it is sufficient to check that
    \begin{equation}
    \begin{aligned}
        &Z(\mathcal{D}^{n+1}, \mathcal{S}^{k-1}\times D^{n+1-k}\cup \mathcal{D}^k_1(\beta)\times S^{n-k})
        \\
        &=\sum_{lb(\alpha_k)=\mathcal{S}^{k-1}} 
        \frac{Tr(\alpha_k)}{\mu(\alpha_k)}
        Z(D^{n+1}\setminus D^k\times \epsilon D^{n+1-k}, 
        \\
        &\mathcal{D}^k_2(\alpha_k) \times \epsilon S^{n-k}
        \cup\mathcal{S}^{k-1}\times (D^{n+1-k}\setminus \epsilon D^{n+1-k})
        \cup \mathcal{D}^k_1(\beta)\times S^{n-k}),
        P')
    \end{aligned}
    \end{equation}
    The LHS is $Tr(\beta)$ by induction hypothesis Move $(n+1)$.
    RHS can be computed by induction hypothesis Move $(k+1)$.
    The value is nonzero only when $\alpha_k=\beta$, and equals (by Move $(n+1)$)
    \begin{equation}
    \begin{aligned}
        &=\frac{Tr(\beta)}{\mu(\beta)}\sum_{\gamma\text{ is $\beta-\beta$ bimodule}}
        \frac{Tr(\gamma)}{\mu(\gamma)}\times Tr(\gamma)
        \\
        &=\frac{Tr(\beta)}{\mu(\beta)}\mu(\beta)
        =Tr(\beta)
    \end{aligned}
    \end{equation}
    (In the above computation one uses implicitly Lem.~\ref{Filling: global relation can cut second for any general filling} and ambient isotope before Move $(k+1)$ such that Move $(k+1)$ is legal.)

    (4). For $k=0$, by the same reason we are to prove
    \begin{equation}
    \begin{aligned}
        &Z(\mathcal{D}^{n+1}, D^0(\beta)\times S^n)
        =\sum_{b} 
        \frac{Tr(b)}{\mu(b)}
        Z(D^{n+1}\setminus \epsilon D^{n+1}, 
        D^0(\beta)\times S^n \cup D^0(b) \times \epsilon S^{n},
        P')
    \end{aligned}
    \end{equation}
    LHS is $Tr(\beta)$. 
    RHS can be computed by induction hypothesis Move $1$:
    \begin{equation}
    \begin{aligned}
        &=\frac{Tr(\beta)}{\mu(\beta)}\sum_{\gamma\text{ is $\beta-\beta$ bimodule},gr(\gamma)=g}
        \frac{Tr(\gamma)}{\mu(\gamma)}\times Tr(\gamma)
        \\
        &=\frac{Tr(\beta)}{\mu(\beta)}\mu_g(\beta)
        =Tr(\beta)
    \end{aligned}
    \end{equation}
    where $g$ is the transport of this Move $1$ and we used (MI) in the last equality.
    (In the above computation one uses implicitly Lem.~\ref{Filling: global relation can cut second for any general filling} and ambient isotope before Move $1$ such that Move $1$ is legal.)
    
    We complete the induction.
\end{proof}

Now we are ready to prove the gauge invariance of the partition function. 
Namely, the partition function depends only on the G-structure, instead of the presentations. 



\begin{lem}\label{chamber inside ball}
    Suppose that $\mathcal{B}_i,i=0,1$ are two bulk vectors with a same support $B$.
    Suppose that $U\subset B$ is a G-labeled manifold with support $D^{n+1}$ and there is a boundary stratified collar (i.e., a stratified homeomorphism onto an open neighborhood such that $c(x,0)=x$):
    \begin{equation}
        c:\partial U\times(-1,+1)\to Im(c)\subset B.
    \end{equation}
    Suppose further that $\mathcal{B}_0|_{\overline{B\setminus U}}=\mathcal{B}_1|_{\overline{B\setminus U}}$.
    Then $Z(\mathcal{B}_0)=Z(\mathcal{B}_1)$.
\end{lem}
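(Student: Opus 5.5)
We would prove the lemma by the same localization strategy as Coro.~\ref{buyaotouhuangainian} and Prop.~\ref{partition function isotopy invariance}: cut $B$ along the sphere $\partial U$ and show that the contribution of the inside of $U$ depends only on the data induced on $\partial U$, which is common to both bulk vectors. Note first that $U$ lies in the interior of $B$ (it has a two-sided collar), so $\mathcal{B}_0$ and $\mathcal{B}_1$ have the same space boundary $\mathcal{M}$ and differ only in the $G$-labeled stratification inside $U$. By the collar $c$, both $\mathcal{B}_i$ restrict to the same stratification on $c(\partial U\times(-1,1))$, since this set meets $\overline{B\setminus U}$.

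\textbf{Step 1 (adapted triangulations).} Choose a full transversal triangulation of $T=\partial U$ with respect to the common induced stratification, using Thm.~\ref{transversal triangulation}. Extend it to a transversal triangulation of $\overline{B\setminus U}$ by Thm.~\ref{transversal triangulation extend to bulk}; the outer collar $c(\partial U\times[0,1))$ and the collar of the space boundary supply its hypotheses. This outer part is chosen once for both $i$. Inside $U$, extend separately for $i=0,1$ to transversal triangulations $\Delta_i'$ of $(U,P_i|_U)$, using the inner collar $c(\partial U\times(-1,0])$. Then pass to a common number of derived subdivisions inside $U$ so that each $\Delta_i'$ is shellable (\cite{adiprasito2017subdivisions}, Coro.~I.3.10, as in Coro.~\ref{buyaotouhuangainian}). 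Next restore transversality by isotopies supported in $U$ and fixing $T$ pointwise (Thm.~\ref{transversal subdivision preparation}); these do not change $Z$ by Prop.~\ref{partition function isotopy invariance}. Fullness is automatic near $T$, since $T$ is interior, and fullness near $\partial B$ is arranged by subdivision as in Thm.~\ref{Global relations}.

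\textbf{Step 2 (regrouping).} By Thm.~\ref{Well-defined} we may compute $Z(\mathcal{B}_i)$ with these triangulations. By Prop.~\ref{App: order}, we first eliminate the simplices of $T$, which are not in $\partial B$, and then those outside $U$. The operations on simplices of $T$ use local data from the collar $c(\partial U\times(-1,1))$, which is identical for both $i$. The admissible sectors are also identical: the link boundaries, and for Move~$1$ the transports along edges of $T$, are read off from the common stratification by Lem.~\ref{Filling: Sector is determined by homotopy group}. The fillings may be chosen to agree. Hence
\[
Z(\mathcal{B}_i)=\sum_{l} w(l)\, Z_{\text{tail}}\bigl(\overline{B\setminus U}(l)\bigr)\, Z_{\text{tail}}\bigl(\mathcal{U}_i(l)\bigr),
\]
where the weights $w(l)$ and the outer tails do not depend on $i$.

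\textbf{Step 3 (inner factor).} Since $\Delta_i'$ is shellable on the ball $U$, Lem.~\ref{Shellable} gives $Z_{\text{tail}}(\mathcal{U}_i(l))=Z(l)$. This is the sphere functional of the labeling on $\partial U$ produced in Step~2, which is the same for $i=0,1$. Therefore $Z(\mathcal{B}_0)=Z(\mathcal{B}_1)$.

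The main obstacle is Step~1: producing triangulations that agree outside $U$, are shellable and transversal inside $U$ for two different stratifications, and remain full. The issue is that isotopies restoring transversality might destroy shellability or agreement on $T$. We would first try to handle this exactly as in the proof of Coro.~\ref{buyaotouhuangainian}, where the same difficulty is resolved by fixing $T$ pointwise throughout.
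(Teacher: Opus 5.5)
Your proposal is correct and is essentially the paper's argument: triangulate $\partial U$ once, extend it commonly outside and separately inside, make the inner part shellable, process $\partial U$ first so that the weights and the outer tails do not depend on $i$, and evaluate the inner shellable tail as $Z(l)$ by Lem.~\ref{Shellable}. The only adjustment is one you already anticipate via Coro.~\ref{buyaotouhuangainian}. The derived subdivisions also subdivide $T$, so they must be taken simultaneously on the outside as well. Transversality is then restored by a common isotopy on $T$ and $B\setminus U$, followed by separate isotopies inside $U$ fixing $T$ pointwise, rather than directly by Thm.~\ref{transversal subdivision preparation}.
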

\begin{proof}
    By Thm.~\ref{transversal triangulation} and Lem.~\ref{Shellable}, one can take a full transversal triangulation of $\partial U$, and by Thm.~\ref{transversal triangulation extend to bulk} one can extend it (and take subdivisions if necessary) into a full transversal triangulation of $\mathcal{B}_i|_{B\setminus \mathring{U}}$ and of $\mathcal{B}_i|_{U}$ as $\Delta_i$, and $\Delta_1=\Delta_2$ outside $U$.
    
    One can assume that inside $Q$ it is shellable, otherwise one can take simultaneous derived subdivisions by \cite{adiprasito2017subdivisions}(Coro.I.3.10) and restore transversality by using simultaneous ambient isotopy $B\setminus U$ and separate ambient isotopy inside $U$ by Thm.~\ref{transversal triangulation}.
    One can perform the algorithm on $\partial U$ first, cutting $B$ into $P=B\setminus U$ with collar shortened and $Q=U$ with collar shortened, creating decoration $l$ on the cutting boundary and one can recast the partition function as
    \begin{equation}
        Z(\mathcal{B}_i)=\sum_l w(l) Z(P(l),\Delta_i)Z(Q(l),\Delta_i)
    \end{equation}
    $Z(P(l),\Delta_i)$ is independent of $i$ since $\Delta_0=\Delta_1$ outside $U$, and by Lem.~\ref{Shellable} $Z(Q(l),\Delta_i)=Z(l)$, the evaluation of the hypersphere functional, which is independent of $i$.

    We are done.
\end{proof}

\begin{lem}\label{chamber divided by balls}\label{App: chamber divided by balls}
    Suppose that $\mathcal{B}=(B,\mathcal{P})$ is a compact $(n+1)$-dimensional G-labeled manifold, $K\subset \mathring{B}$ is compact.
    Then there is a refinement of stratification $P'\succcurlyeq P$ such that
    \begin{itemize}
        \item
        if a chamber $D\in (P')^{(n+1)}$ is contained in $K$ ($D\subset K$), then there is a  stratified submanifold $U_D$ of $B$ with support $|U_D|\sim_{PL}D^{n+1}$ such that $\overline{D}\subset \mathring{U_D}$ 
        \item
        and there is a stratified collar of $\partial U_D$ in $(B,P')$, i.e. there is a stratified homeomorphism $c_D:\partial U_D\times (-1,+1)\to Im(c_D)\subset \mathring{B}$ onto an open neighborhood of $\partial U_D$, $c_D(x,0)=x$.
    \end{itemize}
\end{lem}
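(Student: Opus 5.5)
The plan is to build the refinement $P'$ out of a fine triangulation, letting the walls of $P'$ be codimension-one simplices and the chambers be open top simplices. Each such chamber then sits inside a closed simplex, and a slight thickening of that simplex gives the required ball $U_D$.

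\textbf{Step 1: a fine transversal triangulation.} By Thm.~\ref{transversal triangulation}, take a combinatorial triangulation $\Delta$ of $B$ transversal to $\mathcal{P}$. Pass to a derived subdivision, and use Thm.~\ref{transversal subdivision preparation} to restore transversality while preserving the simplices of $\Delta$. Choose the subdivision fine enough that every closed simplex meeting $K$ lies in $\mathring{B}$, and that the closed star of every such simplex also avoids $\partial B$. Compactness of $K$ and $K\subset\mathring{B}$ make this possible.

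\textbf{Step 2: define $P'$.} Let $N$ be the closed star of the simplices meeting $K$; it is compact and contained in $\mathring{B}$. Set $B'^i := B^i\cup\big((\Delta^{(i)}\cap N)\cap B^{i}\text{-compatible part}\big)$. Concretely, a point of $N$ lies in $B'^i$ when the stratum dimension of the intersection of its open simplex with its $P$-defect is at most $i$. Transversality of $\Delta$ says that near each point the pair (simplex, defect) looks like $Q\times cS$. Consequently each intersection of an open $j$-simplex with a $P$-defect of dimension $k$ is a PL manifold of dimension $j+k-(n+1)$, and these pieces assemble into a stratification. The refinement conditions of Def.~\ref{refinement} then hold:
\begin{itemize}
\item each new defect is a piece of an old defect;
\item $P'=P$ on the complement of $N$, which contains a neighborhood of $\partial B$.
\end{itemize}
The new walls inside old chambers receive the label $1$, and old wall labels are inherited. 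Flatness at the new codimension-two defects holds because every new wall meeting them is either labeled $1$ or is part of an old wall whose cyclic product was already trivial.

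\textbf{Step 3: the balls $U_D$.} A chamber $D$ of $P'$ contained in $K$ is a component of an open top simplex $\sigma$ intersected with an old chamber, so $\overline{D}\subset\sigma$. Take $U_D$ to be the closed simplicial neighborhood of $\sigma$ in a further derived subdivision $\Delta''$, meaning the union of the closed simplices of $\Delta''$ meeting $\sigma$. This is a regular neighborhood of the ball $\sigma$, hence a PL ball with $\overline{D}\subset\mathring{U_D}$.

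\textbf{Step 4: the main obstacle.} What remains, and what I expect to be hardest, is to show that $U_D$ is a stratified submanifold and that $\partial U_D$ has a stratified collar $c_D$ in $(B,P')$. My plan here is to use transversality with respect to $P'$, taking $\Delta''$ transversal to $P'$ via Thm.~\ref{transversal subdivision preparation}. Then $\partial U_D$, being the link-type boundary of a derived neighborhood, is a subcomplex transversal to $P'$. Near each point the local model $Q\times cS$ with $Q$ a collared hypersurface in $\mathbb{P}_k$ produces a local stratified collar. These local collars are glued by the standard PL collaring argument applied stratum by stratum, which mirrors the appendix proofs of Thm.~\ref{transversal triangulation extend to bulk}; the product structure of the local charts keeps the gluing stratified. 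Since $U_D\subset N\subset\mathring{B}$, the image of $c_D$ lies in $\mathring{B}$.
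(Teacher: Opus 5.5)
\textbf{There is a gap in Step~4.} Step~4 is the substance of the lemma, and it does not work as planned.

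You want to make $\Delta''$ transversal to $P'$ by Thm.~\ref{transversal subdivision preparation}, and then glue local collars of $\partial U_D$ read off from transversal models. But in your Step~2, every $n$-simplex of the triangulation defining $P'$ is a wall of $P'$ wherever it meets an old chamber. Since $\Delta''$ subdivides that triangulation, it has simplices lying inside walls of $P'$. In a first derived, for instance, some faces of the frontier simplices of $U_D$ lie in $n$-simplices of the defining triangulation. Such a simplex is never transversal to $P'$: at a wall point, transversality forces the local form $Q\times cS^0$, i.e.\ the simplex must cross the wall.

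The preparation theorem cannot repair this. It requires a parent triangulation that is already transversal to the stratification being moved, and there is none here. It also moves that stratification, which would destroy $P'\succcurlyeq P$. The same objection applies to your use of it in Step~1, where you should instead move the triangulation by the inverse isotopy. Hence $\partial U_D$ is not a transversal subcomplex, and the collar-gluing argument has no input.

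Step~2 is also only asserted. Per-simplex transversality does not by itself give simultaneous conical charts for the pieces $\mathring{\tau}\cap S$, and you do not check the seam along $\partial N$.

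\textbf{The missing idea} is to get the collar from the simplicial structure itself. Let $f$ be the simplicial function equal to $0$ on the vertices of $\sigma$ and $1$ on all other vertices, and set $U_D=f^{-1}([0,t])$. This is a regular neighborhood of $\sigma$, hence a ball. For a short interval $(t-\delta,t+\delta)$ free of vertex levels, the simultaneous trivialization of Lemma~\ref{app:williamson} identifies $f^{-1}((t-\delta,t+\delta))$ with $f^{-1}(t)\times(t-\delta,t+\delta)$, preserving every closed simplex. This is a \emph{stratified} collar exactly when the strata of $P'$ there are unions of open simplices.

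That is why the paper starts from a triangulation \emph{adapted} to $P$, in which all filtration terms are subcomplexes, rather than a transversal one. Take a subcomplex $L\subset\mathring{B}$ containing a simplicial neighborhood of $K$. Inside $L$ the new strata are the open simplices of $L$; outside, they are the components of $S\setminus|L|$ for old strata $S$. The seam is checked separately, and each chamber $D\subset K$ is literally an open top simplex. With your transversal-intersection refinement, you would first have to pass to a subdivision adapted to both $\Delta$ and $P$, at which point you are doing the paper's construction.
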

\begin{proof}
    See the appendix.
\end{proof}

\begin{thm}(Gauge invariance)\label{Gauge invariance}

    When $\mathcal{V}$ is (CF), (SS) and (MI), the partition function is invariant under gauge transformations.
\end{thm}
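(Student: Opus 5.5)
The proof treats the three generating gauge moves of Def.~\ref{Gauge} one at a time. Since gauge equivalence is generated by these moves and their inverses, it suffices to show that $Z(\mathcal{B}_0)=Z(\mathcal{B}_1)$ whenever $\mathcal{B}_1$ arises from $\mathcal{B}_0$ by a single relative isotopy, a single relative refinement, or a single chamber gauge transformation. Throughout I use Thm.~\ref{Well-defined}: the partition function does not depend on the full transversal triangulation, so for each move I may pick a triangulation adapted to it.

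\textbf{Isotopy and refinement.} A relative ambient isotopy $F_t$ fixes $\partial B$ pointwise, so the $\mathcal{V}$-labeling of the space boundary does not change. Prop.~\ref{partition function isotopy invariance} then gives $Z(\mathcal{B})=Z(F_{1*}\mathcal{B})$ directly. For a refinement $P'\succcurlyeq P$, I take a full transversal triangulation $\Delta$ of $(B,P')$ (Thm.~\ref{transversal triangulation}). By Prop.~\ref{transversal of refinement}, $\Delta$ is also transversal to $P$, so both partition functions can be computed with the same $\Delta$.
\begin{itemize}
\item The state sum only uses the link boundaries (and, for Move~$1$, the transports) of the cells it cuts, together with the final hypersphere evaluations.
\item A new wall $e'$ lying inside an old wall $e$ carries the label $g(e)$. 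A new wall lying inside an old chamber carries the label $1$.
\item Hence every G-labeled sphere $\mathcal{S}^{k-1}$ produced at each step is obtained from the one computed with $P$ by adding identity-labeled defects or subdividing existing ones. Such spheres give the same disk-operad object up to the identification built into $V$: a bare identity wall acts trivially.
\end{itemize}
So the two state sums agree term by term. The subtle point is that a bare identity-labeled stratum inside a tangle acts as the identity. I would justify this using that $0$-dimensional bare strata are excluded and that higher bare strata are resolutions of the identity.

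\textbf{Chamber gauge transformations.} This is the main case. Let $K$ be the union of the closures of the chambers $c$ with $a_c\neq 1$. These chambers do not meet $\partial B$, so $K\subset\mathring B$ is compact. I apply Lem.~\ref{chamber divided by balls} to get a refinement $P'\succcurlyeq P$. By the refinement case, $P$ may be replaced by $P'$. Each new chamber $D$ inherits the gauge parameter $a_D=a_c$ of the old chamber containing it. On the new walls inside $c$, the transformed label $a_c\cdot 1\cdot a_c^{-1}=1$, so $P'$ with transformed labels is again the refinement of the transformed $P$. This means the transformation commutes with the refinement.

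Next I factor the gauge transformation as a product over the finitely many chambers $D\subset K$ with $a_D\neq 1$: transform one chamber at a time. The single-chamber transformation at $D$ changes labels only on walls adjacent to $D$, and all such walls lie in $\overline D\subset\mathring U_D$. Thus the two bulk vectors agree on $\overline{B\setminus U_D}$. The ball $U_D$ has a stratified collar, so Lem.~\ref{chamber inside ball} gives equality of partition functions. Composing over all $D$ finishes this case.

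\textbf{Expected obstacle.} The delicate step is checking that the one-chamber transformations remain valid intermediate G-labelings. Flatness at codimension-two defects holds automatically, since conjugation by $a_D$ preserves products around a codimension-two defect. The real concern is that Lem.~\ref{chamber inside ball} needs the modified $\mathcal{B}_1$ restricted to $U_D$ to still be a G-labeled ball whose boundary matches, including on the collar. For this I use that the collar of $\partial U_D$ lies in chambers other than $D$, because $\overline D\subset\mathring U_D$.
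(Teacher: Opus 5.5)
Your decomposition matches the paper's. Refinement invariance comes from running both state sums on one full triangulation transverse to $P'$, hence to $P$ by Prop.~\ref{transversal of refinement}. Chamber gauges are handled by refining with Lem.~\ref{chamber divided by balls}, checking that the transformed refinement refines $a(P)$, and changing one small chamber at a time inside its collared ball via Lem.~\ref{chamber inside ball}. Treating all chambers with $a_c\neq 1$ at once is fine; the paper instead reduces to a single old chamber $A$. Citing Prop.~\ref{partition function isotopy invariance} for relative isotopies is also fine, and your checks on flatness and on the collar avoiding $\overline D$ are correct.

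The step that fails is your justification of the refinement case. There is no principle that a bare identity-labeled wall ``acts trivially''. A G-equivariant empty disk with an extra $1$-labeled wall is a different object of the G-$D^n$-operad, and nothing identifies the two vector spaces. In $\mathcal{V}_{\mathcal{C}}$ a bare $e$-string is $\Omega^0_e=\bigoplus_{X\in I_e}X$, not $\mathbbm{1}$. So a closed $e$-labeled circle on $S^2$ evaluates to $\sum_{X\in I_e}d_X$, which is not $1$ in general. The paper's remark about resolutions of the identity concerns the unit of $A(\mathcal{D}^k\times D^{n-k})$ for a fixed type, not deleting a wall.

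You do not need that claim, because your premise is also wrong: the link spheres for $P'$ are not those for $P$ with extra identity walls, they are identical. At every step the link boundary is the restriction of the current space boundary. That boundary is built from two pieces:
\begin{itemize}
\item the original space boundary, where $P'=P$ on a neighborhood of $\partial B$ by the definition of refinement;
\item the types $\mathcal{D}^j_2$ of the representatives inserted earlier, since the fillings overwrite the bulk near each new boundary piece.
\end{itemize}
Hence the index sets, coefficients, inserted diagrams and final sphere evaluations coincide term by term. The bulk enters only through the Move-$1$ sectors of Lem.~\ref{Filling: Sector is determined by homotopy group}, i.e.\ the transport along each edge core. That transport is unchanged because new walls inside old chambers carry $1$, pieces of old walls keep their labels, and Move-$0$ fillings have radial transport $1$. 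With this replacement, which is exactly the paper's argument, your proof goes through.
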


\begin{proof}
    Gauge transformations involve chamber gauges and refinements.           

    We first prove the refinement gauge invariance. 
    Suppose $P'\succcurlyeq P$ is a refinement. 
    Take a full transversal triangulation of $P'$ by Thm.~\ref{transversal triangulation}.
    It is also a full transversal triangulation of $P$ by Prop.~\ref{transversal of refinement}. 
    The fillings of Move $0$ do not change the transports by assumption, Move $1$ only depends on the transport which is invariant under Move $0$ and refinement, Move $k\ge 2$ depends only on the link boundary created by Move $0,...,k-1$.
    So two state sums under this triangulation with respect to bulk vectors $(B,\mathcal{M},P)$ and $(B,\mathcal{M},P')$ are equal.

    Secondly, we prove the chamber gauge invariance.
    Suppose WLOG that there is only one chamber $A$ and its parameter $a(A)$ involved in the gauge $(B,\mathcal{M},P)\to (B,\mathcal{M},a(P))$.
    By Lem.~\ref{chamber divided by balls}, one can take a refinement of stratification $P'\succcurlyeq P$, which naturally becomes a refinement gauge transformation in the sense of Def.~\ref{Gauge} by labeling new walls with trivial open holonomies.
    
    We define a chamber gauge transformation $b$ on $(B,\mathcal{M},P')$ in which 
    \begin{equation}
    b(D)=
    \begin{cases}
        a(A),\text{ if $D\subset A$}
        \\
        1,\quad\quad \text{otherwise. }
    \end{cases}
    \end{equation}
    Then one can directly see that $b(P')$ is a refinement gauge transformation of $a(P)$.
    By refinement invariance, $Z(B,\mathcal{M},P)=Z(B,\mathcal{M},P')$ and $Z(B,\mathcal{M},b(P'))=Z(B,\mathcal{M},a(P))$.

    In order to prove $Z(B,\mathcal{M},P') = Z(B,\mathcal{M},b(P'))$, one divide the chamber gauge transformation $b$ as steps of single chamber gauges.
    For each step, suppose the chamber being revised is $D$.
    Since $\overline{D}\subset \overline{A}$, by Lem.~\ref{chamber divided by balls}  we have that $\overline{D}\subset \mathring{U_D}$ for a stratified submanifold $U_D$ with support a ball.
    By Lem.~\ref{chamber inside ball}, the partition function is invariant under this transformation. 
    So composing steps we get that $Z(B,\mathcal{M},P') = Z(B,\mathcal{M},b(P'))$.
    Then we have
    \begin{equation}
        Z(B,\mathcal{M},P)=Z(B,\mathcal{M},P') = Z(B,\mathcal{M},b(P'))=Z(B,\mathcal{M},a(P))
    \end{equation}
    So the partition function $Z$ is chamber gauge invariant. 
\end{proof}

\begin{rmk}
    It seems that we proved a solid property of $Z$ without using solid properties of $\mathcal{V}$, i.e. (MI) etc. 
    Actually the solid properties of $\mathcal{V}$ is used in the proof of that the partition function is well defined, i.e. independent of the choice of the full trasnversal triangulations (Thm.~\ref{Well-defined}) and its invariance under global surgery relations (Thm.~\ref{Global relations}). 
\end{rmk}

\subsection{G-equivariant TQFT with space-time boundary}

Suppose the G-equivariant $S^n$-algebra $\mathcal{V}$ is (CF), (SS) and (MI).

Now let us extend the partition function to a G-equivariant TQFT with space-time boundary.
We use the universal reconstruction method of TQFT.

\begin{defi}
    For a G-equivariant Time Boundary $\mathcal{F}=(F,P|_F)$, define $V_{\mathcal{F},\mp}$ to be the vector space spanned by the gauge equivalence classes of G-equivariant Spacetime cobordisms with Time Boundary $\pm\mathcal{F}$. 
\end{defi}

\begin{defi}
    The partition function $Z$ defines a well defined bi-linear form on $V_{\mathcal{F},-}\times V_{\mathcal{F},+}$:
    \begin{equation}
        \langle\mathcal{B}_-, \mathcal{B}_+\rangle := Z(\mathcal{B}_-\cup_{\mathcal{F}} \mathcal{B}_+),\quad
        \forall \mathcal{B}_\mp \in V_{\mathcal{F},\mp}
    \end{equation}
    The quotient spaces are denoted by 
    \begin{equation}\label{cobordism kernel}
    \tilde{V}_{\mathcal{F},\mp}:=V_{\mathcal{F},\mp}
    /\{v\mid \langle v,w\rangle=0, \forall w \in V_{\mathcal{F},\pm}\}
    \end{equation}
\end{defi}
\begin{proof}
    One only need to notice that if $\mathcal{B}$ and $\mathcal{B}'$ are gauge equivalent as G-equivariant Spacetime cobordisms with Time Boundary $-\mathcal{F}$, and $\mathcal{A}$ and $\mathcal{A}'$ are gauge equivalent as G-equivariant Spacetime cobordisms with Time Boundary $+\mathcal{F}$, then $\mathcal{B}\cup \mathcal{A}$ and $\mathcal{B}'\cup \mathcal{A}'$ are gauge equivalent.
\end{proof}

These are the state spaces of the G-equivariant TQFT, namely the functor on the object. 
We are going to prove that these are finite dimensional vector spaces, and further Hilbert spaces when $\mathcal{V}$ is (RP).
In order to prove this, one need to take the ``square root" of the $\mathcal{S}^{k-1}$ relations. 
The topological details are treated in the following lemma.

\begin{lem}\label{Filling: Avoiding the middle}
    Suppose that $0\le k\le n-1$ and one performs global $\mathcal{S}^{k-1}$ relation to $\mathcal{U}\subset \mathcal{B}$ and we identify $\mathcal{U}=\mathcal{D}^k\times D^{n+1-k}$, then the new Space Boundary and the filling $P'$ can be chosen so that the upper half $(D^k\times (D^{n+1-k}\setminus \epsilon \mathring{D^{n+1-k}})|_{x_{n+1}\ge 0},P'|_{...})$ and the lower half $(D^k\times (D^{n+1-k}\setminus \epsilon \mathring{D^{n+1-k})}|_{x_{n+1}\le 0},P'|_{...})$ are both geometric Spacetime cobordisms with Time Boundary the middle surface $(D^k\times (D^{n-k}\setminus \epsilon \mathring{D^{n-k}})\times\{0\},P'|...)$.
\end{lem}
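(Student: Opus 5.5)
The plan is to build the filling $P'$ by hand so that it is symmetric under the reflection $r:x_{n+1}\mapsto -x_{n+1}$, up to orientation reversal. The new Space Boundary should also meet the hyperplane $\{x_{n+1}=0\}$ transversely, and it should admit product collars there. Choosing the last coordinate $x_{n+1}$ inside the $D^{n+1-k}$ factor is possible because $k\le n-1$, so $n+1-k\ge 2$. Then the middle surface $D^k\times(D^{n-k}\setminus\epsilon\mathring{D^{n-k}})\times\{0\}$ is a genuine codimension-one slice of the annular region $D^k\times(D^{n+1-k}\setminus\epsilon\mathring{D^{n+1-k}})$. It meets the space boundary exactly along its own boundary, namely $\mathcal{S}^{k-1}\times(\dots)\times\{0\}$ together with $D^k\times\epsilon S^{n-k-1}\times\{0\}$, as the Time Boundary definition requires.

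First I move the label. By the freedom in Definition~\ref{Sk-1 relations}, I place the label $\alpha_k$ on $\mathcal{D}^k_2(\alpha_k)\times\epsilon S^{n-k}$ in a small disk around a point with $x_{n+1}>0$ (say the north pole). This keeps the slice $x_{n+1}=0$ free of $\mathcal{V}$-labels, so it can serve as a Time Boundary. In the $k=n$ style terms I put $\alpha_n$ and $\alpha_n^\vee$ on opposite sides; for $k\le n-1$ only the single label needs moving.

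Next I fill the bulk as a product. Lemma~\ref{Filling: Sector is determined by homotopy group} produces the filling $P_2=\mathcal{C}_{k+1}\times S^{n-k}$ on the shell, where $\mathcal{C}_{k+1}$ interpolates between $\mathcal{D}^k$ and $\mathcal{D}^k_2$. I reuse it with the $S^{n-k}$ factor kept as an untouched product direction; in the $k=1$ case this uses the transport matching. The only modification is the local refilling near the label, which by Lemma~\ref{Filling: anything} can be confined to a small ball $D^k\times\epsilon^2D^{n-k}\times[\epsilon,\epsilon+\epsilon^2]$ centered at the north pole, so it lies entirely in $\{x_{n+1}>0\}$. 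Since $x_{n+1}$ is a coordinate on $S^{n-k}$ and is transverse to the product structure $\mathcal{C}_{k+1}\times S^{n-k}$ away from that ball, the slice $x_{n+1}=0$ meets every stratum transversally. A neighbourhood $\{|x_{n+1}|<\delta\}$ is then a $G$-preserving stratified product collar of the slice in both halves. This verifies the collar condition of the Spacetime cobordism definition, including the requirement that the collar of $\partial F$ lie in $\partial B$. I would also perform an isotopy rel the relevant pieces so that the original Space Boundary $\mathcal{S}^{k-1}\times(D^{n+1-k}\setminus\epsilon D)$ is collared compatibly. The condition $\iota^*P'=P_F$ holds by definition, taking $P_F$ to be the restriction.

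The main obstacle I expect is making this product structure compatible with $P'=P$ on $B\setminus W$. Outside $W$ the given stratification $P$ on $\mathcal{D}^k\times D^{n+1-k}$ is already a product in the $D^{n+1-k}$ factor, by the identification $\phi$, so the slice is transverse there as well. The delicate part is the corner where the slice meets $\mathcal{S}^{k-1}\times D^{n+1-k}$ and $\epsilon S^{n-k}$; there I would use the corner-smoothing and collar statements proved in the appendix, as in Lemma~\ref{Filling: global relation can cut second for any general filling}. A second point to check is $k=0$, where the filling must have radial transport equal to the identity; a radial product filling of the shell is symmetric and satisfies this automatically.
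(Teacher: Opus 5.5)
Your construction is essentially the paper's: the product filling $\mathcal{C}_{k+1}\times S^{n-k}$ on the shell (trivial radial interval for $k=0$, matching transport for $k=1$), the equatorial slice $\mathcal{C}_{k+1}\times S^{n-k-1}\times\{0\}$ as Time Boundary, and the refilling around the label confined to a small ball off that slice. The one real difference is that the paper uses idempotency $\alpha_k=\alpha_k^2$ to put a copy of $\alpha_k$, each with its own small refilled ball, in \emph{both} halves, which gives the mirror symmetry you announce and then lose by using a single north-pole label; your one-label version proves the lemma as stated, but Theorem~\ref{finite dimensional} relies on the two-label symmetric choice, so that each $\iota(\alpha_k,\mp)$ carries $\alpha_k$ and $\iota_-^\dagger=\iota_+$ in the reflection-positive case.
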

\begin{proof}
    Take any summand $\alpha_k$ in the $\mathcal{S}^{k-1}$ relation, and denote by $\mathcal{D}^k_2=tp(\alpha_k)$.
    By Lem.~\ref{Filling: anything}, one can choose a filling 
    \begin{equation}
        |\mathcal{C}_{k+1}|=D^k\times [+\epsilon,+1],
        \quad
        \mathcal{C}_{k+1}|_{D^k\times \{+1\}}=\mathcal{D}^k
        \quad
        \mathcal{C}_{k+1}|_{D^k\times \{+\epsilon\}}=\mathcal{D}^k_2
    \end{equation}
    when $1\le k$, and an interval with trivial open holonomy when $k=0$.
    First take $P_2=\mathcal{C}_{k+1}\times S^{n-k}$. 
    The Time Boundary is $\mathcal{C}_{k+1}\times S^{n-k-1}\times\{0\}$.
    Thus $P_2$ satisfies the geometric Spacetime cobordism condition, but without labeling $\alpha_k$. 
    (Notice that we may abuse some coordinate representations of the same elementary shape, but without confusions.)
    
    Then one label two $\alpha_k$'s at $D^k \times \epsilon^2 D^{n-k}\times\{+\epsilon\}$ and at $D^k \times \epsilon^2 D^{n-k}\times\{-\epsilon\}$ (one uses idempotency), and uses Lem.~\ref{Filling: anything} again to change only an arbitrarily small neighborhood filling of $P_2$, say $D^k \times \epsilon^2 D^{n-k}\times[\pm\epsilon,\pm\epsilon\pm\epsilon^2]$.
    This is the $P'$ satisfying the condition.
\end{proof}

\begin{thm}\label{finite dimensional}
    When the G-$S^n$-algebra is (SS), (CF), (MI), the vector space $\tilde{V}_{\mathcal{F},\mp}$ is finite dimensional. 
    If $\mathcal{V}$ is further (RP), then it induces a positive definite inner product on $\tilde{V}_{\mathcal{F},\mp}$, so it becomes a finite dimensional Hilbert space. 
\end{thm}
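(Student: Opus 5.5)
The plan is to ``take the square root'' of the global $\mathcal{S}^{k-1}$-relations applied to an identity cylinder over $\mathcal{F}$, as the introduction anticipates. Let $C=\mathcal{F}\times[-1,1]$ be the collar cylinder, viewed as a spacetime cobordism with time boundary $\overline{\mathcal{F}}\sqcup\mathcal{F}$ and space boundary $\partial F\times[-1,1]$. Gluing $C$ to any $\mathcal{B}_-\in V_{\mathcal{F},-}$ returns a cobordism gauge equivalent to $\mathcal{B}_-$ (collar absorption). Hence for every $\mathcal{B}_+$ we have $\langle \mathcal{B}_-,\mathcal{B}_+\rangle=Z(\mathcal{B}_-\cup C\cup\mathcal{B}_+)$.

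Choose a full transversal triangulation of $\mathcal{F}$, using Thm.~\ref{transversal triangulation} together with the fullness part of Thm.~\ref{transversal triangulation extend to bulk}, and product it into a triangulation of $C$ in which the simplices of $\mathcal{F}\times\{0\}$ form a sub-skeleton. Apply the global relations Thm.~\ref{Global relations} to $C$ along the simplices $\sigma\times[-\delta,\delta]$ of the middle slice, in increasing dimension $k=0,\dots,n$; the Move $k$ on a $k$-simplex of $\mathcal{F}$ thickened in time is an $\mathcal{S}^{k-1}$-relation with $\mathcal{U}\cong\mathcal{D}^k\times D^{n+1-k}$. These moves are applied inside the glued manifold but are localized in $C$, so $Z$ is unchanged. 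Lemma~\ref{Filling: Avoiding the middle} lets the fillings $P'$ be chosen so that each summand splits along the middle surface into an upper and a lower half, each a geometric spacetime cobordism. After all the slice simplices are cut, the middle time boundary has been entirely consumed: each summand becomes a disjoint union of a lower piece $L_\lambda$, with time boundary $\mathcal{F}$ and new space boundary labels $\lambda$, and an upper piece $U_\lambda$. The labels $\lambda$ range over tuples of representative irreducible morphisms in the admissible sectors, plus basis/dual-basis pairs for Move $n$.

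By (CF) the index set is finite. The key local fact is that Move $1$ sums only over the fixed transport sector, which is finite by $1$-finiteness, while Move $0$ sums over finitely many $0$-morphisms and Move $n$ over a finite basis by $n$-finiteness. We therefore obtain
\[
\langle\mathcal{B}_-,\mathcal{B}_+\rangle=\sum_{\lambda\in\Lambda} c_\lambda\, Z(\mathcal{B}_-\cup L_\lambda)\,Z(U_\lambda\cup\mathcal{B}_+),
\]
where $\Lambda$ is finite and $c_\lambda$ are the products of $Tr/\mu$ weights, which are well defined by (SS). The separation into a product uses that a closed bulk vector which is a disjoint union has multiplicative $Z$; this follows from the algorithm, since Move $n+1$ is applied to each remaining ball. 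Consequently the map $\tilde V_{\mathcal{F},-}\to\mathbb{C}^{\Lambda}$, $\mathcal{B}_-\mapsto (Z(\mathcal{B}_-\cup L_\lambda))_\lambda$, is injective: if all its coordinates vanish, then $\mathcal{B}_-$ pairs to zero with every $\mathcal{B}_+$ and so lies in the radical. This gives $\dim\tilde V_{\mathcal{F},-}\le|\Lambda|$, and symmetrically for $\tilde V_{\mathcal{F},+}$.

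For (RP), take $\mathcal{B}_+=\mathcal{B}_-^\dagger$, the mirror. Choose the representatives and fillings reflection-symmetrically, so that $U_\lambda=L_\lambda^\dagger$ and the Move $n$ basis is taken orthonormal for the positive form. One then shows $Z(\mathcal{B}^\dagger)=\overline{Z(\mathcal{B})}$ from the Hermitian property, and $Tr(\alpha)>0$, $\mu(\alpha)>0$ from positivity; this gives $\langle\mathcal{B}_-,\mathcal{B}_-^\dagger\rangle=\sum_\lambda c_\lambda|Z(\mathcal{B}_-\cup L_\lambda)|^2\ge0$ with $c_\lambda>0$. Vanishing forces all coordinates to vanish, so the form is positive definite on the quotient and the embedding is isometric after rescaling by $\sqrt{c_\lambda}$.

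I expect the main obstacle to be the topological bookkeeping. One must ensure the successive moves on the slice simplices are legal, via Lemma~\ref{Filling: algorithm can continue} adapted to the middle slice, which lies in the interior rather than on a boundary. One must also ensure that the cut along a simplex meeting $\partial F$ is handled by fullness, and that the halves remain collared cobordisms with consistent $G$-labelings, in particular that Move $0$ fillings keep trivial radial transport. In the (RP) part the delicate points are positivity of $Tr$ and $\mu$ and the reflection symmetry of the fillings.
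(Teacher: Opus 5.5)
Your proposal is correct and follows essentially the paper's proof. The paper likewise applies the global $\mathcal{S}^{k-1}$-relations, in increasing $k$, along a full transversal triangulation of the middle slice of the identity cylinder $\mathcal{F}\times[-1,1]$, uses Lemma~\ref{Filling: Avoiding the middle} to split each summand into halves, and for (RP) chooses an orthonormal basis, self-adjoint representatives and reflection-symmetric fillings. The only difference is packaging: the paper composes stepwise embeddings $\iota_-$ (weighted by $\sqrt{Tr(\alpha_k)/\mu(\alpha_k)}$, with $\iota_+\circ\iota_-=\mathrm{id}$) through the intermediate spaces $\tilde V_{\mathcal{F}_{\alpha_k},-}$ until it lands in $\bigoplus\mathbb{C}$, whereas you factor the pairing through $\mathbb{C}^{\Lambda}$ in one step, which gives the same map up to the rescaling by $\sqrt{c_\lambda}$.
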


\begin{proof}
    Consider the Spacetime cobordism $\mathcal{F}\times[-1,+1]$, gluing along Time Boundary $\mathcal{F}$ to which can be viewed as the identity operator
    \begin{equation}
        \operatorname{id}_{\tilde{V}_{\mathcal{F},-}} = (B=F\times[-1,+1], P_B=P_F\times[-1,+1], M=\partial F\times[-1,+1], M_{\mathcal{V}}=\emptyset)
    \end{equation}
    
    Suppose one performs a global $\mathcal{S}^{k-1}$ relation (Def.~\ref{Sk-1 relations} and Thm.~\ref{Global relations}) to $\phi(D^{n+1}, S^{k-1}\times D^{n+1-k})=(U, U\cap (\partial F\times[-1,+1]))\subset (F\times[-1,+1],\partial F\times[-1,+1])$, for $0\le k\le n$. 
    Suppose $\phi$ preserves the $(n+1)$-th coordinate.
    
    We define the Spacetime cobordism 
    \begin{equation}
    \begin{aligned}
        \iota(\alpha_k, \mp) = [(F\times[-1,+1]\setminus \phi(D^{k}\times \epsilon D^{n-k+1}), \\
        \phi(\mathcal{M}(\alpha_k))\cup \partial F\times[-1,+1] \setminus \phi(S^{k-1}\times\epsilon D^{n-k+1})]
        \cap (F\times[0,\mp 1])
    \end{aligned}
    \end{equation}
    where $\mathcal{M}(\alpha_k):=\mathcal{S}^{k-1}\times (D^{n+1-k}\setminus \epsilon D^{n+1-k})\cup \mathcal{D}^k_2(\alpha_k) \times \epsilon S^{n-k}$ is the Space Boundary after the $\mathcal{S}^{k-1}$ relation. 
    
    Then for $0\le k\le n-1$ by Lem.~\ref{Filling: Avoiding the middle} one can take $\mathcal{S}^{k-1}$ relation properly such that gluing with $\iota(\alpha_k, -)$ is a well defined map
    \begin{equation}
        \iota(\alpha_k,-): \tilde{V}_{\mathcal{F},-}\to \tilde{V}_{\mathcal{F}_{\alpha_k},-}
    \end{equation}
    and similarly
    \begin{equation}
        \iota(\alpha_k,+):\tilde{V}_{\mathcal{F}_{\alpha_k,-}}\to \tilde{V}_{\mathcal{F},-}
    \end{equation}
    where $\mathcal{F}_{\alpha_k}:=(F\times\{0\}\setminus \phi(D^k\times \epsilon D^{n+1-k}), P'|_{\dots})$ is the new middle Time Boundary after performing $\mathcal{S}^{k-1}$-relation. 
    Then we define
    \begin{equation}
        \iota_{-}=\bigoplus_{\alpha_k} \sqrt{\frac{Tr(\alpha_k)}{\mu(\alpha_k)}}\iota(\alpha_k,-): \tilde{V}_{\mathcal{F},-}\to \bigoplus_{\alpha_k}\tilde{V}_{\mathcal{F}_{\alpha_k},-}
    \end{equation}
    and
    \begin{equation}
        \iota_{+}((y_{\alpha_k})_{\alpha_k})=\sum_{\alpha_k} \sqrt{\frac{Tr(\alpha_k)}{\mu(\alpha_k)}}\iota(\alpha_k,+)(y_{\alpha_k}): \bigoplus_{\alpha_k}\tilde{V}_{\mathcal{F}_{\alpha_k},-}
        \to
        \tilde{V}_{\mathcal{F},-}
    \end{equation}
    for $0\le k\le n-1$, and
    \begin{equation}
        \iota_{-}=\bigoplus_{e_i} \iota(e_i,-): \tilde{V}_{\mathcal{F},-}\to \bigoplus_{e_i}\tilde{V}_{\mathcal{F}_{e_i},-}
    \end{equation}
    and
    \begin{equation}
        \iota_{+}((y_i)_i)=\sum_{e_i^\vee} \iota(e_i^\vee,+)(y_i): \bigoplus_{e_i^\vee}\tilde{V}_{\mathcal{F}_{e_i},-}
        \to
        \tilde{V}_{\mathcal{F},-}
    \end{equation}
    for $k=n$.
    Gluing along the common Time Boundary $\mathcal{F}_{\alpha_k}$'s we obtain the cobordism created by the $\mathcal{S}^{k-1}$-relation, since when $0\le k\le n-1$ by $\alpha_k^2=\alpha_k$ we can place $\alpha_k$'s on each half. 
    That is, 
    \begin{equation}
        \iota_+ \circ \iota_- = \operatorname{id}_{\tilde{V}_{\mathcal{F},-}}
    \end{equation}
    So $\iota_-:\tilde{V}_{\mathcal{F},-}\to \bigoplus_{\alpha_k}\tilde{V}_{\mathcal{F}_{\alpha_k},-}$ is a linear embedding.

    Pick a full transversal triangulation of $\mathcal{F}$ by Thm.~\ref{transversal triangulation} and perform the above embedding map from $k=0,\dots,n$, and by the same argument as Lem.~\ref{Filling: algorithm can continue}, this procedure can continue.
    So we get an embedding of $\tilde{V}_{\mathcal{F},-}$ to a direct sum of one dimensional spaces $\tilde{V}_{\emptyset,-}=\mathbb{C}$ with nondegenerate bilinear form.
    The number of simplices of a triangulation and the number of representative irreducible $k$-morphisms $\alpha_k$ for a fixed link boundary (or transport) and basis $e_i$ are finite, so the target vector space is finite dimensional. 
    So $\tilde{V}_{\mathcal{F},-}$ is finite dimensional. 

    If $\mathcal{V}$ is (RP), then the target space is a Hilbert space.
    One can choose at the beginning that $e_i$ is a ONB, $e_i^\vee=\theta(e_i)$, and $\alpha^*=\alpha$ for any chosen $k$-morphism for $0\le k\le n-1$.
    And we required in Lem.~\ref{Filling: Avoiding the middle} the fillings of the bulk of the upper half are always the reflection of that of the lower half.
    Then the dagger of $\iota_-$, is just $\iota_-^\dagger=\iota_+$.
    The embeddings are injective isometries, so the total embedding is an isometry of vector spaces with Hermitian form. 
    Since the final space is a finite dimensional Hilbert space, 
    $\tilde{V}_{\mathcal{F},-}$ is a finite dimensional Hilbert space.
\end{proof}

\begin{defi}
    For a G-equivariant Time Boundary $\mathcal{F}=(F,P_F)$, we call $\mathcal{B}=(B,\mathcal{M},P_B)\in \tilde{V}_{\mathcal{F},-}$ a collar representative, if $B=F\times[-\epsilon,0]$ as PL manifolds, the identification of $\mathcal{F}$ is $\iota(x)=(x,0)$ and $P_B|_{\partial F\times[-\epsilon,0]}=\partial \mathcal{F}\times[-\epsilon,0]$.
\end{defi}

\begin{prop}\label{collar representative}
    For any G-equivariant Time Boundary $\mathcal{F}=(F,P_F)$, the vector space $\tilde{V}_{\mathcal{F},-}$ is spanned by collar representatives.
\end{prop}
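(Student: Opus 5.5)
The plan is to show that every class in $\tilde V_{\mathcal F,-}$ is equivalent, modulo the radical of the pairing, to a linear combination of collar representatives. Let $\mathcal B=(B,P_B,M,M_{\mathcal V},\iota)$ be a spacetime cobordism with time boundary $\mathcal F$. By definition, $\iota(F)$ has a stratified collar $c:F\times(-\epsilon,0]\to U$ that preserves the $G$-labeling and satisfies $c(\partial F\times(-\epsilon,0])\subset\partial B$. Shrinking the collar, we write $B=C\cup B'$, where $C=c(F\times[-\epsilon/2,0])\cong \mathcal F\times[-\epsilon/2,0]$ and $B'=\overline{B\setminus C}$. These meet along the interior slice $\mathcal F_{1/2}=c(F\times\{-\epsilon/2\})$. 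The aim is to "evaluate away" $B'$ while keeping $C$ intact, as in the proof of Thm.~\ref{finite dimensional}.

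Step 1 is to triangulate. Choose a full transversal triangulation of the slice $\mathcal F_{1/2}$ by Thm.~\ref{transversal triangulation}. Extend it to full transversal triangulations of $C$ and of $B'$ by Thm.~\ref{transversal triangulation extend to bulk}, using that the slice has collars on both sides. By Prop.~\ref{App: order}, in computing $Z(\mathcal B\cup_{\mathcal F}\mathcal A)$ for an arbitrary test vector $\mathcal A\in V_{\mathcal F,+}$, we may first process all simplices of $B'$ not contained in $\mathcal F_{1/2}\cup\partial B$. Each such step is an $\mathcal S^{k-1}$-relation, valid globally by Thm.~\ref{Global relations}. Because these moves take place in the interior of $B'$ (away from $C$ and from $\mathcal A$), the pairing with any $\mathcal A$ is preserved summand by summand. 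After all simplices of $B'$ are processed, Lem.~\ref{Filling: algorithm can continue} says that $B'$ has been reduced to a union of balls whose boundaries are $\mathcal V$-labeled. By Coro.~\ref{partition function ball eval}, or by Move~$(n+1)$, these balls evaluate to scalars. What remains in each summand is a cobordism supported on $C\cong F\times[-\epsilon/2,0]$ together with a $\mathcal V$-labeling on its face $F\times\{-\epsilon/2\}$ and on $\partial F\times[-\epsilon/2,0]$.

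Step 2 is to check that each resulting summand really is a collar representative. The support is $F\times[-\epsilon/2,0]$ with $\iota(x)=(x,0)$, which after rescaling is $F\times[-\epsilon,0]$. The $G$-labeling is the product one, since the collar $c$ is stratified and label-preserving. The one point needing care is the side condition $P_B|_{\partial F\times[-\epsilon,0]}=\partial\mathcal F\times[-\epsilon,0]$. This holds because $c$ maps $\partial F\times(-\epsilon,0]$ into $\partial B$ stratified-isomorphically. The $\mathcal V$-labels on that side piece come from $M_{\mathcal V}$; we arrange them by an ambient isotopy fixing $\iota(F)$ that pushes all labels of $M$ off the collar part of $\partial B$ into $B'$ before cutting. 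Such an isotopy is legitimate, since cobordisms are taken up to PL isomorphism, and afterwards the side is unlabeled. We conclude that $\mathcal B-\sum_l w(l)\,\mathcal C(l)$ pairs to zero with every $\mathcal A$, so it lies in the radical and $\mathcal B=\sum_l w(l)\mathcal C(l)$ in $\tilde V_{\mathcal F,-}$.

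The main obstacle is Step 1's topology. We must ensure that the surgeries processing $B'$ never touch the collar $C$, and that every intermediate configuration stays a valid bulk vector after gluing with an arbitrary $\mathcal A$. Fullness of the triangulation near $\mathcal F_{1/2}\cup\partial B$ is exactly what Lem.~\ref{Filling: algorithm can continue} needs. If order independence across the glued manifold is delicate, the fallback is the following. Apply the cutting argument of Lem.~\ref{chamber inside ball} to the glued manifold $\mathcal B\cup_{\mathcal F}\mathcal A$, cutting along $\mathcal F_{1/2}$. This regroups $Z$ as $\sum_l w(l)Z(B'(l))Z(C(l)\cup\mathcal A)$, and the desired identity in $\tilde V_{\mathcal F,-}$ follows directly.
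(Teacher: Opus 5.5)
Your main argument in Step~1 has a genuine gap. You exclude the simplices of $\mathcal F_{1/2}$ and require that the surgeries never touch $C$. But $\mathcal F_{1/2}$ is interior to $\mathcal B\cup_{\mathcal F}\mathcal A$; only $\partial\mathcal F_{1/2}$ lies in the space boundary.

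Consider a simplex of $B'$ with an unprocessed face in the interior of the slice. Already an edge from an interior vertex of $B'$ to an interior vertex of $\mathcal F_{1/2}$ is such a simplex. Its core does not have its boundary in the current boundary, so cutting it is not an $\mathcal S^{k-1}$-relation in the sense of Def.~\ref{Sk-1 relations}. Hence Thm.~\ref{Global relations} does not apply. Prop.~\ref{App: order} does not rescue this either, since it only permits reorderings among face-first orders, and yours is not one.

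Even granting these cuts, nothing ever separates $B'$ from $C$. The residual pieces of $B'$ adjacent to the slice are not closed balls with $\mathcal V$-labeled boundary, because part of their boundary is the unlabeled interior slice. So Move~$(n+1)$ cannot evaluate them. Moreover, the $\mathcal V$-labeling of $F\times\{-\epsilon/2\}$ that your Step~2 relies on is never created.

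The missing idea is that you must cut \emph{through} the slice. This eats a thin neighborhood of $\mathcal F_{1/2}$ on the $C$ side, which is harmless: $F\times[-\epsilon/2+\delta,0]$ is still a collar, and its side is untouched original space boundary carrying the product labeling. Your fallback does exactly this, and it should be the main argument.

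The nonboundary faces of $\mathcal F_{1/2}$ form a lower ideal of the face order, so you can process them first. For a hypersurface meeting the space boundary, the right analogue of Lem.~\ref{chamber inside ball} is Prop.~\ref{app:expose-interface}. Each such move is a legal $\mathcal S^{k-1}$-relation in $\mathcal B\cup_{\mathcal F}\mathcal A$ for every $\mathcal A$, supported away from $\mathcal A$. It is therefore an identity in $\tilde V_{\mathcal F,-}$ by Thm.~\ref{Global relations}.

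Once the slice is cut, finish the $B'$ side with the remaining moves and Move~$(n+1)$. The following depend only on the labels near the slice and in $B'$, never on $\mathcal A$:
\begin{itemize}
\item the states $l=(l_-,l_+)$, which are the dual-basis pairs from Move~$n$;
\item the weights;
\item the Move-$1$ sectors;
\item the fillings.
\end{itemize}
You therefore get $\mathcal B=\sum_l c_l\,C(l_+)$ in $\tilde V_{\mathcal F,-}$, with scalars $c_l$ and collar representatives $C(l_+)$. This is the paper's proof: triangulate the slice, extend over $B_0\cup(F\times[-\epsilon,-\epsilon/2])$, and apply the relations there with the slice faces included.

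Finally, your isotopy clearing $\mathcal V$-labels from the side is unnecessary. The collar is stratified, and points of $\partial F$ lie in strata of dimension at least one. Hence $c(\partial F\times(-\epsilon,0])$ contains no position points.
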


\begin{proof}
    Take $\mathcal{B}=(B,\mathcal{M},P_B)\in \tilde{V}_{\mathcal{F},-}$.
    By definition $(B,P_B)=(F\times[-\epsilon,0],P_F\times[-\epsilon,0])\cup (B_0,P_B|_{B_0})$ and $\partial F\times [-\epsilon,0]$ has no $\mathcal{V}$-labels, where $B_0$ is a closed sub $n+1$ manifold of $B$ and $B_0\cap (F\times[-\epsilon,0])=F\times\{-\epsilon\}$. 
    
    Take a full transversal triangulation of $F\times\{-\epsilon/2\}$ by Thm.~\ref{transversal triangulation}, and extend it into a full (if necessary, take subdivisions) transversal one on $B_0\cup (F\times[-\epsilon,-\epsilon/2])$ by Thm.~\ref{transversal triangulation extend to bulk}.
    Apply $\mathcal{S}^{k-1}$ relations to $B_0\cup (F\times[-\epsilon,-\epsilon/2])$.
    Then the Spacetime cobordism $\mathcal{B}=(B,\mathcal{M},P_B)$ becomes a linear combination of collar representatives. 
\end{proof}

\begin{rmk}
    Notice that this itself does not imply Thm.~\ref{finite dimensional}, because we may have an infinite number of collar representatives.
\end{rmk}

\begin{thm}(TQFT)

    When the G-$S^n$-algebra $\mathcal{V}$ is (SS), (CF), (MI), we obtain an $(n+1)$ G-equivariant TQFT with spacetime boundary. 
    If further $\mathcal{V}$ is (RP), the G-equivariant TQFT is unitary. 
\end{thm}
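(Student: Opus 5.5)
We construct the functor by the universal construction, using the state spaces $\tilde V_{\mathcal F,-}$ of Thm.~\ref{finite dimensional}. On objects we set $\mathcal Z(\mathcal F):=\tilde V_{\mathcal F,-}$, which is finite dimensional by Thm.~\ref{finite dimensional}. On morphisms, a Spacetime cobordism $\mathcal W$ from $\mathcal F_1$ to $\mathcal F_2$ (time boundary $\overline{\mathcal F_1}\sqcup\mathcal F_2$) acts on a representative $\mathcal B\in V_{\mathcal F_1,-}$ by gluing, $\mathcal B\mapsto \mathcal B\cup_{\mathcal F_1}\mathcal W\in V_{\mathcal F_2,-}$. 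The collar condition in the definition of Spacetime cobordisms makes the glued object again a Spacetime cobordism. The gauge-equivalence remark after the definition of the pairing shows the gluing descends to gauge classes.

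\textbf{Well-definedness on the quotient.} If $\langle\mathcal B,\cdot\rangle\equiv 0$ on $V_{\mathcal F_1,+}$, then for every $\mathcal A\in V_{\mathcal F_2,+}$ we have
\[
\langle \mathcal B\cup\mathcal W,\mathcal A\rangle=\langle\mathcal B,\mathcal W\cup\mathcal A\rangle=0 .
\]
This uses associativity of gluing and the fact that the partition function depends only on the glued bulk vector, by Thm.~\ref{Well-defined} and Thm.~\ref{Gauge invariance}. So the action kills the radical and gives a linear map $\mathcal Z(\mathcal W)$.

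\textbf{Functoriality.} Composition is respected because gluing is associative up to PL isomorphism preserving decorations. For identities, the cylinder $F\times[-1,1]$ acts as the identity on $\tilde V$. It suffices to check this on collar representatives by Prop.~\ref{collar representative}, and there gluing a cylinder onto a collar is PL-isomorphic, relative to the boundary identifications, to the collar itself.

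\textbf{Monoidal structure.} There is a natural map $\tilde V_{\mathcal F,-}\otimes\tilde V_{\mathcal F',-}\to\tilde V_{\mathcal F\sqcup\mathcal F',-}$ given by disjoint union. Injectivity follows from multiplicativity of $Z$ on disjoint unions, which is immediate from the state-sum algorithm (triangulate each component separately, using Prop.~\ref{App: order}). Together with nondegeneracy of the pairing on $\tilde V$, this shows the tensor product of radicals is handled correctly. Surjectivity is the \textbf{main obstacle}. A cobordism with time boundary $\mathcal F\sqcup\mathcal F'$ may be connected, so it is not obviously a sum of products. The plan is to use Prop.~\ref{collar representative}: every class is a combination of collar representatives on $(F\sqcup F')\times[-\epsilon,0]$, and such a collar is a disjoint union of a collar on $F$ and a collar on $F'$, hence lies in the image. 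The symmetry and unit isomorphisms ($\tilde V_{\emptyset,-}=\mathbb C$, via closed bulk vectors evaluated by $Z$) are induced by the corresponding PL homeomorphisms, so the coherence axioms hold automatically.

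\textbf{Unitarity.} Assume $\mathcal V$ is (RP). Thm.~\ref{finite dimensional} makes each $\tilde V_{\mathcal F,-}$ a Hilbert space with inner product $\langle\mathcal B,\mathcal B'\rangle=Z(\mathcal B\cup\overline{\mathcal B'})$. For a cobordism $\mathcal W$, compute
\[
\langle\mathcal W\mathcal B,\mathcal B'\rangle=Z(\mathcal B\cup\mathcal W\cup\overline{\mathcal B'})=\langle\mathcal B,\overline{\mathcal W}\mathcal B'\rangle .
\]
Hence $\mathcal Z(\mathcal W^\dagger)=\mathcal Z(\mathcal W)^\dagger$. This requires the partition function to satisfy $Z(\overline{\mathcal B})=\overline{Z(\mathcal B)}$. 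That identity follows from the Hermitian axiom of (RP) applied move by move in the state sum, using self-adjoint representatives $\alpha^*=\alpha$ and a basis with dual basis $e_i^\vee=\theta(e_i)$, as in the proof of Thm.~\ref{finite dimensional}.
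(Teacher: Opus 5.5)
Your proposal is correct and follows the paper's proof essentially step for step. Both arguments take finite-dimensionality from Thm.~\ref{finite dimensional}, let cobordisms act by gluing so that the action descends to gauge classes and to the radical, prove injectivity of the monoidal comparison map by pairing against dual bases using multiplicativity of $Z$, prove surjectivity via collar representatives, and obtain unitarity from the mirror dagger. Your version only spells out details the paper calls obvious, such as the radical computation and the identity $Z(\overline{\mathcal B})=\overline{Z(\mathcal B)}$.
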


\begin{proof}
    By Thm.~\ref{finite dimensional}, the vector space $\tilde{V}_{\mathcal{F},-}$ is finite dimensional. 

    The functor on morphisms is defined as follows.
    A representative $\mathcal{B}$ of a G-equivariant Spacetime cobordism $[\mathcal{B}]:\mathcal{F}_1\to \mathcal{F}_2$ acts on a vector in $\tilde{V}_{\mathcal{F}_1,-}$, i.e. a G-equivariant Spacetime cobordism with Time Boundary $\mathcal{F}_1$, by gluing along the common boundary $\mathcal{F}_1$.
    We need to prove its well definess.

    If $\mathcal{B}$ and $\mathcal{B}'$ are gauge equivalent (as G-equivariant Spacetime cobordisms), then $\mathcal{B}\cup \mathcal{F}_1$ and $\mathcal{B}'\cup \mathcal{F}_1$ are gauge equivalent.
    So it is well defined for a gauge equivalence class.
    By definition it is well defined for the kernel \eqref{cobordism kernel}.
    So $\tilde{V}_{\mathcal{F}_1,-}\to \tilde{V}_{\mathcal{F}_2,-}$ is a well defined map which is obviously linear.

    The identification $\tilde{V}_{\mathcal{F}_1,-}\otimes \tilde{V}_{\mathcal{F}_2,-}\to \tilde{V}_{\mathcal{F}_1\otimes \mathcal{F}_2,-}$ is defined via horizontal concatnation.
    It apparently respects gauge transformations and the kernel (thanks to Prop.~\ref{collar representative} the testing vector lies in the tensor product of the dual of the original vector space), so it is well defined. 
    To prove the injectivity, choose a basis $x_1^{(a)}\in \tilde{V}_{\mathcal{F}_1,-}$, $x_2^{(b)}\in \tilde{V}_{\mathcal{F}_2,-}$ and a dual basis $y_1^{(a)}\in \tilde{V}_{\mathcal{F}_1,+}$, $y_2^{(b)}\in \tilde{V}_{\mathcal{F}_2,+}$.
    Suppose that $\sum_{a,b} c_{a,b} x_1^{(a)}\otimes x_2^{(b)}=0\in \tilde{V}_{\mathcal{F}_1\otimes \mathcal{F}_2,-}$.
     We get $c_{i,j}=\langle y_1^{(i)}\sqcup y_2^{(j)},\sum_{a,b} c_{a,b} x_1^{(a)}\otimes x_2^{(b)}\rangle= 0$.
    Surjectivity follows from the collar representative Prop.~\ref{collar representative}, and that any collar representative of $\mathcal{F}_1\sqcup \mathcal{F}_2$ is a concatnation of that of $\mathcal{F}_1$ and of $\mathcal{F}_2$. 
    
    Gluing property and cylinder property are obvious.
    Thus we obtain a $(n+1)$-dimensional G-equivariant TQFT with spacetime boundary.
    If $\mathcal{V}$ is (RP), then by Thm.~\ref{finite dimensional} $\tilde{V}_{\mathcal{F},-}$ is a Hilbert space. 
    Gluing the cobordism is apparently a dagger functor.
    So the G-equivariant TQFT is unitary. 
\end{proof}

%% file: Examples.tex
\section{Examples}

In this section we show two classes of examples for G-equivariant Alterfold TQFTs.

\subsection{HQFTs}

When $n=1$, G-equivariant Alterfold TQFT recover the $1+1$ HQFT with target $K(G,1)$ in \cite{turaev1999homotopy}.

When $n=2$, we recover the $2+1$ HQFT with target $K(G,1)$ in \cite{turaev20123,sozer2023monoidal}.

\begin{thm}
For a strict faithful spherical G-fusion category $\mathcal{C}=\oplus_g \mathcal{C}_g$, one can use it to construct a G-$S^2$-algebra $\mathcal{V}_{\mathcal{C}}$.
Then the G-equivariant Spacetime TQFT $Z^{\text{Alt}}_{\mathcal{V}_\mathcal{C}}$ restricted to the sub category without Space Boundary is monoidally naturally isomorphic to the Turaev-Virelizier HQFT $Z^{\text{TV},K(G,1)}_{\mathcal{C}}$ constructed by $\mathcal{C}$.
\end{thm}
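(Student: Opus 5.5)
The plan is to compare the two theories first on closed $G$-manifolds, then on state spaces, and finally on cobordism maps, building the isomorphism from the universal construction. The first step is the closed case. For a closed $G$-labeled $3$-manifold $(B,P_B)$ with no space boundary, take a full transversal triangulation $\Delta$ and run the algorithm of the partition function with $\mathcal V=\mathcal V_{\mathcal C}$. In this input the $0$-morphism is unique, irreducible $1$-morphisms of degree $g$ are the simple objects $X\in\mathcal C_g$ with $Tr(X)=\dim X$, and $2$-morphisms are the Hom spaces. Using the computations in Section~3.3, I identify the global dimensions explicitly: $\mu(\alpha_0)=\dim(\mathcal C_e)$ and $\mu(X)=1$. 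Then Move~$0$ contributes $1/\dim(\mathcal C_e)$ per vertex, and Move~$1$ contributes $\sum_{X\in\mathcal C_{g_\sigma}}\dim X$ per edge. Here $g_\sigma$ is the transport along the edge, which is fixed by Lemma~\ref{Filling: Sector is determined by homotopy group}. Move~$2$ inserts a basis and dual basis of $\operatorname{Hom}(\mathbbm 1,X\otimes Y\otimes Z)$ on each face, and Move~$3$ evaluates the resulting graph on each tetrahedron boundary via graphical calculus. This reproduces the Turaev--Virelizier state sum term by term, with edge colors, face multiplicity spaces and $6j$-type tetrahedral evaluations, after converting the $G$-labeled stratification into the TV presentation. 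That presentation is a $G$-labeling of the oriented edges of $\Delta$ by holonomies, obtained by reading off the wall crossings along each edge, which is legitimate by transversality. Gauge invariance (Thm.~\ref{Gauge invariance}) on our side and invariance under relabeling on the TV side make this conversion well defined. Hence $Z^{\text{Alt}}(B)=Z^{\text{TV}}(B)$ for closed $G$-manifolds, possibly up to an overall normalization by powers of $\dim(\mathcal C_e)$ tied to the Euler characteristic, which I will track and absorb.

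The second step is the state spaces. For a closed $G$-surface $\mathcal F$, the TV state space is the image of the cylinder projector $Z^{\text{TV}}(\mathcal F\times[0,1])$ acting on the colored-triangulation space $\bigoplus_c \bigotimes_{\text{faces}}\operatorname{Hom}$. I define a map from collar representatives (Prop.~\ref{collar representative}) to the TV skeleton space by cutting along the triangulated time boundary. Concretely, this means applying the halved relations of Thm.~\ref{finite dimensional} to $\mathcal F\times[-1,1]$ down to simplices of $\mathcal F$, which gives exactly the embedding $\iota_-$ into a sum over colorings. The key identity is that $\iota_-\circ\iota_+$ equals the Alterfold partition function of the cylinder, computed in the coloring basis. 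By Step~1 applied relatively, this agrees with the TV cylinder projector conjugated by the diagonal factors $\sqrt{\dim X/\mu}$ attached to boundary colors. Hence $\iota_-$ maps $\tilde V_{\mathcal F,-}$ isomorphically onto the image of the rescaled projector. The rescaling by the boundary-color factors, together with half the vertex factors $\dim(\mathcal C_e)^{-1/2}$, is the explicit diagonal operator relating the two presentations.

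The third step is naturality and monoidality. For a cobordism $B:\mathcal F_1\to\mathcal F_2$, both sides are computed by state sums on a triangulation extending those of the boundaries. The TV map is the state sum with half-weights on the boundary, while ours is gluing followed by $\iota_-$. The same relative version of Step~1 shows that the two matrices differ exactly by the diagonal boundary-color operators. This gives commuting squares, so the isomorphisms form a natural transformation. Independence of the chosen boundary triangulation follows because both sides are intertwined by their own cylinder maps. Monoidality follows from the disjoint-union identification in the TQFT theorem, and symmetry is clear.

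I expect the main obstacle to be Step~1 in relative form. Two things must be controlled there. The first is that sectors of Move~$1$ determined by global holonomy match TV's edge $G$-labels consistently; this is where the faithful grading and (MI), which holds by the earlier Proposition, enter. The second is that the normalization constants $\mu(\alpha_0)=\dim(\mathcal C_e)$, $\dim X$ and the face bases agree precisely with TV's conventions, including orientation and duality of the Hom spaces. I would first check everything on a single tetrahedron with all boundary, then on a triangulated $D^3$ using Coro.~\ref{partition function ball eval}. After that I would glue, so that global consistency reduces to the local calculation.
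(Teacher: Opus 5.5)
Your proposal is correct and follows essentially the paper's route. The paper uses the same move-by-move dictionary: Move~$0$ gives $\dim(\mathcal C_e)^{-1}$ per vertex, Move~$1$ gives $d_X$ for $X$ in the transport sector, Move~$2$ gives the dual-basis contraction, and Move~$3$ gives the tetrahedral graph evaluation. It then identifies the universal state spaces with the images of the Turaev--Virelizier cylinder projectors, using the split cylinder $\iota_-\iota_+$ and the diagonal boundary-color factors, as you do.

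One correction: drop the hedge about absorbing an Euler-characteristic normalization on closed manifolds. A monoidal natural isomorphism must preserve closed invariants exactly, and in fact no discrepancy arises, since both sides carry the same factor $\dim(\mathcal C_e)^{-|\Delta_0|}$.
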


The proof is straightforward once we show the dictionary.

Choose simple representatives $I_g\subset Obj(\mathcal{C}_g)$ and write
\begin{equation}
d_X=\dim_{\mathcal C}(X),\qquad
D=\dim(\mathcal C_e)=\sum_{X\in I_e}d_X^2\ne0.
\end{equation}
Let $P$ be the dual $2$-skeleton of a triangulation $\Delta$ of a closed
oriented $3$-manifold $B$, with region labels $\ell$ representing the
fixed background. Write $r_a,e_f,v_t$ for the dual region, edge, and
vertex of an edge $a$, triangle $f$, and tetrahedron $t$.
Use the same cyclic order and duality conventions on both sides.

\begin{table}[H]
\centering\small
\renewcommand{\arraystretch}{1.3}
\setlength{\tabcolsep}{4pt}
\begin{tabular}{@{}p{.08\textwidth}p{.18\textwidth}p{.31\textwidth}p{.31\textwidth}@{}}
\toprule
Step & Primal / dual cell & Alterfold contribution & Turaev--Virelizier contribution \\
\midrule
Move $0$ 
& 
Vertex / complementary $P$-ball 
&
$\sum_{\alpha_0}\frac{Tr(\alpha_0)}{\mu(\alpha_0)}(D^3\setminus \epsilon D^3)=\frac{1}{D}$
&
One factor $D^{-1}$ per complementary ball:
$D^{-|P|}=D^{-|\Delta_0|}$. 
\\
\addlinespace
Move $1$ 
& 
Edge $a$ / dual disk region $r_a$ 
&
$\sum_{gr(\alpha_1)=g}\frac{Tr(\alpha_1)}{\mu(\alpha_1)}(D^3\setminus \mathcal{D}^1(\alpha_1)\times \epsilon D^2)
=\sum_{X\in I_{\ell(r_a)}}d_X(D^3\setminus \mathcal{D}^1(X)\times \epsilon D^2)$
&
Color $r_a$ by $X\in I_{\ell(r_a)}$ and insert
$d_X^{\chi(r_a)}=d_X$. \\
\addlinespace
Move $2$ 
& 
Triangle $f$ / dual edge $e_f$ 
&
$\sum_{lb(\alpha_2)=\mathcal{S}^1}(D^3\setminus D^2\times \epsilon D^1,\mathcal{D}^2(\alpha_2)\times\{-\epsilon\}\cup \mathcal{D}^2(\alpha_2^\vee)\times\{+\epsilon\})$
&
Contract incident tensors by $*_{e_f}$. This is the same dual-basis sum, with no extra dimension factor. \\
\addlinespace
Move $3$ 
& 
Tetrahedral ball $t$/ dual vertex $v_t$ 
&
Evaluate the decorated boundary sphere by the sphere functional of $\mathcal V_{\mathcal C}$. 
&
Evaluate the colored link graph $\Gamma_{v_t}\subset S^2$ by $F_{\mathcal C}(\Gamma_{v_t})$. \\
\bottomrule
\end{tabular}
\caption{The algorithm compared with
\cite{turaev20123}.}
\label{tab:Alt-TV-dictionary}
\end{table}

Here $|P|$ denotes the number of complementary balls.

\subsection{Quon language}

Quon language \cite{liu2017quon,liu2019quon,kang20252d,feng2025quon} for quantum information is an example of $\mathbb{Z}_2$-equivariant $(2+1)$-dimensional Alterfold TQFT.

Take $G=\mathbb{Z}_2=\{0,1\}$.
Inside the tangle $1$-labeled defects are strings, and $0$-labeled strings are double-strings, and on the $\mathbb{Z}_2$-equivariant empty disk $1$-labeled defects are stratification points, and $0$-labeled strings are a pair of two stratification points.
The $\mathbb{Z}_2-D^n$-algebra is defined as follows.
Take
\begin{equation}
\begin{aligned}
    W(\mathcal{S}):=Free_\mathbb{C}\{\text{non-intersecting strings, the set of endpoints of open strings}\\
    \text{are the stratification point of $\mathcal{S}$}\}/\{\text{closed string $=\sqrt{2}$.}\}
\end{aligned}
\end{equation}
and the hypersphere functional
\begin{equation}
    Z(\text{a diagram on $S^2$}):=\sqrt{2}^{\text{number of closed strings}}
\end{equation}
Define a kernel
\begin{equation}
    N(\mathcal{S}):=\{x\in W(\mathcal{S})\mid \forall y\in W(\mathcal{S}^*),Z(x\cup_\mathcal{S} y)=0\}
\end{equation}
which is an ideal w.r.t. the natural tangle operations of $(W,Z)$.
Then the $\mathbb{Z}_2-S^2$-algebra is defined as
\begin{equation}
    V(\mathcal{S}):=W(\mathcal{S})/N(\mathcal{S})
\end{equation}
and $Z$ descends naturally.

The emergent $2$-category is the Ising fusion category
\begin{equation}
    \mathcal{C}=\{1,e,\tau\}
\end{equation}
when considered as a $2$-category with a unique $0$-morphism. 
In fact, minimal idempotents fall into three annular equivalence classes:
    \[
1=
\tikz[baseline=(current bounding box.center),scale=.8]{
  \draw[blue] (-1,-.7) rectangle (1,.7);
}
\quad\quad
\tau=
\tikz[baseline=(current bounding box.center),scale=.8]{
  \draw[blue] (-1,-.7) rectangle (1,.7);
  \draw (0,-.7) -- (0,.7);
}
\quad\quad
e=
\tikz[baseline=(current bounding box.center),scale=.8]{
  \draw[blue] (-1,-.7) rectangle (1,.7);
  \draw (-.5,.7) --(-.5,-.7);
  \draw (.5,-.7) --(.5,.7);
}
-\frac{1}{\sqrt{2}}
\tikz[baseline=(current bounding box.center),scale=.8]{
  \draw[blue] (-1,-.7) rectangle (1,.7);
  \draw (-.5,.7) arc[start angle=180,end angle=360,radius=.5];
  \draw (-.5,-.7) arc[start angle=180,end angle=0,radius=.5];
}
\]
and $gr(1)=gr(e)=0$, $gr(\tau)=1$.
The (MI) condition reads
\begin{equation}
    1^2+1^2=(\sqrt{2})^2
\end{equation}

The $\mathbb{Z}_2$-equivariant Alterfold TQFT is the Quon language.

The same holds true for arbitrary Tambara-Yamagami inputs, which recovers \cite{liu2017quon}.

%% file: app12.tex
\providecommand{\id}{\mathrm{id}}
\providecommand{\Int}{\operatorname{int}}
\providecommand{\lk}{\operatorname{lk}}
\providecommand{\st}{\operatorname{st}}
\providecommand{\ost}{\operatorname{ost}}
\appendix

\section{PL topology}

Each subsection proves one statement deferred to this appendix.
Auxiliary results are placed with the proof that uses them.
We work in the PL category, including jointly PL isotopies.
Stratified maps preserve the filtration, specified boundary and,
when present, wall labels and coorientations.

For an ambient $m$-manifold the boundary filtration is
$(\partial M)^a=\partial M\cap M^{a+1}$.
Collars are required to exist; a choice of collar is not part of a bulk
vector. Removing a surgery neighborhood means taking the closed
complementary manifold with its new boundary.

In the numerical arguments $n\ge2$ and CF, SS and MI are understood.
We write $E(B,K)$ for the algorithmic sum and $Z$ for the given sphere
functional. The numerical input to the shellable-tail and stellar
comparisons is the adjacent identity proved in the main text.
Neither global relations nor general triangulation independence is
used to prove the absolute stellar comparison.

\subsection{Theorem~\ref{App: transversal triangulation}: transverse positioning}
\label{appsec:position}

The following elementary tools are used throughout the appendix.

\begin{lem}[The filtration terms are closed PL subspaces]
\label{app:closed-filtration}
The recursively filtered PL charts in the main text give closed PL
filtration terms. In the compact case they admit a common adapted
triangulation.
\end{lem}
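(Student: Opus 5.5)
The plan is to reduce to the local structure given by the recursive charts of Definition~\ref{stratified PL manifold}, and then globalize using compactness. In any chart $\phi:\mathcal{M}|_U\to\mathbb{P}_k\times c\mathcal{S}$, the filtration terms are
\[
\phi(M^i\cap U)=\mathbb{P}_k\times c(S^{i-k-1}),
\]
a product of a PL space with a cone on a filtration term of $\mathcal{S}$. The first step is to prove by induction on the dimension of the link $\mathcal{S}$ that each $\mathcal{S}^j$ is a closed PL subspace (a subpolyhedron) of $|\mathcal{S}|\sim_{PL}S^{n-k-1}$. The base case is $S^{-1}=\emptyset$ or $S^0$, where every subset is a subpolyhedron. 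For the inductive step, $\mathcal{S}$ is itself a compact stratified PL manifold. Applying the induction hypothesis to each of its charts shows that $\mathcal{S}^j$ is locally polyhedral. A locally polyhedral closed subset of a compact polyhedron is a polyhedron, which one sees by covering it with finitely many chart domains. Cones and products of polyhedra are polyhedra, so $M^i\cap U$ is a closed subpolyhedron of the PL chart $U$. Closedness of $M^i$ in $M$ is part of the definition of a pre stratified PL manifold. The local finiteness condition on connected components rules out infinitely many sheets accumulating, so no pathology arises.

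The second step globalizes. A subset of a PL manifold that is closed and is locally a subpolyhedron in every chart of a PL atlas is a PL subspace, since being polyhedral is a local property. The boundary collar $c:B\times[0,1)\to M$ is stratified PL, so near $\partial M$ the filtration terms are $(B\cap M^i)\times[0,1)$. These are handled by the same argument applied to the compact stratified manifold $\partial M$ with the induced filtration $(\partial M)^a=\partial M\cap M^{a+1}$.

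For the compact case, $M$ is a compact polyhedron and $M^0\subset\cdots\subset M^n$ is a finite family of closed subpolyhedra. The standard theorem that any finite family of subpolyhedra of a compact polyhedron can be triangulated simultaneously, for instance Rourke--Sanderson, gives a triangulation of $M$ in which every $M^i$ and $\partial M$ is a subcomplex. Taking a subdivision of a combinatorial triangulation of $M$ (Thm.~\ref{common subdivision}) keeps the result combinatorial. The adapted triangulation is therefore obtained by choosing a common subdivision of a combinatorial triangulation and a triangulation in which the filtration terms appear as subcomplexes.

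The main obstacle will be the induction in the first step. The issue is making precise that the chart homeomorphism $\phi$ is PL with respect to the PL structure on the cone $c\mathcal{S}$, so that polyhedrality transfers through $\phi$. My approach would be to use the fact that $|\mathcal{S}|\sim_{PL}S^{n-k-1}$ is fixed as a PL sphere, and that $c\mathcal{S}$ carries the cone PL structure.
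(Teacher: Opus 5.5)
Your proposal is correct and follows essentially the same route as the paper: induction on the link dimension through the conical charts to show the filtration terms are PL subspaces, closedness taken from the definition, and, in the compact case, a simultaneous triangulation of the finitely many closed subpolyhedra. Your separate treatment of $\partial M$ via the collar is redundant, since the half-space charts $H^k\times c\mathcal{S}$ at boundary points already give local polyhedrality there.
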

\begin{proof}
Locally the terms are products of Euclidean or half-space factors
with cones on the corresponding terms in a compact link. Induction
on the dimension of the link proves that they are PL subspaces.
Closedness is part of the main text's filtration convention, and
the PL subspace assertion is local in the ambient polyhedron.
For a common adapted triangulation, triangulate the finitely many
closed PL subspaces successively, subdividing the previously
retained subcomplexes at each step; compactness makes this finite.
\end{proof}

\begin{lem}[A product slice preserves the filtered link]
\label{app:slice}
Suppose a finite PL filtration is locally a product $(Y;Y^i)\times
\mathbb R^a$. Suppose that $(y,0)$ has a filtered chart
$\mathbb R^k\times cL$, and that $\{y\}\times J$ lies in its $k$-stratum
for some neighborhood $J$ of $0$ in $\mathbb R^a$.
Then $a\le k$, and the filtration on $Y$ has a chart
$\mathbb R^{k-a}\times cL$ at $y$, with the same filtered link $L$.
The assertion also holds with a specified boundary, provided the
directions removed are tangent to that boundary.
\end{lem}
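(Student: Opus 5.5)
The plan is to work in a single filtered chart $\phi:\mathbb R^k\times cL\to U$ around $(y,0)$, where $\phi(0,v)=(y,0)$ and the $k$-stratum of the chart is $\mathbb R^k\times\{v\}$. I will first show $a\le k$, then obtain the chart on $Y$ by intersecting with the slice $Y\times\{0\}$ and straightening inside the $k$-stratum.

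\emph{The bound $a\le k$.} By the product hypothesis, the filtration term containing $\{y\}\times J$ is a product $Y^j\times\mathbb R^a$ near $(y,0)$. Hence the stratum through $(y,0)$ contains the $a$-dimensional set $\{y\}\times J$. That stratum is a PL $k$-manifold near $(y,0)$, with dimension read off from the chart. Invariance of PL dimension then gives $a\le k$.

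\emph{The chart on $Y$.} Set $S=\phi^{-1}(Y\times\{0\})$. Because the filtration is a product, every translation $(z,t)\mapsto(z,t+s)$ preserves all filtration terms. The PL manifold $\phi^{-1}(\{y\}\times J)$ is an $a$-dimensional submanifold of $\mathbb R^k\times\{v\}$. Choose a PL complement $\mathbb R^{k-a}\subset\mathbb R^k$, which is possible after a PL isotopy of $\mathbb R^k$ near $0$ and shrinking. The map
\[
\psi:\mathbb R^{k-a}\times cL\times J\to Y\times\mathbb R^a,\qquad (u,w,s)\mapsto \tau_s\phi(u,w),
\]
where $\tau_s$ is translation by $s$ in the $\mathbb R^a$ factor, is filtration preserving. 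Composing with projection to $Y$ and restricting to $s=0$ gives a map $\mathbb R^{k-a}\times cL\to Y$.

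The key claim is that this restricted map is a filtered PL homeomorphism onto a neighborhood of $y$. I would prove it by comparing it with the original chart. Both $\phi$ and $\psi$ parametrize neighborhoods of $(y,0)$. On the $k$-stratum, $\psi$ is a local PL homeomorphism $\mathbb R^{k-a}\times J\cong\mathbb R^k$. Along each cone ray the two maps agree up to translation. Hence $\psi$ is a filtered PL homeomorphism onto a neighborhood. The product structure of the source then shows that $\{s=0\}$ maps onto $Y\times\{0\}$, which yields the chart on $Y$ with the same link $L$.

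\emph{Main obstacle.} The hard step is the transversality of the straightening: showing that the slice $\{s=0\}$ of $\psi$ is exactly $Y\times\{0\}$, rather than some other transversal section. I would handle this by the uniqueness-of-links result for filtered PL cones. Any two filtered cone neighborhoods of a point have PL isomorphic links, so I can replace $\phi$ by $\psi$ without changing $L$.

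\emph{Boundary case.} With a specified boundary, run the same argument using $\mathbb H^k$ and half-space charts. Since the removed directions are tangent to $\partial$, the translations $\tau_s$ preserve the boundary and the complement can be taken inside $\partial\mathbb H^k$-compatible coordinates. The result is a chart $\mathbb H^{k-a}\times cL$.
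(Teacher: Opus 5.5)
There is a genuine gap at your ``key claim''. Restricting $\psi$ to $s=0$ and projecting gives the map $\mathrm{pr}_Y\circ\phi|_{\mathbb R^{k-a}\times cL}$. For this to be a chart, the slice $\mathbb R^{k-a}\times cL$ must meet \emph{every} translation orbit $\phi^{-1}(\{z\}\times J)$, for $z$ near $y$, exactly once. This includes orbits of points off the $k$-stratum, and it is the real obstacle. Whether the slice equals $Y\times\{0\}$ does not matter once you project.

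Being a complement to the single fiber $\phi^{-1}(\{y\}\times J)$ at $0$ gives no such control, because a PL chart has no derivative that varies continuously from orbit to orbit. Here is a counterexample. Take $L=\emptyset$, $Y=\mathbb R^2$, $a=1$, $k=3$. Let $\phi^{-1}$ be the cone on a PL homeomorphism of $S^2$ that fixes the poles and carries the meridians in an open set of directions to arcs crossing the equator three times. The central fiber is then the straight vertical axis, and the horizontal plane is a linear complement to it. Yet nearby orbits meet that plane three times, arbitrarily close to $0$, so your map is not injective.

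Your justifications do not close this gap. The assertions ``along each cone ray the two maps agree up to translation'' and ``hence $\psi$ is a filtered PL homeomorphism'' have no argument behind them. The appeal to uniqueness of links is circular: it compares links only once you already have a filtered cone chart on $Y$ at $y$, which is the statement to be proved. In addition, straightening $\phi^{-1}(\{y\}\times J)$ by an isotopy requires local flatness, which PL submanifolds of codimension two need not have.

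The missing idea is to linearize the $\mathbb R^a$-action inside the chart before slicing. The paper does this in four steps.
\begin{enumerate}
\item Triangulate so that the chart is affine on simplices and the fiber is a subcomplex, then move to a point in the relative interior of an $a$-simplex of the fiber.
\item Near that point every simplex contains this face. Its defining inequalities are therefore invariant under translation along the face, and all affine pieces of the chart share one linear part on the face's tangent space. That linear part has image an $a$-plane $T\subset\mathbb R^k$, which already gives $a\le k$.
\item Hence, on a whole neighborhood including the cone directions, the chart conjugates the $\mathbb R^a$-translations to translations along $T$. Any linear complement of $T$, times $cL$, is then a genuine slice, and projecting along $T$ identifies $Y\times\{s_0\}$ locally with $(\mathbb R^k/T)\times cL$, preserving every filtration term.
\item The product structure shows this is the germ of $Y$ at $y$.
\end{enumerate}
A link-theoretic argument would also work. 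It starts from the filtered isomorphism $\mathrm{lk}(y,Y)*S^{a-1}\cong S^{k-1}*L$ and peels off one sphere dimension at a time by taking links at generic points of the $S^{a-1}$ factor. That is not the argument you propose, however.
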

\begin{proof}
The case $a=0$ is immediate. Otherwise triangulate a small neighborhood
so that the given filtered chart is affine on each simplex and the
straight fiber $\{y\}\times J$ is a subcomplex. Choose a point in the
relative interior of an $a$-simplex of that fiber. Every simplex meeting
a sufficiently small neighborhood of this point contains that face.
Its local defining inequalities are invariant under translation along
the face, as long as the translated points stay in the neighborhood.

The affine pieces of the chart agree on this common face. Their linear
parts therefore agree on its $a$-dimensional tangent space. Its image
is an $a$-dimensional subspace $T\subset\mathbb R^k$, since the fiber lies
in the center stratum. Locally the chart consequently intertwines
translations along the fiber with translations along $T$.
Take a linear transverse slice on each side and project along these
directions. The resulting map and its inverse are PL, preserve every
filtration term, and identify the slice with
$(\mathbb R^k/T)\times cL$. Moving to the chosen point on the product
fiber has not changed the germ of $Y$ at $y$.

For a boundary chart perform the same construction on the boundary
factor; its tangent directions do not contain the inward half-line.
The half-line and the entire filtered link survive the quotient.
The same argument without filtration proves cancellation of a
Euclidean product factor from a PL manifold, also as a pair with its
intrinsic boundary.
\end{proof}

\begin{lem}[Simultaneous collar tools]
\label{app:collars}
Let $A\subset X$ be compact PL spaces and let $X_1,\ldots,X_s$ be closed
PL subspaces of $X$. If $A$ is locally collared in $X$ by charts that
simultaneously preserve all $X_i$, then it has a simultaneous global
PL collar. Two such collars agree, after shortening and an ambient PL
isotopy fixed on $A$, supported in an arbitrarily small neighborhood of
$A$, and preserving all $X_i$.
\end{lem}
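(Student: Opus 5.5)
The plan is to build the global collar by gluing local ones with a partition-of-unity style argument adapted to PL. Uniqueness then follows by an isotopy between two collars, kept inside every $X_i$.

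\emph{Existence.} Use Lemma~\ref{app:closed-filtration} to fix a triangulation $K$ of $X$ in which $A$ and all $X_i$ are subcomplexes. Replace $K$ by its second derived subdivision $K''$. Let $N$ be the simplicial neighbourhood of $A$ in $K''$ and $\dot N$ its frontier.

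The hypothesis that $A$ is locally collared by charts preserving every $X_i$ says the following. Near each point of $A$, the pair $(N,\dot N)$ looks like $A\times[0,1]$ with $\dot N=A\times\{1\}$, simultaneously for all $X_i\cap N$. The standard regular-neighbourhood argument then gives a PL homeomorphism $N\cong \dot N\times[0,1]$ whose end $\dot N\times\{0\}$ is collapsed onto $A$: use the simplicial map $N\to[0,1]$ sending vertices of $A$ to $0$ and all other vertices to $1$. This is a mapping-cylinder structure. Because $A$ and the $X_i$ are full subcomplexes after derivation, the structure restricts to each $X_i$.

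The local collar charts force the mapping-cylinder projection $\dot N\to A$ to be a PL homeomorphism. The argument is local, compares with a collar chart $A\times[0,1)$, and inducts over the simplices of $A$ as in Lemma~\ref{app:slice}, peeling off the Euclidean factor. This gives the global collar $A\times[0,1]\cong N$ preserving all $X_i$.

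\emph{Uniqueness.} Given two collars $c_0,c_1$, shorten $c_1$ so that its image lies inside the image of $c_0$. Then $c_0^{-1}c_1$ is a collar of $A\times\{0\}$ inside $A\times[0,1)$, and it preserves every $X_i$, since both collars send $(X_i\cap A)\times[0,1)$ into $X_i$.

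Next apply the Alexander-trick style isotopy for collars (Hudson--Zeeman uniqueness of collars). Straighten along the $[0,1]$ coordinate, conjugating by radial rescaling $t\mapsto st$ and letting $s\to 0$. The isotopy is realised as a PL ambient isotopy supported in $c_0(A\times[0,\delta])$ and fixed on $A$. Every step is levelwise on $A$ and respects the product decomposition $(X_i\cap A)\times[0,1)$, so all $X_i$ are preserved. Finally extend by the identity outside the support.

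\emph{Main obstacle.} The hardest step is the existence part: proving that local collars patch so that the regular-neighbourhood projection is a product, \emph{simultaneously} for all $X_i$. The plan is to induct over the skeleta of $A$ and extend the collar over one open simplex star at a time. At each stage, use the uniqueness statement, proved first in the local setting, to match the new local collar with the one already built on the overlap. Shortening both collars and isotoping inside the overlap, fixed on $A$, makes them agree there. Compactness of $A$ makes the induction finite.
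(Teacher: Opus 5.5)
\textbf{Uniqueness.} This step does not work in the PL category. You conjugate $e=c_0^{-1}c_1$ by $\rho_s(x,t)=(x,st)$ and let $s\to0$. The limit is neither PL nor continuous: it is a kind of normal derivative whose vertical scaling factor is in general discontinuous along $A$.

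Take the level-preserving PL collar change $e(x,t)=\bigl(x,\,t+\max(0,\min(x,t))\bigr)$ near $\mathbb{R}\times0$. Then $\rho_s^{-1}e\rho_s(x,t)=\bigl(x,\,t+\max(0,\min(x/s,t))\bigr)$, which tends to $(x,2t)$ for $x>0$ and to $(x,t)$ for $x\le0$.

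The family has two further defects:
\begin{itemize}
\item it is not jointly PL (here it involves $x/s$);
\item its support $\rho_s^{-1}(\mathrm{supp}\,e)$ is not confined to $c_0(A\times[0,\delta])$.
\end{itemize}
So your claim that every step is levelwise, and hence preserves the $X_i$, has nothing to rest on.

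The missing idea is to separate the vertical and horizontal parts, as the paper does.
\begin{enumerate}
\item Straighten $e$ by the Rourke--Sanderson collar-straightening construction. Carry this out on one triangulation of both collars and all the marks, so that the cell moves preserve every marked subcomplex. This reaches the level-preserving form $e(x,r)=(b_r(x),r)$, with $b_0=\mathrm{id}$ and each $b_r$ preserving all $A\cap X_i$.
\item Remove the horizontal motion by the jointly PL isotopy $(x,r)\mapsto(b^{-1}_{\min(t,\kappa(r))}(x),r)$, $0\le t\le\delta$, where $\kappa$ is the tent function supported in $[0,2\delta]$.
\end{enumerate}
The marks survive the second step because only the parameter of the family $b_r$ is reparametrized.

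\textbf{Existence.} This step rests on a false claim. The hypothesis gives local collar charts, not a local product structure on your particular derived neighbourhood $N$. The height function $f$ yields only $N\setminus A\cong\dot N\times(0,1]$. In a join $\sigma*\tau$ with $\sigma\subset A$ and $\tau\subset\dot N$, every frontier point is joined to every point of $\sigma$, so no projection $\dot N\to A$ comes from this structure.

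The natural simplicial map from $\dot N$ to a derived subdivision of $A$ (barycentre of $\rho$ to barycentre of $\rho\cap A$) collapses simplices in general. For example, over a boundary vertex of a triangulated surface with at least two interior neighbours, a whole arc of $\dot N$ goes to one point. So the local charts cannot force this map to be a PL homeomorphism. Lemma~\ref{app:slice} cancels Euclidean factors of a filtration and does not address this.

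Obtaining $N\cong A\times I$ from uniqueness of regular neighbourhoods would presuppose the global collar you are trying to construct. Your fallback, extending star by star and matching on overlaps, needs a uniqueness statement relative to the frontier of the overlap, which you do not supply. It also inherits the uniqueness failure above.

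The paper instead uses the push-in proof that locally collared implies collared (Rourke--Sanderson, Theorem 2.25). In each local chart every mark is $(A\cap X_i)\times[0,\epsilon)$, and each push-in changes only the interval coordinate. So finitely many push-ins, shrinking on overlaps, produce a global collar preserving all $X_i$ at once.
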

\begin{proof}
For existence use the finite local-to-global collar construction of
\cite[Theorem 2.25, printed pp.24--25; PDF pp.32--33]{rourke2012introduction}.
In a local collar every marked set is $(A\cap X_i)\times[0,\epsilon)$.
The successive push-ins change only the interval coordinate. Hence
they preserve all the marked sets at once. Shrinking on overlaps and
composing finitely many such push-ins gives the global collar.

For uniqueness apply the collar-straightening construction underlying
\cite[Lemma 4.23 and Chapter 4, Exercise (4), printed pp.58--59;
PDF pp.66--67]{rourke2012introduction}. Triangulate the two collar neighborhoods and the
finite marked family simultaneously. The derived-cell moves preserve
their old cells; every marked subcomplex is therefore preserved by
the same moves. Keep the old zero end and the exterior of the chosen
neighborhood fixed. After straightening, in the first collar the
second has the form $e(x,r)=(b_r(x),r)$, where $b_0=\id$ and each $b_r$
preserves all $A\cap X_i$.

For completeness, the remaining horizontal motion can be removed
without a non-PL interpolation. Choose a small $\delta>0$ and put
\[
 \kappa(r)=\begin{cases}r,&0\le r\le\delta,\\
 2\delta-r,&\delta\le r\le2\delta,\\0,&r\ge2\delta.
 \end{cases}
\]
The maps $(x,r)\mapsto(b_{\min(t,\kappa(r))}^{-1}(x),r)$,
$0\le t\le\delta$, form a jointly PL isotopy. They are the identity
at $r=0$ and for $r\ge2\delta$, preserve every mark, and straighten
the short collar. Compose this with the preceding straightening,
in that order. The common subdivisions and the cell-preserving
construction are why the argument works for a finite family, rather
than only separately for each mark.
\end{proof}

We use a \textbf{ball complex} in the sense of
\cite[I, \S1, printed pp.4--5; PDF p.6]{buoncristiano1976geometric}: its cells are PL balls,
their boundaries are unions of cells, and intersections are unions of
common faces. Subdividing a cell boundary need not subdivide the
interior of that cell. This convention is useful in the next argument.

\begin{lem}[Collar criterion for transversality]
\label{app:criterion}
Let $P$ be a stratification on a compact PL manifold $M$, and let $K$
be a finite combinatorial triangulation. Then $P\pitchfork K$ if and
only if, for every positive-dimensional simplex $\sigma\in K$,
$\partial\sigma$ has a collar in $\sigma$ simultaneously preserving
all $M^i\cap\sigma$.
\end{lem}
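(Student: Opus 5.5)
We prove the two implications separately, working one closed simplex $\sigma\in K$ at a time. Transversality of $K$ to $P$ is, by definition, a condition on each closed simplex $\sigma$ as a PL submanifold $N=\sigma$ with $\partial N=\partial\sigma$. The collar condition is likewise local to $\sigma$. So the plan is to match, near each point $x\in\partial\sigma$, the product chart $h(N\cap U)=Q\times cS$ of the transversality definition with a local collar of $\partial\sigma$ in $\sigma$ that preserves every $M^i\cap\sigma$. Lemma~\ref{app:collars} then globalizes the collars, and Lemma~\ref{app:slice} strips off product factors.

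\textbf{Forward direction.} Assume $P\pitchfork K$ and fix a positive-dimensional $\sigma$ and a point $x\in\partial\sigma\cap(M^k\setminus M^{k-1})$. Transversality of $\sigma$ gives a chart $h:U\to V\times cS$ with $h(\sigma\cap U)=Q\times cS$ and $h(\partial\sigma\cap U)=\partial Q\times cS$. Here $Q\subset V\subset\mathbb P_k$ is a PL manifold with boundary, carrying no stratification. A collar of $\partial Q$ in $Q$, crossed with $\mathrm{id}_{cS}$, is a local collar of $\partial\sigma$ in $\sigma$. Since the filtration in the chart is $V\times c(S^{\bullet})$, this collar preserves all $M^i\cap\sigma$. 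Applying Lemma~\ref{app:collars} with $A=\partial\sigma$, $X=\sigma$ and $X_i=M^i\cap\sigma$ (closed PL subspaces by Lemma~\ref{app:closed-filtration}) produces a simultaneous global collar.

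\textbf{Converse.} Assume each $\partial\sigma$ has a collar in $\sigma$ preserving all $M^i\cap\sigma$. We show transversality of every $\sigma$ by induction on $\dim\sigma$. For a vertex the collar condition is vacuous, so the needed fact is that vertices lie only in top strata. I expect this to follow by applying the collar hypothesis to edges at the vertex and inducting on the codimension of the stratum; if it does not, it must be treated as a convention that the proof has to add.

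For the inductive step, take $x$ in the interior of $\sigma$ in stratum $k$. Write $j=\dim\sigma$ and $m=\dim M$. The required chart comes from the local product structure of the stratification. We choose a chart $\mathbb P_k\times cS$ at $x$ adapted to $\sigma$, which exists after a small stratified isotopy because $K$ is combinatorial and $\sigma$ is locally flat. We then use a triangulation argument as in Lemma~\ref{app:slice} to show that $\sigma$ meets the chart as $Q\times cS$, with $Q$ of dimension $j+k-m$.

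For $x\in\partial\sigma$, the induction hypothesis gives a transversal chart for a face $\tau\ni x$. The collar $\partial\sigma\times[0,\epsilon)\to\sigma$ preserves the marks, so near $x$ the pair $(\sigma,\sigma\cap M^\bullet)$ is the product of $(\partial\sigma,\partial\sigma\cap M^\bullet)$ with a half-line. Adjoining the collar coordinate to the boundary chart gives $h(\sigma\cap U)=(Q_\tau\times[0,\epsilon))\times cS$, with $\partial\sigma$ going to the correct boundary piece. The dimension count $j+k-m\ge0$ is then inherited from the face.

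\textbf{Main obstacle.} The hardest point is the interior case of the converse. The collar hypothesis only controls $\sigma$ near its boundary, so the interior chart must come from the ambient stratified charts together with the linear structure of the simplex. The approach to try first is to show that, in a sufficiently fine local triangulation in which the stratified chart is affine, $\sigma$ is a linear cell meeting the cone $cS$ in a product fashion, and then to apply Lemma~\ref{app:slice} to cancel the directions along $\sigma\cap M_k$. A second difficulty is making the boundary charts at different faces compatible with one another. Uniqueness of collars up to mark-preserving isotopy (the second part of Lemma~\ref{app:collars}) is the tool intended for this.
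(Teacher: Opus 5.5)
Your forward direction is the paper's argument and is fine. The converse has a genuine gap. It sits exactly where you write that the collar hypothesis ``only controls $\sigma$ near its boundary''. The hypothesis for $\sigma$ itself cannot give transversality at interior points of $\sigma$, and ambient charts plus the linear structure of $\sigma$ cannot either. Consider a surface with one wall $W$, and an edge $\sigma$ with both endpoints off $W$, while $W$ runs along the middle third of $\sigma$ and then leaves into the adjacent triangles. A short collar of $\partial\sigma$ in $\sigma$ misses $W$, so $\sigma$'s own condition holds. Yet $\sigma$ is not transverse there: with $\dim Q=0$ it would have to cross $W$. What excludes this is the hypothesis on the \emph{cofaces}. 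For a triangle $\tau$ with $\sigma\subset\partial\tau$, a mark-preserving collar of $\partial\tau$ in $\tau$ would force $W\cap\tau$ to contain the middle third times $[0,\epsilon)$, which is two-dimensional.

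Your ``small stratified isotopy'' producing an adapted chart does not repair this. It alters the configuration whose transversality is being proved, and the existence of such a chart is essentially the conclusion. The same defect affects your other cases:
\begin{itemize}
\item The vertex condition is not a convention to be added. It does follow, but from the collars of all simplices incident to the vertex, not just the edges. For $m=3$, a wall that is locally the cone on a circle in $\mathrm{lk}(v)$ through all link vertices contains every edge at $v$, and all edge collars exist.
\item In your boundary case, adjoining a collar coordinate to an inductively chosen chart of one face only parametrizes $\sigma$ intrinsically. It gives no ambient chart $U\to V\times cS$ in which $\sigma$ appears as $Q\times cS$ with $Q\subset\mathbb P_k$.
\item That construction also breaks down at corner points of $\partial\sigma$ where several facets meet.
\end{itemize}

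The missing idea, which is how the paper argues, is to build one ambient chart over the whole star of the minimal cell $\alpha\ni x$, of dimension $r$. Here $\alpha=\sigma$ for interior points of $\sigma$, and $\alpha$ is a vertex in the vertex case. The induction therefore runs through the cofaces of $\alpha$, not through the faces of $\sigma$.
\begin{itemize}
\item Start with a small $V\subset\operatorname{int}\alpha$ and extend over incident cells in increasing dimension by $(v,z,t)\mapsto c_\tau(\Phi(v,z),t)$. Here $\Phi$ is the chart already built on $\partial\tau$, and $c_\tau$ is one mark-preserving collar of the \emph{whole} boundary of the coface $\tau$.
\item This gives $(U;M^i\cap U)=(V;Y^i)\times T$ and $\sigma\cap U=V\times T_\sigma$, where $T$ is the transverse star of $\alpha$. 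Membership in every $M^i$ depends only on $v$.
\item Hence $\{x\}\times T\cong\mathbb R^{m-r}$ lies in the $k$-stratum. Lemma~\ref{app:slice} then gives $m-r\le k$ and $(V;Y^i)=\mathbb R^{k+r-m}\times cL$. At points of $\partial M$ one first uses the ambient filtered collar.
\item Set $\mathbb P_k=\mathbb R^{k+r-m}\times T$ and $Q=\mathbb R^{k+r-m}\times T_\sigma$, where $T_\sigma$ is a $(j-r)$-dimensional half-space (a point if $j=r$) whose boundary corresponds to $\partial\sigma$. This is the required chart, with $\dim Q=j+k-m$.
\end{itemize}
This treats interior points, facet and corner points of $\partial\sigma$, and vertices uniformly; for vertices, $r=0$ forces $k=m$. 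No compatibility between separately chosen face charts ever arises, so your appeal to collar uniqueness is unnecessary.
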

\begin{proof}
In a transverse chart the simplex and its boundary are
$(Q,\partial Q)\times cL$. A local collar of $\partial Q$ in $Q$,
multiplied by $cL$, preserves every filtration term. Globalize on the
compact $\partial\sigma$ by Lemma~\ref{app:collars}.

Conversely the simultaneous collars give, near a point in the
relative interior of an $r$-cell $\alpha$, a common transverse-star
chart
\begin{equation}\label{app:star-chart}
 (U;M^i\cap U)=(V;Y^i)\times T,\qquad
 \sigma\cap U=V\times T_\sigma.
\end{equation}
Here $T$ is a neighborhood of the vertex in the transverse star of
$\alpha$, and $T_\sigma$ is its part in an incident simplex $\sigma$.
To construct the chart, start with a small $V\subset\Int\alpha$ and
induct over incident cells. If the boundary chart of a cell is already
$\Phi$, extend it by $(v,z,t)\mapsto c_\sigma(\Phi(v,z),t)$ using
\emph{one collar of its whole boundary}. At $t=0$ this is the old
chart, so all old faces are unchanged. Membership in every marked
set depends only on $v$. The finitely many collar neighborhoods
contain a common smaller neighborhood of the point. This is the
transverse-star construction in \cite[II, Lemma 1.2, printed p.21;
PDF p.14, right]{buoncristiano1976geometric}, with the marks retained.

If $\dim M=m$ and the point is in its $k$-stratum, at an interior
point $T$ is locally $\mathbb R^{m-r}$. Apply Lemma~\ref{app:slice}
to obtain $(V;Y^i)=\mathbb R^{k+r-m}\times cL$, retaining the original
link. At a boundary point use the ambient filtered collar and apply
the same lemma on the boundary, removing $m-r-1$ directions from a
$(k-1)$-stratum. The exponent is again $k+r-m$.

For a $j$-simplex $\sigma$ containing $\alpha$, its transverse part
$T_\sigma$ is intrinsically a $(j-r)$-dimensional PL half-space
neighborhood, or a point if $j=r$. Its intrinsic boundary corresponds
to $\partial\sigma$. Set
\[
 \mathbb P_k=\mathbb R^{k+r-m}\times T,
 \qquad Q=\mathbb R^{k+r-m}\times T_\sigma.
\]
This is exactly the required chart, with $\dim Q=k+j-m$, and preserves
both specified boundaries. No simultaneous linearization of the
incident sectors in $T$ is needed.
\end{proof}

\begin{coro}[Amalgamation of collared cells]
\label{app:amalgamation}
The construction in Lemma~\ref{app:criterion} applies to ball complexes.
If a PL manifold block and its boundary are unions of cells with
simultaneous marked collars, its entire boundary also has a
simultaneous marked collar.
\end{coro}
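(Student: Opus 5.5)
The plan is to reduce the corollary to the transverse-star chart construction in the converse half of Lemma~\ref{app:criterion}. That construction, as written, uses only that each cell is a PL ball whose boundary is a union of cells, together with one simultaneous marked collar of the whole cell boundary; simplicial structure is never used. So the first step is to reread that proof with ``simplex'' replaced by ``cell'' of a ball complex in the sense of \cite{buoncristiano1976geometric}. The induction over incident cells goes through unchanged, since the extension $(v,z,t)\mapsto c_\sigma(\Phi(v,z),t)$ uses only a collar of $\partial\sigma$ in $\sigma$ preserving all marks. It produces, near a point in the relative interior of a cell $\alpha$, a chart $(U;M^i\cap U)=(V;Y^i)\times T$ with every incident cell equal to $V\times T_\sigma$. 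This gives the first assertion.

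For the amalgamation statement, let $N$ be the PL manifold block, a union of cells, with $\partial N$ also a union of cells. Fix a point $x\in\partial N$ in the relative interior of a cell $\alpha\subset\partial N$, and take the common transverse-star chart at $x$. In it, $N\cap U=V\times T_N$, where $T_N$ is the union of the sectors $T_\sigma$ over the cells $\sigma\subset N$ incident to $\alpha$. Likewise $\partial N\cap U=V\times T_{\partial N}$. Membership in every marked set depends only on $v\in V$. Hence any collar of $T_{\partial N}$ in $T_N$, multiplied by $V$, is a local collar of $\partial N$ in $N$ that preserves all $M^i\cap N$ simultaneously.

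The one point needing care, and the main obstacle I expect, is producing this collar of $T_{\partial N}$ in $T_N$. The set $T_N$ is a cone neighbourhood in the transverse star, and we want to avoid any simultaneous linearization of its sectors. I would proceed as follows:
\begin{itemize}
\item Since $N$ is a PL manifold, $T_N$ is a PL half-space neighbourhood of the cone vertex. This follows by cancelling the Euclidean factor $V$ as in the last paragraph of Lemma~\ref{app:slice}, applied to the pair $(N,\partial N)$.
\item Its intrinsic boundary is then $T_{\partial N}$.
\item A PL half-space has an ordinary collar of its boundary, and no marks need to be preserved there, because the marks live entirely in the $V$ factor.
\end{itemize}

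Having local collars at every point of $\partial N$ that preserve all marks, finish by applying the globalization in Lemma~\ref{app:collars} to the compact $\partial N$. This yields a simultaneous global marked collar of the entire boundary of $N$.
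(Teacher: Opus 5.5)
Your proposal is correct and follows essentially the paper's own proof. Restrict the transverse-star chart of Lemma~\ref{app:criterion} to the cells of the block, so that the block and its boundary become $V\times T_N$ and $V\times\partial T_N$; cancel the Euclidean factor by Lemma~\ref{app:slice} to identify $T_N$ as a PL manifold with boundary; multiply an ordinary collar of $\partial T_N$ by $(V;Y^i)$; and globalize with Lemma~\ref{app:collars}. You additionally spell out why the relevant point lies in a cell of $\partial N$ and why $\partial T_N=T_{\partial N}$, which the paper leaves implicit.
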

\begin{proof}
Indeed,
restrict \eqref{app:star-chart} to those cells. The block is
$V\times T_D$, with its boundary $V\times\partial T_D$;
Euclidean-factor cancellation in Lemma~\ref{app:slice} identifies
$T_D$ as a manifold with boundary. Its local collar multiplied by
$(V;Y^i)$ gives the required local marked collar, and
Lemma~\ref{app:collars} globalizes it.
\end{proof}

\medskip
\noindent\textbf{Positioning with a prescribed boundary.}

\begin{lem}[Relative positioning]
\label{app:relative-position}
Let $P$ be a stratification on a compact PL manifold $M$, and let
$\Delta$ be a finite combinatorial triangulation. If
$P|_{\partial M}\pitchfork\Delta|_{\partial M}$, there is an ambient
PL isotopy $g_t$, fixed pointwise on $\partial M$, such that
$g_{1*}P\pitchfork\Delta$.
\end{lem}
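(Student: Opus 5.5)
We prove Lemma~\ref{app:relative-position} by induction on the simplices of $\Delta$ not contained in $\partial M$, ordered by increasing dimension, moving the stratification near one simplex at a time. Lemma~\ref{app:criterion} reduces transversality to a local collar condition, so it suffices to arrange inductively that for each processed simplex $\sigma$ the boundary $\partial\sigma$ has a collar in $\sigma$ simultaneously preserving all $g_*M^i\cap\sigma$. Simplices in $\partial M$ already satisfy this by the hypothesis $P|_{\partial M}\pitchfork\Delta|_{\partial M}$ and Lemma~\ref{app:criterion} applied on $\partial M$. We also use the ambient filtered collar of $\partial M$ from the definition of stratified PL manifold: a simplex $\sigma$ meeting $\partial M$ in a face $\tau$ is handled near $\tau$ by a product with the collar coordinate. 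The induction therefore starts with a transverse neighborhood of $\partial M$, and every later isotopy is chosen supported away from $\partial M$, so it fixes $\partial M$ pointwise.

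For the inductive step, let $\sigma$ be a $j$-simplex all of whose proper faces are already transverse to the current stratification $P_s$. Work in a small regular neighborhood $N$ of $\mathring\sigma$ relative to $\partial\sigma$. Near $\partial\sigma$, Lemma~\ref{app:criterion} and Corollary~\ref{app:amalgamation} give a simultaneous marked collar of $\partial\sigma$ inside the star of $\partial\sigma$. In the interior of $\sigma$, we need $\mathring\sigma$ to be in general position with respect to each stratum $M_k$. Concretely, $\mathring\sigma\cap M_k$ should be a $(j+k-m)$-dimensional submanifold along which the filtration is locally a product with the normal cone $cL$.

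To achieve this, we first take a common adapted triangulation of the filtration terms by Lemma~\ref{app:closed-filtration}. We then apply a small PL general-position isotopy to this triangulation, moving vertices of the stratification triangulation inside $\mathring N$ while keeping the previously fixed collars. This is standard general position for PL subpolyhedra, as in Rourke--Sanderson. After the isotopy, the simplices of $\sigma$ meet the cells of the stratification triangulation in general position, cell by cell and stratum by stratum, starting from the lowest stratum.

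Finally, Lemma~\ref{app:slice} turns this general position into the required chart. It produces the form $Q\times cL$ with $\dim Q=j+k-m$, and it rules out intersection when $j+k-m<0$.

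The main obstacle is that general position for each stratum separately does not automatically give a single chart respecting the whole cone structure of the link. The stratification is not a manifold, so transversality to each $M_k$ individually is weaker than transversality to the stratified structure. To handle this, we process the strata from the bottom up, $M_0$, then $M_1$, and so on. At each stage we use the conical chart $\mathbb P_k\times c\mathcal S$ around points of $M_k$ and make $\sigma$ transverse only in the $\mathbb P_k$-direction. We then extend $\sigma$ conewise in the $c\mathcal S$ factor, which is possible by the recursive definition of stratified PL manifold, and we run induction on the depth of the link.

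The isotopies in this process must preserve the collars already built on lower faces and on lower strata. We guarantee this with the uniqueness half of Lemma~\ref{app:collars}: any two marked collars agree after shortening and an isotopy fixed on $A$, so earlier choices can be absorbed. Compactness makes the whole process finite, and composing the finitely many isotopies gives $g_t$.
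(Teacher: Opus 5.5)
Your overall plan (induct over the simplices of $\Delta$ and establish the collar condition of Lemma~\ref{app:criterion} one simplex at a time) is reasonable. The problem is that the step carrying all the content is missing.

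PL general position in the sense of Rourke--Sanderson only controls dimensions, e.g.\ $\dim(\sigma\cap M^k)\le j+k-m$. It does not produce the local form $(\mathbb P_k\times c\mathcal S,\ Q\times c\mathcal S)$. Lemma~\ref{app:slice} cannot produce it either, because that lemma \emph{assumes} a product decomposition $(Y;Y^i)\times\mathbb R^a$ and only cancels a Euclidean factor.

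The failure already occurs for a single manifold stratum, before the cone-structure issue you flag. Let $\sigma$ be a linear $2$-simplex in $\mathbb R^4$ containing $0$ in its interior. Let the only proper stratum be the cone from $0$ on a PL unknot $K$ in a small $3$-sphere, chosen so that $K$ and the circle $\sigma\cap S^3$ form the $(2,4)$ torus link. The stratum is locally flat and meets $\sigma$ in one point, so general position holds. Transversality, however, would force the local link pair to be the Hopf link.

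Your repair --- make $\sigma$ transverse in the $\mathbb P_k$-direction and ``extend $\sigma$ conewise in the $c\mathcal S$ factor'' --- is not an available operation. The simplex $\sigma$ is fixed, and the whole task is to move $P$ so that $\sigma$ contains the full normal cone of every stratum it meets. Extracting this from general position would need both triangulations to be generic in a common local \emph{linear} structure, since generic linear intersections do split as products. For an arbitrary combinatorial $\Delta$ that means Brouwer-type subdivisions and Armstrong's local shifts \cite{armstrong1967transversality}; your sketch contains none of this.

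Two further steps fail as stated.

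First, transversality of the proper faces of $\sigma$ does not give a marked collar of $\partial\sigma$ in $\sigma$. In $\mathbb R^3$ an edge $\tau$ can cross a wall $W$ as its normal line at a point $x$, while the adjacent $2$-simplex $\sigma$ contains a flat $2$-dimensional sector of $W$ with vertex $x$. So the isotopy for $\sigma$ must act near $\partial\sigma$, and you give no mechanism protecting faces already processed. The uniqueness half of Lemma~\ref{app:collars} compares two collars of one marked family; it does not keep earlier faces transverse under a new isotopy.

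Second, $\Delta$ need not be a product with the filtered collar of $\partial M$; a simplex meeting $\partial M$ only in a vertex is not of the form $\tau\times[0,\epsilon)$. So the boundary hypothesis does not give you a transverse neighbourhood of $\partial M$ from which all later isotopies can stay away from $\partial M$.

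The paper avoids general position altogether, in two stages.
\begin{enumerate}
\item It triangulates all marks and takes derived dual cells. A dual cell meets each mark in a cone from its barycentre, so its boundary automatically has a simultaneous marked collar.
\item It arranges the boundary cells to refine $\Delta|_{\partial M}$.
\item On a common subdivision with $\Delta$, it runs the subdivision construction of \cite{buoncristiano1976geometric} (II.2.1). This gives a homeomorphism $h$ preserving every cell of the dual complex.
\item Amalgamation and Lemma~\ref{app:criterion} then give $h_*P\pitchfork\Delta$.
\end{enumerate}
Only afterwards is the motion made relative to $\partial M$.
\begin{enumerate}
\item $h$ is isotopic to the identity through cell-preserving maps $h_t$.
\item $(h_t|_{\partial M})^{-1}$ is extended simplex by simplex over $\Delta$ to an isotopy $a_t$ preserving every simplex of $\Delta$.
\item Then $g_t=a_th_t$ fixes $\partial M$, and $a_1$ cannot destroy transversality.
\end{enumerate}
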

\begin{proof}
Write $B=\partial M$ and $K=\Delta|_B$. We give the simultaneous
marked version of the dual-cell and subdivision construction of
\cite[II, Theorem 2.1 and the proof of Theorem 3.2,
printed pp.23,30--31; PDF pp.15,19]{buoncristiano1976geometric}. The marks are the finitely
many closed subpolyhedra $M^i$. They need not themselves be manifolds.

\emph{An initial collared ball complex.}
The ambient boundary has a simultaneous filtered collar. By
Lemma~\ref{app:criterion}, each $\partial\sigma\subset\sigma$ for
$\sigma\in K$ also has a simultaneous collar. Triangulate the marks
on each boundary cell, using these collars near its boundary, and
extend to the interior of $M$. Different subdivisions at the ends of
a collar can be joined by triangulating the intervening prism,
cell by cell, relative to its already triangulated boundary.

Take the derived dual cells of this adapted decomposition. For an
interior face $\alpha$, its dual is $b\alpha*\dot D(\alpha)$. Its
intersection with a marked subcomplex is empty if $\alpha$ is absent
from that subcomplex; otherwise it is the cone, with vertex $b\alpha$,
on the intersection with $\dot D(\alpha)$. The boundary collar of the
dual therefore preserves every mark. These duals are PL balls; see
\cite[Proposition 4.1, printed p.283; PDF p.16]{mccrory1975cone}.
At $B$ the prearranged product decomposition gives a boundary dual
times an interval. The side collars and the end collars fit at their
corners: the PL change
\[
 (s,t)\longmapsto(s-t,\min(s,t))
\]
straightens the boundary of a quadrant, and does not change any mark
in the other factors.

On $B$, use the duals \emph{inside each original cell of $K$}, as in
the cited proof of BRS II.3.2. This makes the resulting boundary
complex refine $K$. More explicitly, its local duals are order
complexes of $\{\beta\subset\sigma:\alpha\le\beta\}$, with
$\sigma\in K$. A chain lies in a local dual containing its maximum
cell. Intersections are unions of local duals obtained by choosing
the minimum of the chain and a common original face containing its
maximum. Thus these are compatible cells, not independently chosen
decompositions. Only the boundaries of the interior duals need to
be subdivided. Denote the resulting ball complex by $J$.
Every cell of $J$ has a simultaneous boundary collar, and $J|_B$
refines $K$.

\emph{Passing to a common subdivision.}
Choose a common subdivision $L$ of $J$ and $\Delta$.
The subdivision construction of BRS II.2.1 first refines $L$ so that
in each original ball its subdivision is cylindrical outside an
interior smaller ball \cite[I, Lemma 2.3, printed pp.8--9; PDF p.8]{buoncristiano1976geometric}.
It then reverses the cylindrical collapses, followed by one elementary
collapse. In each elementary extension, a retained ball $A$ acquires
the product $A\times I$, attached along $A\times0$; use the same
product for every mark. Starting with the identity projection on
$M$, these extensions have homeomorphic projections throughout:
each new source is identified with precisely the new target ball,
relative to the retained face. Amalgamation preserves the marked
collars by the argument after Lemma~\ref{app:criterion}.

Here is the compatibility check for this construction, which is
important in the marked case. Proceed over original $J$-cells by
dimension. For a source ball $Q$ put $P_Q=\partial Q$. Suppose the
already constructed side cylinder is identified with $P_Q\times I$,
with its old faces and top end fixed. The cylindrical shell at the
bottom is then another product on this same bottom data. Write their
union as $P_Q\times[-2,0]$, where $[-1,0]$ is the side cylinder.
Adjoin a short simultaneous collar $P_Q\times[0,\epsilon]$ in $Q$.
All marks on this enlarged shell have the form
$(P_Q\cap M^i)\times[-2,\epsilon]$.

The last elementary extension has source $A\times I_u$, where
$A=Q\cup P_Q\times[-2,0]$. Its free face is
$(A\times1)\cup(\partial A\times I_u)$, not merely $A\times1$.
Choose a PL homeomorphism of rectangles
\[
 \psi:[-2,\epsilon]\times I_u\longrightarrow[0,\epsilon]\times I_t
\]
with boundary values
\[
 \psi(\epsilon,u)=(\epsilon,1-u),\quad
 \psi(r,0)=(r,1)\ (0\le r\le\epsilon),\quad
 \psi(r,0)=(0,r+1)\ (-1\le r\le0).
\]
Map the remaining boundary arc monotonically to the bottom edge and
extend by coning a triangulated boundary to an interior point.
Use $\id_{P_Q}\times\psi$ on the shell and $(q,u)\mapsto(q,1-u)$
on the remaining core. The maps agree at $r=\epsilon$, preserve all
marks, agree with the prescribed old side-cylinder identification,
and send the \emph{whole} output end to $Q\times0$. Thus these maps
glue over the original cells. For a zero-cell the construction is
just time reversal.

Taking the output restriction of this cylinder identification, and
then the output projection, gives a PL homeomorphism $h:M\to M$
preserving every original $J$-cell. The transported marks have
simultaneous collars on the refined output complex. Amalgamate first
to $L$, then to $\Delta$. Lemma~\ref{app:criterion} gives
$h_*P\pitchfork\Delta$. The cylinder identification need not preserve
its time coordinate; it has not been used as an isotopy.

\emph{Making the motion relative to $B$.}
A cell-preserving homeomorphism of a ball complex is isotopic to the
identity through cell-preserving homeomorphisms, by
\cite[Corollary 3.23, printed p.38; PDF p.46]{rourke2012introduction}.
Choose such an isotopy $h_t$. On $B$ it preserves every $K$-simplex.
Extend $(h_t|_B)^{-1}$ over the cells of $\Delta$, in increasing
dimension, using the ball isotopy-extension theorem
\cite[Proposition 3.22(ii), printed pp.37--38; PDF pp.45--46]{rourke2012introduction}.
Use the already determined maps on common faces. The extension $a_t$
preserves each $\Delta$-simplex. Hence $g_t=a_th_t$ fixes $B$
pointwise, and $a_1$ preserves the transversality of $h_*P$ to
$\Delta$. This proves the claim.
\end{proof}

\begin{lem}[Closed-manifold positioning]
\label{app:closed-position}
On a compact closed PL manifold $M$, any recursive PL stratification
can be moved by an ambient PL isotopy to be transverse to a given
finite combinatorial triangulation $\Delta$.
\end{lem}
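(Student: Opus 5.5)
The plan is to reduce the closed case to the relative positioning result, Lemma~\ref{app:relative-position}, which applies when $\partial M=\emptyset$. Since the boundary is empty, its hypothesis $P|_{\partial M}\pitchfork\Delta|_{\partial M}$ is vacuous, and the conclusion gives an ambient PL isotopy $g_t$ with $g_{1*}P\pitchfork\Delta$. The fixing condition on $\partial M$ is also vacuous. So the plan is to check that every ingredient in the proof of Lemma~\ref{app:relative-position} still makes sense, or simplifies, when $B=\emptyset$.

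\textbf{Step 1: the collared ball complex.} With $B=\emptyset$, the boundary-collar part of the first construction disappears. I will triangulate the finitely many closed filtration terms $M^i$ simultaneously, using Lemma~\ref{app:closed-filtration} (compactness gives a common adapted triangulation). I will then take derived dual cells of this adapted triangulation. For each face $\alpha$, the dual $b\alpha*\dot D(\alpha)$ meets each mark either emptily or in a cone with vertex $b\alpha$. Hence the radial collar of its boundary preserves all marks at once. No corner straightening along $B$ is needed, and no refinement of a boundary complex $K$. This gives the ball complex $J$, in which every cell has a simultaneous marked boundary collar.

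\textbf{Step 2: transport to $\Delta$.} I will take a common subdivision $L$ of $J$ and $\Delta$ (Thm.~\ref{common subdivision}). I then run the same cylindrical-collapse and elementary-extension construction as in Lemma~\ref{app:relative-position}, cell by cell in increasing dimension. This produces a PL homeomorphism $h$ that preserves every $J$-cell, such that the transported marks have simultaneous collars on a refinement. Amalgamating with Coro.~\ref{app:amalgamation}, first to $L$ and then to $\Delta$, and applying Lemma~\ref{app:criterion}, gives $h_*P\pitchfork\Delta$.

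\textbf{Step 3: isotopy.} Because $h$ preserves the cells of the ball complex, it is isotopic to the identity through cell-preserving homeomorphisms \cite[Corollary 3.23]{rourke2012introduction}. That isotopy is the required ambient isotopy, and no correction term $a_t$ is needed since there is no boundary to fix.

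\textbf{Expected obstacle.} I expect the main difficulty to be the compatibility of the gluing in Step 2 over original cells, namely the rectangle map $\psi$ sending the whole output end to $Q\times 0$. However, that verification was already carried out inside Lemma~\ref{app:relative-position} and does not use the boundary. So the only real care needed is to confirm that no step invoked a nonempty $B$, for instance to initialize the induction. With $B=\emptyset$, the induction simply starts at the $0$-cells, where the construction is time reversal.
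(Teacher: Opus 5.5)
Your proposal is correct and matches the paper, whose entire proof is the observation that this is the empty-boundary case of Lemma~\ref{app:relative-position}. Your Steps 1--3, which re-check that proof, are harmless but unnecessary: that lemma is stated for arbitrary compact PL manifolds, so its hypothesis on $\partial M$ is vacuous here, and its proof does not rely on the closed case, so there is no circularity.
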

\begin{proof}
This is the empty-boundary case of Lemma~\ref{app:relative-position}.
\end{proof}

\begin{thm}[Transverse triangulations; Theorem~\ref{App: transversal triangulation}]
\label{app:position}
Let $P$ be a stratification on a compact PL manifold $M$, and let
$\Delta$ be a combinatorial triangulation. There is an ambient PL
isotopy $f_t$ such that $P\pitchfork f_1(\Delta)$.
If $P|_{\partial M}\pitchfork\Delta|_{\partial M}$, the isotopy can
be fixed pointwise on $\partial M$.
\end{thm}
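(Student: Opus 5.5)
The plan is to reduce the statement to the two positioning lemmas just proved, Lemma~\ref{app:relative-position} and Lemma~\ref{app:closed-position}, by \emph{moving the stratification instead of the triangulation}. Conjugating one isotopy by the other gives the version stated. If $g_t$ is an ambient PL isotopy of $M$ with $g_{1*}P\pitchfork\Delta$, put $f_t=g_t^{-1}$. Transversality is a condition on the pair $(P,\Delta)$ that is invariant under simultaneous PL homeomorphism, since $g_1^{-1}$ carries transverse charts to transverse charts. Hence $P=g_1^{-1}(g_{1*}P)\pitchfork g_1^{-1}(\Delta)=f_1(\Delta)$. The isotopy $t\mapsto g_t^{-1}$ is jointly PL, because inverting a level-preserving PL homeomorphism $(x,t)\mapsto(g_t(x),t)$ of $M\times I$ gives a PL map. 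Moreover $f_t$ is fixed on $\partial M$ whenever $g_t$ is.

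For the first assertion, with no boundary hypothesis, I would first arrange the boundary and then apply the relative lemma. The restriction $P|_{\partial M}$ is a stratification of the closed compact PL manifold $\partial M$, by the chart description in Definition~\ref{stratified PL manifold}. Lemma~\ref{app:closed-position} gives an ambient isotopy $b_t$ of $\partial M$ with $b_{1*}(P|_{\partial M})\pitchfork\Delta|_{\partial M}$.

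Next, extend $b_t$ to an ambient isotopy $\tilde b_t$ of $M$ using the stratified collar $c:\partial M\times[0,1)\to M$ from the definition of a pre-stratified manifold. On the collar, set
\[
\tilde b_t(c(x,s))=c(b_{t(1-2s)}(x),s)\quad (s\le 1/2),
\]
and let $\tilde b_t$ be the identity elsewhere. The key point is that this extension preserves the stratification on the collar. Since $c^{-1}(M^i)=(\partial M\cap M^i)\times[0,1)$, the image $\tilde b_{1*}P$ is still a stratification. On $\partial M$ it restricts to $b_{1*}(P|_{\partial M})$, which is transverse to $\Delta|_{\partial M}$. Applying Lemma~\ref{app:relative-position} to $\tilde b_{1*}P$ then gives an isotopy $g'_t$, fixed on $\partial M$, with $g'_{1*}\tilde b_{1*}P\pitchfork\Delta$. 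Concatenating $\tilde b_t$ and $g'_t$ and inverting as in the first paragraph gives $f_t$.

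For the second assertion, the hypothesis $P|_{\partial M}\pitchfork\Delta|_{\partial M}$ lets me skip the boundary step and apply Lemma~\ref{app:relative-position} directly to $P$; the inverted isotopy is then fixed pointwise on $\partial M$.

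The main obstacle is the boundary-extension step. There I must check that $\tilde b_t$ is jointly PL (the reparametrisation $t(1-2s)$ is PL in $(t,s)$). I must also check that the moved filtration still satisfies the recursive local chart condition near $\partial M$, so that Lemma~\ref{app:relative-position} applies. I expect this to follow because $\tilde b_1$ is a stratified PL homeomorphism of $(M,P)$ onto $(M,\tilde b_{1*}P)$, and chart conditions transport along such maps.
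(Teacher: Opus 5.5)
Your route is the paper's own proof. You position $P|_{\partial M}$ on the closed manifold $\partial M$ with Lemma~\ref{app:closed-position}, push that motion into $M$ through a collar, apply Lemma~\ref{app:relative-position} relative to $\partial M$, then concatenate and invert. In the relative case you skip the boundary step. The inversion and concatenation bookkeeping is correct.

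There is one false claim, and it sits in the step you flagged. The map $(t,s)\mapsto t(1-2s)$ is \emph{not} piecewise linear: it contains the bilinear term $-2ts$, which is affine on no open set. So your $\tilde b_t$ is PL in each time slice but not jointly PL on $M\times[0,1]$. As written, it is therefore not an ambient PL isotopy in the paper's sense.

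The repair is to clamp instead of multiply. Take $\tilde b_t(c(x,s))=c\bigl(b_{\min(t,1-2s)}(x),s\bigr)$ for $s\le 1/2$ and the identity elsewhere. This is the paper's $H_t(c(x,s))=c\bigl(b_{\min(t,\rho(s))}(x),s\bigr)$. It and its inverse (replace $b_u$ by $b_u^{-1}$) are jointly PL, being composites of PL maps.

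A smaller imprecision: the extension does not preserve $P$ on the collar. What you actually need, and what your last sentence correctly says, is only that $\tilde b_{1*}P$ is again a stratification because $\tilde b_1$ is a PL homeomorphism. An ordinary collar would therefore suffice.
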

\begin{proof}
First apply Lemma~\ref{app:closed-position} on the closed manifold
$B=\partial M$, with its dimension-renumbered filtration, to obtain
$b_t$ with $b_{1*}(P|_B)\pitchfork\Delta|_B$.
Take $b_t=\id$ when the boundary is already transverse.

Extend this motion to $M$ using a collar $c:B\times[0,\epsilon]\to M$.
Choose a PL function $\rho$ equal to $1$ near $0$ and to $0$ near
$\epsilon$, and set
\[
 H_t(c(x,s))=c(b_{\min(t,\rho(s))}(x),s).
\]
Outside the collar take the identity. The formula and its inverse
are jointly PL, and the inverse is obtained by replacing $b_u$ by
$b_u^{-1}$. Now $H_{1*}P$ has transverse boundary. Apply
Lemma~\ref{app:relative-position} to it and $\Delta$, obtaining $g_t$
fixed on $B$ and $g_{1*}H_{1*}P\pitchfork\Delta$.
Concatenate $H_t$ and $g_tH_1$, and take the inverse isotopy. Its
endpoint is $f_1=(g_1H_1)^{-1}$, so
$P\pitchfork f_1(\Delta)$. The asserted relative conclusion follows
from the choice $H_t=\id$ when the boundary was already transverse.
\end{proof}

\subsection{Theorem~\ref{App: transversal triangulation extend to bulk}: extension from a boundary face}
\label{appsec:extension}

\begin{lem}[Exact extension of a boundary triangulation]
\label{app:exact-extension}
A prescribed combinatorial triangulation of the boundary of a compact
PL manifold extends to a combinatorial triangulation of the manifold,
without subdividing the prescribed boundary triangulation.
If the boundary triangulation is transverse to a stratification,
the extension can be chosen transverse too.
\end{lem}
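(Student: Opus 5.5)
The plan is to split the lemma into two parts: first an exact extension ignoring the stratification, then a relative positioning step.

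\emph{Exact extension.} Write $B=\partial M$ and let $K$ be the prescribed combinatorial triangulation of $B$. Choose a PL collar $c:B\times[0,1]\to M$; it exists by Lemma~\ref{app:collars} with no marks. By Theorem~\ref{common subdivision}, take a combinatorial triangulation $L$ of $M$ in which $c(B\times[0,1])$ is a subcomplex. Let $L_1$ be the triangulation induced on $c(B\times\{1\})$, and choose a common subdivision $K'$ of $K$ and $L_1$, transported to $B$ by $c$.

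We then triangulate the prism $B\times[0,1]$ so that it restricts to $K$ on $B\times 0$ and to $K'$ on $B\times 1$. To do this:
\begin{itemize}
\item split it into $B\times[0,\tfrac12]$, triangulated as the standard product triangulation of $K\times I$ (using a vertex ordering);
\item triangulate $B\times[\tfrac12,1]$ cell by cell over the simplices $\sigma$ of $K$, increasing in dimension.
\end{itemize}
On each prism $\sigma\times[\tfrac12,1]$ the boundary is already triangulated: by $\sigma\times\tfrac12$, by $K'|_\sigma\times 1$, and by the side prisms constructed earlier. Extend over the interior by coning from an interior point. Coning a combinatorial sphere gives a combinatorial ball, so the result is combinatorial.

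Outside the collar, $L$ restricted to $M\setminus c(B\times[0,1))$ has boundary triangulation $L_1$. Replace it by a subdivision inducing $K'$ on the boundary; one way is to subdivide $L$ by a derived or stellar subdivision relative to $L_1\to K'$, e.g.\ starring the new vertices of $K'$ in turn and extending each starring into the adjacent top simplices. Gluing these pieces gives a combinatorial triangulation $\Delta$ of $M$ with $\Delta|_B=K$ exactly. The only delicate point is keeping the triangulation combinatorial at the glued levels $B\times\{\tfrac12\}$ and $B\times\{1\}$. Links there are joins of links in the two pieces, which are balls glued along spheres; the PL Poincaré-type fact that ball $\cup_{\partial}$ ball is a sphere handles this.

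\emph{Transverse extension.} Now suppose $K\pitchfork P|_B$. Apply Theorem~\ref{app:position} (equivalently Lemma~\ref{app:relative-position}) to $P$ and the $\Delta$ just built. Since the boundary restriction $\Delta|_B=K$ is already transverse, that theorem gives an ambient PL isotopy $f_t$ fixed pointwise on $B$ with $P\pitchfork f_1(\Delta)$. The triangulation $f_1(\Delta)$ still restricts to $K$ on $B$ and is combinatorial, since $f_1$ is a PL homeomorphism. This finishes the second claim.

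The main obstacle I expect is the exactness in the first step: making sure the interior triangulation does not force a subdivision of $K$, while keeping the result combinatorial. This is why all subdivision is pushed into the inner collar level, and the outer level keeps $K$ through the product $K\times I$.
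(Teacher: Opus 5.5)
Your proposal is correct and is essentially the paper's proof. Both arguments take a common subdivision of $K$ with the boundary restriction of some triangulation, extend it inward, and interpolate back to $K$ across a collar by coning the prisms $\sigma\times I$, $\sigma\in K$, in increasing dimension; both then apply the boundary-fixed case of Theorem~\ref{app:position}. The only differences are that you work in an internal collar rather than attaching an external one and absorbing it, and that your product layer $K\times[0,\tfrac12]$ is unnecessary.

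One small correction concerns the inner piece. Starring the new vertices of $K'$ one at a time need not reproduce $K'$ exactly. Use instead the standard relative extension of a subdivision: cone each remaining simplex over its already subdivided boundary, in increasing dimension. This is the Rourke--Sanderson lemma the paper invokes.
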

\begin{proof}
Start with any triangulation $T$ of the manifold. Choose a common
subdivision $L$ of $T|_{\partial M}$ and the prescribed boundary
triangulation $K$. Extend this subdivision over $T$ by
\cite[Lemma 2.17, printed p.19; PDF p.27]{rourke2012introduction}.
Attach $\partial M\times I$ with $L$ at its inner end and $K$ at
its outer end. Triangulate each $\sigma\times I$, $\sigma\in K$,
in increasing dimension: its boundary is already triangulated, so
cone that boundary to an interior point. The prisms agree on all
common faces. Absorb the added cylinder into a boundary collar by
a PL reparameterization of its interval, keeping the boundary base
point fixed. The resulting triangulation restricts to precisely $K$.

For the transverse assertion apply the relative part of
Theorem~\ref{app:position}. It fixes every point of the boundary and
therefore does not change $K$.
\end{proof}

\begin{thm}[Extension from a boundary face; Theorem~\ref{App: transversal triangulation extend to bulk}]
\label{app:face-extension}
Let $(B,P)$, $F$, and $M=\overline{\partial B\setminus F}$ satisfy
the assumptions of Theorem~\ref{App: transversal triangulation extend to bulk}. In particular $F$ and $M$ are compact
PL boundary faces, $F\cap M=\partial F=\partial M$, and $F$ has the
specified stratified corner collar in $B$. A transverse triangulation
of $(F,P|_F)$ extends to a transverse triangulation of $(B,P)$.
\end{thm}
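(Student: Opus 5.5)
The plan is to reduce the statement to Lemma~\ref{app:exact-extension}, using the corner collar of $F$ to separate the prescribed face from the rest of the boundary. Let $K$ be the given transverse triangulation of $(F,P|_F)$. It induces a transverse triangulation $K|_{\partial F}$ of $\partial F=\partial M$, with the renumbered filtration, because faces of transverse simplices restrict transversally in the boundary chart. The first step is to extend $K|_{\partial F}$ to a transverse triangulation $K_M$ of the closed-up face $(M,P|_M)$. I will do this with Lemma~\ref{app:exact-extension} applied to $M$, whose boundary is $\partial F$. Then $K\cup K_M$ is a combinatorial triangulation of the closed manifold $\partial B$. It is transverse to $P|_{\partial B}$: at points of $\partial F$ the corner collar $c(\partial F\times(-1,0])\subset\partial B$ shows that the stratification of $\partial B$ is locally a product across the corner. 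Lemma~\ref{app:criterion} then reduces transversality to simultaneous marked collars of simplex boundaries, and these glue across $\partial F$ by Corollary~\ref{app:amalgamation}.

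The second step applies Lemma~\ref{app:exact-extension} to all of $B$ with the boundary triangulation $K\cup K_M$, which yields a transverse combinatorial triangulation of $(B,P)$. That triangulation restricts exactly to $K$ on $F$, since the lemma does not subdivide the prescribed boundary triangulation. Its relative-positioning part, Lemma~\ref{app:relative-position}, fixes $\partial B$ pointwise, so $K$ is untouched.

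For the fullness addendum, I would first replace $K_M$ by a full triangulation rel $\partial F$. To do this, take a derived subdivision of $K_M$ away from $\partial F$ (a collar prism construction as in Lemma~\ref{app:exact-extension}) and restore transversality using the relative part of Theorem~\ref{app:position}. In the bulk, fullness is obtained by inserting a thin collar $\partial B\times[0,\epsilon]$ triangulated as prisms over the boundary simplices with interior cone points. Every simplex of such a prism triangulation that has all its vertices on $\partial B\times 0$ then lies in $\partial B\times 0$. The collar is then reabsorbed by a PL reparametrization, after which Lemma~\ref{app:relative-position} restores transversality rel $\partial B$.

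I expect the main obstacle to be the corner along $\partial F$. There, transversality of $K\cup K_M$, and the compatibility of the bulk filtered collar with the face structure, must hold simultaneously for all marks. My first attempt would be to straighten the quadrant with the map $(s,t)\mapsto(s-t,\min(s,t))$ from Lemma~\ref{app:relative-position}, which turns the corner collar into an ordinary boundary collar of $\partial B$ while preserving every marked set $M^i$.
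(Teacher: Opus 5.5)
Your proposal is essentially the paper's proof. You restrict $K$ to $\partial F$, extend over $M$ with Lemma~\ref{app:exact-extension}, check that $K\cup K_M$ is transverse on $\partial B$, and apply Lemma~\ref{app:exact-extension} to $B$, whose boundary-fixed positioning leaves $K$ untouched. The seam check is done differently but both versions work. The paper joins the two filtered collars of $\partial F$ into a two-sided collar and extends each transverse chart across it; your simplexwise use of Lemma~\ref{app:criterion} is also valid, since every simplex lies wholly in $F$ or in $M$ and the criterion only involves the marks on that simplex. One thing you leave implicit: the paper explicitly derives from the corner collar that $(M,P|_M)$ and $\partial F$ are stratified manifolds with boundary, and you need this when you apply Lemma~\ref{app:exact-extension} to $M$.
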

\begin{proof}
Put $E=F\cap M$. The relevant filtrations are
\[
 F^a=F\cap B^{a+1},\qquad M^a=M\cap B^{a+1},\qquad
 E^a=E\cap B^{a+2}.
\]
We first verify that these are the required stratified structures.
At a point of $E$, use the collar
$c:F\times(-1,0]\to B$ at a negative parameter $t_0$.
The local pair is
\[
 (B,\partial B;B^i)=
 (F,\partial F;F\cap B^i)\times(t_0-\delta,t_0+\delta).
\]
The boundary identification follows because $c$ is a homeomorphism
onto an open neighborhood and hence preserves intrinsic boundary.
The interval lies in the stratum direction. Lemma~\ref{app:slice}
gives the half-space filtered charts on $F$, with dimensions lowered
by one and the same links. Lemma~\ref{app:collars} supplies their
common collar along compact $E$.

The side $c(E\times(-1,0])$ is a collar on the $M$ side of $E$:
it is exactly the other boundary face of the source corner, and it
covers a neighborhood in $M$. It preserves the filtration. Away
from $E$, the structure on $M$ is the open restriction of the
boundary stratification of $B$. Thus $M$ too is a stratified
manifold with boundary, with the same induced filtration on $E$.

Let $K_F$ be the prescribed triangulation, and put $K_E=K_F|_E$.
Restricting a transverse chart to the genuine boundary decreases
the stratum and ambient dimensions by one and leaves the link
unchanged. Hence $K_E$ is transverse to $E$. By
Lemma~\ref{app:exact-extension} it extends to a transverse
triangulation $K_M$ of $M$. We may arrange that the abstract complexes
glue along exactly $K_E$: any needed preliminary subdivision on the
$M$ side is relative to $E$, and is followed by relative positioning.

The union $K_F\cup_{K_E}K_M$ triangulates $\partial B$.
To check transversality at the seam, join the two filtered collars
of $E$ to a two-sided collar. The collar obtained from the transverse
chart of any incident simplex can be matched to this collar by
Lemma~\ref{app:collars}. Extend its chart across $E$. Its stratum
half-space becomes a full space; the submanifold $Q$ remains on the
original side, and the entire cone factor $cL$ is unchanged.
Its intrinsic boundary is still $\partial Q\times cL$. This is
allowed by the main text's definition, which does not require a
proper embedding of $Q$ in the stratum direction.

The boundary triangulation is therefore transverse everywhere.
Apply Lemma~\ref{app:exact-extension} once more, this time to $B$.
Its boundary restriction, and in particular the original $K_F$,
is unchanged. Empty faces or an empty seam simply omit the
corresponding steps.
\end{proof}

\subsection{Proposition~\ref{App: transversal of refinement}: forgetting a refinement}
\label{appsec:refinement}

\begin{prop}[Transversality descends under refinement; Proposition~\ref{App: transversal of refinement}]
\label{app:refinement}
Let $P'\succcurlyeq P$ be stratifications on the same PL manifold.
If a PL submanifold $N$ is transverse to $P'$, it is transverse to $P$.
In particular every $P'$-transverse triangulation is $P$-transverse.
\end{prop}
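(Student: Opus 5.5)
The plan is to reduce everything to a pointwise chart statement and then reuse the slice lemma, Lemma~\ref{app:slice}. Transversality is local. So fix $x\in N$, let $x$ lie in the $k'$-stratum of $P'$, and let $x$ lie in the $k$-stratum of $P$. The definition of refinement requires every new defect to lie in an old one, so $P'^{i}\supseteq P^{i}$ for each $i$, and hence $k'\le k$. Near $\partial M$ the two filtrations coincide on a neighbourhood $C$, so points there need no work. Interior points are therefore the real case.

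Start from a $P'$-transverse chart
\[
h:U\to V\times cS',\qquad h(N\cap U)=Q'\times cS',\qquad V\subset\mathbb P_{k'},
\]
and a $P$-chart $\phi:U\to\mathbb P_k\times cS$ at $x$. In $\phi$ the old stratum through $x$ is the Euclidean factor $\mathbb P_k\times\{v\}$. First, show that this old stratum is a union of new defects near $x$ and that it meets $h$ compatibly. Concretely, I would show that $h$ carries the old $k$-stratum to $V_0\times cS'_0$, where $S'_0\subset S'$ is a union of strata of the link. Second, take $N_0=N\cap(\text{old }k\text{-stratum})$. This is a PL submanifold of the $k$-manifold $\mathbb P_k\times\{v\}$, and I would aim to show it has dimension $j+k-m$. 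Third, I want the old cone coordinate to split off as a product with $N$. For that, note that $P'$-transversality gives a simultaneous collar of $\partial\sigma$ preserving all $P'$-terms for every simplex $\sigma$ in a triangulation of $N$. Since each $P$-term is a union of closures of $P'$-terms, the same collars preserve the $P$-terms. I would then invoke the collar criterion, Lemma~\ref{app:criterion}, for $P$. This handles the triangulation statement directly.

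For a general submanifold $N$, I would triangulate $N$ and the filtration compatibly using Lemma~\ref{app:closed-filtration}. Then the transverse-star chart \eqref{app:star-chart} for $P'$ is $(V;Y'^i)\times T$. The $P$-terms are unions of closures of $P'$-terms, so they have the same product form $(V;Y^i)\times T$. Applying Lemma~\ref{app:slice} to the $P$-filtration on $V$ then yields $\mathbb R^{k+r-m}\times cL$ with the old link $L$. This produces the required $Q$ exactly as in the proof of Lemma~\ref{app:criterion}.

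The main obstacle is justifying that each $P$-filtration term, being a union of closures of $P'$-terms, is preserved by the $P'$-adapted collars and charts. Preserving a closed set $P'^i$ does not automatically preserve a union of some of its components. I would first try the following argument. The collars are stratified homeomorphisms that are isotopic to inclusions along the interval coordinate. Each defect of $P'$ is connected and each collar line stays in a single defect. Hence the collar maps each $P'$-defect into itself, so it preserves any union of such defects and their closures. That gives the preservation of the $P^i$ needed above.
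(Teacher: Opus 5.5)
Your argument for the ``in particular'' clause is sound, and it differs from the paper's. The ``only if'' half of Lemma~\ref{app:criterion} gives, for each closed simplex $\sigma$ of a $P'$-transverse triangulation, a collar of $\partial\sigma$ preserving every $P'^i\cap\sigma$. Each collar line stays in the fine stratum of its base point and, being connected, in a single fine defect. Every coarse term $P^i$ is a union of fine defects, so the same collar preserves $P^i\cap\sigma$, and the ``if'' half of the criterion then gives $P$-transversality (in the compact, finite setting of that lemma). Your shortcut at the boundary, using the neighbourhood $C$ on which $P$ and $P'$ agree, is also fine.

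The gap is the first and main assertion, for an arbitrary PL submanifold $N$. Your ``First'' and ``Second'' steps are only announced, and the fallback fails for three reasons.
\begin{itemize}
\item Triangulating $N$ and the filtration compatibly via Lemma~\ref{app:closed-filtration} gives an \emph{adapted} triangulation, whose simplices lie inside strata. It has no simultaneous marked collars, so the transverse-star chart \eqref{app:star-chart} does not exist; that chart is built only from an ambient triangulation that is already transverse.
\item $P'$-transversality of $N$ does not give marked collars for the simplices of an arbitrary triangulation of $N$. An edge may run along $N\cap(\text{wall})$, and such an edge is not transverse.
\item The collar criterion cannot be applied to $N$ alone. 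A marked collar of $\partial N$ in $N$ does not imply transversality; a closed curve lying in a wall is a counterexample.
\end{itemize}

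What is missing is to run your connectedness argument inside the given $P'$-transverse chart of $N$ itself, which is what the paper does. Take $h:U\to V\times cS'$ with $h(N\cap U)=Q'\times cS'$, and shrink $V\subset\mathbb R^{k'}$ to be connected. For each connected piece $A$ of a stratum of $cS'$, the set $h^{-1}(V\times A)$ is connected and lies in one fine defect, hence in one coarse defect. So every coarse term has the form $h^{-1}(V\times Y^i)$ with $Y^i\subset cS'$; membership does not depend on the $V$ coordinate. Since $h^{-1}(V\times\{v\})$ lies in the coarse $k$-stratum through $x$, Lemma~\ref{app:slice} with $a=k'$ identifies $(cS';Y^i)$ near the cone point with $\mathbb R^{k-k'}\times cL$, where $L$ is the coarse link. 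Then $N\cap U$ becomes $(Q'\times\mathbb R^{k-k'})\times cL$. The set $Q=Q'\times\mathbb R^{k-k'}\subset\mathbb R^{k}$ has dimension $j+k-m$ and boundary $\partial Q'\times\mathbb R^{k-k'}$, which is the required $P$-transverse chart. This needs no statement about $N\cap(\text{old stratum})$. Applied to each closed simplex, it also gives the triangulation clause without the collar criterion.
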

\begin{proof}
At $x\in N$ let the fine and coarse strata have dimensions $r,s$,
and put $\dim N=j$, $\dim M=m$. A fine transverse chart gives
\[
 (P',N)=(\mathbb P_r\times cL',\ Q'\times cL'),\qquad
 \dim Q'=j+r-m.
\]
Shrink the stratum factor to a connected neighborhood. Each coarse
filtration term is a union of connected fine strata. Each connected
local fine-stratum piece lies in one such global fine stratum;
membership in a coarse term consequently does not depend on the
$\mathbb P_r$ coordinate. Thus the coarse filtration in this same
chart is $\mathbb P_r\times Y^i$.

At an interior point Lemma~\ref{app:slice}, applied to a coarse
conical chart, gives
$(cL';Y^i)=\mathbb R^{s-r}\times cL$ with the coarse filtered link $L$.
At a boundary point apply it on the true boundary, removing $r-1$
directions from the coarse boundary stratum of dimension $s-1$;
the difference is still $s-r$. Multiply back the fine stratum
factor. The coarse transverse chart is
\[
 (P,N)=(\mathbb P_s\times cL,
 (Q'\times\mathbb R^{s-r})\times cL).
\]
The new stratum-direction submanifold has dimension $j+s-m$, and
its boundary is $\partial Q'\times\mathbb R^{s-r}$. The true ambient
boundary is also preserved. Apply the assertion separately to
every closed simplex of the triangulation.
\end{proof}

\subsection{Theorem~\ref{App: transversal subdivision preparation}: preparation on a subdivision}
\label{appsec:preparation}

\begin{lem}[Preparation on a subdivision]
\label{app:preparation}
Suppose $P\pitchfork K$, and let $L$ be a locally finite combinatorial
subdivision of the locally finite triangulation $K$. A $K$-cell-preserving ambient isotopy moves $P$
to a stratification transverse to $L$. After prescribing ordinary
PL collars $d_\sigma$ of the boundaries of the positive-dimensional
$L$-simplices, a further $L$-cell-preserving isotopy makes every
filtration term a product along sufficiently short $d_\sigma$.
If two starting stratifications agree on $\partial M$, the two
preparation paths can be chosen to agree there at every time.
\end{lem}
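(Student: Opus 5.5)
The plan is to derive the first assertion from Lemma~\ref{app:relative-position}, applied simplex by simplex to $K$ in order of increasing dimension. Each closed simplex $\sigma\in K$ carries the induced filtration $\sigma\cap M^i$. Transversality $P\pitchfork K$ gives, through Lemma~\ref{app:criterion}, a simultaneous marked collar of $\partial\sigma$ in $\sigma$. Consequently $(\sigma,\sigma\cap M^i)$ is itself a stratified PL ball, with a recursive chart structure obtained by restricting the transverse charts $(Q\times cL)$. The subdivision $L|_\sigma$ is a combinatorial triangulation of this ball.

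The induction runs as follows. Suppose the isotopy has already been built on the $(r-1)$-skeleton of $K$, so that the moved marks are transverse to $L$ on every $L$-simplex lying in $\partial\sigma$.
\begin{enumerate}
\item Apply Lemma~\ref{app:relative-position} inside $\sigma$. This produces an isotopy of $\sigma$, fixed on $\partial\sigma$, making the marks transverse to $L|_\sigma$.
\item Patch these isotopies together. They are fixed on common faces, so they glue to an isotopy of the $r$-skeleton that preserves each $K$-cell.
\item Extend to $M$ using ball isotopy extension \cite[Proposition 3.22(ii)]{rourke2012introduction}, over the higher cells of $K$ in increasing dimension, exactly as in the last paragraph of the proof of Lemma~\ref{app:relative-position}.
\end{enumerate}
At the next stage the already-treated lower cells are left fixed, so their transversality is kept. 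After the top dimension, every $L$-simplex is transverse: each one lies in a unique smallest $K$-simplex and was positioned there. The global criterion follows from Lemma~\ref{app:criterion} and Corollary~\ref{app:amalgamation}. Local finiteness ensures that only finitely many cells meet any compact set, so the composite is a well-defined PL isotopy.

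For the product-along-collars assertion, fix the prescribed collars $d_\sigma$. Lemma~\ref{app:criterion} gives some simultaneous marked collar $c_\sigma$ of $\partial\sigma$ in $\sigma$ for each $L$-simplex. The uniqueness part of Lemma~\ref{app:collars}, applied to the ordinary collar structure, gives an isotopy of $\sigma$ fixed on $\partial\sigma$, supported near $\partial\sigma$, that carries $c_\sigma$ onto a shortening of $d_\sigma$. Pushing the marks by this isotopy makes them products along the short $d_\sigma$. Again proceed by increasing dimension of $L$-simplices and extend by cell-preserving isotopy extension, so that earlier products are not destroyed. The moves are supported near $\partial\sigma$ relative to $\partial\sigma$, which keeps transversality.

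The relative statement comes from making every choice on $\partial M$ first. This covers the collars, the positioning isotopies on boundary simplices, and the straightenings. If two stratifications agree on $\partial M$, we use identical data there and extend inward afterwards.

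The main obstacle is the claim that $(\sigma,\sigma\cap M^i)$ is a genuine stratified manifold to which Lemma~\ref{app:relative-position} applies. Along $\partial\sigma$ the induced filtration has corners where faces meet. I would first try to absorb this using the transverse-star chart \eqref{app:star-chart}. If that fails, the fallback is to work directly with the ball-complex version of the dual-cell construction inside $\sigma$, as in the proof of Lemma~\ref{app:relative-position}.
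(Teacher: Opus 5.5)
Your proposal is correct and follows the paper's proof essentially step for step. You position inside each coarse $K$-simplex by Lemma~\ref{app:relative-position}, relative to its boundary, in increasing dimension, and extend by RS~3.22(ii); you then straighten by ordinary (unmarked) collar uniqueness, which should be applied to a marked collar of the current, already-moved filtration with identity bottom parameterization, as the paper does; and you do the boundary preparation once for the relative statement. The corner obstacle you anticipate does not arise: the paper gives $\sigma$ the renumbered filtration $(P|_\sigma)^a=M^{a+m-d}\cap\sigma$, and the transverse charts $(Q,\partial Q)\times cL$ are already recursive charts of this PL manifold with boundary, so neither the transverse-star nor the dual-cell fallback is needed.
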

\begin{proof}
On a $d$-simplex $\sigma\in K$ use the dimension-renumbered filtration
\[
 (P|_\sigma)^a=M^{a+m-d}\cap\sigma,\qquad \dim M=m.
\]
Transversality implies that $M^{m-d-1}\cap\sigma=\varnothing$ and
gives the recursive charts on this manifold with boundary.
Process the coarse simplices by dimension. Once their boundaries
are transverse to $L$, apply Lemma~\ref{app:relative-position} inside
each simplex, fixed on its boundary. Extend the resulting isotopy
over higher-dimensional simplices by RS Proposition 3.22(ii).
It leaves the already processed faces unchanged. This gives the
first assertion.

For the collar assertion again process by dimension. Extend the
chosen motion of $\partial\sigma$ over $\sigma$, calling the extension
$e$. The image filtration has a simultaneous collar $c$ whose
bottom parameterization is the identity. Ordinary collar uniqueness
gives a homeomorphism $u$, fixed on $\partial\sigma$, with $uc=d_\sigma$
on a short collar. Thus $ue_*P$ is a product there. This use of
ordinary collar uniqueness does not claim that $u$ preserves the
old filtration. On old faces the maps agree by construction.
The final homeomorphism preserves $L$-cells and is isotopic to the
identity through such homeomorphisms by RS Corollary 3.23.
All preparation stages therefore preserve transversality to $K$.

For two inputs with the same boundary, do the preparation on that
boundary once, extend it to both manifolds, and then do the interior
preparations relative to the boundary. Synchronizing the parameters
gives the last assertion.

The construction also proves the stated locally finite version.
The restriction of $L$ to a closed $K$-simplex is finite, by compactness
of that simplex and local finiteness of $L$. At each dimension the
relative constructions on distinct cell interiors agree on their
already fixed boundaries. Glue them at the same time parameter,
then extend over the higher cells. There are only $m+1$ dimension
stages. Every point has a neighborhood meeting finitely many cells,
so the glued level-preserving map and its inverse are locally PL.
No globally uniform collar width is required. Hence no compactness
hypothesis is being added to main-text Theorem~\ref{App: transversal subdivision preparation}.
\end{proof}

\subsection{Theorem~\ref{App: transversal isotopy step by step}: localization in the original vertex stars}
\label{appsec:stars}

In this subsection $K$ is the original triangulation, whereas $L$
is an auxiliary sufficiently fine triangulation. The isotopies in
the conclusion will be supported in open stars of \emph{vertices
of $K$}. Subdivision of $K$ is a proof device, not a change in the
statement. We need control of the exact endpoint homeomorphism,
not merely of the resulting stratification.

The collars can be chosen to make the preceding preparation compatible
with the \emph{fixed star projections} of $L$, rather than merely with
unspecified normal coordinates. We record an explicit choice, to avoid
an additional compatibility assumption in the localization theorem.

\begin{lem}[Projection-compatible collars]
\label{app:projection-collars}
A finite ordered triangulation $L$ of $M$ admits collars $d_\sigma$ with
the following property. A family of closed PL subspaces which is
a product along all shortened $d_\sigma$ has, near each open face,
simultaneous product coordinates compatible with the fixed star
projection. Two such prepared families equal on $\partial M$ are
equal in an ambient neighborhood of $\partial M$.
\end{lem}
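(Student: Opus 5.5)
We construct the collars $d_\sigma$ directly from the ordered simplicial structure of $L$, so that every collar is a restriction of one global straight-line coordinate system, the star projections. Fix the vertex order on $L$. For an open face $\alpha\in L$, the open star $\ost_L(\alpha)$ is canonically the join $\Int\alpha * \lk_L(\alpha)$ minus the link, and the fixed star projection $\pi_\alpha:\ost_L(\alpha)\to\Int\alpha$ sends a point $(1-t)a+tb$ ($a\in\alpha$, $b\in\lk_L(\alpha)$, $t<1$) to $a$. For a positive-dimensional simplex $\sigma$, define $d_\sigma$ on $\partial\sigma\times[0,\epsilon)$ by a simultaneous pseudo-radial construction. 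Write $\hat\sigma$ for the barycentre. A point $x\in\partial\sigma$ lies in the relative interior of a unique face $\tau<\sigma$. Put $d_\sigma(x,s)=(1-s)x+s\,\hat\sigma$ after a PL reparametrisation of $s$ that depends only on the face and is linear on the cells of the first derived subdivision $L'$. Using the vertex order, we then replace the radial direction towards $\hat\sigma$ by the direction towards the barycentre of the opposite face $\tau^c$ in $\sigma=\tau*\tau^c$. With this choice $d_\sigma$ restricted over $\Int\tau$ is exactly the inverse of the join coordinate of $\ost_L(\tau)\cap\sigma$, truncated at height $\epsilon$. The first check is that these face-wise pieces glue to a PL collar of all of $\partial\sigma$. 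They agree on $\partial\tau$ because the join coordinates are compatible with the ordering; corner points are handled by the same quadrant-straightening map used in Lemma~\ref{app:relative-position}.

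The second step is the compatibility claim. Let $\{X_i\}$ be closed PL subspaces that are products along every shortened $d_\sigma$. Near an open face $\alpha$, every incident simplex $\sigma\supset\alpha$ contributes the collar $d_\sigma$, which over $\Int\alpha$ is the $\pi_\alpha$-fibre coordinate. Being a product along $d_\sigma$ therefore means that $X_i\cap\sigma\cap\pi_\alpha^{-1}(W)=\pi_\alpha^{-1}(X_i\cap W)\cap\sigma$ for a small $W\subset\Int\alpha$. This holds only near $\alpha$, at join height below $\epsilon$. Taking the union over the finitely many incident $\sigma$ gives product coordinates $(W;X_i\cap W)\times T$ with $T$ the truncated cone on $\lk_L(\alpha)$. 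These are the transverse-star charts \eqref{app:star-chart}, and here the projection is literally $\pi_\alpha$. Compatibility between the charts of $\alpha$ and of a face $\beta>\alpha$ follows from $\pi_\alpha\circ\pi_\beta=\pi_\alpha$ on the overlap, which is a property of ordered joins. This overlap check is the main obstacle. The reparametrisations of $s$ chosen on different faces must commute with the projections, so I would choose a single function of the barycentric coordinates, linear on $L'$-cells, rather than face-by-face choices.

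The third step treats a neighbourhood of $\partial M$. For $\alpha\subset\partial M$, the star near $\alpha$ is a finite union of simplices. Each such simplex meets $\partial M$ in a face, because $L$ is a triangulation of the manifold with boundary. Hence a product family is determined on $\pi_\alpha^{-1}(W)$ by its trace $X_i\cap W\subset\partial M$. If two prepared families agree on $\partial M$, their traces on every boundary face agree, and the projection formula above forces the families to agree on $\bigcup_{\alpha\subset\partial M}\pi_\alpha^{-1}(W_\alpha)$ at height below $\epsilon$. This set is an open neighbourhood of $\partial M$ by compactness and finiteness of $L$, which proves the final assertion.
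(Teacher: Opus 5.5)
\textbf{There is a genuine gap in your first step.} You flag this step yourself as ``the main obstacle'': the map you call $d_\sigma$ is not a collar, and no repair is specified.

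Take $\sigma=[v_0,v_1,v_2]$. Your rule moves points of $\operatorname{int}[v_0,v_1]$ straight toward $v_2$, points of $\operatorname{int}[v_0,v_2]$ straight toward $v_1$, and $v_0$ toward the midpoint of $[v_1,v_2]$.
\begin{itemize}
\item The rule is discontinuous at $v_0$, so the face-wise pieces do not ``agree on $\partial\tau$''. The vertex order plays no role in choosing $\widehat{\tau^c}$, so it cannot be what makes them agree.
\item The rule is not injective at any collar height. The segment from $(1-a)v_0+av_1$ toward $v_2$ and the segment from $(1-a)v_0+av_2$ toward $v_1$ meet at parameter $a/(1+a)$ on both. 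This is below $\epsilon$ once $a$ is small, and a reparametrisation of $s$ depending only on the face cannot separate them.
\item No other straight-line rule with targets in $\tau(x)^c$ works either: the limits at $v_0$ from the two edges head toward $v_2$ and toward $v_1$ respectively.
\item The quadrant map $(s,t)\mapsto(s-t,\min(s,t))$ only rounds the corner between two collars that already exist. It does not produce one collar of $\partial\sigma$ with your face-wise property.
\item ``$d_\sigma$ over $\operatorname{int}\tau$ is exactly the inverse of the join coordinate'' can only hold for facets. The domain $\operatorname{int}\tau\times[0,\epsilon)$ has dimension $\dim\tau+1$, while the part of $\sigma$ near $\operatorname{int}\tau$ has dimension $\dim\sigma$.
\end{itemize}
Your second and third steps presuppose these collars, so the argument does not go through as written.

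\textbf{The missing ingredient} is a single explicit PL formula in barycentric coordinates that uses the vertex order to decouple normal from tangential coordinates near every face at once. The paper takes $S_j=\sum_{i\le j}\lambda_i$, $c=\eta/(r+1)$, and
\[
 d_\sigma(\lambda,t)_j=\lambda_j-\eta\bigl(\min(t,S_j)-\min(t,S_{j-1})\bigr)+ct .
\]
It checks that this is a PL collar onto $\{\min_j\mu_j\le c\}$ with an explicit inverse: recover $t=\min_j\mu_j/c$, then invert $s\mapsto s-\eta\min(t,s)$ on the partial sums. Near a point of $\operatorname{int}\alpha$ whose least vertex is $k$, one has $S_j>t$ for $j\ge k$. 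So coordinates with $j>k$ merely gain $ct$, and those with $j<k$ involve only coordinates outside $\alpha$.

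Extend from the identity on $\alpha$ over incident simplices in increasing dimension; on common faces $t=0$, so earlier maps are unchanged. This gives charts whose normal coordinates are exactly the star-projection coordinates and whose $\alpha$-coordinates have the form $x_j+V_j(w)$. Marked sets then depend only on $x$.

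These charts are \emph{not} fibred by your $\pi_\alpha$: in the example above, the collar from $\operatorname{int}[v_0,v_1]$ does not head toward $v_2$. The lemma only needs, and the paper only proves, compatibility of the normal coordinates. Giving up exact $\pi_\alpha$-fibres is precisely what allows the collar to be continuous.

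With such charts your third step is essentially the paper's. The remark that each simplex meets $\partial M$ in a face would require fullness, but it is not needed.
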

\begin{proof}
For an $r$-simplex, write a point of its boundary in barycentric
coordinates $\lambda_0,\ldots,\lambda_r$ in the chosen vertex order.
Choose $0<\eta<1$, put $c=\eta/(r+1)$ and
$S_j=\sum_{i\le j}\lambda_i$, $S_{-1}=0$. Set
\begin{equation}\label{app:collar-formula}
 d_\sigma(\lambda,t)_j
 =\lambda_j-\eta\bigl(\min(t,S_j)-\min(t,S_{j-1})\bigr)+ct.
\end{equation}
This is jointly PL, its coordinates add to $1$, and its minimum
coordinate is $ct$. Given its output $\mu$, recover
$t=\min_j\mu_j/c$. If $Q_j=\sum_{i\le j}(\mu_i-ct)$, then
$Q_j=f_t(S_j)$, where
\[
 f_t(s)=s-\eta\min(t,s),\qquad
 f_t^{-1}(q)=
 \begin{cases}q/(1-\eta),&q\le(1-\eta)t,\\
 q+\eta t,&q\ge(1-\eta)t.
 \end{cases}
\]
Recover $\lambda_j=S_j-S_{j-1}$. These are nonnegative, add to $1$,
and include a zero coordinate. Thus \eqref{app:collar-formula} is a
PL collar onto the shell $\min_j\mu_j\le c$.

Fix a point in the interior of a face $\alpha$, and let $k$ be its
least vertex. In a small neighborhood and a short collar $S_k>t$.
For $j<k$, the sums $S_j$ involve only coordinates outside $\alpha$;
for $j>k$ the two minimum terms cancel. It follows that all output
coordinates outside $\alpha$ depend only on the input coordinates
outside $\alpha$ and on $t$. The same holds for the inverse, with
$t=\min_{j\notin\alpha}\mu_j/c$.

Starting with the identity on $\alpha$, extend by these collars over
incident simplices in increasing dimension. On their common faces
the parameter $t$ is zero and the old map is unchanged. The normal
coordinates remain the coordinates of the fixed star projection;
the remaining coordinates have the form $x_j+V_j(w)$, where $w$ is
normal. Reading $w$ and subtracting $V_j(w)$ gives the PL inverse.
Membership in a prepared marked set depends only on the $\alpha$
coordinate. This proves the product assertion.

The construction is determined by $L$ and its collars, not by the
marks. At a boundary face only the incident simplices actually in
$M$ are used. If two prepared families agree on the face, they agree
in these same coordinates. Intersect their valid neighborhoods and
use compactness of $\partial M$ to obtain a common ambient neighborhood.
\end{proof}

\begin{lem}[Finite simultaneous positioning by small shifts]
\label{app:shifts}
Let $L$ be a finite Brouwer triangulation of a compact closed PL
manifold. Given finitely many finite families of closed PL subspaces,
they can be moved simultaneously into projection-compatible product
position for $L$ by a finite composition $q=q_s\cdots q_1$ of
arbitrarily small special shifts. Any families initially in this
position remain so after every partial composition. The shifts can
be supported in prescribed open stars containing their small support
balls. There is a relative version, fixed on a smaller neighborhood
of a boundary on whose neighborhood all the families already agree
and have these coordinates.
\end{lem}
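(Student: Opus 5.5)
The idea is to adapt the classical general-position argument by small shifts (as in Rourke--Sanderson and Buoncristiano--Rourke--Sanderson) to finitely many marked families at once, with the target condition being the projection-compatible product position of Lemma~\ref{app:projection-collars}. I will proceed over the open faces $\alpha$ of $L$ in an order of \emph{decreasing} dimension. For each face I apply one special shift supported in a small ball inside $\ost_L(\alpha)$. The shift is a PL homeomorphism of the form $(x,w)\mapsto(x+V(w),w)$ in the star-projection coordinates of Lemma~\ref{app:projection-collars}, where $x$ is the face coordinate and $w$ the normal coordinate. It is tapered to the identity near the boundary of its support ball. Its job is to straighten every mark near $\alpha$ into the product form $(\alpha\cap X_i)\times(\text{normal})$. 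Concretely, I first use Lemma~\ref{app:collars} to obtain one simultaneous collar, along the normal directions, of all the marks near the open face. I then compose it with the fixed star projection coordinates, so the correcting map is exactly a shift by $V(w)$, and the Brouwer property of $L$ keeps it jointly PL.

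The first key step is the \emph{persistence} claim: families already in product position stay so under every partial composition. Here I use that a special shift preserves the star projection. Consequently, any set that is a product in those coordinates near a face $\beta$ is mapped to a product near $\beta$. This works because membership depends only on the $\beta$ coordinate, and the shift only translates along $\alpha$-directions by amounts depending on normal coordinates. Faces with $\beta\supseteq\alpha$ are therefore unaffected, which is why I process in decreasing dimension. For lower-dimensional faces, the later shifts are the ones that fix them. The smallness of the shifts will be chosen inductively. Each $q_j$ is taken to be $\delta_j$-small, with $\delta_j$ below the Lebesgue numbers of the product neighborhoods already achieved. Then earlier product neighborhoods only shrink, and they shrink by a controlled amount. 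Compactness and finiteness of $L$ and of the families make the sequence finite, with $s$ equal to the number of faces times the number of families at most.

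Support control is then immediate. Each $q_j$ lives in a small ball around a point of $\alpha$, which I can place inside any prescribed open star containing $\alpha$, for example a vertex star of the original complex. For the relative version, all families already agree and are in product coordinates on a neighborhood $N$ of the boundary. In that case I simply take $V\equiv0$ on faces meeting a smaller neighborhood $N'\subset N$, and taper inside $N\setminus N'$. Faces lying in $N'$ are skipped, and the uniqueness part of Lemma~\ref{app:projection-collars} guarantees no shift is needed there.

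I expect the main obstacle to be the simultaneous straightening near a single face when several families have unrelated local structure. The difficulty is that one shift $V(w)$ must work for all marks at once, and Lemma~\ref{app:collars} only provides a collar after an isotopy, not a shift. I would first try to get the single shift from the straightening isotopy $b_r$ in the proof of Lemma~\ref{app:collars}, reading $V$ off its horizontal motion. If that fails to be of shift form, I would instead replace the one shift by a short finite composition of shifts. These would come from the successive push-ins of that construction, each localized in a smaller ball and each still of special form.
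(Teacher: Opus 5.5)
There is a genuine gap: the moves you allow cannot create transversality. A shift $(x,w)\mapsto(x+V(w),w)$ preserves the normal coordinate $w$ of $\alpha$. It therefore maps $\alpha=\{w=0\}$ into itself, and it merely translates each normal level $X\cap\{w=w_0\}$ of a mark inside that level. Such a move can produce a product near $\alpha$ only if these levels are already translates of one another, i.e.\ only if the mark is essentially in product position already.

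A concrete failure: in an $m$-manifold, let one mark be a single point at a vertex $v$ of $L$. Product position near a face of dimension $r$ forces a $j$-dimensional mark with $j+r<m$ to miss that face, so the point must leave $v$. But your shift at $v$ is the identity, since there is no face coordinate. Every other shift is supported in some $\operatorname{ost}_L(\beta)$ with $\beta\ne\{v\}$, and none of these contains $v$. So the point never moves. The same happens for a point in an open edge of a triangulated surface.

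Your appeal to Lemma~\ref{app:collars} is circular for the same reason. Its hypothesis is that the face is already locally collared by charts preserving all marks simultaneously, which is precisely the product position to be produced. In Lemma~\ref{app:prepared-chain} the statement is applied to the intermediate stratifications $g_j\cdots g_1P_0$, which are arbitrary, so no such collar exists. The obstacle you flag at the end, extracting a shift rather than an isotopy from that lemma, is downstream of this.

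What is missing is a general-position mechanism whose moves have normal components, together with a persistence argument that does not rely on the moves preserving projections. The paper uses Armstrong's local shifts and descending bad-skeleton procedure:
\begin{itemize}
\item Take one triangulation $J$ adapted to all marks, and retract a regular neighborhood $W$ of $R=|J^{m-1}|$ onto $R$. This makes Armstrong's source a manifold even though the marks are not.
\item Make $W\to M$ transimplicial to $L$. Then $R$ splits as a product normal to each $L$-face.
\item Recover the lower $J$-skeleta as nonmanifold loci. Every mark, being a union of $J$-simplices, then becomes a product at once.
\end{itemize}
Since these shifts move normal directions, your claim that a shift preserves the star projection is no longer available. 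Persistence is instead a smallness estimate. If the Lipschitz constant $\varepsilon$ of $\mu-\mathrm{id}$ satisfies $\|p\|\varepsilon\operatorname{Lip}(\phi)<1$, then $A=(p,\delta)$ with $\delta=\mathrm{pr}_{D^b}\phi^{-1}\mu^{-1}$ is a new chart. It is compatible with the fixed projection $p$, and every $\mu(X_i)$ is a product in it. It is the size of the shifts, not a special form, that retains the families already in position.
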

\begin{proof}
Here a Brouwer triangulation is one whose closed vertex stars admit
linear embeddings in Euclidean space, simplex by simplex; it supplies
the local linear structure used for the shifts. Arbitrarily fine
such triangulations are supplied by
\cite{armstrong1967transversality}.

We use the local shifts and descending bad-skeleton procedure in
Armstrong's polyhedral transversality argument
\cite{armstrong1967transversality}. To specify its input,
choose a single triangulation $J$ adapted to \emph{all} marks, put
$R=|J^{m-1}|$, and retract a compact regular neighborhood $W$ of $R$
onto $R$. Apply that procedure to the map $W\to M$ given by the
retraction, with fixed target triangulation $L$. Its source is a
manifold, as required by Armstrong; the marks need not be manifolds.
The local transimplicial diagram splits the image $R$ as a product
in the normal directions to an $L$-face. The image includes the
whole local skeleton, because the retraction restricts to its
identity. Recover the lower $J$-skeleta successively as nonmanifold
loci: for $1\le r<m$, each $(r-1)$-face of the $r$-skeleton has
at least three incident $r$-simplices, and the nonmanifold locus
is closed. The connected components between successive skeleta
are the open simplices. The top open simplices are the components
of $M\setminus R$. Shortening the connected product factor shows
that all these components, and then their closures, split in the
same coordinates. Every mark is a union of $J$-simplices, so all
marks become products simultaneously.

One must also preserve the already good families. In an old product
chart $\phi:H\times D^b\to U$ write $p\phi(z,u)=z$, where $p$ is
the fixed normal projection, and write its marks as $H\times Y_i$.
For a permitted sufficiently small local shift $\mu$, put
\[
 \delta=\operatorname{pr}_{D^b}\phi^{-1}\mu^{-1},\qquad A=(p,\delta).
\]
Here is the relevant projection estimate. In the local linear
structure take $p$ to be the parallel normal projection; its
comparison with the star projection is part of the local
projection construction in \cite{armstrong1967transversality}.
A local shift is affine on
a fixed finite subdivision of its support. By making its vertex
displacement small we can make the Lipschitz constant $\varepsilon$
of $\mu-\id$ arbitrarily small. For a fixed $u$,
\[
 \left|p\mu\phi(z,u)-p\mu\phi(z',u)-(z-z')\right|
 \le \|p\|\varepsilon\operatorname{Lip}(\phi)|z-z'|.
\]
Choose the coefficient on the right smaller than $1$.
Then $p\mu\phi(-,u)$ is injective. Equality of both coordinates
of $A$ therefore forces equality of its arguments. Restricting
to a smaller neighborhood and using invariance of domain gives
a PL inverse over a product neighborhood. This gives the normal
projection preservation estimate needed here.
Moreover,
$x\in\mu(X_i)$ if and only if $\delta(x)\in Y_i$, for every $i$.
Thus the estimate is an ambient estimate and applies to all marks
in that chart, not a separate transversality assertion for each
subspace. For comparison, the preservation estimate relative to a
coarser linear structure is developed in
\cite{armstrong1967transversality2}.
We use the ambient estimate above for the marked families; we do
not treat a marked subpolyhedron as a proper embedded manifold.
There are finitely many source simplices and families, so the finitely
many size bounds can be imposed simultaneously. This is preservation
under these special shifts, not under arbitrary small $C^0$ perturbations.

Run the descending bad-skeleton procedure with this one adapted
source triangulation. At each stage the product charts just described
hold off a lower-dimensional source skeleton, and the preceding
preservation estimate retains the parts already treated. Dimension decreases finitely many times,
and each skeleton has finitely many simplices. Every resulting
partial composition preserves the initially good families.
The local support neighborhoods can be chosen inside the prescribed
open stars before making the auxiliary subdivision.

For the relative assertion choose two nested boundary neighborhoods
inside the region where the families agree. Only the compact
complement of the larger neighborhood needs new product coordinates.
Use shifts supported outside the smaller one. On the intervening
closed band impose the preservation bounds for the already good
marked families. Thus the shift procedure need regularize the
retraction only on that compact complement, not on the protected
boundary region. The finite bad-skeleton induction and its support
choices are unchanged there. This is the collar cutoff used in
the relative local-shift construction of
\cite{armstrong1967transversality2}, with the foregoing
checks for the marks. Retain the original $L$-projection conditions
on the protected region; do not assert transversality there to an
arbitrary auxiliary subdivision.
\end{proof}

\begin{lem}[Controlled fragmentation of an isotopy]
\label{app:fragmentation}
An ambient PL isotopy of a compact PL manifold admits a finite
factorization by isotopies supported in small balls subordinate to
a prescribed finite open cover. At boundary points use half-balls.
The factorization can retain the homotopy class of the path relative
to its endpoints. If the path preserves finitely many subpolyhedra,
the factors can preserve them too. If the path fixes an open boundary
neighborhood, the factors can fix a smaller such neighborhood.
\end{lem}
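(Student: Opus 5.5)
We will prove the lemma by the standard fragmentation argument for PL isotopies, done with enough care that the extra properties (boundary half-balls, homotopy class, marks, and a fixed boundary neighborhood) are kept.

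\emph{Step 1: discretize in time.} Let $H:M\times[0,1]\to M$ be the ambient PL isotopy, with $H_0=\id$. Fix a Lebesgue number for the prescribed finite open cover $\{U_\lambda\}$. By compactness and uniform continuity, choose a partition $0=t_0<\dots<t_N=1$ such that every $h_i=H_{t_i}H_{t_{i-1}}^{-1}$ moves each point less than that Lebesgue number. Each $h_i$ is joined to the identity by the restricted path $s\mapsto H_{s}H_{t_{i-1}}^{-1}$, $s\in[t_{i-1},t_i]$. Concatenating these pieces reproduces the original path up to reparametrization, so the homotopy class relative to the endpoints is kept. It therefore suffices to factor each small isotopy separately.

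\emph{Step 2: fragment a small isotopy.} Take a triangulation $T$ of $M$ so fine that each closed vertex star, together with its image under every stage of the piece, lies in one $U_\lambda$. Choose $T$ adapted to the finitely many preserved subpolyhedra and to the fixed boundary neighborhood. Order the vertices $v_1,\dots,v_r$ and use a PL partition of unity $\{\phi_j\}$ given by the barycentric functions. Set $\Phi_k=\sum_{j\le k}\phi_j$ and define the intermediate isotopies
\[
 G^{(k)}_s(x)=H_{t_{i-1}+\Phi_k(x)(s-t_{i-1})}H_{t_{i-1}}^{-1}(x).
\]
This is the usual push along the time coordinate weighted by $\Phi_k$. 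Each such map is a PL homeomorphism, because the displacement is small compared with the star size; this is the standard Edwards--Kirby type estimate, redone in the PL setting as in Rourke--Sanderson. Successive quotients $(G^{(k-1)})^{-1}G^{(k)}$ are then supported in $\ost(v_k)$, hence in a small ball, or in a half-ball when $v_k\in\partial M$.

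\emph{Step 3: the extra properties.}
\begin{itemize}
\item Marks: if $H$ preserves the subpolyhedra, every $G^{(k)}_s$ is a composite of maps $H_a H_b^{-1}$ evaluated pointwise, so the subpolyhedra are preserved.
\item Fixed boundary neighborhood: if $H$ fixes an open boundary neighborhood, then so does every $G^{(k)}$, hence so do all the factors.
\item Homotopy class: the path $k\mapsto G^{(k)}$ interpolates within a contractible family of weightings, so the homotopy class relative to the endpoints is unchanged.
\end{itemize}

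\emph{Main obstacle.} The hard part is checking that the weighted maps $G^{(k)}_s$ are homeomorphisms, and that they are jointly PL rather than merely continuous. A time reparametrization depending on $x$ can fail to be injective. I would first try to avoid this by working in local linear charts, taking $T$ to be a Brouwer triangulation as in Lemma~\ref{app:shifts}. There $H$ is close to the identity in a Lipschitz sense after subdividing in time, and injectivity follows from a Lipschitz estimate like the one in that proof. Failing that, I would fall back on the Rourke--Sanderson isotopy-extension argument: it fragments an isotopy by extending its restriction to a ball complement, and the marks are handled by triangulating them simultaneously.
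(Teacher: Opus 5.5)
Your construction is the one the paper uses. $G^{(k)}_s(x)=K_{\tau(x)}(x)$ with $K_u=H_uH_{t_{i-1}}^{-1}$ is Hudson's graph map, and raising the height one barycentric function at a time is the paper's ``successive vertex-height changes''. Your handling of marks and of a fixed boundary neighborhood is also fine once the graph maps are known to be bijective, since each $K_t$ maps both $X$ and $M\setminus X$ onto themselves.

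\textbf{The gap} is the step you flag, and neither justification you give closes it. A $C^0$ bound on the displacement of $h_i$ does not control injectivity of $x\mapsto K_{\tau(x)}(x)$. The premise of your fallback is also false: subdividing in time does not make $K_s$ Lipschitz-close to the identity. For example, on $\mathbb{R}$ let $H_t$ map $[0,t]$ linearly onto $[0,2t]$ and $[t,3t]$ linearly onto $[2t,3t]$, and be the identity elsewhere. This is a jointly PL isotopy, yet $\mathrm{Lip}(H_t-\mathrm{id})=1$ for every $t>0$.

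\textbf{The missing idea} is to control the time direction, not the space direction. The level-preserving map $(x,t)\mapsto(K_t(x),t)$ is a PL homeomorphism, so its inverse is PL. Hence the inverse tracks $x_y(t)=K_t^{-1}(y)$ have a uniform speed bound $C$ in finitely many charts. Suppose the height $\tau$ has spatial Lipschitz constant $L$ with $CL<1$. Then $t\mapsto t-\tau(x_y(t))$ is strictly increasing on $[t_{i-1},t_i]$, is $\le 0$ at $t_{i-1}$ and $\ge 0$ at $t_i$, and so has exactly one zero. Since $K_{\tau(x)}(x)=y$ holds exactly when $x=x_y(t)$ for such a zero $t$, this gives bijectivity. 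The map is PL as the composite $x\mapsto(x,\tau(x))\mapsto K_{\tau(x)}(x)$.

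This estimate fixes the order of choices, which your Steps 1--2 reverse. With $\tau=t_{i-1}+\Phi_k\,\Delta t$ one has $L\le\mathrm{Lip}(\Phi_k)\,\Delta t$. So you must first fix $T$, with vertex stars inside members of the cover, which fixes the constants $\mathrm{Lip}(\Phi_k)$. Only then choose the time step so that $C\,\Delta t\,\max_k\mathrm{Lip}(\Phi_k)<1$. Refining $T$ after the partition is fixed, as you do, can destroy the bound.

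Your weighting $\Phi_k(x)(s-t_{i-1})$ is bilinear in $(x,s)$, so your factors are not jointly PL isotopies. Use a PL clamp between consecutive heights instead, as the paper does with $\phi_u=\max\{\phi-u,\min(\psi,\phi+u)\}$. It keeps the Lipschitz bound and is stationary where $\phi=\psi$. So the factor $u\mapsto g_{\phi_u}g_\phi^{-1}$, where $g_\phi(x)=K_{\phi(x)}(x)$, is supported in $\mathrm{ost}(v_k)$. The concatenated factorization is then itself a path of graph maps with some height $\Phi(x,t)$.

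The endpoint-relative homotopy class then follows from the explicit PL homotopy of heights $\max\{\Phi(x,t)-s,\min(t,\Phi(x,t)+s)\}$ to the constant height $t$, which preserves the bound. ``A contractible family of weightings'' is the right intuition, but it needs exactly this PL, bound-preserving form.
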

\begin{proof}
Use Hudson's graph factorization
\cite[Theorem 6.2 and its refinement 6.2.1, printed pp.130--132;
PDF pp.136--138]{hudson1969piecewise}. We spell out its additional controls.
Write the level-preserving isotopy as $H(x,t)=(H_t(x),t)$.
In finite PL charts its inverse tracks $x_y(t)=H_t^{-1}(y)$ have a
uniform speed bound $C$. If a height function $\phi$ has spatial
Lipschitz constant $L$ with $CL<1$, then
$t-\phi(x_y(t))$ is strictly increasing. It has one zero, since its
endpoint values have opposite weak signs. Thus
$x\mapsto H_{\phi(x)}(x)$ is a PL homeomorphism. Small time divisions
and the successive vertex-height changes in Hudson's construction
ensure the bound and localize the differences to the chosen balls.

There is a jointly PL way of joining adjacent admissible heights:
\[
 \phi_u=\max\{\phi-u,\min(\psi,\phi+u)\},\qquad0\le u\le1.
\]
Its spatial Lipschitz bound is no larger than the common bound for
$\phi,\psi$, and it is stationary where they agree. If the concatenated
height path is $\Phi(x,t)$, connect it to the original height $t$ by
\[
 W(x,s,t)=\max\{\Phi(x,t)-s,\min(t,\Phi(x,t)+s)\}.
\]
The same bound holds, and both endpoint heights are fixed. This
gives the required homotopy of isotopy paths.

If every $H_t$ preserves a marked set, so does each graph map:
the inverse track of a point in that set stays in it. This proves
equality, not just forward inclusion. An open region fixed by the
original path is fixed by the graph maps. Boundary charts are
half-ball charts, so these arguments preserve the true boundary
setwise without forcing its points to be stationary.
\end{proof}

\begin{lem}[Connecting prepared endpoints]
\label{app:prepared-chain}
Suppose two stratifications have the coordinates of
Lemma~\ref{app:projection-collars} for a sufficiently fine Brouwer
triangulation $L$. If an ambient isotopy $h_t$ connects them, on a
closed manifold, it admits a finite star-supported replacement whose
intermediate endpoints are transverse to $L$ and whose total endpoint
is exactly $h_1$. The replacement has the same endpoint-relative
path homotopy class. The same construction works relative to a
boundary neighborhood fixed throughout the original path.
\end{lem}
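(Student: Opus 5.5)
The plan is to fragment $h_t$ into star-supported pieces, and then to correct each intermediate endpoint back into prepared (transverse) position. Every correction is compensated by the following piece, so the total endpoint stays exactly $h_1$.

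First apply Lemma~\ref{app:fragmentation} to $h_t$, using the finite open cover by open vertex stars $\ost_L(v)$. This factors $h_t$ as $k_N\cdots k_1$, where each $k_j$ is an isotopy supported in a small ball inside some $\ost_L(v_j)$. The factorization keeps the endpoint-relative path homotopy class, and in the relative case it fixes a smaller boundary neighborhood. Write $H_j=k_{j,1}\cdots k_{1,1}$. Then $H_0=\id$ and $H_N=h_1$. The intermediate stratifications $H_{j*}P$ need not be transverse to $L$.

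Next, correct each intermediate stage by Lemma~\ref{app:shifts}. For $0<j<N$, choose a finite composition $q_j$ of arbitrarily small special shifts that moves $H_{j*}P$ into projection-compatible product position for $L$. By Lemma~\ref{app:projection-collars} and Lemma~\ref{app:criterion}, this position gives transversality to $L$. The shifts are supported in prescribed open stars; I would take these to be the stars containing the support of $k_j$ or $k_{j+1}$. Set $q_0=q_N=\id$, which is allowed because both endpoints are already prepared. Replace the path by the concatenation of the isotopies
\[
 g_j=q_j\,k_{j,1}\,q_{j-1}^{-1},\qquad j=1,\dots,N.
\]
The intermediate endpoints are then $q_jH_j$, which are transverse to $L$. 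The total endpoint telescopes to $q_NH_Nq_0^{-1}=h_1$.

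Each $g_j$ has to be realized as an isotopy supported in a single vertex star. Since $q_j$ is a product of small shifts, it is isotopic to the identity through small shifts. So $g_j$ is isotopic to $k_j$ conjugated and composed with small motions, and the homotopy class relative to endpoints is unchanged. The supports of $q_{j-1}$, $k_j$ and $q_j$ may lie in different stars, so star-support is not automatic. To handle this, I would re-fragment $q_j$ itself with Lemma~\ref{app:fragmentation}, so that each shift becomes its own star-supported step. I would also order the shifts inside $q_j$ so that every partial composition keeps transversality. Lemma~\ref{app:shifts} asserts exactly this: families already in position remain so under every partial composition. Inserting the individual shifts and their inverses as separate steps therefore produces a chain in which every intermediate endpoint is prepared. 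In the relative case, all shifts are chosen fixed on the smaller boundary neighborhood, using the relative clause of Lemma~\ref{app:shifts}.

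The main obstacle is this last bookkeeping step: guaranteeing transversality at every intermediate endpoint once the individual shifts of $q_{j-1}^{-1}$ are applied after $k_j$. The shifts were designed to straighten $H_{(j-1)*}P$, not $k_{j,1}H_{(j-1)*}P$. My first attempt would be to make $k_j$'s support ball disjoint from the shift supports of $q_{j-1}$. This should be possible by refining the fragmentation so that $k_j$ is tiny, and then using commutation. Failing that, I would choose the shift sizes after $k_j$ is fixed and invoke the ambient preservation estimate in Lemma~\ref{app:shifts}.
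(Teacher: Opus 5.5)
Your proposal has a genuine gap at the transitions between consecutive corrections. The fragmentation step is fine and matches the paper. The trouble is that the step $g_j=q_jk_{j,1}q_{j-1}^{-1}$ has to be realized by star-supported pieces with prepared endpoints, and every way of doing so passes through a stratification that nothing controls.

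\begin{itemize}
\item If you withdraw the shifts of $q_{j-1}$ first, you land back on $H_{(j-1)*}P$. For $j\ge2$ that is exactly the unprepared stage the correction was meant to avoid.
\item If you conjugate instead, $q_{j-1}k_{j,1}q_{j-1}^{-1}$ ends at $(q_{j-1}H_j)_*P$. But $q_{j-1}$ was chosen to straighten $H_{(j-1)*}P$, not $H_{j*}P$.
\item Inserting the shifts of $q_j$ one at a time on $H_{j*}P$ fails as well. The preservation clause of Lemma~\ref{app:shifts} concerns families that are \emph{already} in product position. It says nothing about partial compositions applied to $H_{j*}P$, which is not in that position. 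So your sentence that the lemma ``asserts exactly this'' misreads it.
\end{itemize}

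Neither fallback repairs this. Disjoint supports only let $k_j$ act on an already straightened pattern. Its output $(q_{j-1}H_j)_*P$ need not be prepared, and you still face the uncontrolled transition $q_jq_{j-1}^{-1}$. The ambient preservation estimate only keeps prepared families prepared; it cannot straighten $H_{j*}P$.

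The missing idea is to straighten all stages at once with a \emph{single} correction, and then conjugate. Lemma~\ref{app:shifts} accepts finitely many families simultaneously.

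\begin{enumerate}
\item Apply it to the whole list $P_j=H_{j*}P$, $0\le j\le N$, while preserving the prepared endpoints $P_0$ and $P_N$. This gives one composition $q=q_s\cdots q_1$ with every $qP_j$ prepared.
\item Use the chain $P_0\to qP_0\to qP_1\to\cdots\to qP_N\to P_N$. The first and last parts insert and withdraw the individual shifts. Their partial compositions act only on the prepared $P_0$ and $P_N$, so the preservation clause genuinely applies.
\item The middle steps are $qk_{j,t}q^{-1}$. Each is supported in $q(B_j)$, where $B_j$ is the small ball carrying the support of $k_j$. This stays inside the designated star once $q$ is small, which also removes your support bookkeeping.
\item The total endpoint is $q^{-1}(qh_1q^{-1})q=h_1$.
\item The square $(s,t)\mapsto Q_s\circ K_t$ gives the endpoint-relative homotopy. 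Here $K_t$ is the concatenated fragmented path and $Q$ is the insertion path from $\mathrm{id}$ to $q$.
\item In the relative case, use the relative shifts and the relative fragmentation.
\end{enumerate}
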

\begin{proof}
Fragment $h_1=g_r\cdots g_1$ by Lemma~\ref{app:fragmentation}, with
the entire support of the $j$th path in a small ball $B_j$ compactly
contained in a designated open star. Write
$P_j=g_j\cdots g_1P_0$. Apply Lemma~\ref{app:shifts} to this finite
list, preserving the two already prepared endpoints. Let the small
composition be $q=q_s\cdots q_1$. Use the chain
\[
 P_0\longrightarrow qP_0\longrightarrow qP_1
 \longrightarrow\cdots\longrightarrow qP_r\longrightarrow P_r.
\]
The first and last parts insert and withdraw the individual shifts.
Their endpoints are transverse by the preservation assertion. The
middle paths are $qg_{j,t}q^{-1}$ and have transverse endpoints too.
Choose $q$ small enough that every $q(B_j)$ remains in its designated
star. The exact total endpoint is
\[
 q^{-1}(qh_1q^{-1})q=h_1.
\]
The inserted path and its inverse, together with conjugation of the
middle path, do not change its endpoint-relative homotopy class:
the square $Q_sH_t$, where $Q$ is the path to $q$, supplies the
homotopy. All maps on this parameter square are PL after a common
subdivision. In the relative case use the relative shifts and
fragmentation, which fix the chosen smaller boundary neighborhood.
\end{proof}

\begin{thm}[Finite star-supported steps; Theorem~\ref{App: transversal isotopy step by step}]
\label{app:star-steps}
Let $P$ be a stratification on a compact PL manifold $B$, let $\Delta$
be a transverse combinatorial triangulation, and let $F_t$ be an
ambient PL isotopy such that $F_{1*}P\pitchfork\Delta$.
There are finitely many ambient PL isotopies $g_{j,t}$ and vertices
$v_j\in\Delta^{(0)}$ with the following properties:
\begin{enumerate}
\item $g_{j,0}=\id$ and $g_{j,t}$ is the identity on
      $B\setminus\ost_\Delta(v_j)$;
\item if $G_0=\id$ and $G_j=g_{j,1}G_{j-1}$, then
      $G_{j*}P\pitchfork\Delta$ for every $j$;
\item $G_N=F_1$ as ambient homeomorphisms.
\end{enumerate}
No assertion of transversality during an individual step is made.
The boundary is allowed to move.
\end{thm}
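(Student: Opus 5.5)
The plan is to assemble Theorem~\ref{app:star-steps} from the three preceding lemmas, with an auxiliary fine triangulation $L$ serving only as a proof device.

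\emph{Step 1: choose $L$ and prepare the endpoints.} Choose a Brouwer subdivision $L$ of $\Delta$ so fine that every closed vertex star of $L$ lies in the open star $\ost_\Delta(v)$ of some vertex $v$ of $\Delta$; such subdivisions exist by \cite{armstrong1967transversality}. Fix the projection-compatible collars of Lemma~\ref{app:projection-collars} on $L$. Both $P$ and $F_{1*}P$ are transverse to $\Delta$. Lemma~\ref{app:preparation} therefore supplies $\Delta$-cell-preserving isotopies $e^0_t$ and $e^1_t$ that move them into product position along the collars of $L$. Because these isotopies preserve every simplex of $\Delta$, transversality to $\Delta$ holds at every time during them, not only at their endpoints. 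I would treat the boundary using the relative versions of the lemmas together with the boundary collar. Since the boundary is allowed to move, I expect it is enough to fragment with half-balls at boundary points.

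\emph{Step 2: connect the prepared endpoints.} The prepared endpoints are joined by the isotopy $e^1_t F_t (e^0_1)^{-1}$. Apply Lemma~\ref{app:prepared-chain} to it, taking as designated open stars the $\Delta$-vertex open stars that contain the small balls chosen in Step 1. This gives a finite chain of star-supported isotopies. Their intermediate endpoints are transverse to $L$, and hence transverse to $\Delta$ by the subdivision remark. Their total endpoint is exactly $e^1_1 F_1 (e^0_1)^{-1}$.

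\emph{Step 3: make the $\Delta$-cell-preserving pieces star-supported.} Fragment $e^0_t$ and $e^1_t$ by Lemma~\ref{app:fragmentation}, using the cover by open stars of vertices of $\Delta$, with the simplices of $\Delta$ as the preserved subpolyhedra. Each fragment preserves every simplex of $\Delta$, so all its intermediate endpoints remain transverse to $\Delta$.

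\emph{Step 4: concatenate.} Form the sequence
\[
e^0,\quad \text{the chain from Step 2},\quad (e^1)^{-1}.
\]
The composite of their endpoint maps is
\[
(e^1_1)^{-1}\bigl(e^1_1F_1(e^0_1)^{-1}\bigr)e^0_1 = F_1 .
\]
This exact equality of ambient homeomorphisms, and not merely of the resulting stratifications, is the reason exact endpoints were tracked throughout.

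The main obstacle is Step 2. Transversality has to hold at \emph{every} intermediate endpoint while the total endpoint is kept equal to $F_1$, and the fragmentation balls must stay inside $\Delta$-stars after conjugation by the shift $q$. I would handle this by first taking $L$ fine, then choosing the balls $B_j$ compactly inside the stars, and finally taking $q$ small enough, in that order. Lemma~\ref{app:prepared-chain} has the closed-manifold and fixed-boundary cases. Where the boundary moves, I would extend that lemma by first running the whole argument on $\partial B$, then extending into the interior with the boundary collar as in the proof of Theorem~\ref{app:position}, and only then applying the relative version.
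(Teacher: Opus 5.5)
\textbf{Verdict: genuine gap.} Your Steps 1--4 match the paper's argument for a closed manifold. The ingredients are the same: a fine Brouwer subdivision $L$ whose closed vertex stars lie in open $\Delta$-vertex stars, $\Delta$-cell-preserving preparation of both endpoints, fragmentation of the preparation paths with the simplices as marks, Lemma~\ref{app:prepared-chain} in the middle, and exact endpoint bookkeeping. One small fix: the connecting path should be $e^1_tF_t(e^0_t)^{-1}$, since with $(e^0_1)^{-1}$ it does not start at the identity.

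The gap is the case $\partial B\neq\emptyset$ with moving boundary. The statement explicitly allows this, and it is most of the paper's proof. Fragmenting with half-balls does not settle it. In Lemma~\ref{app:prepared-chain}, the intermediate endpoints are transverse because of conjugation by the small shift $q$ of Lemma~\ref{app:shifts}. That shift exists only on a closed manifold, or away from a boundary neighbourhood fixed throughout the path.

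Your repair runs the closed argument on $\partial B$ and then extends by the collar as in Theorem~\ref{app:position}. As stated, this does not give legal steps. A collar extension of a boundary step need not be transverse to $\Delta$ in the interior. The relative positioning that Theorem~\ref{app:position} uses to fix this is global, not star-supported. The paper instead extends each boundary step over the closed star $U=\operatorname{st}_\Delta(v)$ by ball isotopy extension, equal to the identity on $\operatorname{lk}_\Delta(v)$. It then restores transversality by Lemma~\ref{app:relative-position} inside $U$, fixed on $\partial U$. Extension plus correction counts as one step supported in $\operatorname{ost}_\Delta(v)$.

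The more serious omission comes next. Let $a_t$ be the composite of these extended boundary steps, with endpoint $A$. Then $F_1A^{-1}$ fixes $\partial B$ pointwise. But the relative case of Lemma~\ref{app:prepared-chain} needs an isotopy to it that fixes a boundary neighbourhood at every time. The only path available is $F_ta_t^{-1}$, and on $\partial B$ it restricts to the loop $\beta_t\alpha_t^{-1}$, where $\beta_t=F_t|_{\partial B}$ and $\alpha_t=a_t|_{\partial B}$. A boundary-fixing homeomorphism isotopic to the identity need not be isotopic to it rel boundary; a twist in a collar of an annulus is an example. So you cannot simply apply the relative version.

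The missing idea is to carry the endpoint-relative path homotopy class through the closed construction. This is why Lemmas~\ref{app:fragmentation} and~\ref{app:prepared-chain} record it. The paper then does the following:
\begin{enumerate}
\item Choose a homotopy $C_{s,t}$ from $\alpha_t$ to $\beta_t$, fixed at $t=0,1$.
\item Use $b_{s,t}=\beta_tC_{s,t}^{-1}$ on a boundary collar to build $E_t$ with $E_0=E_1=\mathrm{id}$. Then $k_t=E_t^{-1}F_ta_t^{-1}$ fixes $\partial B$ pointwise and ends at $F_1A^{-1}$.
\item Prepare both ends with preparation paths that agree on $\partial B$.
\item Match the collars by the isotopy $u_t$ of Lemma~\ref{app:collars}.
\item Push the path inward by an embedding that commutes with its endpoint, so that it fixes a whole collar while its endpoint is unchanged.
\item Only then apply the relative chain lemma, and append the fragments of $u_t$.
\end{enumerate}
Without these steps, the identity $G_N=F_1$ in your Step 4 is not realized by star-supported moves with transverse intermediate endpoints.
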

\begin{proof}
Choose a sufficiently fine Brouwer triangulation $L$ subdividing
$\Delta$, so that every closed $L$-vertex star lies in some open
$\Delta$-vertex star. Its existence follows from the arbitrarily
fine version used in Lemma~\ref{app:shifts} and a Lebesgue number
for the finite open-star cover. Near a boundary one may choose it
in the double and restrict. A transverse position for $L$ is a
transverse position for $\Delta$: the simultaneous collars amalgamate,
and Lemma~\ref{app:criterion} applies.

Preparation by Lemma~\ref{app:preparation} preserves $\Delta$-cells.
Fragment any preparation path subordinate to the open
$\Delta$-stars. Lemma~\ref{app:fragmentation}, with the cells as marks,
keeps each factor cell-preserving, so all its endpoints are transverse
to $\Delta$. These preparation factors and their inverses may therefore
be inserted in the desired chain. Combining preparation with
Lemma~\ref{app:prepared-chain} proves the assertion for closed
manifolds. It also preserves the endpoint-relative path class there.
This includes the closed manifold $\partial B$ used next.

\emph{First realize the boundary motion.}
Apply the closed construction to $F_t|_{\partial B}$. Its boundary
preparation steps preserve coarse boundary simplices and extend over
$\Delta$ by ball isotopy extension, preserving coarse simplices.
For a step in the star of $v$, start with the identity on simplices
not containing $v$, and extend successively over the others. Its
support stays in that open star.

For the other boundary steps their supports lie compactly inside
an open boundary star. Put $U=\st_\Delta(v)$ for the corresponding
boundary vertex. This is a ball with
\[
 \partial U=D\cup E,\quad D=\st_{\Delta|_{\partial B}}(v),\quad
 E=\lk_\Delta(v),\quad D\cap E=\partial D.
\]
Use the boundary step on $D$ and the identity on $E$. They agree
near the seam. Extend over $U$ by RS Proposition 3.22(ii).
The original transversality gives a common collar on $\partial U$
by amalgamation. Transport it by the extension. At its endpoint
the filtration is transverse to the boundary triangulation of $U$:
on $D$ this is the chosen boundary step; on $E$ and near the seam
it is unchanged. Apply Lemma~\ref{app:relative-position} inside $U$,
fixed on all of $\partial U$, to restore interior transversality.
Concatenate the extension and this correction as one step.
It fixes $E$, so is the identity outside $\ost_\Delta(v)$ in $B$.

Let $a_t$ be the composite path, with endpoint $A$. Its boundary
path $\alpha_t$ has the same endpoint-relative homotopy class as
$\beta_t=F_t|_{\partial B}$.

\emph{Remove the residual boundary path.}
Choose the PL homotopy of boundary isotopies $C_{s,t}$ from $\alpha_t$
to $\beta_t$, fixed at $t=0,1$. Set
\[
 r_t=F_ta_t^{-1},\qquad b_{s,t}=\beta_t C_{s,t}^{-1}.
\]
Then $b_{0,t}=r_t|_{\partial B}$, $b_{1,t}=\id$, and
$b_{s,0}=b_{s,1}=\id$. On an ordinary boundary collar, with its
interval rescaled to $[0,1]$, set
\[
 E_t(c(x,s))=c(b_{s,t}(x),s),
\]
and use the identity outside. This is a jointly PL isotopy with
$E_0=E_1=\id$. Consequently $k_t=E_t^{-1}r_t$ fixes the boundary
pointwise and has endpoint $F_1A^{-1}$. We have not inferred a
relative isotopy merely from the fact that its endpoint fixes the
boundary; the boundary-path homotopy is essential here.

\emph{Deal with the relative residual motion.}
Write its starting and ending stratifications as $P_0,P_1$.
Choose preparation paths $a_{0,t},a_{1,t}$ whose boundary restrictions
agree at every time, and straighten both endpoints using the same
collars of Lemma~\ref{app:projection-collars}. Set
\[
 Q_i=a_{i,1*}P_i,\qquad
 H_t=a_{1,t}k_ta_{0,t}^{-1},\qquad H=H_1.
\]
The two $Q_i$ agree on a boundary neighborhood $V$;
$H_t$ fixes the boundary and $H_*Q_0=Q_1$.
Choose a short filtered collar $c$ for $Q_0$ such that both $c$ and
$Hc$ lie in $V$. They are collars for the same marked family there.
By Lemma~\ref{app:collars} there is $u_t$, fixed on the boundary and
supported in $V$, preserving $Q_0$ and $Q_1$, such that $u_1c=Hc$
on a shorter collar. Thus
\[
 w=u_1^{-1}H,\qquad w_*Q_0=Q_1,
\]
and $w$ is the identity on a boundary collar.

The path $w_t=u_t^{-1}H_t$ fixes the boundary but might not fix that
collar. Choose a PL embedding $i:B\hookrightarrow B$ pushing the
boundary slightly inward and equal to the identity outside a collar
on which $w=\id$. Define $W_t=iw_ti^{-1}$ on $i(B)$ and the identity
on its complement. The maps agree on the seam since $w_t$ fixes the
boundary. The same formula for the inverse shows that this is a
jointly PL isotopy, fixed on a thin outer collar. The embedding $i$
commutes with $w$, so $W_1=w$ \emph{exactly}.

Apply the relative case of Lemma~\ref{app:prepared-chain} to $W_t$.
Use support balls inside $\Int B\cap\ost_L(v)$; the vertex $v$ may
be a boundary vertex. These sets cover the interior. Follow the
resulting chain by a fragmentation of $u_t$. Since $u_t$ preserves
$Q_1$, its factors do too, so their endpoints remain transverse.
Insert the initial preparation and withdraw the final preparation.
The exact total endpoint of this residual chain is
\[
 a_{1,1}^{-1}u_1wa_{0,1}=k_1.
\]
All steps lie in original $\Delta$-stars, by the choice of $L$ and
the control of the preparation factors. Finally concatenate the
boundary chain with this residual chain. Its endpoint is
$(F_1A^{-1})A=F_1$, as required. In dimension zero take the empty chain.
\end{proof}

\subsection{Lemma~\ref{App: isotopy WLOG}: a product region in a sphere product}
\label{appsec:product}

This result uses the given global support $D^k\times S^q$.
It is not a statement that a locally transverse disk automatically
has a prescribed global trivial normal bundle. Its proof uses a
simplicial projection and does not use Theorem~\ref{app:star-steps}.

\begin{lem}[Simultaneous PL trivialization of a simplicial projection]
\label{app:williamson}
Let $K$ be a finite complex and $f:K\to\tau$ a simplicial map to a simplex. Let
$p\in\operatorname{relint}\tau$. Over a sufficiently small relative neighborhood $U\subset\operatorname{relint}\tau$ of $p$
there is a PL homeomorphism
\[
 h:f^{-1}(p)\times U\longrightarrow f^{-1}(U),\qquad
 fh=\operatorname{pr}_U,
\]
such that, for every subcomplex $J\subset K$,
\[
 h^{-1}(|J|)=(|J|\cap f^{-1}(p))\times U.
\]
One can require $h(x,p)=x$.
\end{lem}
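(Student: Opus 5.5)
The proof will avoid global formulas and build $h$ simplex by simplex, extending by coning. The key is that coning is affine on each coned simplex and $f$ is affine on each simplex of $K$, so the fibre condition $fh=\operatorname{pr}_U$ survives every extension.

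\emph{Local structure.} Let $d=\dim\tau$ with vertices $v_0,\dots,v_d$. Only simplices $\sigma\in K$ with $f(\sigma)=\tau$ meet $f^{-1}(\operatorname{relint}\tau)$. Such a $\sigma$ is the join of its faces $\sigma_i=\sigma\cap f^{-1}(v_i)$. A point of $\sigma$ over $u=\sum u_iv_i\in\operatorname{relint}\tau$ is uniquely $\sum u_ix_i$ with $x_i\in\sigma_i$. Hence the fibre $C_\sigma(u)=\sigma\cap f^{-1}(u)$ is a convex polytope affinely isomorphic to $\prod_i\sigma_i$. This identification is compatible with passing to faces $\sigma'\subset\sigma$ with $f(\sigma')=\tau$, which correspond to products of faces $\sigma'_i\subset\sigma_i$.

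\emph{Cell structures.} Choose $U$ to be a small closed $d$-simplex (or ball) in $\operatorname{relint}\tau$. Subdivide $U$ as the cone from $p$ over a triangulation of $\partial U$, so that $p$ is a vertex. Both sides then acquire ball-complex structures indexed by the same set. The cells of $f^{-1}(p)\times U$ are $C_{\sigma'}(p)\times G$. The cells of $f^{-1}(U)$ are the convex polytopes $P_{\sigma',G}=\sigma'\cap f^{-1}(G)$. Here $\sigma'$ ranges over simplices mapping onto $\tau$ and $G$ over cells of $U$. The non-PL map $(\sum p_ix_i,u)\mapsto\sum u_ix_i$ is a face-preserving homeomorphism, which shows that the two face posets agree. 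So $P_{\sigma',G}$ is a ball whose boundary is the union of the cells indexed by proper faces. The same holds for $f^{-1}(p)\times U$, using its product ball structure.

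\emph{Inductive coning.} First set $h(x,p)=x$ on $f^{-1}(p)\times\{p\}$; this lands in the cells $P_{\sigma',\{p\}}$. Then extend over the remaining cells in order of increasing dimension. Suppose $h$ is already defined on $\partial(C_{\sigma'}(p)\times G)$, mapping it onto $\partial P_{\sigma',G}$ and satisfying $fh=\operatorname{pr}_U$. Pick an interior point $(b,g)$ of the source cell and an interior point $b'\in P_{\sigma',G}$ with $f(b')=g$. Extend $h$ by coning $(b,g)$ to $b'$, affinely on each simplex of the cone on a triangulation of the boundary. Since $f$ is affine on $\sigma'$, for $y$ in the boundary
\[
f\bigl(tb'+(1-t)h(y)\bigr)=tg+(1-t)\operatorname{pr}_U(y)=\operatorname{pr}_U\bigl(t(b,g)+(1-t)y\bigr).
\]
The cone extension is a PL homeomorphism of balls because it is a homeomorphism on the boundary. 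Extensions over cells sharing a face agree on that face, so they glue to a PL homeomorphism $h:f^{-1}(p)\times U\to f^{-1}(U)$ with $fh=\operatorname{pr}_U$ and $h(x,p)=x$.

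For a subcomplex $J\subset K$, both $h^{-1}(|J|)$ and $(|J|\cap f^{-1}(p))\times U$ are the union of the cells indexed by $\sigma'\in J$. Since $h$ maps each indexed cell onto the cell with the same index, the two sets are equal, giving the simultaneous trivialisation.

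The main obstacle I expect is the face-lattice matching, namely checking that each $P_{\sigma',G}$ is a cell whose boundary is exactly the union of the cells with smaller index. This is where smallness of $U$ inside $\operatorname{relint}\tau$ is used, so that no other faces of $\sigma'$ meet $f^{-1}(U)$. The cone point $p$ in the subdivision of $U$ has to be handled with care so that the identity on the $p$-slice is part of the induction.
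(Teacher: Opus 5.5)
Your proof is correct. It is in substance the argument the paper attributes to Williamson, written out in full.

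The paper does not prove this lemma itself. It cites Williamson's Theorem 1.3.1, whose statement already preserves every subcomplex. It then obtains $h(x,p)=x$ in a separate step, by precomposing with the inverse of the central-fibre restriction $h_p$. The paper describes Williamson's proof as first writing down the non-PL barycentric homeomorphism and then replacing it by a PL one that preserves simplices and the projection.

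Your proof follows exactly that strategy:
\begin{itemize}
\item The join decomposition $\sigma=\sigma_0*\cdots*\sigma_d$ and the barycentric map $(\sum_i p_ix_i,u)\mapsto\sum_i u_ix_i$ identify the face posets of the two convex cell complexes $\{C_{\sigma'}(p)\times G\}$ and $\{P_{\sigma',G}\}$.
\item Cellwise coning supplies the PL replacement. It is affine on each cone simplex, so it commutes with the affine map $f|_{\sigma'}$, and the fibre condition $fh=\operatorname{pr}_U$ propagates through the induction.
\item Since $h$ carries each indexed cell onto the cell with the same index, the subcomplex condition holds simultaneously for all $J$.
\end{itemize}

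Your route has three advantages. It does not depend on the reference. It builds the normalization $h_p=\mathrm{id}$ into the induction rather than adding it by a second step. It also shows that no smallness of $U$ is needed. The only input is $U\subset\operatorname{relint}\tau$, which forces every simplex meeting $f^{-1}(U)$ to surject onto $\tau$. Hence any convex polytope $U\subset\operatorname{relint}\tau$ with $p$ in its relative interior works.

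The step you flagged as the main obstacle is routine. Each face $P_{\sigma'',G'}$ of $P_{\sigma',G}$ is exposed by a functional of the form $\ell\circ f-\sum_{w\notin\sigma''}t_w$, where $\ell$ supports $G$ along $G'$. Conversely, the tight inequalities at a relative-interior point show that every face arises this way.
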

\begin{proof}
This is Williamson's Theorem 1.3.1
\cite[printed pp.4--5; PDF pp.5--6]{williamson1966cobordism}. Its statement explicitly
preserves every source subcomplex. Precompose with the inverse of
the central-fiber restriction to obtain the normalization.
Williamson's proof first constructs a barycentric homeomorphism
and then replaces it by a PL one preserving the relevant simplices
and projection. The preliminary barycentric formula itself is not
PL. We use the resulting jointly PL homeomorphism.
\end{proof}

\begin{lem}[The product region; Lemma~\ref{App: isotopy WLOG}]
\label{app:product-region}
Let $q=n-k\ge1$, and let $\mathcal L$ be a $V$-labeled manifold
supported on $X=D^k\times S^q$, with the prescribed boundary
$\mathcal S\times S^q$, $|\mathcal S|=\partial D^k$.
There are a standard closed PL ball $C\subset S^q$, a $G$-labeled
stratified manifold $\mathcal D^k$ supported on $D^k$ with
$\partial\mathcal D^k=\mathcal S$, and an ambient PL isotopy $F_t$
fixed pointwise on $\partial X$ such that
\[
 (F_{1*}\mathcal L)|_{D^k\times C}=\mathcal D^k\times C.
\]
The product region contains no position-points. Its closed complementary region has
support $D^k\times D^q$.
\end{lem}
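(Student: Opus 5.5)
The plan is to use the projection $\pi:X=D^k\times S^q\to S^q$ and find a point of $S^q$ over which the stratification is locally a product. First, triangulate $X$ by Lemma~\ref{app:closed-filtration}, with every filtration term $X^i$ and the finitely many position-points as subcomplexes. Then choose a triangulation of $S^q$ and subdivide so that $\pi$ becomes simplicial $f:K\to T$. This uses the standard fact that a PL map can be made simplicial after subdividing source and target. Keep the filtration subcomplexes and $\partial X=\partial D^k\times S^q$ as subcomplexes of the subdivided $K$.

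Next, pick a top simplex $\tau$ of $T$ and a point $p\in\operatorname{relint}\tau$. Take $p$ off the finite image of the position-points, and generic enough that the fibre $f^{-1}(p)$ is a PL copy of $D^k$; this is automatic, since $f^{-1}(p)=D^k\times\{p\}$. Apply Lemma~\ref{app:williamson} to $f$, taking $J$ to range over the subcomplexes $X^i$ and $\partial X$. This gives a small ball $U\ni p$ and a PL homeomorphism $h:f^{-1}(p)\times U\to f^{-1}(U)=D^k\times U$ with $h^{-1}(X^i)=(X^i\cap f^{-1}(p))\times U$ and $h(x,p)=x$. Set $\mathcal D^k:=\mathcal L|_{D^k\times\{p\}}$. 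Because the strata are products along $U$, this restriction is a stratified PL manifold, by the slice Lemma~\ref{app:slice}. The walls of $\mathcal D^k$ are the fibre intersections of walls of $\mathcal L$, and they inherit the labels and coorientations of those walls, with flatness inherited from codimension-two defects. Its boundary is $\mathcal S$, because $h$ preserves $\partial X=\mathcal S\times S^q$, where the stratification is already a product. No position-points lie in $D^k\times U$, by the choice of $p$ and shrinking $U$.

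The remaining step is to convert $h$ into an ambient isotopy fixed on $\partial X$ with $F_1|_{D^k\times C}=h$ for a smaller standard ball $C\subset U$. Then $(F_{1*}^{-1}$ or $F_{1*})\mathcal L$, with the direction chosen so the product structure is pulled straight, restricts to $\mathcal D^k\times C$. I expect this to be the main obstacle. The map $h$ is fibre-preserving over $U$, restricts to the identity on $D^k\times\{p\}$, and is the identity on $\partial D^k\times U$ after composing with the boundary product structure. Both properties come from Williamson's normalization together with $\partial X$ being a subcomplex on which $f$ is a product projection.

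To handle it, I would use the Alexander-type isotopy in the $U$ coordinate, $h_t(x,u)=h(x,p+t(u-p))$ read in reverse. This gives a jointly PL family of fibre-preserving embeddings $D^k\times C\to D^k\times U$, running from the inclusion to $h$ and fixed on $\partial D^k\times C$. Extend it to an ambient isotopy of $X$ supported in $D^k\times U$ and fixed on $\partial X$, using isotopy extension for proper embeddings of a codimension-zero submanifold (RS Proposition 3.22, as in Lemma~\ref{app:relative-position}). Finally, the complement $\overline{S^q\setminus C}$ is a PL ball because $C$ is a standard ball in the sphere. Hence the closed complementary region $D^k\times\overline{S^q\setminus C}$ has support $D^k\times D^q$.
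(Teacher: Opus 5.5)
Your route is the paper's own: make $\pi$ simplicial with the filtration terms, $\partial X$ and the position-points as subcomplexes, take Williamson's trivialization $h$ over a small ball around a point $p$ in a top target simplex, read off $\mathcal D^k$ from the central fibre via the slice lemma, and spread $h$ out by a contraction of the base.

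The gap is your claim that $h$ is the identity on $\partial D^k\times U$. You attribute this to ``Williamson's normalization together with $\partial X$ being a subcomplex on which $f$ is a product projection''. Williamson's theorem gives less than that: $h$ is fibre-preserving, preserves every subcomplex setwise, and has $h_p=\mathrm{id}$. On the boundary this only means $h(x,y)=(b_y(x),y)$, where each $b_y$ is a filtration-preserving homeomorphism of $\partial D^k$ and $b_p=\mathrm{id}$. The triangulation of $\partial X$ is not a product triangulation, so for $k\ge2$ the maps $b_y$ can in general slide points along the strata of $\mathcal S$. Only for $k\le1$ is $b_y=\mathrm{id}$ automatic.

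The product form over $C$ survives this. What fails is that your interpolation moves points of $\partial D^k\times C$, so it cannot be extended to an isotopy fixed pointwise on $\partial X$, as the statement requires. You need a boundary correction before interpolating:
take a filtered collar $\partial D\times[0,1]$ of $\partial D$ in the central fibre, and a jointly PL map $r:[0,1]\times C\to C$ with $r(0,y)=y$, $r(1,y)=p$, $r(s,p)=p$. Precompose $h$ with $a(x,s,y)=(b_{r(s,y)}^{-1}(x),s,y)$ on the collar, and the identity elsewhere. Since each $b_y$ preserves the filtration of $\mathcal S$, $a$ preserves all product filtration terms. Then $g=h\circ a$ satisfies $g_p=\mathrm{id}$ and $g_y|_{\partial D}=\mathrm{id}$ for all $y$.

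Two smaller repairs are also needed. First, the straight-line family $h_{p+t(u-p)}$ is not jointly PL, because $(t,u)\mapsto p+t(u-p)$ is bilinear. Build the contraction PL instead, for instance by triangulating the prism $[0,1]\times C$ relative to its ends and $[0,1]\times\{p\}$ and extending affinely.

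Second, you do not need a codimension-zero isotopy extension theorem; the paper uses RS Proposition~3.22 only as ball isotopy extension. Instead, pick a PL map $\rho:S^q\to C$ that is the identity on $C$, obtained by coning the complementary ball into $C$. Take a jointly PL homotopy $\rho_t$ from the constant map $p$ to $\rho$, and set $H_t(x,y)=(g_{\rho_t(y)}(x),y)$ on all of $D^k\times S^q$. This is a level-preserving PL isotopy with $H_0=\mathrm{id}$. It is fixed on $\partial X$ because $g_y|_{\partial D}=\mathrm{id}$, and it equals $g$ over $C$. Then $F_t=H_t^{-1}$ gives the pushforward in the statement.
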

\begin{proof}
Let $Q$ be the finite set of position-points and let
$\pi:D^k\times S^q\to S^q$ be projection. Triangulate all filtration
terms, the external boundary and $Q$ as subcomplexes, and make $\pi$
simplicial by subdivision, using
\cite[Lemma 1.1.2, printed p.3; PDF p.4]{williamson1966cobordism}.
The points of $Q$ are source vertices, so their images are target
vertices. Choose $p$ in a top-dimensional target simplex and a
sufficiently small closed convex ball $C$ in its interior.
The set $\pi^{-1}(C)$ misses $Q$.

Lemma~\ref{app:williamson} gives
\[
 h:D\times C\longrightarrow\pi^{-1}(C),\qquad
 h(x,y)=(h_y(x),y),\qquad h_p=\id,
\]
preserving all the filtration terms and the external boundary,
where $D=\pi^{-1}(p)$ is identified with $D^k$ by the original
product coordinates. Give $D$ the filtration
$D^r=D\cap X^{r+q}$.
The product-slice Lemma~\ref{app:slice} gives at a point in an
original $j$-stratum the model
$\mathbb R^{j-q}\times cL$, or $H^{j-q}\times cL$ at a boundary
point. In particular $j\ge q$ on the fiber. The original recursive
filtered link is unchanged, and its sphere dimension is
$k+q-j-1=k-(j-q)-1$. Its boundary filtration is precisely $\mathcal S$.
The local half-space charts have a common filtered collar by
Lemma~\ref{app:collars}. Thus the fiber really is a stratified
PL manifold $\mathcal D^k$, not just a filtered polyhedron.

The map $h$ need not fix the external boundary pointwise. For $k>0$
put $b_y=h_y|_{\partial D}$; then $b_p=\id$ and $b_y$ preserves
$\mathcal S$. Choose a closed filtered collar with coordinates
$(x,s)\in\partial D\times[0,1]$ in $D$.
There is a jointly PL map
\[
 r:[0,1]\times C\to C,\qquad
 r(0,y)=y,\quad r(1,y)=p,\quad r(s,p)=p.
\]
For example triangulate the prism relative to its ends and the
segment $[0,1]\times\{p\}$, prescribe these values at its vertices,
and extend affinely on simplices. Convexity of $C$ keeps the image
in $C$. On the collar times $C$ define
\[
 a(x,s,y)=(b_{r(s,y)}^{-1}(x),s,y),
\]
and take the identity off the collar. The formulas agree at its
inner end. The inverse replaces $b^{-1}$ by $b$, so $a$ is jointly
PL and preserves all product filtration terms. Consequently
$g=h\circ a$, written $g(x,y)=(g_y(x),y)$, has
\[
 g_p=\id_D,\qquad g_y|_{\partial D}=\id_{\partial D}.
\]
For $k=0$ there is no boundary correction; take $g=h$.

The ball $C$ lies inside a sphere simplex and can be chosen standard,
so its closed complement is a PL ball. Extend its identity map to
a PL map $\rho:S^q\to C$ by coning the complementary ball into $C$.
Using a triangulated prism as above choose a jointly PL homotopy
$\rho_t:S^q\to C$ from the constant map $p$ to $\rho$.
Now set
\[
 H_t(x,y)=(g_{\rho_t(y)}(x),y).
\]
This is jointly PL, every slice is a homeomorphism, $H_0=\id$, and
the external boundary is fixed. Its level-preserving inverse is
PL as well. On $D^k\times C$, $H_1=g$, so the pullback
$H_1^*\mathcal L$ has the desired product form and has no
position-points there. Take $F_t=H_t^{-1}$ to obtain the pushforward
convention in the statement.

Finally the same product identification carries every wall and its
two sides. A wall label is constant on its connected wall; since
$C$ is connected and the central fiber is fixed, the labels are
exactly the labels inherited from $\mathcal D^k$.
No gauge transformation or invariance of a partition function has
been used. The complement of the block is supported on
$D^k\times\overline{S^q\setminus C}\cong D^k\times D^q$.
\end{proof}

\subsection{Lemma~\ref{App: Filling: anything}: ball fillings and holonomy}
\label{appsec:fillings}

Throughout this subsection $G$ is discrete. A wall is read with the
main text's coorientation convention.
Reversing a crossing replaces its label by its inverse.

\begin{lem}[Ball filling and contractible-loop holonomy; Lemma~\ref{App: Filling: anything}]
\label{app:ball-filling}
A $G$-labeled $k$-sphere bounds a $G$-labeled $(k+1)$-ball if $k\ge2$,
or if $k=1$ and its total transport is $1$.
The filling can retain any prescribed sufficiently short boundary
product collar. A transverse closed curve which is null-homotopic
in a $G$-labeled manifold has transport $1$.
\end{lem}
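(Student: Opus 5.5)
The plan is to prove all three assertions by obstruction theory into $BG=K(G,1)$. A $G$-labeling is the combinatorial shadow of a map to $BG$. I will make this concrete by working with a transverse triangulation and cocycle-type data rather than passing through classifying spaces abstractly.

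\emph{Filling.} Let $\mathcal S$ be a $G$-labeled $k$-sphere. By Theorem~\ref{App: transversal triangulation} I choose a combinatorial triangulation transverse to $\mathcal S$, subdivided so that each vertex lies in a chamber. Assigning to each vertex its chamber, and to each oriented edge the ordered product of the wall labels it crosses, gives an edge labeling $g:\Delta^{(1)}\to G$. The flatness condition at codimension-two defects says that the product around every $2$-simplex is $1$, so $g$ is a flat $G$-connection on the $2$-skeleton.

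Now take $D^{k+1}=\Lambda S^k$ as the cone on this triangulation, with cone vertex $v$. I fix a base vertex $w$ and set the label of $[v,w]$ to $1$. Each remaining cone edge $[v,u]$ gets the transport along a path from $w$ to $u$ in the $1$-skeleton. This is well defined exactly when every closed edge loop in $S^k$ has trivial holonomy. For $k\ge 2$ this holds because $\pi_1(S^k)=1$ and the holonomy of a flat connection depends only on homotopy class; homotopies are realized by the $2$-simplices. For $k=1$ it is precisely the hypothesis that the total transport is $1$. Flatness on the new cone triangles $[v,u,u']$ is then automatic. Higher cone simplices impose no new condition, since flatness is a codimension-two condition only.

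It remains to convert the edge labeling back to a stratification. Inside each cone simplex I take the walls to be the dual codimension-one cells of the barycentric subdivision, meeting the cone edges transversely. An edge with label $1$ contributes no wall, which is allowed by refinement with identity labels. The walls are cooriented by edge direction. On the original sphere this must reproduce $\mathcal S$, and I will achieve this by keeping a thin product collar $\mathcal S\times[1-\delta,1]$ and performing the dual-cell construction only on the inner cone. A chamber-gauge change on the inner chambers reconciles the vertex trivializations, and it is the identity near the boundary. This also gives the claim about retaining a prescribed short collar.

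\emph{Contractible loops.} Let $S\subset\mathcal M$ be a transverse closed curve, null-homotopic in the $G$-labeled manifold. I first homotope the null-homotopy $D^2\to M$ to a PL map, then put it in general position with respect to the strata. The preimages of the walls are then arcs and circles, and the preimages of the codimension-two strata are finitely many points; by general position, deeper strata are missed. The labels pull back to a $G$-labeled disk whose boundary is $S$, and flatness holds at each interior point by the flatness condition upstairs. Summing the local relations over a triangulation of $D^2$, equivalently a van Kampen-type telescoping, shows that the boundary holonomy is $1$.

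The main obstacle is the general-position step. The null-homotopy is only a map, not an embedding, so I must check that the flatness condition pulls back correctly at transverse double points and at codimension-two crossings. I expect to handle this using the local cone charts $\mathbb P_k\times c\mathcal S$ of Definition~\ref{stratified PL manifold}, with each preimage point treated within a single chart.
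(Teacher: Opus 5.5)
Your contractible-loop argument is essentially the paper's. You put the PL null-homotopy in general position off the codimension-$\ge 3$ strata (dimension count $2+(d-3)<d$), so the preimage of the walls is a finite graph whose interior vertices are codimension-two radial patterns or inverse pairs of crossings, and the internal edges cancel. The double points you flag as the main obstacle are harmless, because pulling back the labels is local in the source disk. The care actually needed is keeping the general position relative to the already transverse boundary curve, pushing that curve into a collar first if it lies in $\partial M$.

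The gap is in the filling. Your inner cone carries the dual-cell stratification of the triangulation. On the interface sphere $S^k\times\{1-\delta\}$ its walls are therefore the dual $(k-1)$-cells of the edges $[u,u']$, while the retained collar $\mathcal S\times[1-\delta,1]$ has the walls of $\mathcal S$ there. These are different subpolyhedra, so the two pieces do not glue into a stratified manifold, let alone a $G$-labeled one.

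A chamber-gauge change cannot repair this. It only relabels walls and never moves them. It is also a relation between labeled structures, whereas the lemma asks you to \emph{construct} a labeled ball whose boundary is literally $\mathcal S$. The same objection applies to dropping label-$1$ walls ``by refinement''. Along your route you would still owe an explicit $G$-labeled cylinder on $S^k\times I$ from $\mathcal S$ to the dual-cell structure. One option is to make a slice $S^k\times\{1/2\}$ into a wall in general position with both wall systems, label it region by region by comparing the two trivializations, and check flatness at the new codimension-two strata. None of this is supplied.

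The cocycle detour is unnecessary. The paper takes the closed cone $\Lambda\mathcal S$ with the coned filtration and gives each radial wall the label of its boundary wall. Every codimension-two stratum other than the cone point is a cone on a codimension-two stratum of $\mathcal S$ with the same normal circle, so flatness is inherited. The cone vertex has codimension $k+1$. For $k\ge 2$ the definition of a $G$-labeled manifold imposes no condition there, so $\pi_1(S^k)=1$ is never needed. For $k=1$ its flatness equation is exactly that the total transport is $1$. For the collar statement, insert the prescribed collar first and cone its inner end.
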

\begin{proof}
\emph{The filling.} Take the underlying PL cone on the given sphere and give it the
filtration
\[
 X^0=\{v\},\qquad
 X^{i+1}=\{v\}\cup(S^i\times(0,1]),\quad 0\le i\le k.
\]
This includes the old boundary stratum in its radial product stratum;
it does not add the boundary as a lower-dimensional ambient stratum. The recursive charts are those
of the original sphere, times this interval; the vertex chart has the
original sphere as its link. Thus this is a stratified PL ball in
Definition~\ref{stratified PL manifold}, including its boundary collar.

Give a radial wall the label of its boundary wall. Away from the vertex,
every flatness equation is inherited from the original sphere.
The cone vertex has codimension $k+1$. If $k\ge2$, it imposes no new
codimension-two equation. If $k=1$, its equation is exactly that the
ordered product around the given circle is $1$. This proves existence.
A prescribed short collar can be inserted first; cone the inner end.
The resulting underlying ball can be identified with any specified PL
ball: extend the given boundary identification over its cone by
\cite[Lemma 1.10, printed p.8; PDF p.16]{rourke2012introduction}.

\emph{Holonomy.} We give the argument independently of gauge
classification and of every state-sum identity. In a PL manifold of
dimension $d$, the strata of codimension at least three form a closed
subpolyhedron of dimension at most $d-3$. A PL disk homotopy can be
put in general position relative to it, fixed on its already transverse
boundary, and hence made disjoint from it. One way to perform this
familiar relative general-position step is to triangulate the finite
family of subpolyhedra, subdivide the disk so that its simplices lie in
PL coordinate balls, and perturb its interior vertices successively.
The forbidden positions are finite unions of polyhedra of positive
codimension; the intersection bound is $2+(d-3)-d<0$.
The relative construction uses a fixed thin boundary annulus. This is
the PL general-position construction of \cite[Chapter IV]{hudson1969piecewise}.

In the complement of the codimension-three skeleton, a wall is
locally a flat hypersurface, and a codimension-two stratum has a
normal disk whose walls are radial arcs. A further relative generic
choice makes the inverse image of the walls a finite graph in the
homotopy disk. Its interior vertices either represent an inverse pair
of crossings, or the radial wall pattern at a codimension-two stratum.
There are no other singularities: use the preceding product charts
and triangulate the disk map there. At a tangency two successive labels
are $g,g^{-1}$; at a radial vertex their cyclic product is $1$ by
flatness. Subdivide the disk into small regions of these two kinds and
regions avoiding all walls. Reading their boundaries, internal edges
cancel with opposite orientations. Equivalently, remove one region at
a time along a spanning tree of the dual graph; the boundary word
changes by inserting an inverse pair or a conjugate of a flatness
word. Its final word is empty. The original boundary word is therefore
$1$. A boundary curve is first pushed a short distance into a boundary
collar, which does not change any crossing label.
\end{proof}

\begin{coro}[The sector cylinder used by the algorithm]
\label{app:sector-cylinder}
Let $\mathcal D_1^k,\mathcal D_2^k$ have the same identified boundary
$\mathcal S^{k-1}$. For $k\ge2$ there is a $G$-labeled cylinder
$\mathcal C^{k+1}$, supported on $D^k\times I$, with ends
$\mathcal D_1^k,\mathcal D_2^k$ and side
$\mathcal S^{k-1}\times I$, with compatible collars and the usual incoming/outgoing orientations
on these faces. For $k=1$ it exists if and only if the two oriented transports agree.
For $k=0$ choose the undecorated interval with transport $1$.
\end{coro}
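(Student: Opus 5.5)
The plan is to derive the sector cylinder directly from the ball filling Lemma~\ref{app:ball-filling}, treating the three ranges of $k$ separately. First I will assemble the boundary datum. On $\partial(D^k\times I)$ put $\mathcal D_1^k$ on $D^k\times\{0\}$, $\mathcal D_2^k$ on $D^k\times\{1\}$, and $\mathcal S^{k-1}\times I$ on the side. The orientations are the incoming and outgoing ones, so one end is orientation reversed, and the wall coorientations are reversed with it; Lemma~\ref{app:ball-filling} records that such a reversal inverts the crossing labels. The ends meet the side along $\mathcal S^{k-1}\times\{0,1\}$. There I will insert short product collars $\mathcal S^{k-1}\times[0,\delta]$ on the side and $\partial D^k\times[0,\delta)$ on the ends. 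The corner straightening map $(s,t)\mapsto(s-t,\min(s,t))$ from the proof of Lemma~\ref{app:relative-position} turns the glued corner into a smooth boundary point while preserving all marks. The result is a $G$-labeled stratified sphere $\Sigma$ supported on $S^k\cong\partial(D^k\times I)$. Flatness at codimension-two defects of $\Sigma$ is inherited from flatness on the ends and on the side. Near the seam, each defect is a product of a defect of $\mathcal S^{k-1}$ with the corner, so its flatness word is the flatness word of $\mathcal S^{k-1}$ or of $\mathcal D_i^k$.

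\emph{Case $k\ge2$.} I will apply Lemma~\ref{app:ball-filling} to $\Sigma$, retaining the prescribed short boundary collar. I then identify the filled underlying ball with $D^k\times I$ by coning the boundary identification, as in the final sentence of the existence part of that lemma. The retained collar supplies the compatible collars on the faces, and the corner collars come from undoing the straightening.

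\emph{Case $k=1$.} Here $\Sigma$ is a circle made of four arcs: $\mathcal D_1^1$, then $\{+\}\times I$, then $\mathcal D_2^1$ reversed, then $\{-\}\times I$. The two side arcs $\mathcal S^0\times I$ carry no walls, since the side is a product and $\mathcal S^0$ consists only of points. The total transport of $\Sigma$ is therefore $g(\mathcal D_1^1)\,g(\mathcal D_2^1)^{-1}$. If the two transports agree, this is $1$, and Lemma~\ref{app:ball-filling} fills $\Sigma$ exactly as in the case $k\ge 2$. Conversely, suppose a cylinder exists. The boundary loop $\Sigma$ is then a transverse closed curve, and after a push into the collar it is null-homotopic in the ball. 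The holonomy half of Lemma~\ref{app:ball-filling} then forces the transport to be $1$, so the two transports agree.

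\emph{Case $k=0$.} The cylinder is the interval with no walls, which has transport $1$, and nothing needs to be checked.

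I expect the main obstacle to be the bookkeeping at the corners. I must confirm that the straightened sphere $\Sigma$ is a genuine stratified PL manifold in the sense of Definition~\ref{stratified PL manifold}, and that the collars chosen on the faces survive the filling. To handle this I will use the simultaneous collar Lemma~\ref{app:collars} to make the end and side collars agree near $\mathcal S^{k-1}\times\{0,1\}$. I will also use Lemma~\ref{app:slice} to check that the links there are unchanged.
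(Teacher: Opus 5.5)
Your proposal is correct and follows the paper's proof. The paper likewise glues the two ends and the side along their collars, straightening the quadrant corners, into a $G$-labeled $S^k$. It then fills this sphere by Lemma~\ref{app:ball-filling}, extending the boundary identification over the cone. For $k=1$ it reads off the boundary word $h_1h_2^{-1}$, using that the side arcs carry no walls, to get sufficiency, and it obtains necessity from the contractible-loop assertion of that lemma.
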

\begin{proof}
The two ends and side form a $G$-labeled $S^k$, using their boundary
collars to glue the seams. Apply Lemma~\ref{app:ball-filling}.
When $k=1$, the boundary word is $h_1h_2^{-1}$, since the two fixed
side paths have transport $1$. Necessity follows from the
contractible-loop assertion of that lemma. Extend the prescribed
boundary identification to the cylinder and retain short collars
before coning the inner sphere. At corners, straighten a quadrant in
the two interval coordinates, leaving all marked factors unchanged.
\end{proof}

\subsection{Lemma~\ref{App: Filling: algorithm can continue}: continuation in the prescribed face order}
\label{appsec:algorithm}

Put $d=n+1$. The boundary is full in the vertex sense of the main text's definition of fullness. We keep the prescribed face-before-coface order and do not first remove the true-boundary collar.

\begin{prop}[Full triangulations and the positioning results]
\label{app:full-boundary}
A finite combinatorial triangulation can be made boundary-full by
subdivisions that leave its entire boundary triangulation unchanged.
Subdivisions and ambient PL homeomorphisms preserve boundary fullness.
Consequently a compact stratified PL manifold has a transverse,
boundary-full triangulation. The extension in Theorem~\ref{App: transversal triangulation extend to bulk} can be
chosen boundary-full without changing the prescribed triangulation.
Theorems~\ref{App: transversal isotopy step by step} and~\ref{App: transversal subdivision preparation} retain fullness when their input triangulation
is full.
\end{prop}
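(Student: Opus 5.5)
The statement splits into three separate claims, and I will prove them in this order: fullness can be obtained without touching the boundary, fullness is stable under subdivision and homeomorphism, and fullness survives the positioning and extension theorems. Fullness means that any simplex whose vertices all lie in $\partial B$ is itself contained in $\partial B$.

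\emph{Making a triangulation full.} Let $K$ be a finite combinatorial triangulation. I will take the derived subdivision of $K$ relative to $K|_{\partial B}$. This means starring, in order of decreasing dimension, at the barycentre of every simplex not contained in $\partial B$, while leaving the simplices of $\partial B$ untouched. After this operation, every simplex is a join of the form $\tau\star b_{\sigma_1}\star\cdots\star b_{\sigma_r}$. Here $\tau$ lies in $\partial B$ and the barycentres $b_{\sigma_i}$ sit at interior points of simplices not in $\partial B$. Each such barycentre lies in $\mathring B$, because a simplex not contained in $\partial B$ has its relative interior in $\mathring B$. Hence a simplex with all its vertices in $\partial B$ must have $r=0$, and so it lies in $\partial B$. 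Combinatoriality is preserved, since stellar subdivisions preserve PL links.

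\emph{Stability.} Fullness is stable under ambient PL homeomorphisms, because these preserve the boundary setwise. For subdivisions I will argue as follows. Let $\sigma'$ be a simplex of a subdivision $L$ of a full $K$, with all vertices in $\partial B$. Let $\sigma$ be the smallest simplex of $K$ containing $\sigma'$; its relative interior meets $\sigma'$. If $\sigma$ is not contained in $\partial B$, then its relative interior lies in $\mathring B$. I expect this case to be the main obstacle. The plan is to show that the carrier of $\sigma'$ is the join of the carriers of its vertices, each of which lies in $\partial B$. Then apply fullness of $K$ to the face spanned by those vertex carriers to conclude that $\sigma\subset\partial B$. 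If the join statement fails for general subdivisions, I will fall back on the first paragraph. That is, after each use of Theorem~\ref{common subdivision} or Lemma~\ref{subdivision shellable}, I re-apply the relative derived subdivision, which is allowed because it leaves the boundary triangulation unchanged.

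\emph{Consequences.} With these two facts, the remaining claims follow.
\begin{itemize}
\item A transverse, full triangulation exists: take any combinatorial triangulation, make it full, and apply Theorem~\ref{App: transversal triangulation}. The resulting isotopy is an ambient homeomorphism, so fullness persists.
\item In Theorem~\ref{App: transversal triangulation extend to bulk}, first extend as in Theorem~\ref{app:face-extension}. Then perform the relative derived subdivision away from $\partial B$ and restore transversality relative to the boundary using Lemma~\ref{app:relative-position}. This leaves the prescribed boundary triangulation on $F$ unchanged.
\item Theorem~\ref{App: transversal isotopy step by step} does not change $\Delta$, and Theorem~\ref{App: transversal subdivision preparation} moves only the stratification. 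The triangulations therefore stay full.
\end{itemize}
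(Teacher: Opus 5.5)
Your proposal is correct and follows essentially the paper's route. It uses a relative derived subdivision to make the triangulation full without touching $\partial B$, a carrier argument for stability under subdivisions, and then ``fullify first, position afterwards'' (boundary-fixed in the extension case) for the remaining claims. The step you flag as the main obstacle is immediate, so no fallback is needed; re-fullifying after each subdivision would not prove the stated stability claim anyway. In a linear subdivision $\sigma'$ is the convex hull of its vertices. Each vertex lies in the relative interior of a carrier simplex belonging to the subcomplex $\partial B$. Hence $\sigma'$ lies in the face of $\sigma$ spanned by these carriers, all of whose vertices are in $\partial B$, and fullness of $K$ puts that face in $\partial B$. This is exactly the paper's observation that $|\sigma|\cap\partial B$ is the face spanned by the boundary vertices of $\sigma$.
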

\begin{proof}
Subdivide the nonboundary simplices in decreasing dimension, placing
a new vertex in each of their interiors. Every remaining nonboundary
simplex then has an interior vertex. No boundary simplex is changed.
This is the relative derived construction in
\cite[Exercise 3.2 and Lemma 3.3, printed p.31; PDF p.39]{rourke2012introduction}.

For preservation under a subdivision, let a new simplex lie in an
old simplex $\tau$. Its boundary vertices lie in the face
$\tau\cap\partial B$, so the new simplex lies in that face too.
An ambient homeomorphism preserves the true boundary and all the
relevant incidences. To obtain a transverse full triangulation,
first fullify and then apply Theorem~\ref{App: transversal triangulation}. For the extension, first
extend the prescribed boundary face, then fullify relative to the
whole boundary and use boundary-fixed positioning. The other two
assertions follow because their operations are subdivisions and
ambient homeomorphisms.
\end{proof}

\begin{lem}[The ordinary cores]
\label{app:ordinary-cores}
Let $K$ triangulate a compact PL $d$-manifold with full boundary.
Fix any order $\sigma_1,\ldots,\sigma_N$ of its nonboundary simplices
of dimensions less than $d$, extending the face order. There are
successively small ordinary cuts for which
\[
 D_s=|\sigma_s|\cap B_{s-1}
\]
is a properly locally flat ball of dimension $\dim\sigma_s$.
The residual part of every unprocessed simplex is a ball. Its
intersections with the other residual cells are faces, and the final
top-dimensional pieces are disjoint balls.
\end{lem}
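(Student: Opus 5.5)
We argue by induction along the prescribed order $\sigma_1,\ldots,\sigma_N$. At stage $s$ we maintain an invariant. \emph{Each unprocessed simplex $\tau$ has a residual part $\tau_s:=|\tau|\cap B_s$ that is a PL ball of dimension $\dim\tau$. The cut boundaries meet $\tau_s$ in a collar-compatible way. The residual parts of a simplex and its faces meet in faces.} The cut at stage $s$ is a thin regular neighborhood $N_s$ of $D_s$ in $B_{s-1}$. We take $N_s$ to be a product $D_s\times \epsilon D^{d-\dim\sigma_s}$, chosen by the simultaneous collar tools (Lemma~\ref{app:collars}) to be compatible with all residual cells through $D_s$.

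The base case is fullness. Every nonboundary simplex has at least one vertex off $\partial B$. Hence the first processed simplices (interior vertices) have residual parts that are points, and the cut is a small ball around each. For the inductive step, take $\sigma=\sigma_s$ of dimension $k$. All its proper faces not in $\partial B$ were cut earlier. So $D_s=|\sigma|\cap B_{s-1}$ is $\sigma$ minus thin regular neighborhoods of its interior faces. We identify $D_s$ with a ball as follows. Put $\sigma\cap\partial B=\tau$; by fullness $\tau$ is a proper face, possibly empty. Then $D_s$ is the complement in $\sigma$ of a regular neighborhood of the subcomplex spanned by faces not contained in $\tau$. Equivalently, $D_s$ deformation retracts to a shrunken copy of $\tau$, or to the barycentric region near $\tau$. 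Using the derived-neighborhood description (the regular neighborhood is the simplicial neighborhood in the second derived subdivision), $D_s$ is the closed star of $\tau$'s barycentre in the derived subdivision of $\sigma$, intersected with the complement. This is a cone, hence a $k$-ball. When $\tau=\emptyset$ it is a smaller interior $k$-ball. We also need $D_s\cap\partial B_{s-1}=\partial D_s$. This follows because $\partial D_s$ consists of the pieces on earlier cuts together with $\sigma\cap\partial B$. We need proper local flatness of $D_s$ in $B_{s-1}$. This comes from the fact that near any point of $\partial D_s$ the cut is a product collar of the earlier core. So $(B_{s-1},D_s)$ locally looks like a half-space pair, the standard model $(\mathbb{H}^{d},\mathbb{H}^k)$.

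The main obstacle is keeping the residual cells of higher simplices $\rho\supset\sigma$ as balls meeting correctly after the new cut. We handle it by choosing the $\epsilon$'s successively small, decreasing with $s$. Then $N_s\cap\rho_{s-1}$ is a regular neighborhood of the face $D_s$ inside the ball $\rho_{s-1}$, meeting $\partial\rho_{s-1}$ in a regular neighborhood of $\partial D_s$. Removing a regular neighborhood of a proper face-ball from the boundary of a ball leaves a ball. This is the standard collapsing fact: $\rho_{s-1}\searrow\rho_s$ across a collar, with Rourke--Sanderson regular neighborhood uniqueness. The face intersections persist because all cuts are products over the common cores. Small-enough cuts are possible by compactness and finiteness of $K$.

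At the end all simplices of dimension $<d$ not in $\partial B$ are processed. Each top simplex then has a residual part that is a ball. Distinct top residual parts are disjoint: they met only along interior codimension-$\ge1$ faces, and those have been cut away. Boundary faces, by contrast, lie in $\partial B$, which is not shared between distinct top simplices in their interiors. This proves the final assertion.
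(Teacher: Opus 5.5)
Your overall shape, induction along the face order with successively thinner cuts, matches the paper's. But the one step where fullness does real work is handled incorrectly.

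Write $\sigma=\tau*\gamma$ with $\tau=\sigma\cap\partial B$. You assert that $D_s$ retracts onto a shrunken copy of $\tau$, and that $\partial D_s$ consists of earlier cut pieces together with $\sigma\cap\partial B$. This is false when $\gamma$ has two or more vertices. Then, for $w\in\gamma$, the face $\tau*w$ is a proper nonboundary face of $\sigma$. It was therefore cut earlier, and that cut removed all that remained of $|\tau|$, so $D_s$ misses $\partial B$ entirely. True-boundary contact survives only when $\gamma=\{w\}$, i.e.\ when $\tau$ is a facet of $\sigma$ and $w$ is an interior vertex. Even then it survives only inside $\operatorname{int}\tau$, because each $\eta*w$ with $\eta\subsetneq\tau$ has been cut.

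At those surviving points your local-flatness argument (``near any point of $\partial D_s$ the cut is a product collar of the earlier core'') says nothing, since they lie on no cut. The paper's argument there uses the normal pair $(c\operatorname{lk}_K\tau,\ c\{w\})$. Here $\operatorname{lk}_K\tau$ is a $(d-k)$-ball because $\tau\subset\partial B$. Moreover $w$ is an interior point of it, because $\partial\operatorname{lk}_K\tau=\operatorname{lk}_{\partial K}\tau$ and $w\notin\partial B$. This gives the standard half-space/inward-ray pair. Without this dichotomy you establish neither $D_s\cap\partial B_{s-1}=\partial D_s$ nor proper local flatness at the true boundary.

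Two further steps are asserted rather than proved. First, the product form $N_s\cong D_s\times\epsilon D^{d-k}$, and the corner structure where several earlier frontiers meet, do not follow from Lemma~\ref{app:collars}. That lemma produces collars of a compact marked subspace, not normal disk products, and says nothing about the mutual position of successive cut frontiers. The paper avoids this by cutting along levels $\{f_{\sigma_i}\ge t_i\}$ of simplexwise-affine functions vanishing exactly on $|\sigma_i|$. This has three effects:
\begin{itemize}
\item residual cells become convex polytopes, so being a ball and meeting in faces are automatic, with no need for simultaneous regular neighborhoods in every cell;
\item generic levels make the active differentials independent, which gives orthant charts for $B_s$;
\item the product form comes from normal block bundles being trivial over a ball.
\end{itemize}

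Second, your disjointness argument dismisses common faces lying in $\partial B$ with a non sequitur. Distinct top simplices can share a boundary vertex or edge, and you must show that every such face lies in some processed face. The paper finds an internal facet containing it via the connected facet-adjacency graph of its link. Alternatively, note that such a face has dimension at most $d-2$. Joining it with an interior vertex of one of the two top simplices then gives a processed proper face containing it.
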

\begin{proof}
Let $f_\sigma$ be affine on each simplex, with value zero at vertices
of $\sigma$ and value one at all other vertices. Its zero set is
exactly $|\sigma|$. Choose small positive numbers $t_s$ and put
\[
 B_s=\bigcap_{i\le s}\{f_{\sigma_i}\ge t_i\}.
\]
Choose each level small enough to retain the barycenter of every
unprocessed simplex. This is possible: no previously processed
simplex contains an unprocessed simplex. Each residual cell is
therefore a full-dimensional convex polytope in its original simplex.

The levels can also be chosen generically. On the relative interior
of any old face $\rho$, the affine differentials of the active
constraints are then independent. A dependence with a common level
point would impose an affine equation on the chosen levels; there
are only finitely many such equations. In a local star chart write
the active functions as $a_i(u)+b_i(z)$, with $u$ tangent to $\rho$
and $z$ in its normal cone. Use the independent $a_i$ as some of the
$u$ coordinates and subtract the PL functions $b_i(z)$. This PL
coordinate change makes the retained set an orthant times the normal
cone. It proves that $B_s$ is a PL manifold, including at corners.

All nonboundary proper faces of the current $\sigma$ have been
removed. To examine a surviving true-boundary point, write
$\sigma=\beta*\gamma$, where $\beta=\sigma\cap\partial K$.
If $\gamma$ has at least two vertices, a proper face $\beta*w$ with
$w\in\gamma$ has already been removed, so no point of $\beta$
survives. If $\gamma=\{w\}$, every proper face of $\beta$ has been
removed with a face $\eta*w$. Thus surviving true-boundary contact
is in $\Int\beta$, where $\beta$ is a facet and $w$ is interior.
The normal pair there is
\[
 \bigl(c\lk_K\beta,\ c\{w\}\bigr).
\]
The link is a PL ball with an interior point $w$. This is the
standard half-space--inward-ray pair. Together with the preceding
orthant coordinates it proves proper local flatness of $D_s$ and
$D_s\cap\partial B_{s-1}=\partial D_s$.

The sufficiently short sublevel neighborhood of $D_s$ is a regular
neighborhood: triangulate the finite cut arrangement and place the
derived vertices on its positive boundary level, as in
\cite[printed pp.31--33; PDF pp.39--41]{rourke2012introduction}.
Normal block neighborhoods exist for the proper locally flat disk,
and a block bundle over a PL ball is trivial
\cite[Theorems 4.3 and 1.1, printed pp.16 and 4; PDF pp.17 and 5]{rourke1968block}.
Thus these cuts have the required ordinary disk-product form.

Finally any common face of two distinct top simplices is contained
in an internal facet separating incident top simplices: use a path
in the facet-adjacency graph of its ball or sphere link. That facet
was processed, removing the entire face that still remained.
Consequently the terminal convex top-cell balls are disjoint.
\end{proof}

\begin{lem}[Marked neighborhoods of the current core]
\label{app:marked-core}
Suppose a finite ball complex has the truncated-cell type in
Lemma~\ref{app:ordinary-cores}, and the whole boundary of each of
its cells has a simultaneous marked collar. Let $D$ be its next
proper locally flat disk cell. There are nested normal neighborhoods
of $D$ with the following properties:
\begin{itemize}
\item they have a single normal block structure retaining every mark;
\item after an inner neighborhood is removed, the residual cells
have the same ordinary truncated-cell type and simultaneous marked
boundary collars;
\item on a compact inner disk of $D$, the normal product can retain
the transverse-star product and all its angular cofaces;
\item their total product identifies the ambient-boundary part with
$\partial D\times D^{d-\dim D}$.
\end{itemize}
All choices can be made relative to already fixed short collars,
and simultaneously for a finite collection of marked families having
the same underlying cell collars.
\end{lem}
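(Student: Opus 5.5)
The plan is to build the neighborhoods in the same way that Lemma~\ref{app:ordinary-cores} builds the cuts. I use the sublevel sets of the affine functions $f_\sigma$, applied to the current disk $D=D_s$. The new ingredient is that every mark must be carried along. I first take a common triangulation of the finite cut arrangement together with all the marked subpolyhedra, using Lemma~\ref{app:closed-filtration}. I then choose the derived vertices on the positive levels $\{f_{\sigma_s}=t\}$, as in the regular-neighborhood argument cited there. Varying $t$ in a small interval then gives a nested family of derived neighborhoods $N_t$ of $D$. Each $N_t$ is a union of simplices of a derived subdivision in which every mark is a subcomplex.

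For the normal block structure, Lemma~\ref{app:ordinary-cores} already shows that $D$ is proper and locally flat. The block bundle over the ball $D$ is therefore trivial by the cited result of Rourke--Sanderson. To make the trivialization retain the marks, I proceed as follows.
\begin{itemize}
\item I work cell by cell over the cells of $D$, in increasing dimension.
\item At each step I use the simultaneous marked boundary collars, which are hypotheses of the lemma.
\item I glue the local products with Lemma~\ref{app:collars}, which applies to finitely many marks at once.
\item On a compact inner disk of $D$, I take the normal product to be exactly the transverse-star chart \eqref{app:star-chart} from Lemma~\ref{app:criterion}. In that chart every mark is $V\times T$-product, so the angular cofaces are retained automatically.
\end{itemize}
Near $\partial D$ I use the half-space and inward-ray normal pair from Lemma~\ref{app:ordinary-cores}. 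This identifies the ambient-boundary part with $\partial D\times D^{d-\dim D}$.

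Next I remove an inner neighborhood. The residual cells are intersections of the old truncated cells with one further generic sublevel constraint. Hence they remain convex polytopes of the ordinary truncated-cell type, by the same genericity argument of Lemma~\ref{app:ordinary-cores}. Their new boundary faces lie on the level sets $\{f=t\}$. These faces inherit marked collars from the normal product, namely the radial coordinate of the block structure. Corollary~\ref{app:amalgamation}, together with the quadrant-straightening map, merges these collars with the old collars at the corners.

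Relativity to already fixed short collars comes from doing the Lemma~\ref{app:collars} straightenings with the old collar ends held fixed. The simultaneous version for several marked families is immediate, because every construction depends only on the common underlying cell collars and treats marks as subcomplexes.

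The main obstacle is the compatibility of the two normal products. The transverse-star product on the inner disk and the block-bundle trivialization near $\partial D$ must agree on an overlap annulus without destroying any mark. I would first handle this through collar uniqueness in Lemma~\ref{app:collars}, applied in the normal directions over the annulus. If that fails, I would fall back on Lemma~\ref{app:williamson}, applied to a simplicial projection onto $D$, to get one product that works globally.
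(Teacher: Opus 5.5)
There is a genuine gap. It sits at the step you call the main obstacle, and it is conceptual, not a matter of gluing details. The first and third bullets require a \emph{single} normal block bundle $\xi$ of $D$ such that every mark $X$ satisfies $X\cap E(\xi)=E(\xi|_{X\cap D})$, i.e.\ each mark is a union of whole normal blocks. Only then does a block trivialization carry every mark to a product, because block isomorphisms respect base blocks. Making all marks subcomplexes of a triangulation in which $N_t$ is a derived neighbourhood of $D$ does not give this. It only makes $X\cap N_t$ a regular neighbourhood of $X\cap D$ in $X$. The blocks built from that triangulation need not be compatible with the marks: $X\cap D$ is in general not a union of base cells, and the blocks over it are not contained in $X$. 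Triviality of block bundles over a ball says nothing about the marks. The paper obtains block compatibility from one dual-cell structure (McCrory's structured cellular subdivision), built by triangulating all marks at once inside each cell and extending through the existing collars. In it every mark is a union of dual cells. The paper then gives derived vertices of cells of $D$ height $0$ and all others height $1$, so all nested short sublevel neighbourhoods are unions of the \emph{same} dual blocks. This marked block transversality is the missing idea. It is also what makes the clause about finitely many families work.

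Your two proposed repairs for the overlap do not work either. Lemma~\ref{app:collars} concerns collars $A\times[0,\epsilon)$ and does not compare normal $D^{d-\dim D}$-products of a disk of codimension greater than one. Lemma~\ref{app:williamson} trivializes a simplicial map only over a small neighbourhood of a point in the relative interior of one target simplex, so it cannot give a product over all of $D$; nor is a simplicial bundle projection onto $D$ available. More fundamentally, prescribing a product both on an inner disk and near $\partial D$ is over-determined: two independently given trivializations at the two ends can differ by an essential clutching map on $\partial D$. The paper prescribes only the central transverse-star product, on a slightly larger inner disk retained while building the dual structure. It then extends that trivialization over $D$ by the Rourke--Sanderson extension theorem, using the collapse $D\searrow D_0$. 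Trivializing the innermost level first and extending through the height collars gives one parametrization for all nested levels. The fourth bullet comes from the proper boundary blocks of the relative dual construction, which every block isomorphism respects. The half-space/inward-ray pair of Lemma~\ref{app:ordinary-cores} is only a pointwise local model and gives no product over a collar of $\partial D$.

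Finally, the paper builds the cuts by extending through the \emph{prescribed} marked collars of the cell boundaries; this is what ``relative to already fixed short collars'' means. Only afterwards does it compare them cell by cell with the convex cuts, via unknotted caps and the PL ball extension theorem. Literal convex sublevel sets ignore the prescribed collars. Collars on their new faces would have to come from the level-set product of $f$ away from vertex levels, not from a block radial coordinate you have not shown to match those levels.
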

\begin{proof}
Here are the construction and its relative controls. Work over the
cells in increasing dimension. On a cell $\tau$, retain the cut
already made on $\partial\tau$ and extend it into a short marked
collar of $\partial\tau$. If that boundary cut is $A$, its extension
is $A\times[0,t]$. Its new frontier consists of
$A\times\{t\}$ and $\partial A\times[0,t]$. The collar of
$\partial A$ constructed in the preceding dimension rounds their
corner by a PL change of a rectangle. It preserves every mark,
because the marks are products in these collar coordinates. A cell
contained in $D$ is retained in full, and a cell disjoint from $D$
receives no cut. Use successive short levels for nested neighborhoods.

The intersections $D\cap\tau$ are faces of the ordinary truncated
cells. A short neighborhood of such a face is an unknotted cap;
the cap and its closed complement are balls. Compare this
construction with the convex cuts of Lemma~\ref{app:ordinary-cores}
cell by cell: retain the map already defined on $\partial\tau$,
match the cap frontier, and extend across the cap and its complement.
The extension is by the PL ball extension theorem
\cite[Lemma 1.10, printed p.8; PDF p.16]{rourke2012introduction}.
It preserves the old cell incidences. The preceding rectangle
construction also gives a marked collar on every new frontier
face and on every residual cell boundary. In particular this is a
simultaneous corner assertion, not two independent collar assertions.

We spell out why these neighborhoods can be given a \emph{marked}
normal structure. The dual construction in
\cite[Theorem 4.6 and its proof, printed p.286; PDF p.19]{mccrory1975cone}
gives a structured cellular subdivision $J$ in which the old cells
are subcomplexes and every mark is a union of dual cells. To retain
several marks, triangulate all of them at once in the interior of
each cell; on its boundary keep the structure already constructed,
and extend that structure through the common collar. Dualize this
extension. This is precisely the induction in that proof, with a
finite family in place of one subpolyhedron. It also retains
prescribed product regions. The simultaneous primal and dual
refinement is supplied by
\cite[Lemma 2.6, printed pp.276--278; PDF pp.9--11]{mccrory1975cone}.
Use the collar prisms just described in this construction.

In a flag $\alpha_0<\cdots<\alpha_r$ of $J$, the cells belonging
to $D$ form an initial segment. Give their derived vertices height
zero and the others height one. Short sublevel neighborhoods are
the derived collar neighborhoods above. A dual cell indexed by a
cell outside $D$ has height one throughout and misses such a
neighborhood. The truncated ambient dual blocks indexed by cells
in $D$ lie over the corresponding dual blocks in $D$. They form
normal blocks: their ball pairs are the local link pairs of the
proper locally flat inclusion of $D$. This is the dual-block proof
of \cite[Theorem 4.3, printed pp.16--19; PDF pp.17--20]{rourke1968block},
in the cellular form used in
\cite[Theorem 4.4 and its following remark, printed pp.285--286;
PDF pp.18--19]{mccrory1975cone}. The proper boundary blocks are included
by its relative construction
\cite[Lemma 4.2, printed p.284; PDF p.17]{mccrory1975cone}.

For this \emph{one} normal block bundle $\xi$, each mark $X$ satisfies
\[
 X\cap E(\xi)=E(\xi|_{X\cap D}).
\]
Indeed both sides are the union of exactly its selected dual blocks.
This is also the construction behind the equivalence of embedded
mock and block transversality in
\cite[II, Theorem 4.4, printed pp.34--35; PDF p.21]{buoncristiano1976geometric}.
It is not obtained by intersecting separately chosen normal bundles.
Sliding between two short heights keeps the same flag and dual
block; after a finite prism subdivision these are PL collars
retaining the marks and the old cells. The preceding collar-prism
description checks their boundary corners.

Choose a compact inner disk $D_0\subset\Int D$ by deleting a short
collar of $\partial D$. The transverse-star construction used in
Lemma~\ref{app:criterion} is a product over a neighborhood of
$D_0$, with the incident cofaces as angular faces. Keep that product
on a slightly larger inner disk while constructing $J$. Its normal
blocks there are the product blocks. Since $D$ collapses to $D_0$,
their trivialization extends over $D$ by
\cite[Theorem 1.6, printed p.6; PDF p.7]{rourke1968block}.
A block isomorphism respects every base block, hence every marked
base subcomplex. It also respects the proper boundary blocks.
This proves the asserted global marked product. Its extension at
the ends need not retain the entire angular coface fan; no such
assertion is used.
The relative data here are the already constructed geometric collars
and the central product, not arbitrary independently prescribed
normal framings at both ends. The extension theorem is applied to
this one block bundle and the inner subdisk.
Trivialize the innermost neighborhood first, and extend through the
successive height collars. This gives one product parameterization
for all nested levels, rather than unrelated trivializations.
The $G$-labels also pull back in this product: each normal disk is
contractible, and the wall germs and their coorientations are
retained by the common marked blocks.
\end{proof}

\begin{lem}[Controlled continuation; Main text, Lemma~\ref{App: Filling: algorithm can continue}]
\label{app:algorithm}
Let $\mathcal B$ be a bulk vector and let $K$ be a transverse
triangulation with full true boundary. Choose any
face-first order in the algorithm in the main text. Its neighborhoods and fillings
can be chosen so that every literal core
$D_s=|\sigma_s|\cap B_{s-1}$ has the required $G$-stratified product
neighborhood, every subsequent step is legal, and the final pieces
are balls. All summands can use the same underlying cuts.
The filling can equal the original structure near the boundary of
the core and outside the prescribed inner buffer in the definition of the $\mathcal S^{k-1}$-relations.
\end{lem}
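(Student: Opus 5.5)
We prove the lemma by induction on the step index $s$, combining the two preceding lemmas. The underlying cuts come from Lemma~\ref{app:ordinary-cores}, and the $G$-stratified, marked structure comes from Lemma~\ref{app:marked-core}. Since $K$ is transverse, Lemma~\ref{app:criterion} gives simultaneous marked collars on every closed simplex boundary. This is the base case: the initial ball complex is $K$ itself, of the truncated-cell type with marked collars. The marks are the filtration terms of $P_B$, the position points, and the true boundary.

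At step $s$, Lemma~\ref{app:ordinary-cores} shows that the literal core $D_s=|\sigma_s|\cap B_{s-1}$ is a properly locally flat $k_s$-ball with $D_s\cap\partial B_{s-1}=\partial D_s$. Fullness enters only here, to rule out a surviving true-boundary contact except along an interior facet. Lemma~\ref{app:marked-core} then supplies nested normal neighborhoods of $D_s$ carrying a single block product. This product retains every mark and identifies the ambient-boundary part with $\partial D_s\times D^{n+1-k_s}$. Pulling back along this product gives a stratified homeomorphism $\phi_s:\mathcal D^{k_s}_1\times D^{n+1-k_s}\to\mathcal U$ that preserves the $G$-labels. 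Here $\mathcal D^{k_s}_1$ is $D_s$ with the induced filtration; it is a genuine stratified manifold by the slice Lemma~\ref{app:slice}.

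Next we choose the fillings. We keep the original structure outside the inner buffer $W$ and near $\partial D_s\times D^{n+1-k_s}$. Inside the shell we insert the sector cylinder of Corollary~\ref{app:sector-cylinder}, multiplied by $S^{n-k_s}$, together with a small modification near the label region, exactly as in Lemma~\ref{Filling: Sector is determined by homotopy group}. These choices use only a compact region strictly inside the shell, so one can take the same underlying cuts for all summands and change only the labels.

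The inductive step must show that the residual complex after removing the inner neighborhood is again of truncated-cell type with simultaneous marked boundary collars. The second bullet of Lemma~\ref{app:marked-core} provides this for the original marks. For the new ones, namely the filling and the new space boundary, I would argue that they sit inside the product shell as products with the sphere factor. They are therefore collared along the new frontier faces by the same rectangle corner-rounding, and the hypothesis of the marked-core lemma holds at step $s+1$.

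I expect this to be the main obstacle: the filling is arbitrary inside the buffer, and transversality of later simplices passing through the shell must be restored without moving $D_{s+1}$. My first attempt would be to confine the nonproduct part of the filling to a ball disjoint from all residual cells of lower index, which the shell geometry permits. If that fails, I would apply Lemma~\ref{app:relative-position} inside the shell, fixed on its boundary.

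The statement about the final pieces follows from the last paragraph of Lemma~\ref{app:ordinary-cores}: the terminal top cells are disjoint convex balls, and the cuts have only shrunk them by collars.
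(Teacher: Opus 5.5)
Your architecture matches the paper's: Lemma~\ref{app:ordinary-cores} for the cores and the terminal balls, Lemma~\ref{app:marked-core} for the marked normal product, and the sector cylinder times $S^{n-k}$ as the filling. However, the inductive step is left open, as you acknowledge, and the idea that closes it is missing. That idea is the third property in Lemma~\ref{app:marked-core}, prepared by the projection-compatible collars of Lemma~\ref{app:projection-collars}.

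Over a compact inner disk of $D_s$, the normal product retains the transverse-star product. So in this central region every later coface of $\sigma_s$ is $D_s$ times an angular face $A\subset S^{n-k}$. Since the filling there is $\mathcal C^{k+1}\times S^{n-k}$, such a coface becomes $\mathcal C^{k+1}\times A$, with inner frontier $\mathcal D_2^k\times A$. Its whole-boundary collars are the common side collars times the angular collars, so nothing has to be restored by an isotopy.

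The only non-product piece, the ball filling carrying the $\alpha_k$ label, goes inside a top-dimensional angular face. Such a face meets no later \emph{nonterminal} core. The cells to avoid are these later cores, not ``cells of lower index'', which have already been removed.

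Near $\partial D_s$ the block product need not retain the coface fan. This is why one realizes $\mathcal D_2^k$ to coincide with $\mathcal D_1^k$ in a short end collar and takes $\mathcal C^{k+1}$ unchanged there; the collar-prism construction then still applies at the joins.

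The auxiliary dual subdivision is discarded and a fresh one is chosen at the next stage. Hence the arbitrary $\mathcal D_2^k$ never has to be transverse to it.

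Your fallback via Lemma~\ref{app:relative-position} is unnecessary, and it does not apply as stated. That lemma positions a stratification against a combinatorial triangulation whose boundary restriction is already transverse. Here you have truncated cells and a new inner frontier instead.

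Second, your justification that all summands share the same underlying cuts is wrong: the summands do not differ only in labels. Each one carries its own type $\mathcal D_2^k$ and its own cylinder $\mathcal C^{k+1}$, that is, a different $G$-stratification. These lie on the later cores that cross the shell, so the next marked normal neighborhood would a priori depend on the summand.

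The paper handles this as follows:
\begin{enumerate}
\item The algebraic finiteness assumptions give finitely many summands at each step.
\item The same side and angular collars are prescribed in all of them.
\item The finite-family clause of Lemma~\ref{app:marked-core} is applied to all their filtration terms at once.
\end{enumerate}
This yields a single cut for the whole finite state tree. Without finiteness and this simultaneous construction, the statement's claim that all summands use the same underlying cuts is not established.
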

\begin{proof}
Initially the transverse triangulation has simultaneous filtered
whole-cell collars by Lemma~\ref{app:criterion}. Choose the
projection-compatible collars of Lemma~\ref{app:projection-collars};
this is a cell-preserving preparation, which may be pulled back.
The induction retains the truncated original cells and their
cut-frontier faces, with common filtered collars. The ordinary
comparison in Lemma~\ref{app:ordinary-cores} makes the next literal
core a proper locally flat disk. Apply Lemma~\ref{app:marked-core}
to obtain its filtered normal product and nested buffer levels.
Its central product retains all the future angular faces.

Write $\mathcal D_1^k$ for the current core type and choose a
summand with type $\mathcal D_2^k$. They have the same identified
boundary $\mathcal S^{k-1}$. Using their filtered boundary collars,
realize $\mathcal D_2^k$ so that it agrees with $\mathcal D_1^k$
in a fixed short end collar. Transport the label with this
identification. Corollary~\ref{app:sector-cylinder} supplies
$\mathcal C^{k+1}$ on $D^k\times I$ with the prescribed ends and
side, unchanged in that end collar. For $k=1$ use only the matching
transport sector; for $k=0$ take the interval of transport $1$.
Insert
\[
 \mathcal C^{k+1}\times S^{d-k-1}
\]
in the radial annulus. Retain an outer product collar, so that the
structure is unchanged outside $W$ in the definition of the $\mathcal S^{k-1}$-relations. Place the
small correction carrying the $\alpha_k$ label inside a
top-dimensional angular face. It misses the finite angular faces
for all later nonterminal cores. Its filling is the ball filling
of Lemma~\ref{app:ball-filling}, with its boundary collar fixed.
For $k=d-1$ use the two dual-basis insertions; there is no later
nonterminal core to protect.

Consider the remaining cell collars. In the central region every
future coface is the base times an angular face. After replacement
its relevant pieces are $\mathcal C^{k+1}$ times that angular face,
and its inner frontier is $\mathcal D_2^k$ times the angular face.
Their collars are the prescribed common side collars and the
angular collars. Outside this central region the filling is
unchanged, so the collar-prism construction of
Lemma~\ref{app:marked-core} still applies. At their joins use the
same side collar and the same PL rectangle identification.
Thus the whole boundaries, not merely individual boundary pieces,
have simultaneous filtered collars for the next stage.

The persistent cells here are the truncated original cells and
their frontier faces. The auxiliary dual subdivision used to build
the neighborhood is discarded after this step. In particular we
do \emph{not} require the arbitrary new type $\mathcal D_2^k$ to be
transverse to that auxiliary base subdivision. At the next stage
choose a new dual subdivision retaining the new marks.

There are finitely many summands at every step, by the algebraic
finiteness assumptions. Prescribe the same ordinary side and
angular collars in all of them. Apply the marked-neighborhood
construction to the finite family of all their filtration terms.
This produces a single underlying cut and preserves the common
collars. The induction therefore applies to the full finite state
tree and justifies the common support $B_s$ in the algorithm.
Finally Lemma~\ref{app:ordinary-cores} identifies the terminal
pieces as disjoint balls, each with its complete labeled boundary.
\end{proof}

\subsection{Proposition~\ref{App: order}: choices and face order}
\label{appsec:order}

We first record precisely the filling comparison used in the main
text. It concerns a fixed legal cutting scheme; existence of such a
scheme was proved in Lemma~\ref{app:algorithm}.

\begin{lem}[Filling comparison for a fixed legal scheme]
\label{app:filling-comparison}
Fix the underlying cutting neighborhoods, their boundary
identifications, and the representatives at every node of a finite
algorithmic state tree. Suppose two choices of fillings obey
the definition of the $\mathcal S^{k-1}$-relations and make this same scheme legal.
Then the two complete sums agree. The fillings need not be the
particular cylinders used in the continuation proof.
\end{lem}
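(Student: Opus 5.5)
The plan is to induct down the finite state tree of the fixed scheme, comparing the two sums node by node from the leaves upward. Denote the two filling choices by $P'$ and $P''$. The comparison rests on one observation: every numerical datum read off at a later node is determined by the underlying cuts, the boundary identifications and the chosen representatives, and never by the interior of a filling.

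The data at each node are as follows.
\begin{itemize}
\item For a Move $k$ with $k\ge 2$, the summation set is fixed by the link boundary $\mathcal S^{k-1}$. By the legality of the fixed scheme, this is the filtration on the frontier of the current core, and it lies in the retained outer collar outside $W$. By the definition of the $\mathcal S^{k-1}$-relations, $P'=P''=P$ there.
\item For Move $1$, the admissible sector is fixed by the transport along the core interval. By Lemma~\ref{Filling: Sector is determined by homotopy group} and Lemma~\ref{App: Filling: anything}(2), this transport equals the holonomy of a transverse loop built from the core and a path on the new Space Boundary. That loop is contractible in the current bulk, so its value depends only on labels on the Space Boundary and on the core. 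The core's own labels come from the type $\mathcal D^1_2$ of a representative, not from the filling.
\item For Move $0$, the definition forces radial transport $1$. Hence the inserted bulk cannot change any later transport.
\end{itemize}
From these I conclude that the two trees carry the same index sets, the same weights $Tr(\alpha)/\mu(\alpha)$, and the same dual bases at every node.

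Next I compare the leaves. After the last cut, every terminal piece is a ball, and the algorithm evaluates it by Move $(n+1)$, namely the sphere functional $Z$ applied to its decorated boundary sphere. That boundary sphere is assembled from pieces of the new Space Boundary, labeled by the representatives on their types, together with old Space Boundary pieces. In the $G$-labeled data it sees only the restriction of the filtration to the boundary. The restrictions of $P'$ and $P''$ to each terminal boundary sphere agree: on the cut frontiers they agree by the fixed identifications, and elsewhere they agree because both coincide with the new Space Boundary. Therefore the two leaf values are equal. Summing with the identical weights and index sets then gives equality of the complete sums.

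I expect the main obstacle to be the second step: showing that the Move $1$ transport at a later node is genuinely independent of the filling interior. A filling could carry nontrivial walls crossing paths between the two ends of a later core. To handle this, I would choose the comparison loop to run along the core inside the cut region and then close it up through the retained outer collar, where $P'=P''$. I would then apply the contractible-loop part of Lemma~\ref{App: Filling: anything} in each of the two bulks separately; in both, the loop bounds a disk inside the same ball neighborhood. This yields the same transport, namely the oriented product of the core's wall labels. I would check corner behaviour using the collars supplied by Lemma~\ref{app:algorithm}.
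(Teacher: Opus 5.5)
Your skeleton matches the paper's. You induct along the state tree, check that the index sets, the weights $Tr(\alpha)/\mu(\alpha)$, the paired bases and the inserted diagrams coincide, and conclude because Move $(n+1)$ reads only the terminal sphere diagrams. The gap is in the one nontrivial step: showing that the Move-$1$ sector does not depend on the filling.

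\textbf{Your second bullet does not work.}
\begin{itemize}
\item The two ends of a later Move-$1$ core usually lie on different components of the current boundary, for example two distinct vertex spheres $\epsilon S^n$ at the first edge cut. So there is no closing path on the Space Boundary.
\item Where a core-plus-boundary loop does exist, it need not be contractible in the bulk. This is the paper's torus example: the Move-$1$ degree is not determined by local Space Boundary data.
\item If by ``new Space Boundary'' you mean the tube inserted by this same Move $1$, the loop is the one in Lemma~\ref{Filling: Sector is determined by homotopy group}. That loop only says the admissible types are those whose transport equals the core's. Saying the core's labels ``come from the type $\mathcal D^1_2$'' is then circular.
\end{itemize}

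\textbf{Your repair in the last paragraph also fails as stated.} The end of a later Move-$1$ core at an interior vertex lies on the inner sphere of that vertex's earlier Move-$0$ shell. Hence both the core and any closing curve must cross $W\setminus\epsilon D^{n+1}$ of that earlier move, which is exactly where $P'$ and $P''$ may differ. A loop inside the current Move-$1$ neighborhood only compares the core with a parallel copy, and that copy crosses the same shells. Likewise, ``the oriented product of the core's wall labels'' is filling-dependent term by term, so it cannot serve as the common value.

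\textbf{The missing idea} is a relative homotopy argument inside each earlier shell, which is what the paper's path formulation supplies.
\begin{itemize}
\item A Move-$0$ shell $S^n\times I$ is simply connected for $n\ge2$. The part of the core crossing it is therefore homotopic rel endpoints to (a path on the inner sphere)$\cdot$(a radial connector)$\cdot$(a path on the outer sphere).
\item Codimension-two flatness makes transport invariant under such homotopies, by the disk argument of Lemma~\ref{app:ball-filling}.
\item Each factor is then fixed independently of the filling: the inner sphere carries the inserted diagram, the outer sphere lies where $P'=P''=P$, and the radial connector has prescribed transport $1$.
\item For shells of Moves $k\ge1$, the boundary loops generate $\pi_1(D^k\times S^{n-k}\times I)$. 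Crossing paths are therefore homotopic rel ends to boundary paths, and the same conclusion follows.
\end{itemize}
This is what turns your bare assertion ``radial transport $1$, hence later transports are unchanged'' into a proof.

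\textbf{Your first bullet also needs correcting.} The link boundary of a later core lies on previously inserted boundary pieces, inside the earlier $W$'s, not in the retained collar outside $W$. It still agrees for $P'$ and $P''$, but for a different reason. Both fillings restrict to the same new Space Boundary, which is determined by the original boundary, the fixed representatives' types and the fixed identifications.
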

\begin{proof}
Match the initial boundary diagrams and proceed along each branch.
At the next cut the link boundary is made from the original boundary
and the already inserted boundary types. These pieces, their labels
and their identifications are the same. The only additional sector
datum occurs for Move $1$. Changing a preceding vertex filling does
not change that transport: the radial transport required in
the definition of the $\mathcal S^{k-1}$-relations is $1$.

Here is a path formulation that also covers changes of a whole
permitted shell filling. A shell has support
$D^k\times S^{n-k}\times I$. For $k\ge1$, each component has
connected boundary, and its fundamental group is generated by loops
in that boundary: the boundary contains a copy of the sphere factor,
and the disk and interval factors are contractible. Thus any path
whose ends lie on the boundary is homotopic, relative its ends, to a
boundary path. For $k=0$ there are two boundary components; add one
radial connector, whose transport is prescribed to be $1$. These
boundary paths and the connector generate the relative fundamental
groupoid. For $k=n$, apply the argument to the two ball components
separately. Codimension-two flatness makes wall transport invariant
under these homotopies, by the path argument in
Lemma~\ref{app:ball-filling}. Consequently the identified boundary
data and the specified radial transport determine every transport
used by a later cut. This is a statement about $G$-labels, not about
the numerical gauge invariance to be proved later.

The next relation therefore has the same index set, coefficients
and inserted boundary diagrams in the two computations. At the
highest index choose the same paired bases. Induction matches the
complete terminal sphere diagrams and all coefficients. Move
$(n+1)$ evaluates only those diagrams, and hence gives the same sum.
\end{proof}

\begin{prop}[Main text, Proposition~\ref{App: order}, with auxiliary choices]
\label{app:algorithm-choices}
For fixed representative choices, the controlled construction in
Lemma~\ref{app:algorithm} has the same scalar value for different
auxiliary collars, sufficiently short cut levels, permitted coupon
carriers and linear extensions of the nonboundary face order.
Matched legal fillings can be changed as in
Lemma~\ref{app:filling-comparison}. The construction is natural under
PL homeomorphisms transporting all data. In particular a
cell-preserving change of coordinates preserves its value.
\end{prop}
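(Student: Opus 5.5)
We reduce every listed choice to Lemma~\ref{app:filling-comparison}, with one extra local argument for reordering. The strategy is to show that each auxiliary choice either leaves the terminal sphere diagrams unchanged up to a stratified PL homeomorphism that preserves labels, or changes only data that Lemma~\ref{app:filling-comparison} already shows to be irrelevant. Move $(n+1)$ evaluates only the terminal labeled spheres. Hence two computations agree once we match, branch by branch, the index sets, the coefficients $Tr(\alpha_k)/\mu(\alpha_k)$ (or the dual-basis pairings), and the terminal diagrams up to isotopy.

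\emph{Step 1: collars, cut levels and coupon carriers.} Fix the order and vary the auxiliary collars or the short cut levels. Lemma~\ref{app:collars} gives an ambient PL isotopy between the two choices. It is supported near the cells and preserves all marks, and hence all $G$-labels. Sliding between two short heights in Lemma~\ref{app:marked-core} stays in the same flag and dual block, so the two nested neighborhoods are related by marked PL collars. Transport everything along this homeomorphism. Naturality is clear: all steps of Lemma~\ref{app:algorithm} are defined from the data. The transported scheme is therefore legal and has the same link boundaries at every node. Lemma~\ref{app:filling-comparison} then matches its fillings with those of the second scheme. The terminal sphere diagrams differ by isotopy, so the two sums coincide by isotopy invariance of $Z$ on $\widehat{S}^n(V)$. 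Changing the top-dimensional angular face that carries the $\alpha_k$-coupon is an isotopy of the coupon inside the spherical shell $\epsilon S^{n-k}$ that avoids later cores, so it is handled the same way. For $k=n$ the choice of paired bases is irrelevant by bilinearity.

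\emph{Step 2: changing the face order.} Any two linear extensions of the face order are connected by transpositions of adjacent simplices $\sigma_s,\sigma_{s+1}$ with neither a face of the other. For such a pair, Lemma~\ref{app:ordinary-cores} shows that after short cuts the cores $D_s$ and $D_{s+1}$ have disjoint normal neighborhoods, because neither contains the other. We choose the levels $t_s,t_{s+1}$ generically and small enough for this. The two $\mathcal{S}^{k-1}$-relations are then supported in disjoint regions. The link boundary of each is read off from the boundary diagram near its own core, which the other move does not touch. The transport sector of a Move~$1$ is fixed by the path argument in Lemma~\ref{app:filling-comparison}, using boundary paths and radial connectors of transport $1$, so it is also unaffected. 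The two orders therefore produce the same double sum over pairs $(\alpha,\alpha')$ with identical coefficients, and the resulting bulk vectors coincide after the common later steps. Induction over the remaining steps, together with Step~1 to reconcile the neighborhoods, gives equality.

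\emph{Step 3: cell-preserving coordinate changes.} A cell-preserving PL homeomorphism transports the whole controlled construction to a legal one for the image data, so naturality gives equal values.

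The main obstacle is Step~2 near the true boundary and at corners. There a transposed pair may share a boundary face, and disjointness of the cut neighborhoods must be checked with the same marked block structure used in Lemma~\ref{app:marked-core}. We handle this by fullness, which guarantees that a shared face lying in $\partial B$ is not itself cut, and by choosing the two neighborhoods from one common dual subdivision, so that their blocks are indexed by disjoint flags.
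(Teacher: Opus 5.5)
Your treatment of collars and cut levels in Step~1 is in line with the paper. The proposal has two genuine gaps: the reordering argument in Step~2, and the handling of coupon carriers.

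\emph{Reordering.} In Step~2, incomparability does not give disjoint cores. Two consecutive incomparable nonboundary simplices can share a true-boundary facet: $\sigma_s=\beta*w$ and $\sigma_{s+1}=\beta*w'$ with $w,w'$ interior vertices. Fullness does not exclude this, since it only concerns simplices all of whose vertices lie on $\partial B$.

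By Lemma~\ref{app:ordinary-cores}, once the common nonboundary proper faces are removed, both cores still contain the surviving part of $\mathrm{Int}\,\beta$. So every admissible product neighborhood of one core contains a neighborhood of a piece of the other. The dual blocks over $\mathrm{Int}\,\beta$ belong to both neighborhoods, so no choice of flags separates them. The first cut shortens the second core, so the two orders produce different cuts. Your claim that the other move does not touch the second core's link boundary also fails.

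You need a construction under which both orders remove the same set. In a marked normal chart over $\beta$, the two cores are inward rays in the half-space cone $c\,\mathrm{lk}_K\beta$. Take a short common cap $C$ there, and disjoint thin tubes $T_1,T_2$ outside it. Whichever core is processed first uses $C$ and its own tube; the other is shortened by a boundary collar and starts at the cap frontier. Keep both fillings unchanged on $C$ and in the end collars. The shortened core types then differ only by unchanged collars, which have transport $1$ at index $1$. Hence the weights and final states match in pairs, and the two finite sums commute.

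\emph{Coupon carriers.} Changing the coupon carrier is not an isotopy that avoids later cores. The permitted carriers are top-dimensional angular faces of the normal sphere $S^{n-k}$. They are separated by codimension-one angular faces, which are exactly where the $n$-dimensional cofaces of the core meet the shell. These are the last nonterminal cuts (Move~$n$). Any path between two carriers crosses such a cut, and the coupon then ends on a different terminal ball. So the individual terminal diagrams genuinely differ, and no term-by-term isotopy argument can work. Moving the coupon within a fixed carrier is an isotopy, and that part of your argument is fine.

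The comparison must be made after summation. Let $A\colon V_S\to V_T$ be the map given by the local coupon tangle between the disk spaces on the two sides of a crossed face. The copairing inserted by that Move~$n$ cut satisfies $(A\otimes\mathrm{id})\Omega_S=(\mathrm{id}\otimes A^\vee)\Omega_T$. So sliding the coupon across the cut leaves the complete contraction unchanged. To finish, join the two carriers by a path through codimension-one faces that misses the codimension-two skeleton. This is possible because the facet-adjacency graph of the sphere is connected. Do this with an order increasing in dimension and the lower labels fixed, applying the identity at each crossed face.
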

\begin{proof}
First fix the top-dimensional angular cell carrying each coupon.
Compare collars over the old cells in increasing dimension. Match
the already constructed boundary caps, use simultaneous collar
uniqueness (Lemma~\ref{app:collars}) to put their extensions in one
collar, and match the finitely many short levels by a piecewise
affine change of its interval coordinate. Keep the outer end and a
prescribed inner core fixed. The maps agree on common faces and
preserve all marks. Compare the central transverse-star parameters
by cellwise ball isotopy in the normal factor. At the ends the old
and new filtrations coincide and there is no coupon; changes of
the extended normal trivialization there do not change the physical
filled diagram. Move the coupon within its fixed carrier by an
isotopy supported in that carrier. These identifications transport
the states, their boundary parameters and their weights. Different
controlled cylinders are compared by choosing common cuts for their
finite marked families and applying
Lemma~\ref{app:filling-comparison}.

Next interchange two consecutive incomparable faces. If their
remaining cores are disjoint, use disjoint short neighborhoods;
the operations and their finite sums commute. Otherwise all
nonboundary common proper faces have already been removed.
By Lemma~\ref{app:ordinary-cores}, the surviving intersection lies
in the relative interior of a common true-boundary facet $\beta$.
In particular the two cores have the same dimension. In a marked
normal chart over the remaining part of $\beta$ they are two inward
rays in the half-space cone $c\lk_K\beta$.

Take a short full cap $C$ in this cone, and disjoint thin ray tubes
$T_1,T_2$ outside $C$. The first core uses $C$ and its own tube.
The other core is shortened by a boundary collar and starts at the
cap frontier. Reversing the order removes the same set
$C\cup T_1\cup T_2$. The construction over $\beta$ uses its common
marked collar; at its boundary use the same corner collar as in
Lemma~\ref{app:marked-core}. Thus both orders are legal.
Keep both fillings unchanged in the common cap and in their end
collars. Their changed central regions and coupons are disjoint.
The new core types differ only by the unchanged boundary collars;
use these collars to identify the parameters and labels. At index
$1$ the extra short collar has transport $1$. Therefore the weights
and final states match for each pair of labels, with their coupon
carriers fixed. The two finite sums commute. This argument uses a
shared cap, not disjoint neighborhoods of the entire original cores.
Any two linear extensions are connected by these adjacent exchanges.

It remains to compare different angular carriers. This last
comparison is made after summation, not term by term. By the order
comparison just proved, use an order increasing in dimension.
Fix the lower-dimensional labels. On the compact central part of
a $k$-core, $k<n$, the normal angular sphere is $S^{n-k}$ and the
filtration is a product in its angular direction. Join the two
carrier cells by a path crossing their codimension-one faces and
missing the codimension-two skeleton. Such a path exists since the
facet-adjacency graph of this sphere is connected. The crossed
angular faces correspond to $n$-dimensional cofaces of the original
core, precisely the last nonterminal cuts. Keep the core's end
collars fixed throughout this local comparison.

For one crossing let $V_S,V_T$ be the finite-dimensional local disk
spaces on its two sides. A local coupon tangle gives a linear map
$A:V_S\to V_T$, and reverse gluing gives the dual map
$A^\vee:V_T^\vee\to V_S^\vee$. These are spaces of the original
$S^n$-algebra, not the Time Boundary state spaces constructed later.
Write $\Omega_S=\sum_i e_i\otimes e^i$ and similarly for $T$.
The inverse-pairing tensors satisfy
\begin{equation}\label{app:coupon-tensor}
 (A\otimes\id)\Omega_S=(\id\otimes A^\vee)\Omega_T
       \quad\text{in }V_T\otimes V_S^\vee.
\end{equation}
Indeed contraction against $\lambda\in V_T^\vee$ and $v\in V_S$
gives $\lambda(Av)$ on both sides; these contractions separate
tensors. The highest nonterminal relation inserts exactly this
inverse-pairing tensor. Consequently moving the coupon through
one such cut leaves the complete contraction unchanged. This
includes arbitrary labels and sphere functionals attached to its
other slots. The local tangle and its reverse carry the given
$G$-data, so no gauge transformation or unproved gauge invariance
is invoked. Apply the identity at each crossed face and then sum
over the other, fixed, lower labels. Finally return to the original
face-first order. The case $k=n$ is the same basis-independent
inverse-pairing tensor itself.

Naturality follows by transporting the entire geometric
construction and the hypersphere evaluations. The proof has used
neither global relations nor isotopy invariance for a fixed
triangulation. In particular different carrier choices have been
compared by their complete sums; no assertion that all their
individual terminal diagrams coincide is needed.
\end{proof}

\subsection{Lemma~\ref{App: Shellable}: evaluation of a complete shellable tail}
\label{appsec:shell-tail}

Write $E(B,K)$ for the algorithmic sum and reserve $Z$ for the given sphere functional. The only numerical input in this subsection is the adjacent identity in the main text. In particular, global relations and general triangulation independence are not used.

A \textbf{shelling} orders the top simplices of a ball so that each simplex after the first meets the preceding union in a nonempty union of facets forming a ball. For a sphere the final intersection is its entire boundary. An elementary collapse removes a simplex together with a facet contained in no other remaining simplex.

\begin{lem}[A relative face order for a shellable ball]
\label{app:shell-order}
If $Q$ is a shellable $d$-ball, it has a top simplex $\Omega$ such
that $Q\setminus\Int\Omega$ collapses to $\partial Q$. Reversing
the collapse orders its interior faces in consecutive pairs
$(\sigma,\tau)$, where $\sigma$ is a facet of $\tau$, followed by
$\Omega$. This order extends the face order relative to $\partial Q$.
\end{lem}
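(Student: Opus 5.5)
The plan is to extract $\Omega$ from the shelling order, and then obtain the collapse by reversing the shelling one simplex at a time.

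Let $\sigma_1,\dots,\sigma_N$ be a shelling of $Q$ and take $\Omega=\sigma_1$. For $j\ge2$ write $Q_j=\bigcup_{i\le j}\sigma_i$ and $R_j=\sigma_j\cap Q_{j-1}$. By the definition of a shelling, $R_j$ is a union of facets of $\sigma_j$ forming a combinatorial $(d-1)$-ball.

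\textbf{Step 1: removing one shelling simplex.} The key local statement is the standard one. If $R$ is a nonempty union of facets of a $d$-simplex $\sigma$, and $R$ is not all of $\partial\sigma$, then $\sigma$ collapses onto $R\cup(\partial\sigma\cap\partial Q)$ by a sequence of elementary collapses. These collapses remove exactly the faces of $\sigma$ that lie neither in $R$ nor in $\partial Q$.

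To see this, let $F$ be the set of facets of $\sigma$ opposite the vertices $w_1,\dots,w_r$, so that $R=\bigcup F$. The faces to be removed are exactly the faces containing all of $w_1,\dots,w_r$ and not lying in $\partial Q$. Since $\sigma_j$ is free in $Q_j$ along its facets not in $R_j$, these faces can be paired as $\eta\leftrightarrow\eta*w$ for a fixed vertex $w\notin\{w_1,\dots,w_r\}$. This is the cone structure on the complementary face. Ordering the pairs by decreasing dimension gives elementary collapses.

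Two checks are needed here:
\begin{itemize}
\item A facet of $\sigma_j$ lying outside $R_j$ either is in $\partial Q$ or is shared with a later simplex. In the second case it is not free in $Q_j$, but it is also not yet removed, since removal proceeds from $j=N$ downward.
\item The faces removed at stage $j$ are precisely those of $\sigma_j$ not contained in $Q_{j-1}\cup\partial Q$.
\end{itemize}

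\textbf{Step 2: assembling the collapse.} Performing Step 1 for $j=N,N-1,\dots,2$ collapses $Q\setminus\operatorname{Int}\Omega$ onto $\partial Q\cup\partial\Omega$. Every face of $\partial\Omega$ other than those in $\partial Q$ gets removed when we handle the simplices adjacent to it. The top simplex $\Omega$ is left last, and its interior is excluded. Ordering the removals this way gives $Q\setminus\operatorname{Int}\Omega\searrow\partial Q$.

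\textbf{Step 3: reversing the collapse.} Reversing the list of elementary collapses orders the interior faces in consecutive pairs $(\sigma,\tau)$ with $\sigma$ a facet of $\tau$, followed by $\Omega$. When a pair $(\sigma,\tau)$ is added back, every other face of $\tau$ is already present or lies in $\partial Q$, because $\sigma$ was free when removed. The same holds for the faces of $\sigma$. Hence the order is face-first relative to $\partial Q$. Finally $\Omega$ comes last, and all its proper faces are present by then.

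I expect the main obstacle to be the bookkeeping in Step 1 when $R_j$ already contains faces that meet $\partial Q$. One must check that the cone pairing never pairs a face with one lying in $\partial Q$. If this fails, I would instead use the known equivalence that a shellable ball, minus one top simplex, is collapsible relative to its boundary (via the dual shelling, compare \cite{adiprasito2017subdivisions}), and extract the pairs from that collapse.
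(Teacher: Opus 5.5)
\textbf{There is a genuine gap.} Your Step 1 is false, and the cause is the direction of the induction, not bookkeeping.

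Write $W=\{w_1,\dots,w_r\}$ and let $T$ be the complementary face of $\sigma_j$. Then $\sigma_j=W*T$ with $T\neq\emptyset$ and $R_j=\partial W*T$, and the faces of $\sigma_j$ outside $R_j$ are the $W*\beta$ with $\beta\subseteq T$. At your first stage $j=N$ there is no later simplex. So each $W*\beta$ with $\beta\subsetneq T$ lies in a single top simplex and hence in $\partial Q$. This gives $R_N\cup(\partial\sigma_N\cap\partial Q)=\partial\sigma_N$, and the set you propose to remove is the single face $\sigma_N$. No sequence of elementary collapses removes exactly one face, and your pairing $\eta\leftrightarrow\eta*w$ necessarily pairs $\sigma_N$ with a facet in $\partial Q$. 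Already for two triangles $abc$, $bcd$ shelled in this order, you would be collapsing $bcd$ onto its boundary.

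Step 2 also contradicts Step 1. By your own description, the faces removed at stage $j$ avoid $Q_{j-1}\supseteq\Omega$. So the interior faces of $\partial\Omega$ (the edge $bc$ above) are never removed, and you end at $\partial Q\cup\partial\Omega$, not $\partial Q$.

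What you have adapted is the standard proof that a shellable ball is collapsible, $Q_j\searrow Q_{j-1}$. That collapse removes exactly the new faces $W*\beta$, and they are free precisely because they are boundary faces of $Q_j$. A collapse relative to $\partial Q$ is not allowed to remove such faces. Moreover, the only free faces of $Q\setminus\operatorname{Int}\Omega$ outside $\partial Q$ are the interior facets of $\Omega$, so the collapse has to start next to $\Omega$. Your fallback cites the statement to be proved, so it does not help.

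\textbf{The missing idea (the paper's argument).} Run the shelling forward and remove the complementary interval: the faces containing $T$ rather than $W$. Suppose $Q_{j-1}\setminus\operatorname{Int}\Omega$ has already been collapsed to $\partial Q_{j-1}$. Fix $v\in W$, which is possible since $W\neq\emptyset$ for $j\ge 2$. Remove the pairs $(T*\eta,\,T*v*\eta)$ for $\eta\subseteq W\setminus\{v\}$, in decreasing dimension of $\eta$. This removes the relative interior of the attaching ball $R_j$ together with $\sigma_j$. Each low face $T*\eta$ is free at its turn:
\begin{itemize}
\item it lies in the relative interior of $R_j$, hence in the interior of the ball $Q_j$, so no later simplex contains it;
\item its cofaces in $\partial Q_{j-1}$ lie in $R_j\subseteq\sigma_j$;
\item all its cofaces in $\sigma_j$ other than $T*v*\eta$ were removed at earlier steps.
\end{itemize}
What remains is $\partial Q_j$, obtained from $\partial Q_{j-1}$ by replacing the ball $R_j$ with $W*\partial T$ along their common boundary $\partial W*\partial T$. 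Induction ends at $\partial Q_N=\partial Q$. Every removed face is interior to $Q$, so the obstacle you anticipated never arises. Your Step 3 then applies unchanged.
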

\begin{proof}
Take $\Omega$ to be the first simplex of a shelling and remove its
open top simplex. Suppose the earlier shelling part has been
reduced to its boundary. For the next simplex $F$, partition its
vertices into nonempty simplices $R,T$ so that its attaching part
is $\partial R*T$ and the other part is $R*\partial T$.
Choose a vertex $v\in R$. For faces $\eta\subset R\setminus\{v\}$,
in decreasing dimension, remove
\[
 (T*\eta,\ T*v*\eta).
\]
The low face is in the relative interior of the attaching part.
It has become interior in the enlarged shelling ball, so no later
top simplex contains it. Its other cofaces in $F$ have already
been removed. It is therefore free. These pairs remove $F$ and
the interior of its attaching part, leaving $R*\partial T$.
Induction leaves precisely $\partial Q$. Reversing the list gives
the asserted face order: all proper faces of a low face, and all
faces of its high face except the low face, are already present.
\end{proof}

\begin{lem}[The actual shellable tail]
\label{app:shell-tail}
In a controlled algorithm of Lemma~\ref{app:algorithm}, suppose a
prefix has separated a component $Q_0$ supported on a ball. Suppose
its remaining face list is the relative interior face list of a
shellable ball $Q$, with the inherited truncated-cell geometry.
Then the complete remaining sum on $Q_0$ is $Z(\partial\mathcal Q_0)$.
It has this value for every permissible choice of representatives
in that tail.
\end{lem}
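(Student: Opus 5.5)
I would argue by induction on the number of top simplices of the shelling, using the relative face order of Lemma~\ref{app:shell-order}. Since the complete sum is independent of the linear extension of the face order, by Proposition~\ref{app:algorithm-choices}, I may process the interior faces of $Q_0$ in that order. The order consists of consecutive pairs $(\sigma,\tau)$ with $\sigma$ a facet of $\tau$, followed by the terminal top simplex $\Omega$. The goal is to show that each pair contributes nothing beyond an isotopy of the boundary diagram of the current residual ball. Then only Move $(n+1)$ on $\Omega$ remains, and it gives $Z$ of the accumulated sphere diagram, which should be identified with $\partial\mathcal Q_0$.

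\textbf{Step 1: one pair is an Adjacent Move.} Consider a pair $(\sigma,\tau)$ with $\dim\sigma=k-1$ and $\dim\tau=k$, removed in reversed-collapse order. All other faces of $\tau$ except $\sigma$ are already present, and $\sigma$ is free in the current residual complex. So at the moment $\sigma$ is cut, the residual ball is locally the model of the Adjacent Move:
\begin{itemize}
\item the core of $\sigma$ gives the local part $\mathcal S^{k-2}\times\epsilon D^{n-k+2}$;
\item the core of $\tau$, after the first cut, gives the second local part $\mathcal S^{k-1}_\pm$.
\end{itemize}
I would check this with Lemma~\ref{app:marked-core}. Its nested normal product neighborhoods identify the residual cell of $\tau$ with $\mathcal D^{k}\times D^{n+1-k}$, whose link boundary is built from the already inserted types and the inherited truncated-cell collars. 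Performing the two relations is then exactly $F^{k-1}_\pm$, in one of the two orientations, and the remaining residual ball is again a ball.

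\textbf{Step 2: the pair's contribution drops out.} I would not evaluate the pair in isolation. Instead I would regroup the full remaining sum: fix all labels created before the pair, and note that the labels created by the pair enter later only through the sphere functional of the final diagram. By Step 1 and the induction (the remainder of the tail is again a shellable-tail situation on a smaller ball), the final value is $Z$ of a sphere diagram containing the pair's insertion. Theorem~\ref{Adjacent Move} gives $Z\circ F^{k-1}_\pm=Z$, so the insertion can be removed. For the pair $(0,1)$ this needs (MI), together with the sector constraint of Lemma~\ref{Filling: Sector is determined by homotopy group} on the transport of the edge.

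The cleanest organisation is to induct from the end of the list. The base case is $\Omega$ alone, where the claim is Move $(n+1)$. For the inductive step, suppose the tail after the pair $(\sigma,\tau)$ equals $Z$ of its boundary. Then the pair plus the tail is $Z\circ F^{k-1}_\pm$ applied to the boundary before the pair, which equals $Z$ of that boundary. The boundary sphere after removing the pair differs from the previous one by an isotopy supported near $\sigma\cup\tau$. That isotopy is absorbed by Proposition~\ref{app:algorithm-choices} and the isotopy invariance built into the $G$-$D^n$ operad. Independence of representatives follows because the Adjacent identity holds for every summand choice, using Proposition~\ref{A basis}(3).

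\textbf{Main obstacle.} The hard part is Step 1: verifying that the literal geometry of the controlled algorithm matches the model $\mathcal S^{k}_\pm$ of the Adjacent Move, including the $G$-fillings and coupon carriers. In particular one must show that the null-relation terms of Proposition~\ref{A basis}(4) avoid the second cut. I would handle this as the paper's proof of Theorem~\ref{Adjacent Move} does, choosing $F_+$ or $F_-$ according to the side on which the null term lies. The fillings would be compared using Lemma~\ref{app:filling-comparison}.
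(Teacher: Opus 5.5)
Your proposal is correct and is essentially the paper's proof. You process the tail in the reversed-collapse order of Lemma~\ref{app:shell-order}, recognize each free-facet pair $(\sigma,\tau)$ as an adjacent move $F^{\dim\sigma}_\pm$ (with (MI) entering only at $\dim\sigma=0$, and a top-dimensional pair simply isolating a ball evaluated by Move $(n+1)$), and induct backwards from the final Move $(n+1)$ evaluation, applying Theorem~\ref{Adjacent Move} only after the later tail has been replaced by $Z$.

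Consequently, the null-term issue you list as the main obstacle does not arise in this lemma. The adjacent identity is used as a black box on sphere diagrams and covers both orientations $F_\pm$, so, as the paper notes, no null relation is ever inserted into an unevaluated tail.
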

\begin{proof}
Use the order from Lemma~\ref{app:shell-order}; the order comparison
in Proposition~\ref{app:algorithm-choices} permits it. If a top ball
becomes isolated, its final scalar evaluation may be performed
immediately: it has no remaining cut and its scalar commutes with
the other finite sums. This is only a rearrangement of the original
algorithm, in which all Move-$d$ evaluations are written last.

We check the geometry of one consecutive free-facet pair. Before
that pair all faces of $\tau$ except $\sigma$ and $\tau$ have
already been processed or belong to the fixed boundary. The
ordinary neighborhood of the pair is the elementary cancelling
block. Its incoming part is
\[
 \bigl((S^{k-1}\times I)\cup_{S^{k-1}}D^k\bigr)
       \times D^{d-k-1},\qquad k=\dim\sigma,
\]
a $(d-1)$-ball; its outgoing part is another such ball, with their
outer collar identified. One obtains this model by writing the
high cell as a ball with the low facet and its complementary
boundary ball, then using the shared boundary collars. It is the
elementary step behind the regular-neighborhood collapse theorem
\cite[Theorem 3.26, printed pp.41--43; PDF pp.49--51]{rourke2012introduction}.

Use the same collars in the marked construction. Centrally they
give the base times the normal angular disk. At an attaching end
the filling is unchanged, so the cap is absorbed by the same
product-collar reparameterization. After the low cut, the boundary
of the high core is the inserted $\mathcal D_2^k$ joined to the
complementary old $k$-disk. Its normal factor is $D^{d-k-1}$.
These are exactly the two cuts in the definition of the adjacent move, with the
orientation giving $F^k_+$ or $F^k_-$. The argument straightens a
single free-facet pair; it does not straighten an entire possibly
knotted coface fan. The choices are available in every preceding
state by the common-collar construction. The $k=0$ transport is
the specified one, since the inserted radial interval has
transport $1$.

The ordinary pair changes the remaining ball by a boundary collar
bulge. For the last-dimensional pair it can first isolate a ball,
which is evaluated by Move $d$. Thus the complete support before
and after each paired stage is again a ball, after these isolated
evaluations.

Now induct \emph{backwards} on the complete tail. The last ball is
evaluated by the defining Move $d$, so its value is $Z$. Assume
that the entire tail after a pair equals $Z$ for every current
complete boundary diagram. The preceding sum is then the two
adjacent relations followed by $Z$. The adjacent identity in the main text says that it equals
the old $Z$ again. This proves the assertion. In particular we
never insert a null relation into an unevaluated tail. MI is used
exactly through the $k=0$ case of the adjacent identity in the main text.
\end{proof}

\begin{rmk}
The remaining cells in this statement are inherited from a legal computation on the original full triangulation. They are not assumed to form a full triangulation relative to the boundary of the separated ball.
\end{rmk}

\subsection{Proposition~\ref{App: p-subdivision invariance}: invariance under a p-subdivision}
\label{appsec:stellar}

\begin{lem}[Common interface elimination]
\label{app:interface}
Let $K'=s_A^pK$ be a stellar subdivision of a full triangulation,
and suppose both triangulations are transverse to the same bulk.
Put $L=\lk_K A$, $Q=A*L$ and $J=\partial A*L$.
The nonboundary faces of $J$ can be processed as the same prefix
on both sides. After this prefix the inside and outside supports
are disjoint compact manifolds; the inside support is a ball.
For each prefix state its complete inside boundary diagram and
its weights are the same on both sides.
\end{lem}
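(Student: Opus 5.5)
The idea is that $K$ and $K'=s^p_AK$ agree outside the closed star $Q=A*L$, and also on its boundary: $\partial Q=\partial A*L=J$ when $A$ is interior. When $A$ meets $\partial B$, $\partial Q$ is $J$ together with the part of $Q$ in the true boundary. So the simplices of $J$ not contained in $\partial B$ are literally the same simplices in both triangulations. I would use Proposition~\ref{app:algorithm-choices} (independence of the face order and of auxiliary choices) to put these faces first in both computations, in one fixed face-first order of $J$. This is legitimate because every face of a simplex of $J$ again lies in $J$, and $J$ is a subcomplex of both $K$ and $K'$.

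\textbf{Step 1: the prefix is identical on both sides.} The interface $J$, its simplices and the bulk stratification near $J$ are the same on both sides. Transversality near $J$ is the same condition for $K$ and $K'$. I would therefore choose the controlled neighborhoods of Lemma~\ref{app:marked-core} once, depending only on a neighborhood of $|J|$ and on the marks there. These same choices are used in both computations. The prefix states are then literally the same labeled diagrams with the same weights. The Move-$1$ transports agree, since they are determined by boundary data and radial transport $1$ (Lemma~\ref{app:filling-comparison}). The representative sets and dual bases are also chosen identically.

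\textbf{Step 2: the prefix separates $Q$ from its complement.} Every face of $Q\cap\overline{B\setminus Q}$ that is not in $\partial B$ lies in $J$. After the prefix has removed thin neighborhoods of all these cores, the residual cells of Lemma~\ref{app:ordinary-cores} inside $Q$ and outside $Q$ meet only along removed regions. Hence the supports become disjoint compact manifolds, and each has its full labeled boundary. By fullness, a simplex of $Q$ with all vertices on $\partial B$ lies in $\partial B$, so no additional contact survives. The inside support is $Q$ with a collar of its frontier removed. Since $Q$ is the star of $A$ in a combinatorial manifold, it is a PL ball, and removing a collar leaves a ball. The inside boundary diagram is the sphere made of the untouched true-boundary part together with the inserted types on the cut frontier. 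It is the same on both sides, because the cut frontier and the insertions depend only on the prefix data.

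\textbf{Main obstacle.} The hardest part will be the case where $A$ meets $\partial B$. Then the interface also meets the true boundary, and I must check that the cuts along $J$ near $\partial B$ are still legal and identical for $K$ and $K'$. Here I would first invoke the boundary-contact analysis of Lemma~\ref{app:ordinary-cores}: surviving contact occurs only in the relative interiors of true-boundary facets, with the half-space/inward-ray model there. I would then use the common cap construction from the order comparison in Proposition~\ref{app:algorithm-choices}, together with the fact that $p$-subdivision preserves fullness (Proposition~\ref{app:full-boundary}). The remaining check is that the neighborhoods chosen for $J$ stay inside a collar of $J$ that avoids the new cone vertex $p$.
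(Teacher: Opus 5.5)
\textbf{Gap in Step~1.} You justify the shared prefix by claiming the controlled neighborhoods of Lemma~\ref{app:marked-core} can be chosen ``depending only on a neighborhood of $|J|$ and on the marks there''. That is not how the construction works, and $K$, $K'$ are \emph{not} the same near $J$.

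The neighborhood of a core is built relative to the ambient cell structure. Its central product retains the angular faces of all cofaces of the core. The residual cells must keep the truncated-cell type of the triangulation in use. Each coupon must sit in a top-dimensional angular face that misses the angular faces of all later nonterminal cores (Lemma~\ref{app:algorithm}).

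For every $\sigma=\alpha*\lambda\in J$, the simplex $p*\sigma$ is a new coface of $\sigma$ in $K'$. When $\dim A-\dim\alpha\geq 2$ (for instance $\alpha=\emptyset$, $\sigma=\lambda\in L$), it is an extra angular face lying inside the angular face of the coarse coface $A*\lambda$. Consequently a coupon placed using $K$'s angular structure can land on a later core of $K'$. Likewise, cuts adapted to one triangulation need not leave the other's residual cells in controlled form. Your proposed check that the neighborhoods avoid $p$ does not help, since the cells $p*\sigma$ reach all the way to $J$.

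The missing idea is to build the prefix on the common refinement:
\begin{itemize}
\item construct the neighborhoods and collars from the fine cells of $K'$;
\item note that the coarse cells of $K$ are unions of fine cells and that the flag neighborhoods preserve these unions, so the same cuts are legal for both triangulations, with coarse collars obtained by amalgamation;
\item place the coupons in fine top-dimensional angular faces, which then miss the later cores of both triangulations.
\end{itemize}
Only then are the prefix states, labels and weights literally shared.

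\textbf{Hole in the separation argument.} Faces of $J$ lying in $\partial B$ are never processed. Fullness does not by itself separate the inside and outside top cells that contain them. What you need is that every face in the intersection of the closed inside and outside top-cell unions lies in a nonboundary facet of $J$. This holds because the top cells containing such a face are the facets of its ball or sphere link, whose facet-adjacency graph is connected. A path from an inside cell to an outside cell must therefore cross a facet shared by an inside and an outside top cell, i.e.\ an internal facet of $J$. That facet has been processed, and removing its core cuts the true-boundary seam with positive width.

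With these two points supplied, the rest of your outline is in line with the paper: identical inside diagrams and weights, and finite distributivity. Your ball claim (``$Q$ minus a frontier collar'') is acceptable as a sketch. The paper makes it concrete by coning radially from a point $c$ in the interior of $A$, writing $Q=c*J$ and choosing cut levels below $f_\sigma(c)$; this also covers the case $A\subset\partial B$.
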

\begin{proof}
The interface simplices are unchanged by the subdivision. Construct
their neighborhoods using the fine cells of $K'$ and their common
collars; the coarse cells are unions of these cells. The flag
neighborhoods preserve those unions, so the same cuts are legal
for both triangulations. Amalgamation of the fine collars gives
the coarse collars. In the central product a coarse angular face
is a union of fine angular faces. Put new coupons in the fine
top-dimensional angular faces. Hence the entire prefix, including
all its labels and buffers, can be shared.

We include the boundary-seam check. Any face in the intersection
of the closed inside and outside top-cell unions is contained in
an internal interface facet. Indeed, the top cells containing it
are the facets of a manifold link, whose facet-adjacency graph is
connected; a path from an inside to an outside cell crosses such
a facet. That facet is a nonboundary face of $J$ and has been
processed. Its whole remaining core, including its true-boundary
seam, is removed. The two closed remaining pieces are therefore
disjoint. The width at that seam is positive; no zero-width collar
is being used. Each is a union of components of the remaining
manifold and has the induced filtered collars.

To see that the inside is a ball, use the ordinary convex model
of Lemma~\ref{app:ordinary-cores}. Choose $c\in\Int A$, so that
$Q=c*J$. For every processed interface face $\sigma$, one has
$f_\sigma(c)>0$. Choose its level smaller than this value. On
each ray from $c$ the retained inequalities leave an initial
interval, of positive length uniformly near $c$. The finite
polyhedral radial decomposition is a cone on its outer frontier;
triangulate its faces and cone to $c$. It identifies the retained
set with a PL ball. When $A$ is a boundary face, $J$ is a ball and
$c$ is on the true boundary; the same construction is a cone on
a ball. The cellwise cap comparison transports this conclusion
to the actual cuts.

Finally the prefix uses the same physical cores, link boundaries,
transport sectors and representatives. Its boundary types are
inserted with the same parameters. Keep the paired dual-basis
indices when an interface facet is cut. The complete resulting
inside diagram is consequently identical. Finite distributivity
now gives
\[
 E(B,K)=\sum_s w_s E_{\rm in}(s,K)E_{\rm out}(s),
\]
and the analogous formula for $K'$, with the same $s,w_s$ and
outside factors. This formula is used only \emph{after} the
geometric separation has been proved.
\end{proof}

\begin{prop}[A stellar move with shellable link]
\label{app:shellable-stellar}
If the link of $A$ is shellable, the two algorithmic sums in
Lemma~\ref{app:interface} are equal. Choices inside the star may
be arbitrary; use the same choices in the common prefix and outside.
\end{prop}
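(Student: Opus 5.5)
By Lemma~\ref{app:interface}, both algorithmic sums decompose as
\[
E(B,K)=\sum_s w_sE_{\rm in}(s,K)E_{\rm out}(s),\qquad
E(B,K')=\sum_s w_sE_{\rm in}(s,K')E_{\rm out}(s).
\]
The prefix states $s$, the weights $w_s$ and the outside factors are the same in the two formulas. It therefore suffices to prove
\[
E_{\rm in}(s,K)=E_{\rm in}(s,K')=Z(\partial\mathcal Q_s)
\]
for each fixed prefix state $s$. Here $\mathcal Q_s$ is the separated inside ball, whose complete boundary diagram is identical on the two sides. The plan is to identify each inside tail as the relative interior face list of a shellable ball and then apply Lemma~\ref{app:shell-tail} twice. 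The arbitrary choices inside the star are harmless, because that lemma holds for every permissible choice of representatives in the tail. Proposition~\ref{app:algorithm-choices} allows the inside faces to be reordered after the common prefix.

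\emph{Step 1 (the coarse side).} After the prefix, the faces of $K$ remaining in $Q=A*L$ are exactly the faces containing $A$ that are not in the true boundary. These are the relative interior faces of the ball $Q$ relative to $J=\partial A*L$. Since $L$ is shellable, the join $A*L$ of a simplex with a shellable complex is shellable: shell $L$ and join each top simplex with $A$. Hence Lemma~\ref{app:shell-tail} gives $E_{\rm in}(s,K)=Z(\partial\mathcal Q_s)$.

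\emph{Step 2 (the subdivided side).} In $K'$ the star is replaced by $p*\partial A*L=p*J$, and the remaining inside faces are those containing $p$. The cone $p*J$ is shellable as well. If $A$ is interior, $J=\partial A*L$ is a join of the shellable sphere $\partial A$ with the shellable $L$, and a join of shellable complexes is shellable. Coning preserves shellability. If $A$ is a boundary face, $J$ is a ball and the same join argument applies. Lemma~\ref{app:shell-tail} then gives $E_{\rm in}(s,K')=Z(\partial\mathcal Q_s)$, where the boundary diagram is the same one produced by the shared prefix.

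\emph{Main obstacle.} The expected difficulty is geometric, not numerical. Lemma~\ref{app:shell-tail} requires the inside tail to carry the inherited truncated-cell geometry of a shellable ball. On the $K'$ side, the ball $Q_0$ obtained after the prefix must be identified with the truncation of $p*J$, and the fine cells used to build the shared prefix must be compatible with it. The plan is to reuse the radial cone model from the proof of Lemma~\ref{app:interface}, taking $c=p$ so that $Q=p*J$. On the coarse side, the same radial picture, read in the coarse cells, exhibits $Q_0$ as the truncation of $A*L$. The boundary case, in which $p$ lies on the true boundary, needs a check that the cone on the ball $J$ is shellable relative to the correct boundary faces. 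I would verify this directly by shelling $J$ and coning each top simplex from $p$.
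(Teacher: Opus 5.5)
Your proposal is correct and follows essentially the paper's argument. After the common interface prefix of Lemma~\ref{app:interface}, the remaining faces on each side are the relative interior faces of the shellable balls $A*L$ and $p*\partial A*L$, so Lemma~\ref{app:shell-tail} evaluates both inside tails to $Z$ of the same boundary diagram, and the shared weights and outside factors then give equality of the complete sums. The only slip is one of wording: when $A$ is a boundary face, the tail is relative to all of $\partial Q=J\cup A*\partial L$ rather than to $J$ alone, and your clause ``not in the true boundary'' already accounts for this.
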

\begin{proof}
Both $A*L$ and $p*\partial A*L$ are shellable balls. After the
common prefix, their remaining faces are their relative interior
faces. Lemma~\ref{app:shell-tail} makes both complete inside sums
equal to $Z$ of the same boundary diagram. Multiply by the common
outside factors and weights and sum over all prefix states.
Boundary stellar moves are included; only actual nonboundary
faces of the interface are processed.
\end{proof}

\begin{lem}[Factoring a stellar subdivision]
\label{app:stellar-factor}
Every stellar subdivision of a finite combinatorial manifold is
a finite composition of stellar subdivisions and their inverses
whose centers have shellable links. The factorization fixes the
old interface and exterior. If the initial triangulation is full,
all intermediate triangulations in this factorization are full.
\end{lem}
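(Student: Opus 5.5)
The plan is to reduce the general stellar subdivision to stellar moves centered at faces whose link is shellable. The tool for this is the result of Adiprasito and Benedetti \cite{adiprasito2017subdivisions}: every combinatorial ball or sphere becomes shellable after a finite number of derived subdivisions. Fix the stellar subdivision $K'=s^p_AK$, and write $L=\lk_KA$ and $Q=A*L$. The factorization will use only subdivisions supported in the open star of $A$ and on $L$, carried out in a way that leaves $J=\partial A*L$ and everything outside $Q$ unchanged as a triangulated space.

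The first step derives the link. Choose $r$ such that the $r$-fold derived subdivision $L^{(r)}$ of $L$ is shellable. Each derived subdivision of $L$ is a composition of stellar subdivisions at the barycenters of the simplices of $L$, taken in decreasing dimension. Joining with $A$, a stellar move at $\tau\in L$ inside $A*L$ becomes the stellar move at $\tau$ in $K$, whose link is $\lk_K\tau\supset A*\lk_L\tau$. These moves, however, also subdivide $J$ and possibly the exterior. I would therefore proceed inductively on dimension, as in the standard proof of Newman's theorem \cite{lickorish1999simplicial}. Each auxiliary stellar move at a lower-dimensional face is itself factored, by induction on the dimension of the ambient link, into moves with shellable links. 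Each such move is then undone after it is used, so that the net effect on the interface and exterior is trivial.

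Concretely, I expect the identity
\[
 s^p_A\;=\;\bigl(s^{b}_{\bullet}\bigr)^{-1}\circ s^{p}_{A}\big|_{A*L^{(r)}}\circ s^{b}_{\bullet},
\]
where $s^b_\bullet$ denotes the derived moves on the link. The middle move has link $\partial A*L^{(r)}$. This link is a join of a simplex boundary with a shellable complex, and joins of shellable complexes are shellable, so the middle move has shellable link. The outer moves are handled by the dimensional induction on the links $\lk\tau$, which have strictly smaller dimension.

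Fullness is preserved throughout by Proposition~\ref{app:full-boundary}. Each intermediate complex is either a subdivision of a full triangulation or a triangulation of which the full triangulation $K'$ or $K$ is a subdivision, and fullness is a vertex condition unaffected by subdivisions supported away from the boundary interface. In the inverse directions I would check directly that no simplex with all vertices on $\partial B$ is created. The reason is that the removed centers are interior unless $A\subset\partial B$, and in that case the move is a boundary stellar move of the same type.

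The main obstacle is the bookkeeping that guarantees the factorization fixes the old interface $J$ and the exterior as triangulated spaces. The derived moves on $L$ naively also subdivide faces of $J$ and simplices outside $Q$ that contain faces of $L$. To address this, I would first try to perform all auxiliary moves relative to $A$, that is, to replace each derived move at $\tau\in L$ by the move at $A*\tau$ realized as a pseudo-radial projection in $Q$ alone. Failing that, I would accept modifications on $J$ and restore them at the end by inverse moves in reverse order. The point is that the sequence of complexes stays within the class of full triangulations, even if individual steps touch $J$.
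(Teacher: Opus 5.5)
The gap is in the auxiliary moves $s^b_\bullet$. You set them up as stellar moves in $K$ at faces $\tau\in L$. Their centers therefore have links $\operatorname{lk}_K\tau$, which need not be shellable, and these moves also subdivide $J$ and the exterior. Your displayed factorization thus violates both the shellable-link requirement and the clause that the old interface and exterior are fixed.

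Your proposed repair is to factor each auxiliary move by induction because $\operatorname{lk}\tau$ has smaller dimension. This does not close. The dimension $\dim\operatorname{lk}_K\tau=d-1-\dim\tau$ is at least $\dim L=d-1-\dim A$ whenever $\dim\tau\le\dim A$, for instance when $\tau$ is a vertex. Moreover, deriving $\operatorname{lk}_K\tau$ the same way calls for moves in $K$ at its faces, whose links are again of full size, so nothing decreases along the regress.

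Lifting to $A*\tau$, which you mention only tentatively, does shrink the left-hand links to $\operatorname{lk}_L\tau$. It does not, however, let you undo the moves after $s^p_A$. In $s^p_AK$ the simplex $A*\tau$ no longer exists, so the undoing move must be centered at $p*\tau$. Its link $\partial A*\operatorname{lk}_L\tau$ has dimension $d-2-\dim\tau$, which exceeds $\dim L$ once $\dim\tau<\dim A-1$. So an induction on link dimension cannot control these moves either. Your fallback of touching $J$ and restoring it gives up part of the statement and still does not yield shellable links.

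The missing idea is to prepare $L$ inside itself by stellar moves whose centers $C$ already have shellable link $H=\operatorname{lk}C$ in the current subdivision of $L$. Each such move is then lifted to both sides of $s^p_A$ via the exchange identity
\[
 s^p_A\,s^b_{A*C}K=s^b_{p*C}\,s^p_AK .
\]
The left center $A*C$ has link $H$. The right center $p*C$ has link $\partial A*H$, which is shellable as a join of shellable complexes. Both centers contain $A$ or $p$, so $J$ and the exterior are never touched.

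Such a preparation exists by induction on $\dim L$:
\begin{enumerate}
\item First make every vertex link of $L$ shellable by moves lifted to $v*C$. Then every nonempty face link is shellable.
\item Then perform the Adiprasito--Benedetti derived subdivisions in decreasing face dimension. Their centers keep shellable links because stellar subdivisions, links and joins preserve shellability.
\end{enumerate}
The middle move $s^p_A$ then has shellable center link $\operatorname{lk}A$, the prepared $L$. What you wrote, $\partial A*L^{(r)}$, is the link of $p$, not of $A$.

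For fullness, note that every intermediate complex in this scheme is a subdivision of $K$ or of $K'$, hence full. You should drop your justification that a triangulation of which a full one is a subdivision must itself be full; that claim is false.
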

\begin{proof}
Links and joins of shellable balls or spheres are shellable.
Stellar subdivisions preserve shellability. For the latter claim,
replace a facet containing a center $A$ by the facets
$b*(F\setminus\{v\})$, $v\in A$, placing first the $v$ whose
opposite facet belonged to the old attaching part. Their attaching
intersections are precisely the old attaching facets and the
facets shared with earlier new simplices. They satisfy the ball
or sphere shelling condition.

Prepare a ball or sphere $L$ by induction on dimension. For each
original vertex $v$, prepare $\lk_Lv$ by stellar subdivisions at
centers with shellable links, using the induction. Lift a center
$C$ to $v*C$. Its link is $\lk_{\lk_Lv}C$. Previously prepared
vertex links undergo stellar subdivisions or stay unchanged; a
new vertex has shellable link $\partial(v*C)*\lk_L(v*C)$.
After all original vertices have been treated, every vertex link,
and hence every nonempty face link, is shellable. Now apply
\cite[Corollary I.3.10, 2013 preprint, printed/PDF p.11]{adiprasito2017subdivisions}:
enough derived subdivisions make $L$ shellable. Implement them
by stellar subdivisions in decreasing face dimension. All their
centers already have shellable links. The empty link is the
$(-1)$-sphere, the join unit, and causes no exception.

For $L=\lk_K A$ lift this preparation to both sides of the desired
move. For a center $C\subset L$ and new vertex $b$, the exchange
identity is
\[
 s_A^p s_{A*C}^bK=s_{p*C}^b s_A^pK.
\]
If $H=\lk_LC$, both sides replace the affected region by
$\partial A*b*(C\cup p*\partial C)*H$. Their lifted centers
have links $H$ and $\partial A*H$, respectively. Prepare on the
left, perform $s_A^p$ when its link is shellable, then reverse the
preparation on the right. All centers have the required links.
Each forward preparation is a subdivision of its full endpoint,
so the reversed right-hand path is full as well.
\end{proof}

\begin{prop}[Main text, Proposition~\ref{App: p-subdivision invariance}]
\label{app:p-subdivision}
The controlled algorithm on transverse, boundary-full triangulations
is invariant under $p$-subdivision, with the common exterior and
interface data retained.
\end{prop}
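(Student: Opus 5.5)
The plan is to reduce an arbitrary $p$-subdivision to stellar moves whose centers have shellable links, and to compare each such move by separating off the star of the center. Let $K'=s_A^pK$, where $A$ is the simplex containing $p$ in its relative interior. By Lemma~\ref{app:stellar-factor}, $s_A^p$ factors as a finite chain of stellar subdivisions and inverses. Every center in the chain has a shellable link, the chain fixes the old interface and exterior, and every intermediate triangulation is full. Invariance for $K\to K'$ then follows from invariance for each elementary step.

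For each step, the intermediate triangulation may fail to be transverse to the bulk. I would pass to a common subdivision of the finitely many triangulations in the chain using Theorem~\ref{common subdivision}. Theorem~\ref{App: transversal subdivision preparation} then gives an isotopy preserving every cell of the original triangulation and making the stratification transverse to all of them at once. Proposition~\ref{app:full-boundary} shows that fullness survives this preparation. Since the preparation is cell-preserving, the naturality clause of Proposition~\ref{app:algorithm-choices} says the algorithmic value $E$ is unchanged by this change of coordinates. We may therefore assume every triangulation in the chain is full and transverse to one fixed bulk.

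For a single stellar step with shellable link, Lemma~\ref{app:interface} processes the nonboundary faces of $J=\partial A*L$ as a common prefix. This yields
\[
E(B,K)=\sum_s w_s\,E_{\rm in}(s,K)\,E_{\rm out}(s),
\]
and the analogous formula for the subdivided triangulation holds with the same $s$, $w_s$ and $E_{\rm out}$. Proposition~\ref{app:shellable-stellar} then identifies the two inside factors. Both $A*L$ and $p*\partial A*L$ are shellable balls, so Lemma~\ref{app:shell-tail} evaluates each complete inside tail as $Z$ of the same boundary diagram. Chaining the steps gives $E(B,K)=E(B,K')$. The order freedom of Proposition~\ref{app:algorithm-choices} lets the interface faces be processed first in each case.

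I expect the main obstacle to be the uniform transversality step. The chain has intermediate triangulations on both sides, and the inverse moves must also be realized with the bulk held fixed. The preparation isotopy must be cell-preserving for $K$ and fix the exterior and interface data, so that the common prefix is literally shared. I would try doing the preparation once on the common refinement, relative to the interface, and checking via Lemma~\ref{app:preparation} (the boundary-agreeing variant) that it does not move the prefix data. The boundary case, where $A$ is a boundary face and $J$ is a ball, needs the remark in Lemma~\ref{app:interface} that the inside remains a cone on a ball.
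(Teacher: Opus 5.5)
Your route is the paper's: factor by Lemma~\ref{app:stellar-factor}, take a common subdivision $H$ of all members of the chain, and move the bulk once so that it is transverse to $H$. You then identify the endpoint sums by cell-preserving naturality (Proposition~\ref{app:algorithm-choices}) and compare each middle step on the fixed bulk by Proposition~\ref{app:shellable-stellar}.

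One step fails as you set it up. You ask the preparation isotopy of Theorem~\ref{App: transversal subdivision preparation} to be cell-preserving for $K$. That gives $E(B,K)=E(h_*B,K)$. At the other endpoint, however, naturality only gives $E(B,K')=E(h_*B,h(K'))$. A $K$-cell-preserving $h$ may move the new simplices of $p*\partial A*L$ inside the cells of $A*L$. Identifying $E(h_*B,h(K'))$ with $E(h_*B,K')$ would then require isotopy invariance inside a star whose link $L$ need not be shellable. That is not available here, since isotopy invariance is proved later from subdivision invariance, which rests on this proposition.

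The fix uses the hypothesis that the fine endpoint $K'$ is itself transverse. Apply Theorem~\ref{App: transversal subdivision preparation} with $\Delta=K'$ and $\Gamma=H$. The resulting isotopy preserves every $K'$-simplex, hence every $K$-simplex, since each is a union of $K'$-simplices. It also makes the bulk transverse to $H$, and by amalgamation to every member of the chain. Naturality then applies at both ends.

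With this choice your closing worry disappears: no preparation relative to the interface, and no pointwise fixing of exterior data, is needed. After the isotopy every triangulation in the chain is full and transverse to the single bulk $h_*B$. In each step the interface $J$ consists of simplices common to the two triangulations being compared, so Lemma~\ref{app:interface} shares the prefix with identical choices. Inverse moves are the same equality read backwards.
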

\begin{proof}
Take the finite factorization in Lemma~\ref{app:stellar-factor}
and a common subdivision $H$ of all its members. The fine endpoint
$K'$ is already transverse. Theorem~\ref{App: transversal subdivision preparation} gives a $K'$-cell-preserving
isotopy making the bulk transverse to $H$, hence to every member
of the factorization by amalgamation. This also preserves every
$K$-cell. Transport neighborhoods, representatives and labels at
the endpoints. Proposition~\ref{app:algorithm-choices} identifies
their sums with the original sums; this is cell-preserving
naturality, not general isotopy invariance. Compare each middle
step by Proposition~\ref{app:shellable-stellar} and transport back.
All intermediate triangulations are full. No general ball
evaluation has entered the proof.
\end{proof}

\paragraph{Relative computations after a cutting prefix.}
The local comparison also applies to the inherited cell structures left
by the algorithm. We give this technical extension here. Its geometric
and local numerical proof does not use global relations. Identifying
the resulting full-triangulation sum with a partition function uses
the main text's triangulation independence; this consequence is used
only after that result is available.

The order of the arguments is important. The absolute stellar comparison
above uses the adjacent identity and the shellable-tail calculation,
not the restarting result below. Subdivision invariance, ball evaluation,
and isotopy invariance then give the main text's triangulation independence.
Only after that theorem may Proposition~\ref{app:restart} identify a
remaining sum with a new partition function. Neither
Proposition~\ref{app:restart} nor Proposition~\ref{app:expose-interface}
is an input to any of those earlier invariance proofs.

\begin{lem}[Derived refinement of residual cells]
\label{app:relative-starring}
Let $C$ be the residual cell complex of a controlled cutting prefix.
There is a finite sequence of controlled cellular stellar comparisons
from $C$ to a transverse triangulation with full boundary, preserving
the complete remaining scalar sum and its entire boundary diagram.
The terminal subdivision may be obtained by starring all original
positive-dimensional cells, including boundary cells, in decreasing
dimension.
\end{lem}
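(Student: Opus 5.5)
The plan is to realize the passage from the residual cell complex $C$ to a genuine triangulation as a sequence of cellular stellar subdivisions, and to check each one by the same local argument already used for Proposition~\ref{app:p-subdivision}: a shared interface prefix (Lemma~\ref{app:interface}), followed by a shellable-tail evaluation (Lemma~\ref{app:shell-tail}). First I describe $C$ concretely. By Lemmas~\ref{app:ordinary-cores} and~\ref{app:marked-core}, the cells of $C$ are truncated original simplices together with their cut-frontier faces. Each cell is a convex ball carrying simultaneous filtered collars on its whole boundary, and each is transverse in the sense of the collar criterion Lemma~\ref{app:criterion}. The remaining scalar sum is the controlled algorithm run on these cells in a face-first order. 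Proposition~\ref{app:algorithm-choices} shows that this sum depends neither on that order nor on the auxiliary collars, carriers and fillings.

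Second, I star the positive-dimensional cells of $C$ in decreasing dimension, including cells lying in the (new) boundary, by placing a point $c_\tau\in\Int\tau$ in each cell $\tau$. After this step every cell is a cone over a starred boundary, so the terminal complex is a simplicial complex: the derived subdivision of the ball complex. The construction is the relative derived one of Proposition~\ref{app:full-boundary}. Since boundary cells are starred as well, every nonboundary simplex has an interior vertex, which gives fullness. Transversality follows by amalgamation (Corollary~\ref{app:amalgamation}). If needed, I then apply a cell-preserving isotopy from Lemma~\ref{app:preparation}; its effect on the sum is absorbed by the cell-preserving naturality in Proposition~\ref{app:algorithm-choices}.

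Third, I show that each single cellular starring preserves the complete remaining sum. For a cell $\tau$ with link $L$ in the current complex, I process the interface $\partial\tau * L$ as a common prefix on both sides, as in Lemma~\ref{app:interface}. This leaves an inside ball equal to $\tau*L$ before the move and $c_\tau*\partial\tau*L$ after it. If both are shellable, Lemma~\ref{app:shell-tail} evaluates each inside tail to $Z$ of the same boundary diagram, and finite distributivity gives equality. Because the interface cells are untouched, the entire outer boundary diagram is preserved at every step.

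I expect the main obstacle to be shellability for cells that are not simplices. The link of a truncated cell, and the cone $\tau*L$ over a polytope, need not be shellable a priori. My first attempt will use the fact that boundaries of convex polytopes are shellable (Bruggesser--Mani) to make the cell factor shellable, and then handle $L$ by the preparation of Lemma~\ref{app:stellar-factor}. That lemma factors the move through stellar moves whose centers have shellable links, using the exchange identity and \cite{adiprasito2017subdivisions}, and it keeps the interface and exterior fixed. A second difficulty is that cells touching the cut frontier carry inserted types $\mathcal D_2^k$ that are not transverse to auxiliary subdivisions. I will handle this, as in Lemma~\ref{app:algorithm}, by choosing a fresh marked dual subdivision at each stage rather than retaining the old one.
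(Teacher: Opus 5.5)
Your outline has the same skeleton as the paper's proof: one descending-dimensional derived subdivision of the residual complex, each starring compared by a shared interface prefix plus a shellable-tail evaluation, Bruggesser--Mani for the polytopal factor, the good-link preparation for $L$, and a final cell-preserving repositioning. You also correctly flag shellability of non-simplicial cells as the obstacle. However, the central numerical step is not justified. The sentence ``if both are shellable, Lemma~\ref{app:shell-tail} evaluates each inside tail'' applies a simplicial lemma to non-simplicial balls. When $\tau$ is starred, the top cells of $\tau*L$ and $c_\tau*\partial\tau*L$ are the polytopes $\tau*\sigma$ and $c_\tau*F*\sigma$, with $F$ a facet of $\tau$ and $\sigma$ a top simplex of $L$. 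Lemma~\ref{app:shell-tail} rests on Lemma~\ref{app:shell-order}, whose free-pair collapse order uses the simplex-specific form $\partial R*T$ of each attaching part. For polytopal cells, a shelling whose attaching parts are merely balls does not by itself supply that order. The missing ingredient is a \emph{recursive} cellular shelling in Ziegler's sense. There each attaching ball $A$ begins a shelling of the new cell's boundary, so induction on dimension gives a facet $\theta$ with $A\setminus\operatorname{int}\theta$ collapsing to $\partial A$, and $\theta$ is removed with the cell as a free pair. You then also need three facts: line shellings of $\partial\tau$ are recursive; joins preserve recursive shellings (lexicographic order, with attaching part $(A_1*E_2)\cup(E_1*A_2)$ in a cell $E_1*E_2$); and a convex cell with one facet has the cancelling-block model. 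Bruggesser--Mani alone gives only the shelling of the factor. The same issue affects your use of Lemma~\ref{app:stellar-factor}, which is about simplicial centers. A preparation center $A\subset L$ with shellable link $H$ lifts to $\tau*A$ and $c_\tau*A$, whose links are $H$ and the non-simplicial $\partial\tau*H$. So the exchange identity must be redone with the cell $\tau$ in place of a simplex, and the lifted comparisons themselves need the cellular shellable case. It is also worth stating that the decreasing order is what makes $L$, the complex of chains of strict cofaces, simplicial at the moment $\tau$ is starred; otherwise the preparation could not be applied to $L$ at all.

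A second gap is legality of the controlled algorithm on the intermediate complexes. These mix unstarred convex cells with coned cells and are not full triangulations. Lemmas~\ref{app:ordinary-cores}, \ref{app:interface}, \ref{app:algorithm} and Proposition~\ref{app:algorithm-choices} are all stated for full triangulations, yet you check fullness only for the terminal complex. After each starring you must show that every new nonboundary cell has an interior cone vertex $v$. For a boundary center this holds only because its nonboundary cofaces were already coned, which is the real reason for the decreasing order including boundary cells. Given this, a boundary face $\beta$ other than the opposite facet lies in the earlier-processed face $v*\beta$. Any surviving true-boundary contact of a core then lies in the relative interior of an opposite facet, with the half-space/inward-ray model. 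After that, caps, proper local flatness, marked collars and the order comparison carry over; two incomparable cores now meet in a face or its truncation, handled by a common cap. Finally, transversality of the derived subdivision does not follow from Corollary~\ref{app:amalgamation}, which passes from fine cells to coarse blocks, not the reverse. Your fallback repositioning is what is actually needed. Do it cellwise with Lemma~\ref{app:relative-position} on the ball cells, since Lemma~\ref{app:preparation} assumes a simplicial $K$, and move the subdivisions so that the labeled manifold and its boundary diagram stay fixed.
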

\begin{proof}
Use the ordinary truncated-polytopal model and the simultaneous
marked whole-cell collars supplied by the controlled construction.
Do not first subdivide boundary faces while retaining their
nonboundary cofaces. Instead perform one descending-dimensional
derived subdivision of the entire residual complex. This is the order
of auxiliary refinements; every algorithm compared below still
processes nonboundary faces in face-first order.

At the turn of an original cell $D$, its proper faces have not yet
been subdivided, whereas all its proper cofaces have been coned.
The two closed stars and their common interface are therefore
\[
 Q=D*L,\qquad Q'=p_D*\partial D*L,\qquad J=\partial D*L,
\]
where $L$ is the simplicial complex of chains of strict cofaces.
It is a PL sphere when $D$ is nonboundary and a PL ball when $D$
is a boundary cell. The boundary of $D$ is still its original convex
polytopal boundary. Coning it and extending by the join with $L$
is a face-to-face polytopal subdivision in the ordinary model.
No shellability assertion for an arbitrarily pre-refined boundary
cell structure is required.

We verify the controlled geometry. An unaltered nonboundary cell
retains the inherited core and collars. Starring a nonboundary
center introduces an ambient-interior vertex. At a boundary center,
every nonboundary coface already contains the interior center of an
original nonboundary coface, since higher cells were coned first.
Boundary starring retains these vertices. Every new nonboundary
cell thus has an interior cone vertex $v$. A boundary face $\beta$
other than the entire opposite facet belongs to the proper
nonboundary face $v*\beta$, which is processed earlier. Its cut
removes the corresponding true-boundary seam. Surviving contact
with the true boundary is consequently in the relative interior of
an opposite boundary facet, with the half-space and inward-ray
model of Lemma~\ref{app:ordinary-cores}, or there is no such contact.

Extend the short boundary caps through the existing whole-cell
collars, using the PL corner rectangle of Lemma~\ref{app:marked-core}.
A short neighborhood of a proper face of a convex polytope is a
boundary cap, with the usual truncated-cell complement. Successive
caps have the corresponding orthant collars. The new literal cores
are therefore proper locally flat disks, and all remaining cells
retain simultaneous marked whole-boundary collars. The marked normal
neighborhoods and buffered fillings of Lemmas~\ref{app:marked-core}
and~\ref{app:algorithm} apply, including simultaneous choices for
finitely many states. This concerns these inherited and coned
polytopal models, not arbitrary regular CW decompositions.

Order and choice comparisons retain the same control. Two
incomparable current cores meet only in the current boundary,
since their common nonboundary proper faces were processed earlier.
Their intersection is a face, or its inherited truncation, in the
ordinary polytopal model. Take a common short derived neighborhood
of it in the whole-cell collars. In each core this is a boundary
cap. Whichever core is processed first removes the common cap;
off it the compact cores have disjoint short tubes. Both fillings
are unchanged there. Interchanging the cuts changes only marked
boundary-collar parameterizations, so the proof of
Proposition~\ref{app:algorithm-choices} applies. Its one-dimensional
transport check is unchanged: earlier central changes are vertex
fillings with radial transport $1$. Keep coupon carriers fixed
in these exchanges. Their subsequent comparison uses that
proposition's inverse-pairing tensor: the angular cell sphere has
connected facet-adjacency graph, and its codimension-one faces index
the last nonterminal contractions. This compares the complete
contractions, not individual carrier-dependent terms.

For the numerical argument use recursive cellular shellings:
the first top-cell boundary is shellable, and each later attaching
part begins a shelling of the new top-cell boundary
\cite[Definition 2.3, author's preprint, printed/PDF pp.2--3]{ziegler1998shelling}.
Convex polytopal boundaries have line shellings \cite{bruggesser1971shellable}.
Joins preserve these recursive shellings. To check this, order
joined top cells lexicographically. In a cell $E*F$ the attaching
part is $(A*F)\cup(E*B)$, where $A,B$ are the factor attaching parts.
Shellings of $\partial E,\partial F$ beginning with $A,B$ induce a
shelling of $\partial(E*F)$ beginning with this union: shuffle their
induced facets, placing those from $A,B$ first. Induction on the sum
of factor dimensions checks each attaching intersection, which has
the same displayed form with a factor of lower dimension. Empty
and whole factor attaching parts are allowed.

The cellular version of Lemma~\ref{app:shell-order} follows as well.
For a recursively shellable ball $Q$, delete its open first top
cell $\Omega$. Suppose the preceding shelling part has been reduced
to its boundary, and let $F$ be the next top cell, with attaching
ball $A$. By the recursive definition $A$ is shellable. Induction
on dimension supplies a facet $\theta$ such that
$A\setminus\Int\theta\searrow\partial A$.
The pair $(\theta,F)$ is free: $\Int A$ has become interior to the
enlarged shelling ball, so no later top cell meets it. Remove this
pair and then collapse $A\setminus\Int\theta$ relative to
$\partial A$. These pairs remain free for the same reason.
Induction leaves $\partial Q$. The zero-dimensional attaching case
is the single vertex--edge pair. Reversing the collapses gives a
face-first list of consecutive free-facet pairs, followed by
$\Omega$. A convex cell with one facet and its complementary
boundary ball, with their collars, has the cancelling-block model
of Lemma~\ref{app:shell-tail}. The adjacent identity and backward
induction on the complete tail give that lemma's evaluation for
these cellular shellings.

If $L$ is shellable, both $D*L$ and $p_D*\partial D*L$ are shellable
cell balls. Apply the common-prefix construction of
Lemma~\ref{app:interface} to the nonboundary faces of $J$.
Its marked-collar and radial arguments apply to the convex cell
$D$ in place of a simplex. When $D$ is a boundary cell,
\[
 \partial Q=(\partial D*L)\cup(D*\partial L),
\]
and $D*\partial L$ is in the true boundary. Only nonboundary
interface faces are processed. Each separated inside tail is
therefore relative to the entire boundary of its local ball,
including this true-boundary part. The inside boundary diagrams,
paired basis indices and prefix weights coincide. The cellular
shellable-tail calculation makes both complete inside sums equal
to $Z$ of that diagram. Their common outside tails give equality
of the complete sums. This argument also covers shellable links
which are polytopal joins of the factors just described.

For general simplicial $L$, use the good-link preparation of
Lemma~\ref{app:stellar-factor}. A center $A$ with shellable link
$H$ lifts to $D*A$ and $p_D*A$ on the two sides. Their links are
$H$ and $\partial D*H$, both shellable by the join argument; the
second need not be simplicial. The first center is the convex join
of $D$ and a simplex and also has a polytopal boundary. Each lifted
comparison thus uses the shellable case already proved. The join
exchange identity of Lemma~\ref{app:stellar-factor} holds with the
cell $D$ in place of its simplex factor. Prepare on both sides,
compare when the middle link is shellable, and reverse the second
preparation. These auxiliary operations fix $\partial D$ and all
other unprocessed original cells. A nonboundary auxiliary center
introduces an interior apex; a boundary center retains an interior
vertex in every nonboundary coface. Their controlled geometry is
covered above. No induction on a possibly larger second link,
no restarting assertion and no global relation is used. The
empty-link case is the single top cell and is covered directly
by the shellable calculation.

All constructions have finite common subdivisions. Apply the
cellwise relative positioning of Lemma~\ref{app:relative-position}
to their ball cells, on their boundaries first and then on their
interiors, retaining every original cell set and smaller marked
collars. Inverse motions position the subdivisions while the
labeled manifold stays fixed. The comparison under these coordinate
changes is cell-preserving naturality, not general isotopy invariance.

Finally the simplices of the derived subdivision correspond to
chains of original cells. Their vertices are the corresponding
centers, with original vertices retained. If all vertices are in
the true boundary, the chain's largest cell is a boundary cell,
so the entire simplex is in the boundary. This proves fullness.
The preceding positioning makes the triangulation transverse, and
all comparisons retain the entire boundary diagram and scalar sum.
\end{proof}

\begin{prop}[Restarting the computation]
\label{app:restart}
Assume that the algorithmic sum is independent of full transverse triangulations. After any controlled cutting prefix, the complete remaining sum
on each separated component equals its partition function computed
from a new full transverse triangulation. In particular, if the
prefix has states $s$, weights $w_s$, and disjoint remaining bulk
vectors $B_{s,1},\ldots,B_{s,r_s}$, then
\[
 E(B,K)=\sum_s w_s\prod_{j=1}^{r_s}E(B_{s,j}).
\]
All current boundary diagrams, including paired basis indices, are
retained in this formula.
\end{prop}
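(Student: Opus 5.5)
The plan is to reduce the remaining sum on each separated component to an algorithmic sum on a genuine full transverse triangulation of that component, and then invoke the assumed triangulation independence. Fix a prefix state $s$. Its remaining support is a disjoint union of compact bulk vectors $B_{s,1},\dots,B_{s,r_s}$. The residual cell complex $C_s$ consists of truncated original cells together with their cut-frontier faces. Each carries its complete boundary diagram, including the paired basis indices inserted at Move $n$ cuts, regarded as $\mathcal V$-labels on the new space boundary.

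\emph{Step 1: factor over components.} The remaining face list on $C_s$ splits into the face lists of the components, since distinct components are disjoint. By the order comparison of Proposition~\ref{app:algorithm-choices}, the faces of one component may be processed first, then those of the next, and so on. The component factors are finite sums, so finite distributivity gives
\[
 E_{\rm tail}(s)=\prod_{j} E_{\rm tail}(B_{s,j},C_s|_{B_{s,j}}).
\]
This holds for each fixed choice of the labels created in the prefix.

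\emph{Step 2: pass to a full triangulation.} For each $j$, apply Lemma~\ref{app:relative-starring} to the residual complex $C_s|_{B_{s,j}}$. It gives a finite chain of controlled cellular stellar comparisons ending at a transverse triangulation $K_{s,j}$ of $B_{s,j}$ with full boundary. Each comparison preserves both the complete remaining scalar sum and the entire boundary diagram. The remaining sum on the residual cells therefore equals the algorithmic sum $E(B_{s,j},K_{s,j})$, computed from scratch on the bulk vector $B_{s,j}$ with its current space boundary. Here every face of $B_{s,j}$ not in the true boundary is processed in face-first order. The hypothesis of independence of full transverse triangulations identifies this with the partition function, so $E(B_{s,j},K_{s,j})=E(B_{s,j})$. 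Summing over $s$ with the weights $w_s$ gives the displayed formula.

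\emph{The main obstacle} is the bookkeeping in Step 2. Lemma~\ref{app:relative-starring} is stated for the residual complex of a controlled prefix, and we must check that its terminal triangulation really presents $B_{s,j}$ as a bulk vector in the sense of the main text. This requires that the cut-frontier faces become true boundary, that $K_{s,j}$ restricted to them is a subcomplex transverse to the inherited stratification, and that the $G$-labels and chamber trivializations near the new boundary agree with those of the prefix fillings.

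I would first handle the fillings through Lemma~\ref{app:filling-comparison}. After the prefix, the filling in each shell is a permitted filling of $B_{s,j}$ that makes the scheme legal. By that lemma, and by the path argument fixing Move $1$ transports (radial transport $1$ for Move $0$), the later transports depend only on the boundary data.

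The second point is that the prefix states include the paired dual-basis indices. These are treated as ordinary vector labels on two separate boundary disks, not as a contracted tensor. This is consistent because they are summed only in the outer weight $w_s$.

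Finally, positivity of widths at seams (Lemma~\ref{app:interface}) ensures that distinct components have disjoint closed supports. This justifies the product over $j$.
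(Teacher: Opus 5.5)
Your proposal is correct and follows essentially the same route as the paper. You fix a prefix state, factor the inherited tail over the disjoint components by finite distributivity, use Lemma~\ref{app:relative-starring} to replace each inherited tail by the algorithmic sum on a full transverse triangulation while retaining the complete boundary diagram, and then invoke the assumed triangulation independence before summing with the weights $w_s$. Your extra bookkeeping (reordering via Proposition~\ref{app:algorithm-choices} and the filling comparison) is not needed beyond what Lemma~\ref{app:relative-starring} and disjointness of supports already supply, but it does no harm.
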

\begin{proof}
Fix a prefix state $s$ before making any comparison. The operations
remaining in distinct components have disjoint supports. Finite
distributivity therefore gives their product of complete inherited
tail sums; at this point no tail has been identified with a new
partition function.

For each component, Lemma~\ref{app:relative-starring} supplies a
full transverse triangulation and equates its algorithmic sum with
that particular inherited tail. Its comparison retains the complete
boundary diagram, including coupon and paired-basis data. The
assumed triangulation independence, and only that already proved
result, identifies this full-triangulation sum with $E(B_{s,j})$.
Multiply these equalities and sum with the original coefficients
$w_s$. This is the displayed formula. No global relation is used.
\end{proof}

\begin{prop}[Exposing a collared interface]
\label{app:expose-interface}
Let $T$ be a compact separating PL hypersurface with a filtered
two-sided collar, allowing a boundary in the true boundary with the
specified corner collar. There is a full transverse triangulation
of the bulk having $T$ as a subcomplex. Its nonboundary faces can
be processed first. The resulting components are the two sides
with their $T$-collars shortened, and the restarting formula of
Proposition~\ref{app:restart} applies.
\end{prop}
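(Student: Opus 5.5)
The plan has three parts: build a triangulation in which $T$ is a subcomplex, process the faces of $T$ first, and then quote the restarting result.

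\textbf{Construction of the triangulation.} I start from the two-sided filtered collar $c:T\times(-1,1)\to B$, together with its corner collar along $\partial T\subset\partial B$.
\begin{enumerate}
\item Restrict the stratification to $T$. By the collar this is a stratified PL manifold with boundary; the argument is the one in the proof of Theorem~\ref{app:face-extension}, using the slice Lemma~\ref{app:slice}.
\item Choose a full transverse triangulation of $T$, using Theorem~\ref{App: transversal triangulation} and Proposition~\ref{app:full-boundary}.
\item Cut $B$ along $T$ into the two closed sides $B_\pm$. Each side contains $T$ as a boundary face, collared by half of $c$, with the seam $\partial T$ in the true boundary.
\item Apply Theorem~\ref{App: transversal triangulation extend to bulk} to each side, and make each side boundary-full by Proposition~\ref{app:full-boundary}, relative to the whole boundary.
\item Glue the two sides along the common triangulation of $T$.
\end{enumerate}

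\textbf{Properties of the glued triangulation.}
\begin{itemize}
\item Transversality across $T$ is checked as at the seam in Theorem~\ref{app:face-extension}. The two half-collars combine into the given two-sided collar, and the collar criterion Lemma~\ref{app:criterion} applies to simplices meeting $T$.
\item Fullness: take a simplex all of whose vertices lie in $\partial B$. It lies in one side $B_\pm$, where all its vertices lie in $\partial B_\pm$. By fullness of that side it lies in $\partial B_\pm = (\partial B\cap B_\pm)\cup T$. It must still be checked that it cannot sit in $T$ unless it is already in $\partial T\subset\partial B$. This is where fullness of the chosen triangulation of $T$ relative to $\partial T$ enters. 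If needed, I would additionally star the simplices of $T$ away from $\partial T$ before extending, so that the check is vertexwise.
\end{itemize}

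\textbf{Processing $T$ first.} The face-first ordering allows the nonboundary faces of $T$ to come first, since all their faces lie in $T$; Proposition~\ref{app:algorithm-choices} permits the reordering. The geometric separation is argued as in Lemma~\ref{app:interface}:
\begin{itemize}
\item Any face common to the two closed top-cell unions lies in an interior facet of $T$. This uses the connected facet-adjacency graph of manifold links.
\item Processing that facet removes its whole remaining core, including the true-boundary seam handled by the half-space/inward-ray model of Lemma~\ref{app:ordinary-cores}.
\end{itemize}
Consequently the residual support is the disjoint union of $B_\pm$, each with its $T$-collar shortened. The controlled neighborhoods of Lemma~\ref{app:marked-core}, built with the given collar, ensure that each side inherits filtered whole-cell collars.

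\textbf{Restarting.} Proposition~\ref{app:restart} then applies to this controlled prefix and gives the product formula. The main obstacle I expect is the fullness and seam verification at $\partial T$: the triangulation must be full for $B$, must contain $T$, and the corner collar must match. I would first try to use the relative derived construction of Proposition~\ref{app:full-boundary} applied only to simplices not in $T\cup\partial B$, and then verify by hand that simplices of $T$ with all vertices on $\partial T$ lie in $\partial T$.
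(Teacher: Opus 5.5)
Your proposal is correct and follows the paper's proof essentially step for step. The paper also takes a transverse triangulation of $T$ full relative to $\partial T$, extends it over each side and fullifies relative to that side's whole boundary, and checks fullness of the union as you do: an all-boundary simplex lies in a boundary face of one side, and if that face is in $T$, fullness of $T$ puts it in $\partial T$. It then processes the lower ideal of nonboundary faces of $T$ first and applies Proposition~\ref{app:restart} to each prefix state.

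The one divergence is the separation step. You use the facet-adjacency argument of Lemma~\ref{app:interface}, which is nearly vacuous here because the two sides meet exactly in $T$, and which by itself gives only disjointness. The paper instead shows, cell by cell, that the union of the short controlled neighborhoods of the $T$-cells is the regular neighborhood of $T$ inside its given two-sided corner collar, with corner caps of positive width at $\partial T$. It is this identification that actually justifies your ``consequently'' that the residual pieces are the two sides with their $T$-collars shortened.
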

\begin{proof}
Choose a transverse triangulation of $T$ which is full relative to
$\partial T$. Extend it over each side and make each extension full
relative to that side's whole boundary. Their union is full relative
to the true boundary of the bulk. Indeed an all-true-boundary
simplex lies in a boundary face of one side; if this face belongs
to $T$, fullness of $T$ puts it in $\partial T$. Relative positioning
retains the common triangulation. Match the two filtered collars
in the construction.

The nonboundary faces of $T$ form a lower ideal in the nonboundary
face order, so they can be processed first. The union of their
short controlled neighborhoods is the regular neighborhood of $T$
in its given two-sided corner collar. One can see this cell by cell:
the cut on the boundary of a $T$-cell is extended by its collar,
and its interior cut completes the normal interval over that cell.
On common faces the intervals agree. The corner caps have positive
width at $\partial T$. Their removal therefore separates the sides
and shortens their collars, including at the seam. The arbitrary
coupons change the complete boundary diagrams, not these supports.
Apply Proposition~\ref{app:restart} separately to each prefix state.
\end{proof}

\subsection{Lemma~\ref{App: p-subdivision sequence}: a full-preserving stellar connection}
\label{appsec:full-connection}

\begin{lem}[A full-preserving stellar connection]
\label{app:full-connection}
Let $\Delta$ be a finite triangulation with full boundary, and let
$\Gamma$ be a subdivision of $\Delta$ on the same underlying
polyhedron. They are connected by $p$-subdivisions and their
inverses through full triangulations, with the given underlying
manifold identification retained.
\end{lem}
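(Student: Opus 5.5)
The plan is to reduce to the classical statement that a subdivision of a simplicial complex is reachable from it by stellar moves. I would then show that each such move, or a harmless modification of it, can be arranged to keep the boundary full. The classical input is the theorem in \cite{lickorish1999simplicial}: if $\Gamma$ is a subdivision of $\Delta$, both triangulating the same polyhedron, then they are stellarly equivalent. In fact one can pass from each of $\Delta$ and $\Gamma$ to a common iterated derived subdivision by stellar subdivisions alone. So I would proceed in three steps. First, choose a derived subdivision $\Delta^{(r)}$ of $\Delta$ that is also a subdivision of $\Gamma$ up to a stellar rearrangement. Second, connect $\Delta$ to $\Delta^{(r)}$ by starring simplices in decreasing dimension. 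Third, connect $\Gamma$ to the same endpoint by stellar subdivisions, and reverse the second chain. The underlying polyhedron and its identification with the manifold are never moved, since all moves are subdivisions of a fixed polyhedron.

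\textbf{Fullness along forward moves.} By Proposition~\ref{app:full-boundary}, any subdivision of a full triangulation is full. A new simplex lies in an old simplex $\tau$, and if its vertices all lie in $\partial B$ they lie in $\tau\cap\partial B$, a face of $\tau$ by fullness of the parent. Consequently, every triangulation obtained from $\Delta$, or from $\Gamma$, by forward $p$-subdivisions alone is full. Note that $\Gamma$ is full, being a subdivision of the full $\Delta$. Inverse moves in the concatenated chain only retrace a forward chain, so every intermediate triangulation is a subdivision of $\Delta$ or of $\Gamma$, hence full.

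\textbf{The remaining issue.} What is left is to check that the classical stellar connection can be taken \emph{through subdivisions of the endpoints only}, i.e.\ of the shape $\Delta\to\cdots\to\Lambda\leftarrow\cdots\leftarrow\Gamma$ with only forward $p$-subdivisions on each side. I would argue this as follows. A derived subdivision of $\Delta$ is realized by starring every simplex in decreasing dimension, and each single starring is a $p$-subdivision in the sense of the paper, centered at a point $p$ in the interior of a simplex. For the $\Gamma$ side I would use Lickorish's argument. Choose the derived vertices of $\Delta^{(r)}$ at points compatible with $\Gamma$ (for $r$ large, $\Delta^{(r)}$ can be taken to be a derived subdivision of a stellar subdivision of $\Gamma$). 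Then starring the simplices of $\Gamma$ at the chosen points, again in decreasing dimension, reaches the same complex.

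\textbf{Main obstacle.} I expect the main obstacle to be exactly this coincidence of endpoints. Lickorish's theorem in general produces moves in both directions, with inverse welds applied to complexes that are not subdivisions of either endpoint. Such an intermediate complex could a priori contain a non-boundary simplex with all vertices in $\partial B$. My first attempt would be to avoid this: use the fact that a subdivision is reached from $\Delta$ by a derived subdivision followed by stellar moves whose centers lie in interiors of simplices of $\Gamma$, and check that every weld used has its center off $\partial B$ or in a boundary simplex. In that case fullness is again inherited from a parent via the argument of Proposition~\ref{app:full-boundary}. If this fails, the fallback is to fullify each intermediate triangulation by starring its non-boundary simplices relative to the boundary, as in Proposition~\ref{app:full-boundary}. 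One then compares consecutive fullifications through a common stellar refinement, which is full by the forward-move argument above.
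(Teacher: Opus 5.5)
Your fullness argument is correct and is the same as the paper's. A subdivision of a full triangulation is full. So once $\Delta$ and $\Gamma$ are joined by a chain of the shape $\Delta\to\cdots\to R\leftarrow\cdots\leftarrow\Gamma$ consisting only of forward $p$-subdivisions, every intermediate complex is a subdivision of $\Delta$ or of $\Gamma$, and hence full.

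The gap is the existence of such a chain. It is not the classical theorem in \cite{lickorish1999simplicial}. The Alexander--Newman/Lickorish theorem gives stellar equivalence with subdivisions and welds interleaved, and the intermediate complexes are in general subdivisions of neither endpoint. A weld can destroy fullness: subdivide an interior edge $A$, whose two endpoints lie on $\partial B$, at an interior point $p$; welding $p$ away recreates a non-boundary simplex with all vertices on $\partial B$. The statement that a triangulation and a linear subdivision of it have a \emph{common iterated stellar subdivision} is a much harder, strong-factorization-type result. The paper does not derive it; it imports it as Theorem~1.1 of \cite{adiprasito2026all}, after which the proof is just the fullness observation you give.

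Your sketch for producing the common endpoint does not fill this gap. Starring every simplex of $\Gamma$ once, in decreasing dimension, gives a derived subdivision $\Gamma'$ whose combinatorics is dictated by $\Gamma$. No choice of the derived points of $\Delta^{(r)}$ makes the two complexes coincide; that would essentially force $\Gamma$ itself to be an iterated derived subdivision of $\Delta$. Your parenthetical claim, that for large $r$ the complex $\Delta^{(r)}$ is a derived subdivision of a stellar subdivision of $\Gamma$, is asserted without argument and is essentially the statement to be proved.

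The fallback has the same defect. Comparing the fullifications of consecutive members $K_j,K_{j+1}$ of a Lickorish chain ``through a common stellar refinement'' presupposes that these two subdivisions admit a common iterated stellar subdivision. You give no argument for that, and it is the same missing ingredient. To complete the proof you must either invoke a common-stellar-subdivision theorem such as \cite{adiprasito2026all}, or give a genuine argument that fullness can be maintained across each weld.
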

\begin{proof}
Realize $\Delta$ linearly in a Euclidean space and realize
$\Gamma$ as its prescribed subdivision on the same polyhedron.
By \cite[Theorem 1.1, author's version of 21 April 2026, printed/PDF p.1]{adiprasito2026all}, there is a common iterated stellar
subdivision $R$. Thus there are finite sequences
\[
\Delta \longrightarrow \cdots \longrightarrow R
\longleftarrow \cdots \longleftarrow \Gamma,
\]
where each arrow is a stellar subdivision, equivalently a
$p$-subdivision.

Every subdivision of a full triangulation is full. Indeed,
for each old simplex $\sigma$, fullness implies that
$|\sigma|\cap\partial B$ is the face spanned by the boundary
vertices of $\sigma$ (possibly the empty face). Any new simplex
contained in $|\sigma|$ whose vertices all lie in $\partial B$
is therefore contained in that face, and hence in $\partial B$.

Consequently, $\Gamma$ and all intermediate triangulations
in both sequences are full. Reading the second sequence backwards
gives the required sequence of $p$-subdivisions and their inverses.
All operations retain the underlying polyhedron and its given
identification with $B$.
\end{proof}

\subsection{Lemma~\ref{App: Filling: global relation can cut second for any general filling}: a second cut through a general filling}
\label{appsec:second-cut}

\begin{lem}[A product disk through a general shell; Lemma~\ref{App: Filling: global relation can cut second for any general filling}]
\label{app:second-cut}
Let $0\le k<n$, $q=n-k$, and let a bulk vector have underlying
manifold $(D^k\times I)\times S^q$. Suppose its two end faces carry
$\mathcal D_2^k(\alpha_2)\times S^q$ and
$\mathcal D_1^k(\alpha_1)\times S^q$, and its side carries
$\mathcal S^{k-1}\times I\times S^q$, with the boundary collars
required of the bulk vector. There is an ambient PL isotopy fixed
pointwise on its entire boundary and a small closed angular disk
$C\subset S^q$, avoiding both coupon carriers, such that the
transported $G$ structure on $(D^k\times I)\times C$ is
$\mathcal D^{k+1}\times C$ for a $G$-labeled disk
$|\mathcal D^{k+1}|=D^k\times I$.
The product holds on a slightly larger disk, so its outgoing face
has a two-sided filtered collar. Removing its smaller interior
leaves a PL $(n+1)$-ball.
\end{lem}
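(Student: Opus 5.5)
The plan follows the proof of Lemma~\ref{app:product-region}, with the role of $D^k$ played by $D^k\times I$ and the boundary condition imposed on the whole boundary rather than only on the side. Write $X=(D^k\times I)\times S^q$ and $\pi:X\to S^q$ for the projection. First choose the two coupon carriers (the supports of the $\alpha_j$ labels on the end faces) together with their small neighborhoods $N_1,N_2\subset S^q$. Then pick the target disk $C$ in the complement of $\pi(N_1)\cup\pi(N_2)$.

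Next I would triangulate all filtration terms of the $G$-structure, the boundary faces, the position points and $N_1,N_2$ as subcomplexes, and subdivide so that $\pi$ becomes simplicial (Williamson, Lemma~1.1.2). I then choose $p$ in the interior of a top simplex of $S^q$ whose image misses the coupon images and the images of position points, and take a small convex ball $C\ni p$ inside it. Lemma~\ref{app:williamson} gives a fibrewise trivialization $h:\pi^{-1}(p)\times C\to\pi^{-1}(C)$ preserving every filtration term. The product-slice Lemma~\ref{app:slice} then shows that the fibre $\mathcal D^{k+1}:=\pi^{-1}(p)$, with the induced filtration and labels, is a $G$-labeled stratified disk on $D^k\times I$. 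Its boundary is $\mathcal D_2^k\cup\mathcal S^{k-1}\times I\cup\mathcal D_1^k$, since near the boundary the structure is a product with $S^q$ by the assumed collars.

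The main obstacle is that $h$ need not fix $\partial(D^k\times I)\times C$ pointwise, and the required isotopy must fix the entire boundary. I would handle this exactly as in Lemma~\ref{app:product-region}. Put $b_y=h_y|_{\partial(D^k\times I)}$; these preserve the boundary filtration and satisfy $b_p=\id$. Using a filtered collar of $\partial(D^k\times I)$ in the fibre and the PL contraction $r(s,y)$ of $C$ to $p$, I correct $h$ to some $g$ with $g_p=\id$ and $g_y|_\partial=\id$. The extra point here is that the boundary collar is now a collar of a manifold with corners (end faces meeting the side). I would use the straightened corner collar from the quadrant map of Lemma~\ref{app:marked-core}, or Lemma~\ref{app:collars}, so that all marks remain products.

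Finally I extend by the retraction $\rho_t:S^q\to C$ and set $H_t(x,y)=(g_{\rho_t(y)}(x),y)$. This is an isotopy fixing $\partial X$, and on $D^k\times C$ it carries the structure to $\mathcal D^{k+1}\times C$; taking $F_t=H_t^{-1}$ gives the pushforward form. To get the product on a slightly larger disk, I run the construction for a slightly larger $C'\supset C$ inside the same simplex, which gives the two-sided collar of the outgoing face. The complement of $\mathcal D^{k+1}\times\mathring C$ is $D^k\times I\times\overline{S^q\setminus C}\cong D^{k+1}\times D^q$, which is a PL $(n+1)$-ball. Wall labels agree with those of $\mathcal D^{k+1}$ because $C$ is connected and the central fibre is fixed.
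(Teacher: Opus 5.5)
Your proposal is correct and follows essentially the same route as the paper's proof. Both make the projection to $S^q$ simplicial, take the Williamson trivialization over a small disk away from the coupon carriers, correct it on the boundary using a filtered collar of $\partial(D^k\times I)$ and the contraction $r$ so that $g_p$ and each $g_y|_{\partial}$ are identities, spread it by $\rho_t$, and invert; you also obtain the slightly larger disk and the complementary ball the same way. Two small remarks. Your corner-collar precaution is unnecessary, since $\partial(D^k\times I)$ is just a PL sphere and the fibre's stratified boundary already has a filtered collar. Also, the product form is obtained on $(D^k\times I)\times C$, not on $D^k\times C$ as you wrote.
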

\begin{proof}
Put $H=D^k\times I$ and project $H\times S^q$ to $S^q$.
By the small-carrier convention of the main text, there is a
nonempty open angular disk $C_0$ disjoint from both coupon carriers.
The prescribed boundary is an angular product over this disk. Triangulate the bulk filtration, its boundary faces and the
carriers simultaneously, and make the projection simplicial.
Choose $p$ in the relative interior of a top target simplex inside
$C_0$ and a small closed disk $C^+\subset C_0$ around it.
Lemma~\ref{app:williamson} gives a filtration-preserving PL
trivialization
\[
 h:H\times C^+\longrightarrow\pi^{-1}(C^+),\qquad
 h(x,y)=(h_y(x),y),\quad h_p=\id.
\]
The fiber filtration is a recursive stratification of a PL
$(k+1)$-disk by Lemma~\ref{app:slice}. Its boundary is the prescribed
union of $\mathcal D_1^k$, $\mathcal D_2^k$ and the side cylinder,
with the dimensions shifted by $q$.

We spell out why the isotopy fixes the \emph{whole} boundary, including
the labeled end faces. On $C^+$ the boundary structure is a product,
so $b_y=h_y|_{\partial H}$ is a filtered family of boundary
homeomorphisms, $b_p=\id$. Choose a filtered collar of $\partial H$
in the central fiber and a jointly PL contraction parameter
$r:[0,1]\times C^+\to C^+$ with
$r(0,y)=y$, $r(1,y)=p$, $r(s,p)=p$. On the collar define
\[
 a(x,s,y)=(b_{r(s,y)}^{-1}(x),s,y),
\]
and use the identity off it. As in Lemma~\ref{app:product-region},
$g=h\circ a$ is a filtered trivialization with $g_p=\id$ and
$g_y|_{\partial H}=\id$ for every $y$.

Extend the identity of $C^+$ to a PL map $\rho:S^q\to C^+$ and
choose a jointly PL homotopy $\rho_t$ from the constant map $p$
to $\rho$. Then
\[
 H_t(x,y)=(g_{\rho_t(y)}(x),y)
\]
is an ambient PL isotopy fixing $\partial H\times S^q$ pointwise.
Take its inverse to obtain the pushforward convention. On $C^+$
the new pattern is the required product; the actual coupons elsewhere
on the boundary have not moved. Labels agree by their restriction to
the central fiber. Choose $C\subset\Int C^+$ so that its sphere
boundary has room for a collar. The product with that ordinary
angular collar is a filtered collar of $H\times\partial C$,
including its corners at $\partial H$.

The closed complement of $C$ in $S^q$ is a PL $q$-ball. Thus the
remaining support is $H\times D^q\cong D^{n+1}$. No global relation
and no gauge invariance was used in this geometric construction.
The case $k=n$ is not included: its normal sphere is $S^0$, and the
two local components are already separated by the top pairing move.
\end{proof}

\subsection{Lemma~\ref{App: chamber divided by balls}: localizing chamber gauges in collared balls}
\label{appsec:chamber-balls}

\begin{lem}[Compactly supported refinement into small chambers]
\label{app:local-refinement}
Let $P$ be a recursive PL stratification on a compact PL manifold
$B$, and let $K\subset\Int B$ be compact. There is a refinement,
unchanged on a neighborhood of $\partial B$, which agrees near $K$
with the open-simplex stratification of a finite triangulation
adapted to $P$. The closed simplicial neighborhoods of the top simplices meeting $K$
can be required to lie in the region where this description holds.
Here the simplicial neighborhood of $\sigma$ is the union of the
closed simplices meeting $\sigma$, including their faces.
\end{lem}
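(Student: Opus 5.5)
The plan is to triangulate the filtration and $K$ together, and then to replace $P$, only near $K$, by the stratification into open simplices of a subdivision. Concretely, by Lemma~\ref{app:closed-filtration} there is a finite triangulation $T$ of $B$ in which every filtration term $B^i$ is a subcomplex. After subdividing, I may take $T$ to be fine relative to $K$: since $K$ is compact in $\Int B$, there are compact PL neighborhoods $K\subset N_1\subset\Int N_2\subset N_2\subset\Int B$, and I choose $T$ so that every closed simplicial neighborhood of a simplex meeting $N_1$ lies in $\Int N_2$. This is possible by iterated derived subdivision and a Lebesgue number argument. It also secures the last sentence of the statement, with "the region where this description holds" taken to contain $N_2$.

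Next I define the refinement $P'$ by gluing. Let $A$ be a subcomplex of (a subdivision of) $T$ with $N_1\subset|A|$ and $|A|\subset\Int B$, for instance the simplicial neighborhood of $N_1$. On $\Int|A|$, set $B'^{\,i}$ to be the union of $B^i$ with the $i$-skeleton of $A$. Away from $|A|$, keep $B^i$ unchanged. This changes nothing near $\partial B$, which gives the second condition of Definition~\ref{refinement}.

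Every open simplex of $T$ lies in a single stratum of $P$, because the $B^i$ are subcomplexes. Each new connected defect is therefore either an open simplex or a piece of an old stratum, and in both cases it lies in one old defect. This gives the first condition of Definition~\ref{refinement}.

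I then check that $P'$ is a stratified PL manifold, with no bare $0$-strata issues, since this is a statement about the filtration only. At a point in the interior of $A$, the filtration is locally the simplicial filtration of a combinatorial manifold. Its link charts are $\mathbb R^k\times c(\lk)$, with the link carrying its own simplicial stratification, so recursion applies. At points in the interior of the old region, nothing has changed.

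The main obstacle is the seam $\partial|A|$, where a simplicial stratification meets the coarse one. The frontier cells of $A$ are new strata that end abruptly there, and this may violate the cone-chart condition. I would try first to avoid the seam altogether. For that, choose $A$ with a collar of its frontier in which $P$ is already a product, which can be arranged by the preparation Lemma~\ref{app:preparation}. Then taper the refinement across this collar: inside the collar, use only those simplices of $A$ that are products of frontier simplices with the interval, so that each new stratum meets the frontier as a product with $[0,1)$.

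If tapering fails, the fallback is to take the refinement to be the simplicial stratification of $T$ on all of $B$ except a thin collar of $\partial B$, and to repeat the same product argument at the collar of $\partial B$ instead. Here the simultaneous collar result Lemma~\ref{app:collars} supplies a collar preserving all old filtration terms. Finally, near $K$ the description "open-simplex stratification of a triangulation adapted to $P$" holds on $N_2$ by construction.
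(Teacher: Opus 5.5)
Your set-up is the paper's: a triangulation $T$ in which all $B^i$ are subcomplexes, made fine relative to $K$, with $P$ replaced near $K$ by the open simplices of a subcomplex $A$. The gap is that you leave open the one point that needs proof, the recursive cone charts along the seam $\partial|A|$. Neither remedy you propose can supply them.

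Whatever the taper is, consider a new stratum that is not part of an old stratum of its own dimension, e.g.\ an open codimension-one simplex of $A$ lying in an old chamber. The new defect containing it must stay in that chamber. Outside the modified region the only codimension-one strata are old walls, so this new wall cannot leave the modified region. It must therefore end on lower-dimensional strata in the closure of that region, and where it ends is a seam of exactly the kind you wanted to avoid. Continuing it across a product collar as (frontier face)$\times[0,1)$ only moves that end to the far side of the collar. Your fallback moves the identical seam to the inner end of a collar of $\partial B$. So the seam has to be handled directly.

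The missing observation is that the seam is harmless, with no collar needed, once the frontier simplices are themselves strata. Your two-part definition leaves $\partial|A|$ unspecified; take the closed sets $B'^{\,i}=B^i\cup|A^{(i)}|$. The new strata are then all open simplices of $A$, including those in $\partial|A|$, together with the components of $S\setminus|A|$ for old strata $S$.

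For $x$ in the relative interior of $\sigma\in A$, the star of $\sigma$ in $T$ gives a chart $\mathbb R^{\dim\sigma}\times c\operatorname{lk}_T(\sigma)$ in which all $B^i$ and $|A|$ are products. This is the same star chart you already use at interior points. In it the new filtration is $\mathbb R^{\dim\sigma}$ times the cone on the same kind of relative refinement of the link sphere: open simplices on the subcomplex $\operatorname{lk}_A(\sigma)\subset\operatorname{lk}_T(\sigma)$, and old link strata (identified via Lemma~\ref{app:slice}) off it. Induction on dimension then gives the charts of Definition~\ref{stratified PL manifold}. Off $|A|$ the old charts restrict to the open complement.

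Closedness of the filtration at the seam needs one more line. An $s$-simplex of $A$ inside an $s$-dimensional old stratum $S$ is open in $S$, so $\overline{S\setminus|A|}$ meets it only in lower faces. This is the paper's argument.

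Finally, the simplicial description holds on $|A|$, not on $N_2$ as you state. For the last assertion, choose $T$ so fine that the closed simplicial neighborhoods of the top simplices meeting $K$ lie in the interior of $|A|$, as the paper does with its subcomplex $L$.
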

\begin{proof}
Choose open sets $K\subset W_0\Subset W_1\Subset\Int B$.
Triangulate all closed filtration terms of $P$ simultaneously and
subdivide sufficiently finely that the closed simplicial neighborhoods
of simplices meeting $\overline W_0$ lie in $W_1$. After another subdivision,
choose a subcomplex $L$ whose support lies in $\Int B$ and contains
these neighborhoods in its interior. Such choices follow from a Lebesgue
number for a finite cover of the compact sets in question. The set $K$ need not itself be a subcomplex; the construction only
uses a finite simplicial neighborhood of it.

Inside $|L|$ use the relative interiors of its simplices as strata.
Outside $|L|$ use the connected components of $S\setminus|L|$ for
old strata $S$. Each of these is an open subset of a PL manifold;
inside $L$ the strata are open simplices. Each new stratum is
contained in one old stratum because the triangulation is adapted.
Their dimension filtration is closed: a new open simplex has its
boundary in smaller simplices, while the boundary of an exterior
piece consists of old lower strata and simplices of $L$ of smaller
dimension. The latter assertion follows by looking in a simplex of
the adapted triangulation; an equal-dimensional face in its closure
would be part of the same old stratum, and its boundary has smaller
dimension.

We verify the recursive conical condition at the seam, which is the
only point not immediate from the old or simplicial charts. At a
point in the relative interior of a face $\sigma\subset L$, the
triangulated star has its standard form
$\mathbb R^{\dim\sigma}\times c\lk(\sigma)$, preserving all old
filtration terms and $L$. Membership in these subcomplexes is
independent of the first coordinate. The new strata inside $L$
are the corresponding simplicial product strata; outside $L$ they
are the old strata restricted to this open complement. The induced
strata on a small normal link are PL manifolds: on the old part this
follows by removing the translation and radial product directions
using Lemma~\ref{app:slice}, and on the simplicial part it is the
simplex-link description. Repeating the same construction in a
proper link decreases dimension, and gives its recursive charts.
The normal link has its standard PL sphere support. Thus these are
the charts of Definition~\ref{stratified PL manifold}, including the seam. Equivalently this
is the relative simplicial refinement of a conically filtered
polyhedron, verified here with its marked subcomplex $L$.

The construction is finite, so local finiteness holds. It leaves an
open boundary neighborhood completely unchanged; in particular the
old filtered boundary collar survives. The chosen simplicial-neighborhood containment
ensures the last assertion.
\end{proof}

\begin{lem}[Collared balls around small chambers; Lemma~\ref{App: chamber divided by balls}]
\label{app:chamber-balls}
Let $(B,P)$ be a compact $G$-labeled $d$-manifold and
$K\subset\Int B$ compact. There is a refinement $P'\succcurlyeq P$,
unchanged near $\partial B$, such that for every new chamber $D$
contained in $K$ there is a PL ball $U_D\subset\Int B$ with
\[
 \overline D\subset\Int U_D,
\]
and $\partial U_D$ has a two-sided $G$-stratified collar in $(B,P')$.
In particular $U_D$ with its restricted filtration is a stratified
submanifold in the main text's definition of a stratified submanifold.
\end{lem}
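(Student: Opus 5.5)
The plan is to apply Lemma~\ref{app:local-refinement} and take each $U_D$ to be a small regular neighborhood of $\overline D$ inside the triangulation that the refinement produces. First fix a compact neighborhood $K'$ of $K$ with $K\subset\Int K'\subset K'\subset\Int B$. Apply Lemma~\ref{app:local-refinement} to $K'$. This gives a refinement $P'$, unchanged near $\partial B$, which agrees near $K'$ with the open-simplex stratification of a finite triangulation $T$ adapted to $P$. The lemma also lets us require that the closed simplicial neighborhoods of the top simplices meeting $K'$ lie in the region where this description holds.

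New walls receive label $1$, as in the refinement gauge of Def.~\ref{Gauge}. Flatness holds automatically, since each codimension-two stratum either lies in an old stratum with trivial new labels or sits on old walls in the old pattern.

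Next, identify the chambers. A new chamber $D\subset K$ is a connected component of a top stratum of $P'$ meeting $K$. Near $K'$ the top strata are open top simplices of $T$, so $D=\Int\sigma$ for a top simplex $\sigma$. Hence $\overline D=\sigma$.

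Then define $U_D$. Pass to the first derived subdivision $T'$ of $T$, and let $U_D$ be the closed simplicial neighborhood of $\sigma$ in $T'$, that is, the union of closed $T'$-simplices meeting $\sigma$. This is a derived regular neighborhood of the collapsible subcomplex $\sigma$, so it is a PL ball by the regular neighborhood theorem \cite[Theorem 3.26]{rourke2012introduction}. Clearly $\sigma\subset\Int U_D$, and $U_D$ lies in the region where $P'$ is simplicial, hence in $\Int B$.

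The main obstacle is the two-sided $G$-stratified collar of $\partial U_D$. This requires that $\partial U_D$ meets every stratum of $P'$, that is, every open simplex of $T$, transversally. The approach is to observe that $\partial U_D$ is the frontier of a derived neighborhood and is a level set $\{f=1/2\}$ of the simplicial function $f$ on $T'$ that is $0$ on vertices of $\sigma$'s subdivision and $1$ elsewhere. On each open simplex $\tau$ of $T$ meeting $\partial U_D$, the function $f$ is affine along the join structure $\tau=(\tau\cap N)*(\tau\cap N^c)$. So $f^{-1}([1/2-\delta,1/2+\delta])\cap\tau$ is a product $(f^{-1}(1/2)\cap\tau)\times[1/2-\delta,1/2+\delta]$ via the join lines, and these product structures agree on faces. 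Assembling them gives a PL map $c_D:\partial U_D\times(-1,1)\to B$ that preserves every closed simplex of $T$, hence every filtration term of $P'$, and also the wall labels and coorientations, since those are constant on connected walls. If the straight-line join parametrization is not PL on overlaps, I would instead invoke the collar criterion (Lemma~\ref{app:criterion}, Corollary~\ref{app:amalgamation}) on the ball complex formed by the cut pieces, and globalize with Lemma~\ref{app:collars}. Finally, since $U_D$ is a ball with a filtered collar, its restricted filtration has the recursive charts of Definition~\ref{stratified PL manifold}, by Lemma~\ref{app:slice}, so it is a stratified submanifold.
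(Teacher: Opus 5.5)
There is a genuine gap in the collar step, which is the substantive part of the statement. Your overall plan matches the paper's: refine by Lemma~\ref{app:local-refinement}, identify $D=\operatorname{int}\sigma$, take a regular neighbourhood of $\sigma$, and obtain the collar from a product structure near a level set. Two things go wrong.

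\textbf{Mismatched $U_D$ and level set.} If $U_D$ is the closed simplicial neighbourhood of $\sigma'$ in the derived subdivision $T'$, and $f$ is the $T'$-simplicial function that is $0$ on the vertices of $\sigma'$ and $1$ elsewhere, then the frontier of $U_D$ is $U_D\cap f^{-1}(1)$. It is not $f^{-1}(1/2)$. The set $f^{-1}([0,1/2])$ is a strictly smaller, second-derived neighbourhood. Moreover, such an $f$ is not affine on the simplices of $T$.

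\textbf{The join-line product is not PL.} Take $f$ affine on $\tau=\alpha*\beta$. Your collar $(x,r)\mapsto x+\delta r\,(b(x)-a(x))$ is radial projection. Already when $\beta$ is a single vertex $b$, one has $a(x)=(x-tb)/(1-t)$, and the term $r\,a(x)$ is bilinear. So the map fails to be PL inside a single simplex, not just on overlaps.

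\textbf{The fallback does not apply.} Lemma~\ref{app:criterion} and Corollary~\ref{app:amalgamation} concern cells in transverse position to the stratification. Here the strata of $P'$ are the open simplices of $T$ themselves, so for a cut piece $C=\tau\cap U_D$ the filtration terms $M^i\cap C$ with $i<d$ are nonempty subsets of $\partial C$. A collar $c$ of $\partial C$ in $C$ with $c^{-1}(M^i)=(\partial C\cap M^i)\times[0,\epsilon)$ therefore cannot exist. Localizing and globalizing with Lemma~\ref{app:collars} only restates the needed local product claim.

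\textbf{The missing ingredient} is the one the paper uses. Take $f$ simplicial on $T$ itself; $\sigma$ is automatically a full subcomplex. Set $U_D=f^{-1}([0,t])$. This is the derived neighbourhood obtained by deriving the mixed simplices at points of $f^{-1}(t)$, hence a PL ball. Then apply Williamson's simultaneous trivialization, Lemma~\ref{app:williamson}, to $f$ over an interval $(t-\delta,t+\delta)$ containing no vertex values. This gives a PL homeomorphism $f^{-1}(t)\times(t-\delta,t+\delta)\cong f^{-1}((t-\delta,t+\delta))$ preserving every subcomplex of $T$. Hence it preserves every closed simplex, every stratum of $P'$, and the wall labels and coorientations. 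That is exactly the two-sided $G$-stratified collar.

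A smaller omission: your flatness check leaves out the case of a new codimension-two stratum lying inside an old wall. There the small circuit crosses that wall twice in opposite directions, so the product is still $1$.
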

\begin{proof}
Use Lemma~\ref{app:local-refinement}. Label a new wall in an old
chamber by $1$ and a piece of an old wall by its inherited label,
with the inherited coorientation. This is a $G$-labeled refinement:
a new codimension-two small circuit either has no nontrivial
crossing, crosses an old wall twice in opposite directions, or
is the subdivision of an old flat circuit. Its product is $1$.

Every chamber $D\subset K$ is the relative interior of a top simplex
$\sigma$ of the chosen triangulation $T$. Its closed simplicial
neighborhood lies in the interior region carrying the simplicial
stratification. The simplex $\sigma$, with all its faces, is a full
subcomplex of $T$. Define the simplicial function $f:T\to[0,1]$ by
$f(v)=0$ on vertices of $\sigma$ and $f(v)=1$ on all other vertices.
Then $f^{-1}(0)=|\sigma|$. For $0<t<1$ set
\[
 U_D=f^{-1}([0,t]).
\]
This lies in the indicated neighborhood and contains $\sigma$ in its
interior. It is the $t$-neighborhood of the full subcomplex in the
terminology of \cite[printed p.32; PDF p.40]{rourke2012introduction}. Derive each mixed
simplex at a point on $f^{-1}(t)$; the resulting derived neighborhood
has exactly this support, by the construction on that page. Thus
$U_D$ is a regular neighborhood of $\sigma$.
The closed simplex collapses to a point. A regular neighborhood of
it is therefore a PL $d$-ball by the regular-neighborhood theorem
\cite[Corollary 3.27, printed p.41; PDF p.49]{rourke2012introduction}.

Here is the needed \emph{stratified} collar, which does not follow
just from that ordinary ball statement. Let $f$ be the above
simplicial height function. On a sufficiently short interval
$(t-\delta,t+\delta)$ there are no vertex levels. The simultaneous
simplicial trivialization of Lemma~\ref{app:williamson} identifies
this strip with
\[
 f^{-1}(t)\times(t-\delta,t+\delta),
\]
preserving every closed simplex of the original adapted
triangulation. It consequently preserves their relative interiors,
which are precisely the strata of $P'$ here. After reparameterizing
the interval this is the asserted two-sided collar, with its zero
level fixed. Its G labels are constant along each product wall.
Product slicing gives the recursive boundary charts, and restricting
to the inner half gives the boundary collar of $U_D$ itself.
Thus $U_D$ is the required stratified submanifold.
\end{proof}